\newtheorem{theorem}{Theorem}[chapter]
\newtheorem{lemma}[theorem]{Lemma}
\newtheorem{proposition}[theorem]{Proposition}
\newtheorem{corollary}[theorem]{Corollary}
\theoremstyle{definition}
\newtheorem{definition}[theorem]{Definition}
\newtheorem{example}[theorem]{Example}
\newtheorem{convention}[theorem]{Convention}
\theoremstyle{remark}
\newtheorem{remark}[theorem]{Remark}
\newtheorem{condition}[theorem]{Condition}
\numberwithin{section}{chapter}
\numberwithin{equation}{chapter}
\begin{document}
\onehalfspacing
\frontmatter

\title{On the Stochastic Heat Equation with Spatially-Colored Random  Forcing}


\author{Mohammud Foondun}
\address{School of Mathematics, Loughborough University, Leicestershire, LE11 3TU }
\email{m.i.foondun@lboro.ac.uk}
\thanks{Research supported in part by the National Science Foundation
	grant DMS-0706728.}

\author{Davar Khoshnevisan}
\address{University of Utah, Department of Mathematics, 155 South 1400 East JWB 233,
	Salt Lake City, UT 84112--0090}
\email{davar@math.utah.edu}

\date{}
\subjclass[2000]{Primary. 60H15; Secondary. 35R60.}
\keywords{The stochastic heat equation, spatially-colored homogeneous noise,
	L\'evy processes.}

\maketitle

\begin{abstract}
We consider the stochastic heat equation of the following form
\begin{equation*}
	\frac{\partial}{\partial t}u_t(x) = (\sL u_t)(x) +b(u_t(x)) +
	\sigma(u_t(x))\dot{F}_t(x)\quad \text{for }t>0,\ x\in \R^d,
\end{equation*}
where $\sL$ is the generator of a L\'evy process and $\dot{F}$ is a spatially-colored,
temporally white, gaussian noise.  We will be concerned mainly with the long-term 
behavior of the mild solution to this stochastic PDE.

For the most part, we work under the assumptions that the initial data
$u_0$ is a bounded and measurable function and $\sigma$ is 
nonconstant and Lipschitz continuous.
In this case, we find conditions under which the preceding stochastic PDE admits a unique 
solution which is also \emph{weakly intermittent}. In addition, we study the same
equation in the case that $\mathcal{L}u$ is replaced by its
massive/dispersive analogue $\mathcal{L}u-\lambda u$ where $\lambda\in\R$.
And we describe accurately the effect of the parameter $\lambda$ on
the intermittence of the solution in the case that $\sigma(u)$ is proportional
to $u$ [the ``parabolic Anderson model''].

Furthermore, we extend our analysis to the case that the initial data $u_0$ is a 
measure rather than a function.  As it turns out, the stochastic PDE in question
does not have a mild solution in this case.  
We circumvent this problem by introducing a new concept of a solution
that we call a \emph{temperate solution},
and proceed to investigate the existence and uniqueness of a temperate solution.
We are able to also give partial insight into the long-time behavior of the temperate
solution when it exists and is unique.

Finally, we look at the linearized version of our stochastic PDE, that is 
the case when $\sigma$ is identically equal to one [any other constant
works also].
In this case, we study not only the existence and uniqueness of a solution, but 
also the regularity of the solution when it exists and is unique.

\end{abstract}


\setcounter{page}{4}

\tableofcontents

\mainmatter

\chapter{Introduction and Statements of Main Results}
The principle aim of this paper is to describe the asymptotic large-time 
behavior of the mild solution $\bm{u}:=\{u_t(x)\}_{t\ge 0,x\in\R^d}$ of
the stochastic heat equation, 
\begin{equation}\label{heat}
	\frac{\partial}{\partial t}u_t(x) = (\sL u_t)(x) +b(u_t(x)) +
	\sigma(u_t(x))\dot{F}_t(x),
\end{equation}
where $t>0$ and $x\in\R^d$, and the preceding 
stochastic PDE can be understood in the sense of Walsh \cite{Walsh}.


For the most part, we consider the case that
the initial data $u_0$ is a nonrandom, as
well as bounded and measurable, function. 
But we will also consider the physically-interesting 
case that $u_0$ is a nonrandom finite Borel
measure on $\R^d$.  The latter case will be the subject of Chapter 6.

Throughout we consider only functions $\sigma,b:\R\to\R$
that are nonrandom and Lipschitz continuous. 
Also, we let $\sL$ be the
$L^2$-generator of a $d$-dimensional L\'evy process $X:=\{X_t\}_{t\ge 0}$, 
and assume that $X$ has transition functions.

In the above discussion, we have used the standard 
notation of probability theory: Namely,
$g_t$ denotes the evaluation of a  [random or nonrandom] function
$g$ at time $t$, and \emph{never} the time derivative of $g$.  This notation will be used throughout the rest of the paper.

As regards the forcing term $\dot{F}$ in \eqref{heat}, we assume that
$\dot{F}$ is a generalized Gaussian random field \cite[Chapter 2, \S2.4]{GV} whose
covariance kernel is $\delta_0(s-t) f(x-y)$, where the ``correlation
function'' $f$ is a 
nonnegative definite, symmetric, and
nonnegative function that is not identically zero.\footnote{The symbol ``$f$'' is reserved
for this correlation function here and throughout. We \emph{never}
refer to any other function as $f$.}  Alternatively, one can use the following
\begin{equation}
	\dot{F}_t(x) := \frac{\partial^{d+1}}{\partial t\partial x_1
	\cdots\partial x_d}F(t\,,x),
\end{equation}
in the sense of generalized random fields,
where $F$ is a centered generalized Gaussian random
field with covariance kernel
\begin{equation}\label{eq:Cov}\begin{split}
	&\textrm{Cov}\left( \int_{\R_+\times\R^d}\phi\,\d F
		~,\, \int_{\R_+\times\R^d}\zeta\,\d F\right)\\
	&\hskip1.4in= \int_0^\infty
		\d s\int_{\R^d} \d x\int_{\R^d}\d y\ \phi_s(x)\zeta_s(y)
		f(x-y),
\end{split}\end{equation}
where $\int\phi\,\d F$ and $\int\zeta\,\d F$ are Wiener integrals
of $\R_+\times\R^d\ni(s\,,x)\mapsto \phi_s(x)$ and $
\R_+\times\R^d\ni(s\,,x)\mapsto\zeta_s(x)$ with respect to $F$,
and $\phi$ and $\zeta$ are nonnegative measurable functions
for which the right-most multiple integral in \eqref{eq:Cov} is 
absolutely convergent.

According to the Bochner--Schwartz theorem \cite[Theorem 3,
p.\ 157]{GV}:
\begin{enumerate}
 \item[(a)] The Fourier transform $\hat{f}$
of $f$ is  a [nonnegative Borel] tempered measure on $\R^d$; and
\item[(b)] Conversely, every tempered measure $\hat f$ on $\R^d$ is the Fourier transform
of one such correlation function $f$.
\end{enumerate}
The measure $\hat f$ is known as the ``spectral measure'' of the noise $F$.
Throughout, we assume without further mention that $F$
``has a spectral density.'' That is,
\begin{equation}
	\text{$\hat f$ is a measurable function}.
\end{equation}
This  implies that $\hat f$ is locally integrable on $\R^d$
as well. Strictly speaking,
these conditions are not always needed in our work, but we
assume them for the sake of simplicity.

By enlarging the underlying probability space, if need be, we introduce
an independent copy $X^*:=\{X_t^*\}_{t\ge 0}$ of the dual process $-X$. 
We can then use $X^*$ to define a symmetric L\'evy process $\bar X:=\{\bar X_t\}_{t\ge 0}$
on $\R^d$ via the assignment
\begin{equation}
	\bar X_t:= X_t+X_t^*\qquad\text{for all $t\ge 0$}.
\end{equation}
Motivated by the works of Kardar, Parisi, and Zhang \cite{KPZ}
and Kardar \cite{Kardar}, we may refer to $\bar X$ as the \emph{replica
L\'evy process} corresponding to $X$ and will therefore call the resolvent $\{\bar R_\alpha\}_{\alpha>0}$ of $\bar{X}$, the 
\emph{replica resolvent}.\footnote{These quantities are
defined in more detail in Chapter \ref{ch:Levy}.} We will consider the condition
that the correlation function $f$ has finite $\alpha$-potential at zero 
for all $\alpha>0$. That is, we consider the following:
\begin{condition}\label{cond:1}
	$(\bar{R}_\alpha f)(0)<\infty 	\quad\text{for all $\alpha>0$}$
\end{condition}
The above condition will imply an existence
and uniqueness result for the stochastic heat equation \eqref{heat}.
Moreover, our proof of existence and uniqueness is
closely linked to the large-time behavior of the solution
itself [via \emph{a priori} estimates]. We describe these results next. But first, let us define two important quantities:
The first denotes the
\emph{upper $L^p(\P)$-Liapounov exponent} of the solution
$\bm{u}:=\{u_t(x)\}_{t>0,x\in\R^d}$ to \eqref{heat} at the
spatial point $x\in\R^d$:
\index{gammabar000x@$\overline\gamma_x(p)$, the upper $L^p(\P)$-Liapounov exponent of the solution at $x$}%
\index{gammabar*@$\overline\gamma_*(p)$, the maximum upper $L^p(\P)$-Liapounov exponent of the solution.}%
\begin{equation}\label{def:gamma}
	\overline\gamma_x(p) :=\limsup_{t\to\infty} \frac 1t
	\ln\E\left( \left| u_t(x)\right|^p\right);
\end{equation}
and the second the \emph{upper maximum $L^p(\P)$-Liapounov exponent}:
\begin{equation}\label{def:gamma*}
	\overline\gamma_*(p):=\limsup_{t\to\infty} \frac 1t\sup_{x\in\R^d}
	\ln\E\left( \left| u_t(x)\right|^p\right).
\end{equation}
The above two quantities are variants of the well known \emph{Liapounov exponent}.

We are now ready to state the first main contribution of this paper.

\begin{theorem}\label{th:existence}
	Assume that Condition \ref{cond:1} holds,
	and suppose $u_0:\R^d\to\R$ is bounded and measurable.
	Then, \eqref{heat} has an a.s.-unique mild solution which satisfies
	the following: For all even integers $p\ge 2$,
	\begin{equation}\label{eq:exist:nonlinear}
		\overline\gamma_*(p)
		\le \inf\left\{\beta>0:\ Q(p\,,\beta)<1\right\},
	\end{equation}
	where
	\begin{equation}\label{def:Q}
		Q(p\,,\beta) := \frac{p\lip_b}{\beta}+
		z_p\lip_\sigma\sqrt{( \bar R_{2\beta/p} f)(0)},
	\end{equation}
	and $ z_p$ denotes the largest 
	positive zero of the Hermite polynomial
	$\text{\sl He}_p$.
\end{theorem}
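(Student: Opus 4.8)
The plan is to prove existence and uniqueness by a Picard iteration in a suitable weighted $L^p(\P)$ norm, and to extract the Liapounov exponent bound as a byproduct of the fixed-point estimate. Recall that the mild (Walsh) solution is the solution of the stochastic integral equation
\begin{equation*}
	u_t(x) = (P_t u_0)(x) + \int_0^t\!\!\int_{\R^d} p_{t-s}(y-x)\,b(u_s(y))\,\d y\,\d s
	+ \int_0^t\!\!\int_{\R^d} p_{t-s}(y-x)\,\sigma(u_s(y))\,F(\d s\,\d y),
\end{equation*}
where $p_t$ is the transition density of $X$ and $P_t$ the associated semigroup. For a parameter $\beta>0$ define the norm $\mathcal{N}_{\beta,p}(v):=\sup_{t\ge0}\sup_{x\in\R^d} e^{-\beta t}\,\|v_t(x)\|_{L^p(\P)}$, and let $\mathcal{A}(v)$ denote the right-hand side of the display above (with $u_0$ fixed). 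The first step is to show $\mathcal{A}$ maps the Banach space $\{v:\mathcal{N}_{\beta,p}(v)<\infty\}$ into itself and is a strict contraction once $\beta$ is large enough that $Q(p,\beta)<1$.

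The heart of the argument is the estimation of the two integral terms in $L^p(\P)$. For the drift term, the triangle inequality for $\|\cdot\|_{L^p(\P)}$ together with the Lipschitz bound on $b$ gives a contribution bounded by $\mathrm{Lip}_b \int_0^t\!\int p_{t-s}(y-x)\,(\,|\sigma\text{-free part}|+\|v_s(y)\|_p)\,\d y\,\d s$; after pulling out $e^{\beta t}$ and using $\int p_{t-s}(y-x)\,\d y=1$ one is left with $\mathrm{Lip}_b\int_0^t e^{-\beta(t-s)}\,\d s\le \mathrm{Lip}_b/\beta$ times $\mathcal{N}_{\beta,p}(v)$, which explains the first term $p\,\mathrm{Lip}_b/\beta$ of $Q$ (the extra factor $p$ comes from handling the difference of two iterates together with the drift in a single Gronwall-type step, or from a Minkowski-in-$p$ bookkeeping). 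For the stochastic term the key tool is the Burkholder--Davis--Gundy inequality in the sharp form due to Carlen--Kree: for a martingale, the $L^p$ norm of the stochastic integral is at most $z_p$ (the largest zero of $\mathrm{He}_p$) times the $L^p$ norm of the square function. Applying this, using the covariance structure \eqref{eq:Cov} of $F$, and then the Lipschitz bound on $\sigma$, the square function is
\begin{equation*}
	\int_0^t\!\!\int_{\R^d}\!\!\int_{\R^d} p_{t-s}(y-x)\,p_{t-s}(z-x)\,f(y-z)\,
	\sigma(v_s(y))\,\sigma(v_s(z))\,\d y\,\d z\,\d s;
\end{equation*}
taking $\|\cdot\|_{L^{p/2}(\P)}$ and using Minkowski's inequality one bounds this by $\mathrm{Lip}_\sigma^2$ times $\int_0^t\!\int\!\int p_{t-s}(y-x)p_{t-s}(z-x)f(y-z)\,\|v_s(y)\|_p\|v_s(z)\|_p\,\d y\,\d z\,\d s$ (plus a $u_0$ part). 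Factoring out $e^{2\beta s}$ and recognising the spatial double integral against $f$ as a contribution of the replica process $\bar X=X+X^*$, the $s$-integral $\int_0^t e^{-2\beta(t-s)}(P^{\bar X}_{2(t-s)} f)(0)\,\d s$ is dominated by the replica resolvent $(\bar R_{2\beta/p}f)(0)$ after the substitution matching the exponent $2\beta/p$ built into $Q$; this gives the second term $z_p\,\mathrm{Lip}_\sigma\sqrt{(\bar R_{2\beta/p}f)(0)}$. Condition \ref{cond:1} is exactly what makes this finite for every $\beta>0$.

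Combining the two estimates yields $\mathcal{N}_{\beta,p}(\mathcal{A}v-\mathcal{A}w)\le Q(p,\beta)\,\mathcal{N}_{\beta,p}(v-w)$ and, applied with $w\equiv 0$, $\mathcal{N}_{\beta,p}(\mathcal{A}v)\le C(u_0)+Q(p,\beta)\,\mathcal{N}_{\beta,p}(v)$. Hence whenever $\beta$ satisfies $Q(p,\beta)<1$, Banach's fixed-point theorem produces a unique solution $\bm u$ in that space, and the a priori bound $\mathcal{N}_{\beta,p}(\bm u)<\infty$ means $\|u_t(x)\|_{L^p(\P)}\le C e^{\beta t}$ uniformly in $x$, i.e. $\overline\gamma_*(p)\le p\beta$ — dividing by $p$ and taking the infimum over admissible $\beta$ gives \eqref{eq:exist:nonlinear}. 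One then notes that uniqueness in every such weighted space upgrades to a.s.-uniqueness among all mild solutions by a standard localization/exhaustion in $\beta$. The main obstacle is the stochastic-term estimate: getting the \emph{sharp} constant $z_p$ (rather than a crude BDG constant) requires the Carlen--Kree inequality, and correctly threading the parameter $2\beta/p$ through the BDG/Minkowski steps so that the square-function bound lands precisely on $(\bar R_{2\beta/p}f)(0)$ rather than some other potential — this bookkeeping, plus justifying the interchange of $L^{p/2}(\P)$ with the triple space-time integral via Minkowski's integral inequality, is where the real care is needed. A routine a-priori truncation (replacing $\sigma,b$ by bounded Lipschitz approximants, or running the iteration up to a finite horizon $T$) handles the a priori finiteness needed to start the contraction rigorously.
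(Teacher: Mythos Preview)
Your approach is essentially the paper's: Picard iteration in the weighted space $\bm{B}_{\beta,p}$, with the drift bounded by Minkowski and the stochastic term by the Davis/Carlen--Kree BDG inequality with optimal constant $z_p$, landing on the replica resolvent $(\bar R_{\cdot}f)(0)$.

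The only real issue is a bookkeeping mismatch between your norm and the paper's, which causes the hand-waving about the ``extra factor $p$'' and the ``dividing by $p$'' at the end. The paper's norm is
\[
	\|v\|_{\beta,p}=\sup_{t,x}\bigl[e^{-\beta t}\E(|v_t(x)|^p)\bigr]^{1/p}
	=\sup_{t,x}e^{-\beta t/p}\|v_t(x)\|_{L^p(\P)},
\]
i.e.\ exponential weight $e^{-\beta t/p}$, not your $e^{-\beta t}$. With weight $e^{-\beta t/p}$ the drift gives $\int_0^t e^{-\beta(t-s)/p}\,\d s\le p/\beta$, explaining $p\lip_b/\beta$ honestly, and the quadratic-variation bound becomes $\int_0^t e^{-2\beta(t-s)/p}(\bar P_{t-s}f)(0)\,\d s\le(\bar R_{2\beta/p}f)(0)$, explaining the $2\beta/p$. (Also note the spatial double integral is $(\bar P_{t-s}f)(0)$, not $(\bar P_{2(t-s)}f)(0)$.) Finiteness of $\|u\|_{\beta,p}$ then gives $\E(|u_t(x)|^p)\le Ce^{\beta t}$ directly, hence $\overline\gamma_*(p)\le\beta$ with no division by $p$. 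Your parametrization works too after the substitution $\beta_{\text{paper}}=p\,\beta_{\text{yours}}$, but as written the justification of the constants is not correct.
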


Let us make two remarks before we continue
with our presentation of the main results of this paper. The first
one is consequence of the above result.

\begin{remark}
	It is possible to deduce from Condition \ref{cond:1} and the monotone
	convergence theorem that $\lim_{\alpha\to\infty}(\bar R_\alpha f)(0)=0$.
	This, in turn, implies that
	\begin{equation}
		\lim_{\beta\to\infty} Q(p\,,\beta)=0.
	\end{equation}
	Consequently, Theorem \ref{th:existence} implies among other things
	that $\overline\gamma_*(p)<\infty$ for all $p\in(0\,,\infty)$.
	\qed
\end{remark}

\begin{remark}[Borrowed from \protect{\cite[Remark 2.2]{FK}}]
	It might help to recall that
	\begin{equation}
		\text{\sl He}_k(x)=\frac{1}{2^{k/2}}
		{\sl H}_k\left( \frac{x}{\sqrt 2} \right)
		\quad\text{for all integers $k\ge 0$ and $x\in\R$,}
	\end{equation}
	where $\{H_k\}_{k=0}^\infty$ is defined uniquely via the following:
	\begin{equation}
		\e^{-2xt-t^2}=\sum_{k=0}^\infty \frac{t^k}{k!}\,{\sl H}_k(x)
		\quad\text{for all $t>0$ and $x\in\R$.}
	\end{equation}
	It is not hard to verify that
	\begin{equation}
		z_2 = 1
		\quad\text{and}\quad
		z_4=\sqrt{3+\sqrt{6}}\ \approx 2.334.
	\end{equation}
	This is valid simply because
	$\text{\sl He}_2(x) = x^2-1$ and
	$\text{\sl He}_4(x) = x^4-6x^2+3$. In addition,
	\begin{equation}
		z_p\sim 2\sqrt p
		\quad\text{as $p\to\infty$, and}\quad
		\sup_{p\ge 1} \left( \frac{z_p}{\sqrt p}\right)=2;
	\end{equation}
	see Carlen and Kree \cite[Appendix]{CarlenKree}.\qed
\end{remark}

Next we put Theorem \ref{th:existence} in the context of
the existing literature on the stochastic heat equation. With this
in mind, define for all $\beta\ge 0$,
\begin{equation}\label{eq:Upsilon}
	\Upsilon(\beta):=
	\frac{1}{(2\pi)^d}\int \frac{\hat f(\xi)}{\beta+2\Re\Psi(\xi)}\,\d\xi,
\end{equation}
where $\Psi$ is the characteristic exponent of the L\'evy process $X$.
Dalang \cite{Dalang} has established a very general 
result which guarantees the 
existence and uniqueness of solutions to large families of
SPDEs. If we apply Dalang's
result to the present parabolic problem \eqref{heat}, 
then we find the following: If $u_0$ is a constant, then the condition
\begin{equation}\label{cond:Dalang}
	\Upsilon(1)<\infty
\end{equation}
insures the existence and uniqueness of a [mild] solution to \eqref{heat}.
Moreover, Dalang's result shows that 
\eqref{cond:Dalang} is necessary and sufficient for existence
and uniqueness in the case
that \eqref{heat} is a linear SPDE; that is, when $\sigma(u)=1$ and $b(u)=0$.
For closely-related results [that also include hyperbolic equations] see
Carmona and Molchanov \cite{CarmonaMolchanov:94},
Conus and Dalang \cite{ConusDalang},
Dalang and Frangos \cite{DalangFrangos},
Dalang and Mueller \cite{DalangMueller},
Dalang, Mueller, and Tribe \cite{DalangMuellerTribe},
Dalang and Sanz-Sol\'e \cite{DalangSanzSole}, and
Peszat and Zabczyk \cite{PeszatZabczyk}.

Our next result implies among many other things that
Dalang's condition \eqref{cond:Dalang} is generically equivalent to 
the potential-theoretic Condition \ref{cond:1}.

\begin{theorem}[A Maximum Principle]\label{th:Dalang:1}
	For all $\beta>0$,
	\begin{equation}\label{eq:0sup}
		(\bar R_\beta f)(0)=
		\sup_{x\in\R^d}(\bar R_\beta f)(x)=\Upsilon(\beta).
	\end{equation}
	Thus, Condition \ref{cond:1} holds if and only if \eqref{cond:Dalang} holds.
	Furthermore, if Condition \ref{cond:1} $[$and/or \eqref{cond:Dalang}$]$ holds
	and $f$ is lower semicontinuous, then for all $\beta>0$
	there exists $\pi_\beta \in C_0(\R^d)$ such that $\bar R_\beta f=\pi_\beta$
	almost everywhere.
\end{theorem}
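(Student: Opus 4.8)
The plan is to compute the resolvent $\bar R_\beta f$ by passing to Fourier variables, using that $\bar X$ is symmetric so its characteristic exponent is $2\Re\Psi(\xi)\ge 0$. First I would recall that for a symmetric L\'evy process $\bar X$ with exponent $\bar\Psi(\xi)=2\Re\Psi(\xi)$, the resolvent density (in the Fourier domain) satisfies $\widehat{\bar r_\beta}(\xi)=(\beta+2\Re\Psi(\xi))^{-1}$, at least formally. Hence for nice $g$ one has $(\bar R_\beta g)(x)=(2\pi)^{-d}\int \e^{ix\cdot\xi}\,\hat g(\xi)/(\beta+2\Re\Psi(\xi))\,\d\xi$. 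Applying this with $g=f$ and $x=0$, and using $\hat f\ge 0$ (the spectral measure is nonnegative) together with $\Re(\e^{ix\cdot\xi})\le 1$, one gets $(\bar R_\beta f)(x)\le (\bar R_\beta f)(0)=\Upsilon(\beta)$ immediately, which is exactly \eqref{eq:0sup}. The equivalence of Condition \ref{cond:1} with \eqref{cond:Dalang} then follows because $\Upsilon$ is monotone in $\beta$ and finiteness at one value of $\beta>0$ is equivalent to finiteness at every value (the integrand changes by a bounded factor on each compact set in $\xi$, and at infinity $2\Re\Psi(\xi)$ dominates $\beta$).

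The technical care needed is that $f$ need not be integrable, so ``$\hat f$'' must be read as the spectral measure and the Fourier inversion must be justified as an identity of measures/distributions rather than of functions. I would make this rigorous by a regularization: convolve $f$ with a Gaussian approximate identity $\phi_\varepsilon$, so that $f_\varepsilon:=f*\phi_\varepsilon$ is a bona fide nonnegative definite function with $\hat f_\varepsilon(\xi)=\hat f(\xi)\widehat{\phi_\varepsilon}(\xi)\ge 0$ integrable, apply the clean Fourier computation to $f_\varepsilon$, and then let $\varepsilon\downarrow 0$ using monotone convergence (as $\widehat{\phi_\varepsilon}\uparrow 1$ pointwise) on the spectral side and Fatou/monotonicity on the potential side. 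Since $\bar X$ has transition functions and hence the resolvent operators $\bar R_\beta$ act on bounded measurable functions, one can alternatively argue probabilistically: $(\bar R_\beta f)(x)=\E\int_0^\infty \e^{-\beta t} f(x+\bar X_t)\,\d t$, and by symmetry of $\bar X_t$ and nonnegative definiteness of $f$ (Bochner: $f(z)=(2\pi)^{-d}\int\e^{iz\cdot\xi}\hat f(\d\xi)$) one has $\E f(x+\bar X_t)=(2\pi)^{-d}\int \e^{ix\cdot\xi}\e^{-2t\Re\Psi(\xi)}\hat f(\d\xi)\le \E f(\bar X_t)$ termwise in $\xi$; integrating $\e^{-\beta t}\d t$ gives the claim. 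I expect this regularization/interchange-of-integrals step to be the main obstacle — everything else is a direct consequence of symmetry and nonnegativity.

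For the last assertion, assume Condition \ref{cond:1} holds and $f$ is lower semicontinuous. The goal is to produce $\pi_\beta\in C_0(\R^d)$ with $\bar R_\beta f=\pi_\beta$ a.e. The idea is that $\bar R_\beta f$ is already ``almost'' continuous: it is excessive for $\bar X$, hence lower semicontinuous (or finely continuous), and I would upgrade this to genuine continuity by noting that $\bar R_\beta f$ is the increasing limit of $\bar R_\beta f_n$ where $f_n\uparrow f$ with $f_n$ bounded continuous — each $\bar R_\beta f_n$ is continuous and bounded because $\bar X$ has transition functions, giving lower semicontinuity of the limit; the reverse (upper semicontinuity, hence continuity) comes from the resolvent equation $\bar R_\beta f=\bar R_{\beta'} f-(\beta-\beta')\bar R_\beta\bar R_{\beta'}f$ together with the fact that $\bar R_{\beta'}f$ is bounded (by $\Upsilon(\beta')<\infty$, via the maximum principle just proved), so $\bar R_\beta$ applied to a bounded function is continuous. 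Finally, decay at infinity: from the Fourier representation, $(\bar R_\beta f)(x)=(2\pi)^{-d}\int \e^{ix\cdot\xi}\hat f(\xi)/(\beta+2\Re\Psi(\xi))\,\d\xi$ is the Fourier transform of an $L^1$ function (the integrand is integrable by Condition \ref{cond:1}), so by Riemann--Lebesgue it vanishes at infinity; hence its continuous version lies in $C_0(\R^d)$, and it agrees with $\bar R_\beta f$ a.e. because two such representatives of the same $L^1_{\mathrm{loc}}$ function coincide a.e. This packages as the desired $\pi_\beta$.
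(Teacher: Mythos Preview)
Your regularization/Fourier argument correctly delivers the upper bound $\sup_{x}(\bar R_\beta f)(x)\le \Upsilon(\beta)$, and this is essentially what the paper does in its preliminary Proposition~\ref{pr:FA}. The gap is in the other direction: you need $(\bar R_\beta f)(0)\ge \Upsilon(\beta)$ \emph{pointwise at the origin}, and neither of your two sketches supplies this.

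In the mollification route, what you actually obtain is
\[
(\bar R_\beta (f*\phi_\varepsilon))(0)=((\bar R_\beta f)*\phi_\varepsilon)(0)=\int (\bar R_\beta f)(y)\,\phi_\varepsilon(y)\,\d y \longrightarrow \Upsilon(\beta),
\]
i.e.\ the \emph{averages} of $\bar R_\beta f$ near $0$ converge to $\Upsilon(\beta)$. This pins down the essential supremum near $0$ (which is exactly Proposition~\ref{pr:FA}), but says nothing about the value $(\bar R_\beta f)(0)$ unless you know $\bar R_\beta f$ is, say, lower semicontinuous at $0$---and that is not assumed in the first part of the theorem. Your ``Fatou/monotonicity on the potential side'' does not help: Fatou gives $\int \bar r_\beta\cdot\liminf f_\varepsilon \le \liminf \int \bar r_\beta f_\varepsilon$, which is the upper bound again, and there is no reason for $f*\phi_\varepsilon\le f$ pointwise. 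In the probabilistic route, the identity $\E f(x+\bar X_t)=(2\pi)^{-d}\int \e^{ix\cdot\xi}\e^{-2t\Re\Psi(\xi)}\hat f(\d\xi)$ is precisely the unjustified step: $f$ may be unbounded (Riesz kernels, for instance), so classical Bochner does not apply and the inversion is only distributional, not pointwise at $x=0$.

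The paper closes this gap with a separate energy inequality (Proposition~\ref{pr:KX}): for any probability measure $\mu$,
\[
\iint f(x-y)\,\mu(\d x)\,\mu(\d y)\ \ge\ \frac{1}{(2\pi)^d}\int |\hat\mu(\xi)|^2\hat f(\xi)\,\d\xi,
\]
proved via Lusin's theorem to handle the lack of regularity of $f$. Applying this with $\mu(\d x)=p_t(x)\,\d x$ gives $(\bar P_t f)(0)\ge (2\pi)^{-d}\int \e^{-2t\Re\Psi}\hat f$, and integrating against $\e^{-\beta t}\,\d t$ yields $(\bar R_\beta f)(0)\ge\Upsilon(\beta)$. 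That one-sided inequality is the missing idea in your proposal. Your treatment of the $C_0(\R^d)$ assertion (define $\pi_\beta$ by the Fourier integral, invoke Riemann--Lebesgue, match with $\bar R_\beta f$ via testing against $\phi\in\mathcal S$) is fine and matches the paper; note the theorem only claims equality almost everywhere, so your attempt to upgrade $\bar R_\beta f$ itself to continuity is more than what is needed.
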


In light of Theorem \ref{th:Dalang:1} and Dalang's theorem \cite{Dalang}, 
the novel contributions of our Theorem \ref{th:existence}
are: 
\begin{enumerate}
	 \item[(a)] The condition that $u_0$ is a constant can be improved to one
		about the boundedness of $u_0$ [this can also be derived by adapting
		the method of Dalang \cite{Dalang} to the present setting]; and more significantly
	\item[(b)] We obtain a uniform upper bound for the maximum $L^p(\P)$-moment Liapounov
		exponent of the solution to \eqref{heat} as an \emph{a priori} 
		consequence of the existence of the solution. 
\end{enumerate}
This second contribution leads to the weak intermittence of
solutions, which is a notion that is rooted in the literature of
statistical mechanics.  With this in mind, let us recall the following \cite{FK}:

\begin{definition}
	Suppose that there exists an a.s.-unique solution $\bm{u}:=\{u_t(x)\}_{t>0,x\in\R^d}$
	to \eqref{heat}. We say that $\bm{u}$ is \emph{weakly intermittent}
	if
	\begin{equation}
		\overline\gamma_*(p)<\infty\quad\text{for all $p\in[2\,,\infty)$},
		\quad\text{and}\quad
		\inf_{x\in\R^d}\overline\gamma_x(2)>0.
	\end{equation}
\end{definition}

The same reasoning that was employed in \cite{FK} can be used to
deduce that if the solution to \eqref{heat} is nonnegative for all $t>0$,
then weak intermittence implies the much better-known property
of \emph{intermittency} \cite{CarmonaMolchanov:94,Molchanov,AlmightyChance};
that is, the property that
\begin{equation}
	p\mapsto \frac{\overline\gamma_x(p)}{p}
	\quad\text{is strictly increasing on $[2\,,\infty)$ for all $x\in\R^d$}.
\end{equation}
There is a large literature which shows that, under further mild hypotheses
on $\mathcal{L}$ and/or $f$,
if $u_0$ is nonnegative
then the solution to \eqref{heat} is nonnegative at all
times; see, for example the papers by
Assing and Manthey \cite{AssingManthey},
Carmona and Molchanov \cite{CarmonaMolchanov:94},
Donati-Martin and Pardoux \cite{DonatiMartinPardoux},
Hausmann and Pardoux \cite{HaussmannPardoux},
Kotelenez \cite{Kotelenez},
Manthey \cite{Manthey:86},
Manthey and Stiewe \cite{MantheyStiewe:92,MantheyStiewe:90},
Mueller \cite{Mueller},
Nualart and Pardoux \cite{NualartPardoux},
and  Shiga \cite{Shiga,Shiga:contrasting}.\footnote{In
connection to matters of positivity and regularity, we mention also
a closely-related and fundamental paper by Dawson, Iscoe, and Perkins
\cite{DawsonIscoePerkins}, where \eqref{heat} with
$\sigma(u)=\text{const}\cdot \sqrt{u}$ is considered. And
positivity of the solution is shown to follow from 
many-particle approximations to the underlying SPDE.}

Thus, we can draw the conclusion 
that, in all such cases, weak intermittence actually implies intermittency.

A quick calculation, using only H\"older's inequality, shows that
$p\mapsto \overline\gamma_x(p)/p$ is always nondecreasing on $[2\,,\infty)$.
However, the mentioned strict monotoncity does not 
always hold. When it does hold, then it has some physical significance;
see Zeldovitch, Ruzmaikin, and Sokoloff \cite{AlmightyChance} for a 
physical discussion of intermittency. And Molchanov \cite{Molchanov}
for a mathematical explanation of that physical phenomenon.

Our next main goal is to find nontrivial conditions
that guarantee the weak intermittence of the solution to \eqref{heat}.
In light of Theorem \ref{th:existence}, we aim to
derive a positive lower bound on $\inf_{x\in\R^d}\overline\gamma_x(2)$.
Unfortunately, it is quite hard to do this at the level of generality of
the conditions of Theorem \ref{th:existence}. In fact, informal arguments
suggest that the solution to \eqref{heat} might not always
be weakly intermittent. Thus, we seek to find reasonable restrictions of the 
various parameters of \eqref{heat} which guarantee that the solution
to \eqref{heat} is weakly intermittent.

Let $\hat g$ denotes the Fourier transform of a locally-integrable function
$g$, and consider the following:
\begin{condition}\label{conds:f}
	Suppose:
	\begin{enumerate}
		\item $\hat f(\xi)$ depends on $\xi\in\R^d$ only through
			$|\xi_1|,\ldots,|\xi_d|$;
		\item $|\xi_j|\mapsto \hat f(\xi)$ is nonincreasing for every $j=1,\ldots,d$; 
			and
		\item $\Re\Psi(\xi)$ depends
			on $\xi\in\R^d$ only through $|\xi_1|,\ldots,|\xi_d|$.
	\end{enumerate}
\end{condition}
These are relatively mild provisions on the spectral density $\hat f$
and the process $\bar X$. Our conditions on
the spectral density can be applied to all of the examples that 
we would like to cover. It is possible to show that 
they include the following choices for $f$:
\begin{itemize}
\item[(i)] \emph{Ornstein--Uhlenbeck-type kernels}.
	\[f(x)=c_1\e^{-c_2\|x\|^\alpha}
	\qquad\left[\hat f(\xi) = \frac{c_1}{(2\pi)^d}\int_{\R^d}
	\e^{-i\xi\cdot x -c_2\|x\|^\alpha}\,\d x\right],\]for
	constants $c_1,c_2\in(0\,,\infty)$ and $\alpha\in(0\,,2]$;
\item[(ii)] \emph{Poisson kernels.}
	\[f(x)=\frac{c_1}{ \left(\|x\|^2+ c_2\right)^{(d+1)/2}}
	\qquad\left[\hat f(\xi) = \text{\rm const}\cdot\e^{-\text{\rm const}
	\|\xi\|}\right],\]
	for $c_1,c_2\in(0\,,\infty)$.
\item[(iii)] \emph{Cauchy kernels}.
	\[f(x)=\frac{c_1}{\prod_{j=1}^d (c_2+x_j^2)}
	\qquad\left[\hat f(\xi) = \text{const}\cdot\e^{-\text{\rm const}
	\sum_{j=1}^d|\xi_j|}\right],\]
	for $c_1,c_2\in(0\,,\infty)$; and
\item[(iv)] \emph{Riesz kernels}.
	\[f(x)=\frac{c}{\|x\|^\alpha}
	\qquad\left[\hat f(\xi) = \frac{\rm const}{\|\xi\|^{d-\alpha}}\right],\]
	for $c\in(0\,,\infty)$ and $\alpha\in(0\,,d)$.
\end{itemize}
And one can construct a great number of other 
permissible examples as well.

Having introduced Condition \ref{conds:f}, we can now 
present the third main result of this paper.

\begin{theorem}\label{th:interm}
	Suppose $b\equiv 0$ and Conditions \ref{cond:1} and \ref{conds:f} 
	hold. Suppose, in addition,
	that $\eta:=\inf_{x\in\R^d}u_0(x)>0$
	and there exists ${\rm L}_\sigma\in(0\,,\infty)$ such that
	$\sigma(z)\ge{\rm L}_\sigma|z|$ for all $z\in\R$. Then,
	\begin{equation}\label{eq:cond:interm}
		\inf_{x\in\R^d}\overline\gamma_x(2)\ge \sup\left\{\beta>0:\
		(\bar R_\beta f)(0)\ge \frac{2^{d-1}}{{\rm L}_\sigma^2}
		\right\},
	\end{equation}
	where $\sup\varnothing:= 0$.
\end{theorem}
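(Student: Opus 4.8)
The plan is to derive a matching lower bound for $\E(|u_t(x)|^2)$ by exploiting the mild formulation of \eqref{heat}. Since $b\equiv 0$, the mild solution satisfies
\[
	u_t(x) = (P_t u_0)(x) + \int_{(0,t)\times\R^d} p_{t-s}(y-x)\,\sigma(u_s(y))\,F(\d s\,\d y),
\]
where $p_t$ is the transition density of $X$ and $P_t$ the associated semigroup. By the Walsh isometry and the covariance structure \eqref{eq:Cov}, together with the hypothesis $\sigma(z)\ge \mathrm{L}_\sigma|z|$ and the bound $u_0\ge\eta>0$ (which, via a positivity/comparison argument as referenced in the paper, should give $\E(|u_s(y)|^2)\ge \eta^2$, or at least $\ge (P_s u_0)(y)^2 \ge \eta^2$ from the first term), I obtain the renewal-type inequality
\[
	\E\!\left(|u_t(x)|^2\right) \ge \eta^2 + \mathrm{L}_\sigma^2 \int_0^t \d s \int_{\R^{2d}} p_{t-s}(y-x)\,p_{t-s}(y'-x)\, f(y-y')\,\E\!\left(|u_s(y)\wedge u_s(y')|^2\right)\d y\,\d y'.
\]
The first step is thus to set up this inequality cleanly and reduce the spatial double integral. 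Using Plancherel and Condition \ref{conds:f} — the symmetry and monotonicity of $\hat f$ in $|\xi_j|$ and the corresponding symmetry of $\Re\Psi$ — I can bound $\int p_{t-s}(y-x)p_{t-s}(y'-x)f(y-y')\,\d y\,\d y'$ from below; this is where the factor $2^{d-1}$ enters, by a correlation/rearrangement inequality that controls the cross term when the Fourier multipliers are coordinatewise monotone.

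The second step is to iterate. Writing $M_t := \inf_{x}\E(|u_t(x)|^2)$ and feeding the spatial lower bound back in, I expect an inequality of the form $M_t \ge \eta^2 + c\int_0^t \kappa(t-s)\,M_s\,\d s$ where $\kappa$ is a kernel whose Laplace transform at $\beta$ is comparable to $\mathrm{L}_\sigma^2\, 2^{-(d-1)}(\bar R_\beta f)(0)$ — recalling from Theorem \ref{th:Dalang:1} that $(\bar R_\beta f)(0)=\Upsilon(\beta)=\sup_x(\bar R_\beta f)(x)$, and that $\bar X_t = X_t + X_t^*$ so that the product $p_{t}(y-x)p_{t}(y'-x)$ integrated against $f$ is exactly the replica-process quantity. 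A standard renewal-theory / Laplace-transform argument (as in the Gronwall-type lower bounds in \cite{FK}) then shows that if the Laplace transform of $\kappa$ ever reaches the threshold $1$, i.e.\ if $\mathrm{L}_\sigma^2\,2^{-(d-1)}(\bar R_\beta f)(0)\ge 1$, then $M_t$ grows at least exponentially with rate $\beta$; taking the supremum over such $\beta$ yields \eqref{eq:cond:interm}, with the convention $\sup\varnothing=0$ handling the case where the threshold is never met.

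The main obstacle is the second step's nonlinearity: the integrand genuinely involves $\E(|u_s(y)|^2)$ jointly at two points $y,y'$ rather than a single-point quantity, so one cannot directly pull $M_s$ out. I would handle this either by a pointwise comparison (replacing $\E(|u_s(y)u_s(y')|)$ by $\inf_z\E(|u_s(z)|^2)$ after checking the sign, since $f\ge0$ and $p\ge0$), or — more robustly — by first establishing the spatial homogeneity/near-homogeneity of $x\mapsto \E(|u_t(x)|^2)$ down to the bound $\inf_x$, and then running the renewal inequality for $M_t$ directly. The delicate point in the first step, quantifying the $2^{d-1}$ constant via Condition \ref{conds:f}, is essentially a Fourier-analytic lemma (a coordinatewise Chebyshev/rearrangement inequality for products of monotone functions) and I expect it to be the technical heart of the argument, but not the conceptual bottleneck.
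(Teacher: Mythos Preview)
Your skeleton---mild formulation, renewal-type inequality, Laplace/iteration argument---is the right one, but the proposal has a genuine gap at exactly the point you flag as ``the main obstacle,'' and your proposed fixes do not work.

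Option (a) fails: there is no inequality of the form $\E(|u_s(y)u_s(y')|)\ge \inf_z\E(|u_s(z)|^2)$ for general square-integrable random fields, and nothing in the hypotheses supplies one. Option (b) fails too: even if $u_0$ were constant so that the law of $u_s(\cdot)$ were spatially stationary, the two-point function $C_s(y-y'):=\E(u_s(y)u_s(y'))$ is typically maximized, not minimized, at the diagonal, so you cannot bound the off-diagonal integrand below by $M_s=C_s(0)$. Relatedly, your localization of Condition \ref{conds:f} is off: the single-step integral $\int p_{t-s}(y-x)p_{t-s}(y'-x)f(y-y')\,\d y\,\d y'$ is simply $(\bar P_{t-s}f)(0)$ and needs no lower bound; the factor $2^{d-1}$ does \emph{not} enter there.

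What the paper does instead is keep the two-point function throughout. One sets $H_\beta(a,b):=\int_0^\infty \e^{-\beta t}\E(|u_t(a)u_t(b)|)\,\d t$ and iterates an integral operator $\mathcal{A}_\beta$ acting on functions of two spatial variables, obtaining $H_\beta\ge (\eta^2/\beta)\sum_{\ell\ge 0}\mathrm{L}_\sigma^{2\ell}\mathcal{A}_\beta^\ell\mathbf{1}$. Each $\mathcal{A}_\beta^\ell\mathbf{1}$ is then bounded below in Fourier variables using Proposition \ref{pr:KX}; after a change of variables $z_j=\xi_1+\cdots+\xi_j$ one is left with a product $\prod_j \hat f(z_j-z_{j-1})/(\beta+2\Re\Psi(z_j))$. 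Only now does Condition \ref{conds:f} enter: restricting all $z_j$ to the set $\Sigma=\R_+^d\cup\R_-^d$ and using coordinatewise monotonicity gives $\hat f(z_j-z_{j-1})\ge \hat f(z_j)$, which decouples the nested integral into a power of $\int_\Sigma \hat f(z)/(\beta+2\Re\Psi(z))\,\d z = 2^{-(d-1)}\Upsilon(\beta)$. Setting $a=b=x$ and summing the geometric series yields divergence of $\int_0^\infty\e^{-\beta t}\E(|u_t(x)|^2)\,\d t$ precisely when $(\bar R_\beta f)(0)\ge 2^{d-1}/\mathrm{L}_\sigma^2$, and a short real-variable argument converts this into $\overline\gamma_x(2)\ge\beta$. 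The missing idea in your plan is this two-variable iteration together with the Fourier decoupling on $\Sigma$; without it the renewal scheme does not close.
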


Before we pause to make a few remarks,
let us briefly study an example.
Consider the case that $(\bar R_0 f)(0)=\infty$.
In that case, $(\bar R_\beta f)(0)\ge 2^{d-1}/{\rm L}_\sigma^2$
for all $\beta>0$ sufficiently small. Hence, in this case,
the hypotheses of Theorem \ref{th:interm} guarantee  weak intermittence of the solution to \eqref{heat}
without further restrictions. 

\begin{remark}\begin{enumerate}
	\item In the case that $\dot{F}$ is space-time white noise,
		the condition ``$\sigma(z)\ge{\rm L}_\sigma|z|$''
		can be replaced with the slightly-better condition
		``$|\sigma(z)|\ge{\rm L}_\sigma|z|$'' \cite{FK}.
	\item We will see later on that,
		when $d=1$, the lower bound
		\eqref{eq:cond:interm} and the
		upper bound \eqref{eq:exist:nonlinear} can sometimes match.
		However, the two bounds can never agree when $d\ge 2$.
		This phenomenon is due to the fact that level sets
		of $\beta\mapsto(\bar R_\beta f)(0)$ 
		cannot describe the growth of $\bm{u}$
		exactly. The correct gauge appears to be 
		a much more complicated function,
		except in the cases that $\dot{F}$ is space-time
		white noise and when $d=1$; compare with \cite{FK}
		for results on the case that $\dot{F}$ denotes
		space-time white noise.
		\qed
\end{enumerate}\end{remark}

We are aware of a few variants of Theorem \ref{th:interm}, but the
next one  is perhaps the most striking since it assumes
only that the nonlinearity term $\sigma$ is asymptotically sublinear.
Thus, the local behavior of $\sigma$ is shown to not have an effect
on weak intermittence, provided that the initial data $u_0$
is sufficiently large. A significant drawback of this result is that its proof does 
not provide any information about how large ``sufficiently large'' 
should be. We introduce the following condition.

\begin{condition}\label{cond:0} $(\bar R_0 f)(0) = \infty$.
\end{condition}

The following result is the mentioned variant of the Theorem \ref{th:interm}.

\begin{theorem}\label{th:interm:asymp}
	Suppose $b\equiv 0$ and Conditions \ref{cond:1}, \ref{conds:f}, and
	\ref{cond:0} hold.
	Suppose, in addition, that $\sigma\ge 0$ pointwise, and 
	$q:=\liminf_{|z|\to\infty} \sigma(z)/|z|>0$. If $u_0(x)>0$
	and $\P\{u_t(x)>0\}=1$ for all $t>0$ and $x\in\R^d$, then
	\begin{equation}
		\overline\gamma_x(2)>0\qquad\text{for every $x\in\R^d$},
	\end{equation}
	provided that $\eta:=\inf_{x\in\R^d}u_0(x)$ is sufficiently large.
\end{theorem}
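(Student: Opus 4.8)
\textbf{Proof proposal for Theorem \ref{th:interm:asymp}.}

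The plan is to reduce the asymptotically-sublinear hypothesis on $\sigma$ to the genuinely-linear lower bound already exploited in Theorem \ref{th:interm}, by exploiting the extra room that Condition \ref{cond:0} provides. First I would fix a large threshold $K>0$ and split $\sigma$ according to whether its argument exceeds $K$ in absolute value. Since $q=\liminf_{|z|\to\infty}\sigma(z)/|z|>0$, for any $\epsilon\in(0\,,q)$ there is $K=K(\epsilon)$ so that $\sigma(z)\ge(q-\epsilon)|z|$ whenever $|z|\ge K$. The difficulty is that the solution $\bm{u}$ need not stay above $K$; so the strategy is to show that, when $\eta:=\inf_x u_0(x)$ is large, the second-moment dynamics force $\E(|u_t(x)|^2)$ to be large enough that the ``bad'' region $\{|u_t(x)|<K\}$ contributes negligibly to the relevant moment identity. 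Concretely, I would write the mild formulation, apply Walsh's isometry to get the now-standard renewal-type inequality
\begin{equation*}
	\E\left(|u_t(x)|^2\right)\ge \left(\inf_y(p_tu_0)(y)\right)^2
	+ \int_0^t \d s\int_{\R^d}\d y\ p_{t-s}(x\,,y)^2\ \E\left(\sigma(u_s(y))^2\right),
\end{equation*}
where $p$ is the transition density of $X$, and then bound $\E(\sigma(u_s(y))^2)$ from below by $(q-\epsilon)^2\E(|u_s(y)|^2\mathbf{1}_{\{|u_s(y)|\ge K\}})$. Here $\mathbf{1}$ should read $\mathbf 1$; to stay safe I will phrase the indicator verbally in the real write-up. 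The point is to absorb the truncation by choosing $\eta$ large: since $\inf_y(p_tu_0)(y)\ge\eta$, a bootstrap on $m(t):=\inf_y\E(|u_t(y)|^2)$ shows $m(t)\ge\eta^2$ for all $t$, so on the event $\{|u_s(y)|\ge K\}$ we lose only a fixed fraction once $\eta\gg K$, and one gets an inequality of the form $m(t)\ge \eta^2 + c(q-\epsilon)^2\int_0^t \kappa(t-s)m(s)\,\d s$ with $\kappa(r):=\inf_x\int p_r(x\,,y)^2\,\d y$ — exactly the kernel whose Laplace transform is $(\bar R_\beta f)(0)$ after using Condition \ref{conds:f} and the maximum principle of Theorem \ref{th:Dalang:1}.

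From there the argument is the renewal-theoretic one already used for Theorem \ref{th:interm}: taking Laplace transforms in $t$, the inequality $\bar m(\beta):=\int_0^\infty e^{-\beta t}m(t)\,\d t$ satisfies $\bar m(\beta)\ge \eta^2/\beta + c(q-\epsilon)^2(\bar R_\beta f)(0)\,\bar m(\beta)$ up to the truncation correction, so whenever $c(q-\epsilon)^2(\bar R_\beta f)(0)>1$ the transform $\bar m(\beta)$ must be infinite, which forces $\overline\gamma_x(2)\ge\beta$. Condition \ref{cond:0} says $(\bar R_0 f)(0)=\infty$, hence $(\bar R_\beta f)(0)\to\infty$ as $\beta\downarrow 0$, so for every sufficiently small $\beta>0$ we have $c(q-\epsilon)^2(\bar R_\beta f)(0)>1$; thus $\overline\gamma_x(2)>0$. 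The positivity assumption $\P\{u_t(x)>0\}=1$ together with $u_0>0$ is what lets us drop absolute values and know the lower bound in Walsh's isometry is not vacuous (it guarantees $\E(\sigma(u_s(y))^2)$ genuinely sees the mass of $u_s(y)$ rather than cancellation), and also makes the truncation step legitimate because $u_s(y)$ and $K$ are compared as nonnegative quantities.

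The main obstacle I anticipate is the truncation bookkeeping: controlling $\E(|u_s(y)|^2\,;\,|u_s(y)|<K)$ uniformly enough that the loss is a genuinely small fraction of $m(s)$ for all $s\ge 0$ simultaneously, not just for large $s$. The clean way around this is to run the bootstrap for $m(t)$ first — establishing $m(t)\ge\eta^2$ unconditionally from the first (mean) term — and only then to feed $m(t)\ge\eta^2$ back in: on $\{|u_s(y)|<K\}$ one has $|u_s(y)|^2<K^2$, so $\E(|u_s(y)|^2\,;\,|u_s(y)|<K)\le K^2$, whence $\E(\sigma(u_s(y))^2)\ge(q-\epsilon)^2(m(s)-K^2)\ge(q-\epsilon)^2 m(s)(1-K^2/\eta^2)$. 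Choosing $\eta$ large makes $(1-K^2/\eta^2)$ as close to $1$ as needed, and then one picks $\epsilon$ small and $\beta$ small so that $c(q-\epsilon)^2(1-K^2/\eta^2)(\bar R_\beta f)(0)>1$; Condition \ref{cond:0} supplies the last factor. All constants ($c$, coming from the $2^{d-1}$ factor in Condition \ref{conds:f}, and the replica-resolvent normalization) are exactly as in the proof of Theorem \ref{th:interm}, so no new potential-theoretic input is required beyond what Theorems \ref{th:existence} and \ref{th:Dalang:1} already give.
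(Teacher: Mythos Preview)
Your high-level strategy is exactly the paper's: truncate to convert the asymptotic condition $q>0$ into a linear lower bound $\sigma(z)\ge q_0|z|$ on $\{|z|>A\}$, absorb the truncation error using $\eta$ large, then invoke Condition~\ref{cond:0} to find a $\beta_0>0$ with $(\bar R_{\beta_0}f)(0)$ large enough that the renewal recursion diverges. But the execution has a real gap. The inequality you write,
\[
\E\bigl(|u_t(x)|^2\bigr)\ge (\inf_y P_tu_0(y))^2+\int_0^t\d s\int_{\R^d}\d y\,p_{t-s}(x,y)^2\,\E\bigl(\sigma(u_s(y))^2\bigr),
\]
is the Walsh isometry for \emph{space-time white noise}. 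For spatially colored noise the correct identity carries the double spatial integral
\[
\int_0^t\d s\int_{\R^d}\d y\int_{\R^d}\d z\,p_{t-s}(y-x)p_{t-s}(z-x)f(y-z)\,\E\bigl(\sigma(u_s(y))\sigma(u_s(z))\bigr),
\]
so the object you must lower-bound is the \emph{two-point} function $W_s(y,z):=\E(\sigma(u_s(y))\sigma(u_s(z)))$, not the one-point second moment. Your one-variable renewal inequality for $m(t)$ with kernel $\int p_r^2$ does not follow, and the claimed identification of its Laplace transform with $(\bar R_\beta f)(0)$ is not justified at this level.

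The paper fixes this by doing the truncation on the two-point function. Using positivity of $u$ and $\E u_s(y)=(P_su_0)(y)\le\|u_0\|_{L^\infty}$, it shows $W_s(y,z)\ge q_0^2\{\E(u_s(y)u_s(z))-A_*\}$ for a constant $A_*$ depending on $A$ and $\|u_0\|_{L^\infty}$. This is then fed into the same operator recursion $H_\beta\ge(\eta^2/\beta)\mathbf 1+q_0^2\mathcal A_\beta H_\beta-q_0^2A_*\mathcal A_\beta\mathbf 1$ used in Theorem~\ref{th:interm}, yielding $H_\beta\ge(\eta^2/\beta-A_*)\sum_{\ell\ge 0}q_0^{2\ell}\mathcal A_\beta^\ell\mathbf 1$. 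Evaluating on the diagonal and applying Condition~\ref{conds:f} and Proposition~\ref{pr:KX} (exactly as in Theorem~\ref{th:interm}) gives a geometric series with ratio $(q_0^2/2^{d-1})(\bar R_\beta f)(0)$, and Condition~\ref{cond:0} furnishes a $\beta_0$ making this exceed $1$; the series then diverges provided $\eta>\sqrt{\beta_0 A_*}$. So your ``$\eta$ large absorbs $K^2$'' idea is right in spirit, but it must be implemented at the two-point level and run through the $\mathcal A_\beta$ machinery rather than a scalar renewal equation.
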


The preceding results describe our main contributions to the
analysis of the stochastic heat equation
\eqref{heat} in the case that $\sigma$ is not a constant and that $u_0$ is a bounded
and measurable function. But we also study the linearization of \eqref{heat};
this is the case when $\sigma$ is identically equal to one.
In addition to studying existence-and-uniqueness issues, 
we use the theory of Gaussian processes to study continuity properties of the solutions.  Moreover, we produce a class of interesting examples which we briefly describe next. 

Consider the linear stochastic heat equation
\begin{equation}\label{eq:heat33}
	\frac{\partial}{\partial t} u_t(x) = (\Delta u_t)(x) + \dot{F}_t(x),
\end{equation}
where $u_0\equiv 0$,
$x\in\R^3$, $t>0$, and the Laplacian acts on the $x$ variable only.
Then, we construct families of noises
$\dot{F}$ which ensure that \eqref{eq:heat33} has a solution
$\bm{u}:=\{u_t(x)\}_{t\ge 0,x\in\R^d}$ that is a square-integrable
random field. But that random field is discontinuous densely with probability one. In fact,
outside of a single null set [of realizations of the process $\bm{u}$],
\begin{equation}\label{bad}
	\sup_{(t,x)\in V}u_t(x)=-\inf_{(t,x)\in V}u_t(x)=\infty,
\end{equation}
for all open balls $V\subset\R_+\times\R^d$ with rational centers and radii!\footnote{%
It would be wonderful to construct versions of these examples
that apply to fully nonlinear problems; but we do not know how to proceed
in a fully nonlinear [or even semilinear] setting.}
We know of only a few examples of SPDEs with well-defined
random-field solutions that have  unbounded oscillations densely;
see Dalang and L{\'e}v{\^e}que 
\cite{DalangLeveque:06,DalangLeveque:04b,DalangLeveque:04a},
Mytnik and Perkins \cite{MytnikPerkins}, and Foondun, Khoshnevisan, and
Nualart \cite{FKN}. 
The preceding \eqref{bad} yields a quite-simple example of an
otherwise physically-natural stochastic PDE [the operator is the Laplacian in $\R^3$
and the noise is white in time] which
has a very badly-behaved solution.

This paper was influenced greatly by the theoretical physics literature on the
``parabolic Anderson model'' (see, for example, Kardar,
Parisi, and Zhang \cite{KPZ}, Krug and Spohn
\cite[\S5]{KrugSpohn},
Medina, Hwa, Kardar, and Zhang \cite{MHKZ}, and
the book by Zeldovitch, Ruzmaikin, and Sokoloff 
\cite{AlmightyChance}),
as well as the mathematical physics literature on the very same topic
(see, for example, Bertini and Cancrini
\cite{BertiniCancrini:98,BertiniCancrini},
Bertini, Cancrini, and Jona-Lasinio \cite{BertiniCancriniJona},
Bertini and Giacomin \cite{BertiniGiacomin:99,BertiniGiacomin:97},
Carmona, Koralov, and Molchanov \cite{CarmonaKoralovMolchanov},
Carmona and Molchanov 
	\cite{CarmonaMolchanov:95,CarmonaMolchanov:94},
Carmona and Viens \cite{CarmonaViens},
Cranston and Molchanov 
	\cite{CranstonMolchanov:07a,CranstonMolchanov:07b},
Cranston, Mountford, and Shiga 
	\cite{CranstonMountfordShiga:05,CranstonMountfordShiga:02},
Florescu and Viens \cite{FlorescuViens},
G\"artner and den Hollander \cite{GartnerDenHollander},
G\"artner and K\"onig \cite{GartnerKonig},
Hofsted, K\"onig, and M\"orters \cite{HofstedKonigMorters},
K\"onig, Lacoin, and M\"orters \cite{KonigLacoinMorters},
Lieb and Liniger \cite{LiebLiniger},
Molchanov \cite{Molchanov}, and
Woyczy\'nski  \cite{Woyczynski} for a partial listing).
Furthermore, there are interesting variations of the parabolic Anderson model
that correspond to continuous directed-polymer measures;
see Comets and Yoshida \cite{CometsYoshida}
and Comets, Shiga, and Yoshida \cite{CometsShigaYoshida1,CometsShigaYoshida2}.

In a nutshell, the parabolic Anderson model
is equation \eqref{heat} where
$\sigma(u)$ is proportional to $u$. There are many good
reasons why that equation has been studied intensively;
see for instance the Introduction of Carmona and Molchanov
\cite{CarmonaMolchanov:94}. Two such reasons are that the 
parabolic Anderson model it is exactly solvable in the 
two cases where $u_0\equiv \text{constant}$
and $u_0=\delta_0$; and it is related deeply to the stochastic
Burgers equation as well as the KPZ equation of statistical mechanics.

And perhaps not surprisingly,
the results of our Theorems \ref{th:existence}, \ref{th:interm},
and \ref{th:interm:asymp} are sharpest for the parabolic Anderson
model, particularly when $d=1$. However, an inspection of
Theorems \ref{th:existence} and \ref{th:interm} reveals an inconsistency: Our upper bound on the Liapounov exponent [Theorem \ref{th:existence}]
does not require the drift $b$ to be zero; whereas our lower bound
[Theorem \ref{th:interm}] does. 

David Nualart has asked us whether we know how the drift $b$ can affect
the weak intermittence of the solution to \eqref{heat}. 
This seems to be a hard question to answer rigorously 
when the drift $b$ is a general Lipschitz-continuous  function.
But it is intuitively clear that a sufficiently-strong drift
ought to destroy the natural tendency of the solution
to be weakly intermittent.

Although we are not aware of general theorems of this type, 
we are able to give a partial answer to Nualart's question;
and the striking nature of that partial 
answer confirms our initial suspicion that it might be rather difficult to
answer D. Nualart's question in good generality. 

Here is an instance where we can rigorously prove weak intermittency:
Consider the one-dimensional parabolic Anderson model
for the relativistic [or massive/dissipative] Laplacian; i.e.,
the stochastic PDE
\begin{equation}\label{heat:Lapla}
	\frac{\partial}{\partial t}u_t(x) = (\Delta u_t  )(x)
	+\frac{\lambda}{2}\, u_t(x)+\kappa u_t(x)\dot{F}_t(x),
\end{equation}
where $t>0$ and $x\in\R$, $\kappa\neq 0$,
$\lambda\in\R$, and $u_0:\R\to\R$ is a measurable function 
that is bounded uniformly away from zero and infinity. Let us
consider the special case that the correlation function of the noise  is
of Riesz type; that is,
\begin{equation}
	f(z):=\|z\|^{-1+b}
	\qquad\text{for all $z\in\R$},
\end{equation}
where $b\in(0\,,1)$.
Then, Example \ref{ex:massdiss} on page
\pageref{ex:massdiss} implies that weak intermittence holds if and only if
\begin{equation}\label{ouch}
	\lambda >-\left|\kappa\right|^{4/(1+b)} 
	8^{-(1-b)/(1+b)}\left[ 
	\frac{\Gamma(b/2)\Gamma((b+1)/2)}{\sqrt\pi}\right]^{2/(1+b)}.
\end{equation}

This completes our investigation of the solution to \eqref{heat}
when the initial data $u_0$ is a bounded and measurable function.

We conclude this paper by considering \eqref{heat} in the 
other physically-interesting cases where $u_0$ is a finite
Borel measure on $\R^d$. This condition on $u_0$ is
very natural, as in many physical applications $u_0$
denotes the initial distribution of particles in a particle
system in a disordered medium 
\cite{KPZ,MHKZ,KrugSpohn,Molchanov,CarmonaMolchanov:94,BertiniCancrini}. 
From a purely mathematical point of view, this problem
is interesting since it leads us to a different notion
of a solution, which we call \emph{temperate}.
Our proposed temperate solutions differ from 
the much better-known notion of ``mild solutions'' \cite[Chapter 3]{Walsh}. 
They also lead
us to an extension of the notion of a stochastic convolution,
which defines stochastic integrals at almost every time, rather
than pointwise. We believe that the notion of temperate solutions
has other uses in describing otherwise hard-to-define SPDEs.
In order to describe our results on temperate solutions---the final main contributions
of this paper---we would have to develop some machinery.
The details can be found in Chapter \ref{ch:temperate}.

A brief outline of the paper follows: In Chapter \ref{ch:Levy}
we review, very briefly, some analytical facts about L\'evy processes
and their generators, and also construct examples that will be used
in subsequent chapters. 

Chapter \ref{ch:PD} is concerned with positive-definite
functions and their connections to potential theory and harmonic
analysis. And Theorem \ref{th:Dalang:1} is shown to be
a consequence of these connections. Chapter \ref{ch:PD} also contains 
a probabilistic characterization of the analytic condition \eqref{cond:0}
and Condition \ref{cond:1} in terms of continuous additive functionals of
the replica process $\bar X$. Also, a family of useful correlation functions
is constructed in that chapter; that construction uses the results of
Chapter \ref{ch:Levy} on probabilistic potential theory.

In Chapter \ref{ch:linear} we study the linearization of \eqref{heat},
and derive necessary and sufficient conditions for the existence and
spatial continuity of the solution. We also consider various
examples that include \eqref{eq:heat33} above.

Chapter \ref{ch:NL} contains the proofs of Theorems \ref{th:existence}
and \ref{th:interm}. In that chapter we consider also the relativistic
version of \eqref{heat}, thereby constructing examples that include the
mentioned analysis of \eqref{heat:Lapla}.

Finally, we conclude with Chapter \ref{ch:temperate}, where
a new notion of ``temperate solution'' is introduced. And then
that notion is used to produce solutions to  \eqref{heat} in
the case that $b\equiv 0$ and $u_0$ belongs to a suitable family
of finite Borel measures on $\R^d$.

Let us conclude the present chapter
by introducing some notation that will be used throughout
the paper. For all integers $k\ge 1$,
\begin{equation}
	\|x\| := \left( x_1^2+\cdots+x_k^2\right)^{1/2}
	\qquad\text{for every $x\in\R^k$}.
\end{equation}
And if $g:\R^k\to\R$ is a function, then
\begin{equation}
	\lip_g := \sup_{\substack{x,y\in\R^k\\x\neq y}}\frac{
	|g(x)-g(y)|}{\|x-y\|}.
\end{equation}
This socalled Lipschitz constant of $g$ is well defined, but might
be infinity.

Throughout this paper, ``~$\,\widehat{\hskip0.2em}\,$~''
denotes the Fourier transform in the sense
of L. Schwartz; our Fourier transform is normalized so that
\begin{equation}
	\hat g(\xi) := \int_{\R^d} \e^{ix\cdot\xi} g(x)\,\d x
	\quad\text{for all $\xi\in\R^d$ and $g\in L^1(\R^d)$}.
\end{equation}

Finally, if $g:\R^d\to\R$ is a function, then we define
\begin{equation}
	\tilde g(x):= g(-x)\quad\text{for all $x\in\R^d$}.
\end{equation}
And similarly, if $\mu$ is a Borel measure on $\R^d$, then
for all Borel sets $A\subseteq\R^d$,
\begin{equation}
	\tilde\mu(A):= \mu(-A),
	\quad\text{where $-A:=\{-a:\, a\in A\}$}.
\end{equation}

\vskip.4in
\noindent\textbf{Acknowledgements.}
Many thanks are due to the following:
Michael Cranston, who among many other interesting things,
pointed us to the stochastic
heat equation with spatially-correlated noise;
David Nualart, who asked about the interplay between drift and
intermittence; Steven Zelditch, who told us about the lovely book
by Zeldovitch et al \cite{AlmightyChance};
and Daniel Conus for his insightful remarks and suggestions
on this topic.			
\chapter{L\'evy Processes}\label{ch:Levy}

\section{Preliminaries}
\index{000X000@$X$, the underlying L\'evy process}%
We begin this chapter with the definition of a L\'evy process which throughout this paper, will be denoted by
 $X:=\{X_t\}_{t\ge 0}$.

\begin{definition}
	We say that $X:=\{X_t\}_{t\ge 0}$ is a \emph{L\'evy process} if:
	\begin{enumerate}
		\item $X_{t+s}-X_s$ is independent of the sigma-algebra
			generated by $\{X_r\}_{r\in[0,s]}$ for every $s,t\ge 0$;
		\item $X_{t+s}-X_s$ has the same distribution
			as $X_t$ for every $s,t\ge 0$;
		\item $t\mapsto X_t$ is continuous in probability; that is,
			$X_s$ converges to $X_t$ in probability as $s\to t$; and
		\item $X_0=0$.
	\end{enumerate}
\end{definition}
By adopting a suitable modification of the paths,
we can and will always assume, without loss of generality, that the
trajectories of $X$ are cadlag; i.e., $t\mapsto X_t$ is almost
surely right-continuous with left limits.
Comprehensive treatments can be found in the
books by Bertoin \cite{Bertoin:book},
Jacob \cite{Jacob}, Kyprianou \cite{Kyprianou}, and Sato \cite{Sato}.

Let $m_t$ denote the distribution of $X_t$ for every
$t\ge 0$; that is,
\begin{equation}
	m_t(A) := \P\{ X_t\in A\}
	\quad\text{for all $t\ge 0$ and Borel sets $A\subseteq\R^d$}.
\end{equation}
Let us recall that throughout this paper we are assuming that
the process $X$ has transition functions; that is,
\begin{equation}\label{cond:ac}
	m_t(\d x)\ll \d x
	\qquad\text{for all $t>0$}.
\end{equation}
\index{000p000t@$p_t(x)$, $\{p_t(x)\}_{t>0,x\in\R^d}$, etc., 
	the underlying transition functions}%
According to Theorem 2.2 of Hawkes \cite{Hawkes:PLP}, 
we can always select a version of
these transition functions that has the following \emph{regularity features}:
\begin{enumerate}
	\item $\int_A p_t(z)\,\d z=m_t(A)$ for all $t>0$ and 
		Borel sets $A\subseteq\R^d$;
	\item $(0\,,\infty)\times\R^d\ni(t\,,x)\mapsto p_t(x)\in\R_+$ 
		is Borel measurable;
	\item $x\mapsto p_t(x)$ is lower semicontinuous for all $t>0$;
	\item $p_{t+s}(x)=(p_t*p_s)(x)$ for all $s,t>0$ and $x\in\R^d$,
\end{enumerate}
where ``$*$'' denotes the convolution operator, defined in the
sense of L. Schwartz.  We work only with such a version of these transition functions.
Note that for all $t\ge 0$ and $x\in\R^d$, and for every Borel-measurable
function $\phi:\R^d\to\R_+$,
\begin{equation}\begin{split}
	\E \phi(x+X_t) &= \int_{\R^d} \phi(z)p_t(z-x)\,\d z\\
	&=(\phi*\tilde p_t)(x),
\end{split}\end{equation}
where we recall
$\tilde{p}_t(x):=p_t(-x)$.

\index{000P000t@$P_t$, $\{P_t\}_{t\ge 0}$, etc., the underlying semigroup}%
Alternatively, one can work with the semigroup
$\{P_t\}_{t\ge 0}$ of $X$, which is defined via
\begin{equation}
	(P_t \phi)(x) := \E \phi(x+X_t).
\end{equation}
It is easy to verify that 
$\{P_t\}_{t\ge 0}$ is a Feller semigroup; i.e., 
\begin{equation}
	P_t: C_0(\R^d)\to C_0(\R^d),
\end{equation}
\index{000C0@$C_0(\R^d)$, the space of all 
	real-valued continuous functions on $\R^d$
	that vanish at infinity}%
where $C_0(\R^d)$ denotes the collection of all continuous
functions $g:\R^d\to\R$ that vanish at infinity.
In fact, under the present conditions, $\{P_t\}_{t\ge 0}$
is strong Feller in the sense of Girsanov \cite{Girsanov:60}; 
see Hawkes \cite{Hawkes:PLP}.

Let us emphasize that
\begin{equation}
	P_t \phi=\phi*\tilde{p}_t,
\end{equation}
valid for all $t\ge 0$, and for instance for 
every nonnegative Borel-measurable 
functions $\phi:\R^d\to\R$.

\index{000Ra@$R_\alpha$, $\{R_\alpha\}_{\alpha>0}$, etc., the resolvent}%
Let $\{R_\alpha\}_{\alpha\ge 0}$ denote the resolvent
of $\{P_t\}_{t\ge 0}$; i.e.,
\begin{equation}
	R_\alpha :=\int_0^\infty \e^{-\alpha s} P_s\, \d s.
\end{equation}
It follows that if $\phi:\R^d\to\R_+$ is Borel measurable, then
\begin{equation}\begin{split}
	(R_\alpha \phi)(x) &= \int_0^\infty \phi(z) r_\alpha(x-z)\,\d z\\
	&=(\phi*\tilde r_\alpha)(x),
\end{split}\end{equation}
where
\index{000ralpha@$r_\alpha$, $\alpha$-potential density}%
\begin{equation}
	r_\alpha(x) := \int_0^\infty \e^{-\alpha  t} p_t(x)\, \d t
	\qquad\text{for $\alpha\ge 0$ and $x\in\R^d$}.
\end{equation}
Each ``$\alpha$-potential density''
$r_\alpha(x)$ is well defined, but could well be infinity
at some [in fact, even all, when $\alpha=0$] $x\in\R^d$.
Nevertheless, the regularity properties of the transition functions
imply that every $r_\alpha$ is lower semicontinuous. Furthermore,
\begin{equation}\label{eq:R:BG}
	R_\alpha:C_0(\R^d)\to C_0(\R^d)
	\qquad\text{for every $\alpha>0$}.
\end{equation}
In fact, $R_\alpha(C_0(\R^d))$ is uniformly dense in 
$C_0(\R^d)$ when $\alpha>0$; 
see Blumenthal and Getoor \cite[Exercise (9.13), p.\ 51]{BG}.

The \emph{characteristic exponent} of the process $X$
is a function $\Psi:\R^d\to\mathbf{C}$ that is
defined uniquely via 
\begin{equation}
	\E\e^{i\xi\cdot X_t} = \e^{-t\Psi(\xi)}
	\qquad\text{for all $\xi\in\R^d$ and $t\ge 0$}.
\end{equation}
The L\'evy--Khintchine formula 
\cite[Theorem 1.2, p.\ 13]{Bertoin:book}, and
a theorem of Schoenberg \cite{Schoenberg:a,Schoenberg:b} 
together imply that 
the family of all L\'evy processes is in one-to-one correspondence
with the family of all  ``negative-definite functions.''

\section{The Generator}\label{Generator}\index{Generator}%
We will be working with the $L^2$-theory of generators,
as developed, for instance, in the book by Fukushima, \={O}shima, and Takeda
\cite{FOT} for more general Markov processes.
We outline the details in the present special case; matters are
greatly simplified and in some cases generalized because of harmonic analysis.

\index{000Dom[L]@$\text{\rm Dom}[\mathcal{L}]$, the domain of the generator}%
Define
\begin{equation}\label{DomL}
	\text{Dom}[\sL] =\left\{
	\phi\in L^2(\R^d):\, \Psi\hat\phi\in L^2(\R^d)\right\}.
\end{equation}
Plancherel's theorem guarantees that
$\phi\in\text{Dom}[\sL]$ if and only if
$\phi:\R^d\to\R$ is Borel-measurable, locally integrable, and
\begin{equation}
	\int_{\R^d}\left( 1+ |\Psi(\xi)|^2\right)\,|\hat\phi(\xi)|^2\,\d\xi<\infty.
\end{equation}

It is well known that the following holds:
\begin{equation}\label{Bochner}
	\limsup_{\|\xi\|\to\infty}\frac{|\Psi(\xi)|}{\|\xi\|^2}<\infty.
\end{equation}
This can be derived directly from the L\'evy--Khintchine formula;
see the book by Bochner \cite[(3.4.14), p.\ 67]{Bochner}. In the ``Notes and
References'' section for Chapter 3 \cite[p.\ 169]{Bochner} of his influential
book, Bochner ascribes \eqref{Bochner} in part to Kolmogorov and L\'evy.

Recall that 
\begin{equation}\label{Wiener:Algebra}
	W^{1,2}(\R^d):=
	\left\{ \phi\in L^2(\R^d):\,  \nabla \phi\in L^2(\R^d)\right\}.
\end{equation}
Because of Plancherel's theorem,
\begin{equation}
	\|\phi\|_{L^2(\R^d)}^2+\|\nabla \phi\|_{L^2(\R^d)}^2
	=\int_{\R^d}\left( 1+\|\xi\|^2\right)|\hat \phi(\xi)|^2\,\d\xi.
\end{equation}
Therefore, we can see from \eqref{Bochner} that 
\begin{equation}\label{S:Dom:L}
	W^{1,2}(\R^d)\subseteq\text{Dom}[\sL]
	\subseteq L^2(\R^d).
\end{equation}
Of course, $\mathcal{S}$ is dense in $W^{1,2}(\R^d)$,
when the latter is endowed with the usual Sobolev
norm,
\begin{equation}
	\|\phi\|_{W^{1,2}(\R^d)}^2:=\|\phi\|_{L^2(\R^d)}^2
	+\|\nabla \phi\|_{L^2(\R^d)}^2.
\end{equation}

According to Plancherel's theorem, 
\begin{equation}\label{(f,Ptphi)}\begin{split}
	(\psi\,,P_t\phi)_{L^2(\R^d)} &= (\psi*m_t\,,\phi)_{L^2(\R^d)}\\
	&=\frac{1}{(2\pi)^d}\int_{\R^d} \overline{\hat\phi(\xi)}\,
		\hat \psi(\xi) \e^{-t\Psi(\xi)}\,\d\xi,
\end{split}\end{equation}
for all $t\ge 0$ and $\psi,\phi\in L^2(\R^d)$.
Moreover,
\begin{equation}
	\left( \psi\,, \frac{P_t\phi-\phi}{t}\right)_{L^2(\R^d)}
	= \frac{1}{(2\pi)^d}\int_{\R^d} \overline{\hat\phi(\xi)}\,
	\hat \psi(\xi) \left[\frac{\e^{-t\Psi(\xi)}-1}{t}\right]\,\d\xi.
\end{equation}
\index{000Lc@$\mathcal{L}$, the generator}%
It follows easily from this and \eqref{DomL} that
\begin{equation}
	\sL\phi:=\lim_{t\downarrow 0} \frac{P_t\phi-\phi}{t}
\end{equation}
exists in $L^2(\R^d)$
if and only if $\phi\in\text{Dom}[\sL]$. Indeed, the sufficiency follows
from the elementary bound
\begin{equation}
	\left|\e^{-t\Psi(\xi)}-1\right|\le t|\Psi(\xi)|,
\end{equation}
and  the Cauchy--Schwarz inequality. And the necessity follows from
Fatou's lemma, upon setting $\psi:=(P_t\phi-\phi)/t$.

Thus, we have the
socalled \emph{generator} [$L^2$-generator, in fact] $\sL$,
defined on its \emph{domain} $\text{Dom}[\sL]$. In addition,
$\sL$ can be thought of as a convolution [or pseudo-differential]
operator with multiplier [or symbol]
$\hat{\sL}=-\Psi$. More precisely,
\begin{equation}\label{FT:Dom:L}
	\widehat{\sL\phi}(\xi)=-\Psi(\xi)\hat\phi(\xi)
	\quad\text{for all $\phi\in\text{Dom}[\sL]$ and $\xi\in\R^d$}.
\end{equation}

Let us note that for all $t\ge 0$, $\xi\in\R^d$, and
$\phi\in L^1(\R^d)$,
\begin{equation}\label{eq:Pt:hat}
	\left| \widehat{P_t\phi}(\xi)\right|^2 =\e^{-2t\Re\Psi(\xi)}
	\cdot|\hat\phi(\xi)|^2.
\end{equation}
Therefore, the well-known nonnegativity of $\Re\Psi(\xi)$---which we prove at the 
beginning of the following section---implies the following.

\begin{lemma}\label{lem:contraction:A(Rd)}
	$P_t$ is a contraction on $W^{1,2}(\R^d)$ for all $t\ge 0$.
	Hence, $\alpha R_\alpha$ is also a contraction on
	$W^{1,2}(\R^d)$ for all $\alpha>0$.
\end{lemma}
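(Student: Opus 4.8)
The plan is to work entirely on the Fourier side, where the semigroup $P_t$ acts by multiplication by $e^{-t\Psi(\xi)}$, and to reduce the $W^{1,2}$-contraction property to the pointwise bound $|e^{-t\Psi(\xi)}|=e^{-t\Re\Psi(\xi)}\le 1$, which is exactly the nonnegativity of $\Re\Psi$ promised for the next section. First I would recall, via Plancherel, that for $\phi\in W^{1,2}(\R^d)$ one has $\|\phi\|_{W^{1,2}(\R^d)}^2=(2\pi)^{-d}\int_{\R^d}(1+\|\xi\|^2)|\hat\phi(\xi)|^2\,\d\xi$, and that $W^{1,2}(\R^d)\subseteq\mathrm{Dom}[\sL]\subseteq L^2(\R^d)$ by \eqref{S:Dom:L}. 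The key computation is \eqref{eq:Pt:hat}: $|\widehat{P_t\phi}(\xi)|^2=e^{-2t\Re\Psi(\xi)}|\hat\phi(\xi)|^2$. Since $\Re\Psi(\xi)\ge 0$ for all $\xi$, we get $|\widehat{P_t\phi}(\xi)|^2\le|\hat\phi(\xi)|^2$ pointwise, hence also $(1+\|\xi\|^2)|\widehat{P_t\phi}(\xi)|^2\le(1+\|\xi\|^2)|\hat\phi(\xi)|^2$, and integrating in $\xi$ yields $\|P_t\phi\|_{W^{1,2}(\R^d)}\le\|\phi\|_{W^{1,2}(\R^d)}$.

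One technical point I would address is that \eqref{eq:Pt:hat} is stated for $\phi\in L^1(\R^d)$, whereas we need it for $\phi\in W^{1,2}(\R^d)$. The cleanest route is to note that the identity $\widehat{P_t\phi}(\xi)=e^{-t\Psi(\xi)}\hat\phi(\xi)$ is already available for $\phi\in L^2(\R^d)$: this is essentially \eqref{(f,Ptphi)}, which identifies $(\psi,P_t\phi)_{L^2}$ with $(2\pi)^{-d}\int\overline{\hat\phi(\xi)}\,\hat\psi(\xi)e^{-t\Psi(\xi)}\,\d\xi$ for all $\psi,\phi\in L^2(\R^d)$; since this holds for every test function $\psi$, it follows that $\widehat{P_t\phi}=e^{-t\Psi}\hat\phi$ as elements of $L^2$. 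Taking moduli squared gives \eqref{eq:Pt:hat} for $\phi\in L^2(\R^d)\supseteq W^{1,2}(\R^d)$, which is all we need. (Alternatively, one approximates $\phi\in W^{1,2}$ by Schwartz functions, using that $\mathcal{S}$ is dense in $W^{1,2}$.)

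For the second assertion, I would simply integrate. Write $\alpha R_\alpha=\alpha\int_0^\infty e^{-\alpha s}P_s\,\d s$, so for $\phi\in W^{1,2}(\R^d)$ the triangle inequality (or Minkowski's integral inequality in the Hilbert space $W^{1,2}(\R^d)$) gives
\begin{equation*}
	\|\alpha R_\alpha\phi\|_{W^{1,2}(\R^d)}
	\le\alpha\int_0^\infty e^{-\alpha s}\|P_s\phi\|_{W^{1,2}(\R^d)}\,\d s
	\le\alpha\int_0^\infty e^{-\alpha s}\,\d s\cdot\|\phi\|_{W^{1,2}(\R^d)}
	=\|\phi\|_{W^{1,2}(\R^d)},
\end{equation*}
using the contractivity of $P_s$ just proved. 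One should remark that $R_\alpha\phi\in W^{1,2}(\R^d)$ in the first place, which is again transparent on the Fourier side: $\widehat{R_\alpha\phi}(\xi)=\hat\phi(\xi)/(\alpha+\Psi(\xi))$, and $|\alpha+\Psi(\xi)|\ge\alpha+\Re\Psi(\xi)\ge\alpha$, so $(1+\|\xi\|^2)|\widehat{R_\alpha\phi}(\xi)|^2\le\alpha^{-2}(1+\|\xi\|^2)|\hat\phi(\xi)|^2\in L^1$. There is no real obstacle here; the only thing to be careful about is the logical dependence on the nonnegativity of $\Re\Psi$, which the paper defers to the start of the next section, so the proof should explicitly invoke that fact rather than pretend it has been established. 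Everything else is routine Plancherel bookkeeping.
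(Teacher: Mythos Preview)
Your proof is correct and follows exactly the route the paper indicates: the paper does not write out a formal proof but simply states that the lemma follows from \eqref{eq:Pt:hat} together with the nonnegativity of $\Re\Psi$ (proved at the start of the next section), and you have filled in precisely those details, including the careful remark that the Fourier identity for $P_t\phi$ extends from $L^1$ to $L^2$ via \eqref{(f,Ptphi)}. The Minkowski-integral argument for $\alpha R_\alpha$ is the natural way to deduce the ``Hence'' clause, and is also implicit in the paper's statement.
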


\section{The Replica Semigroup and Associated Sobolev Spaces}
\index{000X*@$X^*:=-X$, the dual process}%
Let $X^*$ denote an independent copy of the L\'evy process
$-X$ and, following L\'evy \cite{Levy}, define
\index{000Xbar@$\bar X$, the replica process}%
\begin{equation}
	\bar{X}_t:=X_t+X_t^*\qquad\text{for all $t\ge 0$}.
\end{equation}
It is easy to see that $X^*:=\{X_t^*\}_{t\ge 0}$ is the dual process
to $X$, and $\bar{X}:=\{\bar{X}_t\}_{t\ge 0}$
is a symmetric L\'evy process on $\R^d$.
And if we denote the distribution of
$\bar{X}_t$ by $\bar{m}_t$, then
\begin{equation}
	\bar{m}_t(A)=(m_t*\tilde{m}_t)(A)
	\quad\text{for all Borel sets $A\subseteq\R^d$},
\end{equation}
where $\tilde{m}_t(A):=m_t(-A)$.
Note that the Fourier transform of $\bar{m}_t$ is
\begin{equation}\begin{split}
	\widehat{\bar{m}_t}(\xi) &= |\hat{m}_t(\xi)|^2\\
	&= \e^{-2t\Re\Psi(\xi)}.
\end{split}\end{equation}
Among other things, this implies the classical fact that
\begin{equation}\label{eq:RePsi:pos}
	\Re\Psi(\xi)\ge 0\text{ for all $\xi\in\R^d$}.
\end{equation}

The absolute-continuity
condition \eqref{cond:ac} implies that every $\bar m_t$ is
absolutely continuous with respect to the Lebesgue measure
on $\R^d$ [$t>0$]. We denote the resulting transition density
by $\bar p_t$.  Every $\bar p_t$ is a symmetric function 
on $\R^d$ $[t>0]$.

We can always choose a version of $\bar p$ that has good
regularity features [of the type mentioned earlier for $p$].
In fact, the following version works:
\index{000pbart@$\bar p_t(x)$, $\{\bar p_t(x)\}_{t>0,x\in\R^d}$, etc.,
	the replica transition functions}%
\begin{equation}\begin{split}
	\bar p_t(x) &:= (p_t*\tilde p_t)(x)\\
	&= \int_{\R^d} p_t(x+z)p_t(z)\,\d z
		\qquad\text{for $x\in\R^d$ and $t>0$}.
\end{split}\end{equation}
\index{000Pbart@$\bar P_t$, $\{\bar P_t\}_{t\ge 0}$, etc.,
	the replica semigroup}%
Equivalently, if $\bar{P}:=\{\bar{P}_t\}_{t\ge 0}$ denotes the
semigroup of $\bar{X}$, then 
\begin{equation}
	\bar{P}_t=P_tP_t^*\quad\text{for all $t\ge 0$},
\end{equation}
where $P_t^*$ denotes the adjoint of $P_t$ in $L^2(\R^d)$.
Every $\bar{P}_t$ is a self-adjoint contraction on $L^2(\R^d)$.

Motivated by the work of Kardar \cite{Kardar}, we refer to $\bar{X}$ and $\bar{P}$
respectively as the \emph{replica process} and the
\emph{replica semigroup}. The corresponding generator is denoted by
$\bar\sL$ and its domain by $\text{Dom}[\bar\sL]$.
\index{Replica!semigroup}\index{Replica!process}%

For all $\alpha\ge 0$, we can
define the \emph{replica $\alpha$-potential density} $\bar r_\alpha$ as
\index{000rbar@$\bar r_\alpha$, the replica $\alpha$-potential density}%
\begin{equation}
	\bar r_\alpha (x) := \int_0^\infty \e^{-\alpha s} \bar p_s(x)\,\d s
	\qquad\text{for all $x\in\R^d$}.
\end{equation}
Clearly,
$\bar r_\alpha(x)$ is well defined; but $\bar r_\alpha(x)$
can be infinite for some [and even
all, in the case that $\alpha=0$] $x\in\R^d$.
\index{000Rbar@$\bar R_\alpha$, $\{\bar R_\alpha\}_{\alpha>0}$,
	etc., the replica resolvent}%
The resolvent $\bar{R}:=\{\bar{R}_\alpha\}_{\alpha> 0}$ of the semigroup $\bar{P}$
can also be defined as follows
\begin{equation}\begin{split}
	(\bar{R}_\alpha \phi)(x) &:= \int_0^\infty \e^{-\alpha s}
		(\bar{P}_s\phi)(x)\,\d s\\
	&=\int_{\R^d} \phi(z)\bar r_\alpha(z-x)\,\d z,
\end{split}\end{equation}
for all $\alpha>0$ and $x\in\R^d$. Since $\bar r_\alpha$ is
a symmetric function on $\R^d$, it follows that $\bar R_\alpha\phi
=\phi*\bar r_\alpha$.

The preceding quantity which is called the \emph{$\alpha$-potential} of
$\phi$ makes sense,
for example, if $\phi:\R^d\to\R_+$ is Borel measurable,
or when $\phi\in L^p(\R^d)$ for some $p\in[1\,,\infty]$
because every $\bar{P}_s$ is a contraction on $L^p(\R^d)$.

\section{On the Heat Equation and Transition Functions}\label{sec:TF}

We begin by recalling some generally-known
facts about the fundamental [weak] solution to
the \emph{heat \textnormal{[or Kolmogorov]} equation} for $\sL$:
We seek to find a function $H$ such that
for all $t>0$ and $x\in\R^d$,
\begin{equation}\label{eq:heat:L}\left|\begin{split}
	&\frac{\partial}{\partial t} H_t(x) = (\sL H_t)(x),\\
	&H_0=\delta_z,
\end{split}\right.\end{equation}
where $z\in\R^d$ is fixed.
By rewriting the above in terms
of the [spatial] Fourier transform $\hat H_t$ and 
using the fact that the [weak] Fourier transform of 
$\mathcal{L}H_t$ is $-\Psi\cdot\hat H_t$ [see \eqref{FT:Dom:L}],
we obtain the ordinary differential equation,
\begin{equation}\left|\begin{split}
	&\frac{\partial}{\partial t} \hat{H}_t(\xi) = -\Psi(\xi)\hat{H}_t(\xi),\\
	&\hat{H}_0(\xi)=\e^{i\xi\cdot z}.
\end{split}\right.\end{equation}
The unique solution to this ODE is
\begin{equation}
	\hat H_t(\xi)=\e^{i\xi\cdot z -t\Psi(\xi)}.
\end{equation}
Direct inspection of the Fourier transform reveals that $H_t(x)=p_t(z-x)$.
Thus, we find that the fundamental solution to
\eqref{eq:heat:L} is the measurable
function $(0\,,\infty)\times\R^d\times\R^d
\ni (t\,;x\,,y)\mapsto p_t(y-x)$. In  particular,
we might observe that in order to have a function solution
to \eqref{eq:heat:L}, it is necessary as well as sufficient
that the underlying L\'evy process $X$ has transition densities.

We are thus led to the natural question: ``What are the necessary and sufficient conditions on the 
characteristic exponent $\Psi$ that ensure the existence of
transition densities of the corresponding L\'evy processes''?
Unfortunately, there is no satisfactory known answer
to this question at this time, though several
attempts have been made in the first half of the twentieth century;
see, for example Blum and Rosenblatt
\cite{BlumRosenblatt}, Fisz and Varadarjan \cite{FiszVaradara},
Hartman and Wintner \cite{HartmanWintner},
and Tucker \cite{Tucker:65,Tucker:64,Tucker:62}.

More recently, Bass and Cranston \cite{BassCranston} applied Malliavin calculus
to a family of stochastic differential equations driven by jump noises.
Their result can be used to supply good sufficient conditions that
ensure the existence of smooth transition functions. And in \cite{NourdinSimon}, Nourdin and Simon have proved, among other things,
that if a L\'evy process has transition functions, then so does the same
process plus a drift.

We will use the following unpublished result of Hawkes.  It 
typically provides a good-enough sufficient condition
for the existence of transition functions. We include a proof
in order to document this interesting fact.

\begin{proposition}[Hawkes \cite{Hawkes}]\label{pr:hawkes}
	The following conditions are equivalent:
	\begin{enumerate}
		\item\label{H1} Condition \eqref{cond:ac} holds and 
		$p_t\in L^2(\R^d)$
		for all $t>0$;
	\item\label{H2} Condition \eqref{cond:ac} holds and  
		$p_t\in L^\infty(\R^d)$
		for all $t>0$;
		\item\label{H11} Condition \eqref{cond:ac} holds and 
		$p_t\in L^2(\R^d)$
		for almost every $t>0$;
	\item\label{H21} Condition \eqref{cond:ac} holds and  
		$p_t\in L^\infty(\R^d)$
		for almost every $t>0$;
		\item\label{H3} $\exp(-\Re\Psi)\in L^t(\R^d)$ for all $t>0$.
		\item\label{H31} $\exp(-\Re\Psi)\in L^t(\R^d)$ for almost every $t>0$.
	\end{enumerate}
	Moreover, any one of these conditions implies that: (i)
	$(t\,,x)\mapsto p_t(x)$ has a continuous version which is
	uniformly continuous for all $(t\,,x)\in[\eta\,,\infty)\times\R^d$
	for every $\eta>0$; and (ii) $p_t$ vanishes at infinity for all $t>0$.
\end{proposition}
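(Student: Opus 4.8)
\emph{Proof approach.} The plan is to push everything through the Fourier transform. Set $g(\xi):=\exp(-\Re\Psi(\xi))$; since $\Re\Psi\ge 0$ by \eqref{eq:RePsi:pos} we have $0<g\le 1$, and $|\hat m_t(\xi)|=|\e^{-t\Psi(\xi)}|=g(\xi)^t$ for every $t>0$. The single observation that makes the proposition work is that, because $0<g\le 1$, the map $t\mapsto g(\xi)^t$ is nonincreasing for each fixed $\xi$; consequently $g\in L^{t_0}(\R^d)$ forces $g\in L^{t}(\R^d)$ for \emph{all} $t\ge t_0$. In particular any condition in the list of the form ``$\dots$ for almost every $t>0$'', once rephrased as ``$g\in L^{t}(\R^d)$ for almost every $t$'', automatically upgrades to ``$g\in L^{t}(\R^d)$ for every $t>0$'' (choose $t_0$ arbitrarily small in the good set). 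This already identifies \eqref{H3} with \eqref{H31}.

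Next I would close the cycle $\eqref{H3}\Rightarrow\eqref{H2}\Rightarrow\eqref{H1}\Rightarrow\eqref{H3}$ among the ``for all $t$'' statements. For $\eqref{H3}\Rightarrow\eqref{H2}$: if $g\in L^{t}$ for all $t$, then $|\hat m_t|=g^{t}\in L^1(\R^d)$ for every $t>0$, so by Fourier inversion $m_t$ is absolutely continuous [this is \eqref{cond:ac}] with a bounded continuous density $(2\pi)^{-d}\int_{\R^d}\e^{-ix\cdot\xi-t\Psi(\xi)}\,\d\xi$, which we take as our continuous version of $p_t$; in particular $p_t\in L^\infty(\R^d)$. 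For $\eqref{H2}\Rightarrow\eqref{H1}$: $\|p_t\|_{L^2(\R^d)}^2=\int_{\R^d}p_t^2\le\|p_t\|_{L^\infty(\R^d)}\int_{\R^d}p_t=\|p_t\|_{L^\infty(\R^d)}<\infty$. For $\eqref{H1}\Rightarrow\eqref{H3}$: Plancherel's theorem gives $g^{2t}=|\hat m_t|^2=|\hat p_t|^2\in L^1(\R^d)$ for every $t>0$, i.e.\ $g\in L^{s}(\R^d)$ for every $s>0$. The ``almost every $t$'' siblings are folded in by cheap implications: $\eqref{H1}\Rightarrow\eqref{H11}$ and $\eqref{H2}\Rightarrow\eqref{H21}$ trivially; $\eqref{H21}\Rightarrow\eqref{H11}$ by the same $L^2\le L^\infty$ bound read for a.e.\ $t$; $\eqref{H11}\Rightarrow\eqref{H31}$ by Plancherel read for a.e.\ $t$; and $\eqref{H31}\Rightarrow\eqref{H3}$ by the monotonicity remark of the first paragraph. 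Chaining these shows all six conditions are equivalent.

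For the ``moreover'' part I would work from \eqref{H3}, so that for every $t>0$ the inversion formula $p_t(x)=(2\pi)^{-d}\int_{\R^d}\e^{-ix\cdot\xi-t\Psi(\xi)}\,\d\xi$ holds with integrand of modulus $g(\xi)^{t}$. Riemann--Lebesgue gives $p_t\in C_0(\R^d)$, which is assertion (ii). For (i), fix $\eta>0$, take $s\ge t\ge\eta$ and $x,y\in\R^d$, set $h:=s-t\ge 0$, and use
\[
	\e^{-ix\cdot\xi-t\Psi(\xi)}-\e^{-iy\cdot\xi-s\Psi(\xi)}
	=\e^{-t\Psi(\xi)}\big(\e^{-ix\cdot\xi}-\e^{-iy\cdot\xi}\big)
	+\e^{-iy\cdot\xi}\e^{-t\Psi(\xi)}\big(1-\e^{-h\Psi(\xi)}\big)
\]
together with $g^{t}\le g^{\eta}$ and $|\e^{-h\Psi(\xi)}|\le 1$ to get
\[
	(2\pi)^d\,|p_t(x)-p_s(y)|\le\int_{\R^d}g(\xi)^{\eta}\Big(\big|\e^{-ix\cdot\xi}-\e^{-iy\cdot\xi}\big|+\big|1-\e^{-h\Psi(\xi)}\big|\Big)\,\d\xi.
\]
The integrand here is dominated by $2g^{\eta}\in L^1(\R^d)$, tends to $0$ pointwise as $\|x-y\|+h\to 0$, and depends on $(t,x,s,y)$ only through $\|x-y\|$ and $|s-t|$; dominated convergence therefore furnishes a modulus of continuity uniform over $[\eta\,,\infty)\times\R^d$, and, $\eta$ being arbitrary, $(t,x)\mapsto p_t(x)$ is continuous on $(0\,,\infty)\times\R^d$.

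The whole argument is bookkeeping around Plancherel's theorem, Fourier inversion, and the elementary bound $0<g\le 1$, so I do not anticipate a serious obstacle. The only genuinely non-mechanical point — and the thing to get right first — is the monotonicity observation $t\mapsto g(\xi)^{t}\downarrow$, since it is precisely what collapses each ``for almost every $t$'' hypothesis onto its ``for every $t$'' counterpart, so that the six conditions truly coincide rather than splitting into tiers. The remaining care lies in checking that the estimate in (i) is uniform over the \emph{unbounded} strip $[\eta\,,\infty)\times\R^d$ and not merely locally uniform.
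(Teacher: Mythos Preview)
Your proof is correct and follows essentially the same Fourier-analytic approach as the paper, hinging on Plancherel, Fourier inversion, and the monotonicity $t\mapsto g(\xi)^t\downarrow$ coming from $\Re\Psi\ge 0$. The only organizational difference is that the paper links \eqref{H1} and \eqref{H2} directly via the semigroup identity $p_t=p_{t/2}*p_{t/2}$ and Young's inequality (yielding $\|p_t\|_{L^\infty}\le\|p_{t/2}\|_{L^2}^2\le\|p_{t/2}\|_{L^\infty}$), whereas you route \eqref{H1}$\Rightarrow$\eqref{H2} through \eqref{H3} by inversion; your route avoids the convolution step but costs nothing, and your explicit dominated-convergence computation for the uniform continuity in (i) is more detailed than the paper's, which simply invokes the inversion theorem.
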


\begin{proof}[Proof (Hawkes \cite{Hawkes})]
	Recall that $\int_{\R^d} p_t(x)\,\d x=1$ and
	$p_t=p_{t/2}*p_{t/2}$. Therefore,
	two applications of Young's inequality yield
	\begin{equation}
		\|p_t\|_{L^\infty(\R^d)} \le \|p_{t/2}\|_{L^2(\R^d)}^2
		\le \|p_{t/2}\|_{L^\infty(\R^d)}
		\qquad\text{for all $t>0$}.
	\end{equation}
	Consequently, \eqref{H1}$\Leftrightarrow$\eqref{H2}
	and \eqref{H11}$\Leftrightarrow$\eqref{H21}.
	
	Next let us suppose that \eqref{H31} holds.
	Because
	\begin{equation}\begin{split}
		|\hat p_t(\xi)| &=\left|\e^{-t\Psi(\xi)}\right|\\
		&\le\e^{-t\Re\Psi(\xi)},
	\end{split}\end{equation}
	Plancherel's theorem ensures that
	\begin{equation}\begin{split}
		\| p_t\|_{L^2(\R^d)}^2 &=\frac{1}{(2\pi)^d}
			\|\hat p_t\|_{L^2(\R^d)}^2\\
		&\le \left\|\e^{-2t\Re\Psi} \right\|_{L^1(\R^d)}.
	\end{split}\end{equation}
	Since $\Re\Psi\ge 0$, it follows from \eqref{H31}
	that $p_t\in L^2(\R^d)$ for \emph{every} $t>0$; i.e.,
	\eqref{H31}$\Rightarrow$\eqref{H1}. Moreover,
	we have---in this case---the following inversion formula:
	For almost all $x\in\R^d$ and every $t>0$,
	\begin{equation}\label{eq:p_t}
		p_t(x) = \frac{1}{(2\pi)^d}\int_{\R^d}\e^{-i\xi\cdot x-t\Psi(\xi)}
		\,\d\xi.
	\end{equation}

	It remains to prove that \eqref{H1} and equivalently \eqref{H2} together 
	imply \eqref{H3}.
	Recall that $\tilde{p}_t(x):=p_t(-x)$ and observe that 
	\begin{equation}
		\widehat{p_{t/4}*\tilde{p}_{t/4}} = \e^{-(t/2)\Re\Psi}.
	\end{equation}
	Therefore, by Plancherel's theorem,
	\begin{equation}\begin{split}
		\left\|\e^{-\Re\Psi}\right\|_{L^t(\R^d)}
			&=\left\|\e^{-(t/2)\Re\Psi}\right\|_{L^2(\R^d)}\\
		&= (2\pi)^d \left\| p_{t/4}*\tilde{p}_{t/4}\right\|_{L^2(\R^d)}^2\\
		&\le (2\pi)^d \left\|  p_{t/4}*\tilde{p}_{t/4}\right\|_{%
			L^\infty(\R^d)}.
	\end{split}\end{equation}
	Consequently, Young's inequality, implies that
	\begin{equation}
		\left\|\e^{-\Re\Psi}\right\|_{L^t(\R^d)}
		\le(2\pi)^d\|p_{t/4}\|_{L^2(\R^d)}^2,
	\end{equation}
	which has the desired effect.
	
	Finally, if \eqref{H3} holds then the inversion
	theorem applies and tells us that
	we can always choose a version of $p$ that satisfies the properties of
	the final paragraph in the statement of theorem.
\end{proof}

There is also the following 1970 theorem of J. Zabczyk,
which characterizes \eqref{cond:ac} in special though important
cases.

\begin{proposition}[Zabczyk \protect{%
	\cite[Example (4.6)]{Zab}}]\label{pr:Zab}
	If $d\ge 2$ and $\Psi$ is a radial function, then \eqref{cond:ac}
	holds if and only if 
	\begin{equation}\label{eq:blow}
		\lim_{\|\xi\|\to\infty}
		\Psi(\xi)=\infty.
	\end{equation}
\end{proposition}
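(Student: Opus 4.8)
\textbf{Proof proposal for Proposition~\ref{pr:Zab} (Zabczyk's criterion).}

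The plan is to exploit the radial structure together with the Riemann--Lebesgue-type behaviour of Fourier transforms of integrable functions. First I would dispose of the easy direction. Suppose \eqref{cond:ac} holds. Then each $m_t$ has a density $p_t\in L^1(\R^d)$, so $\hat p_t(\xi)=\e^{-t\Psi(\xi)}$ is, by the Riemann--Lebesgue lemma, a function that tends to $0$ as $\|\xi\|\to\infty$. Hence $|\e^{-t\Psi(\xi)}|=\e^{-t\Re\Psi(\xi)}\to 0$, which forces $\Re\Psi(\xi)\to\infty$ along any sequence with $\|\xi\|\to\infty$. Because $\Psi$ is radial, $\Re\Psi$ is radial as well, and one gets $\lim_{\|\xi\|\to\infty}\Re\Psi(\xi)=\infty$. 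To upgrade this to $\lim_{\|\xi\|\to\infty}\Psi(\xi)=\infty$ in the sense of \eqref{eq:blow}, I would combine this with the growth bound \eqref{Bochner}, $|\Psi(\xi)|=O(\|\xi\|^2)$, and the L\'evy--Khintchine representation, which controls the imaginary part of $\Psi$ in terms of its real part (the drift gives at most linear growth in $|\Im\Psi|$, while the jump part's imaginary contribution is dominated by the real part for a symmetric-type estimate); since $\Psi$ is radial, the linear drift term must in fact vanish, so $|\Im\Psi(\xi)|$ is controlled by $\Re\Psi(\xi)$ plus lower-order terms, and divergence of $\Re\Psi$ yields divergence of $|\Psi|$. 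This takes care of ``$\Leftarrow$'' becoming ``$\Rightarrow$'' after the harder direction is in hand; let me turn to that.

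For the substantive direction, assume \eqref{eq:blow}, i.e. $\Psi(\xi)\to\infty$, equivalently (since $\Re\Psi\ge 0$ by \eqref{eq:RePsi:pos} and $|\Im\Psi|\lesssim\Re\Psi+O(1)$ here) $\Re\Psi(\xi)\to\infty$. The key point to establish is that $\e^{-t\Re\Psi}\in L^1(\R^d)$ for every $t>0$; by Proposition~\ref{pr:hawkes} (the implication \eqref{H3}$\Rightarrow$\eqref{H1}, hence in particular $p_t\in L^2(\R^d)$ and \eqref{cond:ac} holds), this is exactly what is needed. Here is where $d\ge 2$ and radiality do the work. Write $\Re\Psi(\xi)=\psi(\|\xi\|)$ for a function $\psi:[0,\infty)\to[0,\infty)$ with $\psi(r)\to\infty$ as $r\to\infty$. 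Then in polar coordinates
\begin{equation}
\int_{\R^d}\e^{-t\Re\Psi(\xi)}\,\d\xi = c_d\int_0^\infty \e^{-t\psi(r)}\,r^{d-1}\,\d r,
\end{equation}
so the finiteness reduces to an integrability statement on the half-line with the weight $r^{d-1}$. Since $\psi(r)\to\infty$, for every $A>0$ there is $R_A$ with $\psi(r)\ge A$ for $r\ge R_A$; but mere divergence does not by itself give a rate fast enough to beat $r^{d-1}$. The resolution is that $\psi$ is not an arbitrary divergent function: it is (the radial profile of) the real part of a negative-definite function, hence subadditive-type lower bounds are available. Concretely, I would use the classical fact that for a L\'evy exponent one has a lower bound of the form $\Re\Psi(\xi)\ge c\,\|\xi\|^2\wedge 1$-type control is \emph{too weak}; instead the right tool is: $\Re\Psi(2\xi)\le 4\Re\Psi(\xi)$ (a standard consequence of the L\'evy--Khintchine formula, since $\Re\Psi(\xi)=\sigma^2\|\xi\|^2/2+\int(1-\cos(\xi\cdot y))\,\nu(\d y)$ and $1-\cos(2u)=(1-\cos u)(1+ \ldots)\le 4(1-\cos u)$). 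This gives the matching \emph{upper} regularity $\psi(2r)\le 4\psi(r)$, and dually a lower bound: iterating, $\psi(r)\le 4^{-k}\psi(2^k r)$ is not what we want either — rather, $\psi(2^k)\ge 4^{-k}\psi(2^k)\cdot$(nothing). Let me restate the mechanism properly: from $\psi(2r)\le 4\psi(r)$ one derives, for $r\in[2^k,2^{k+1}]$, that $\psi(r)\ge \psi(2^k)\ge 4\psi(2^{k-1})/4=\psi(2^{k-1})$... The clean route is this: because $\psi(r)\to\infty$ and $\psi$ is (locally) bounded and the doubling inequality $\psi(2r)\le 4\psi(r)$ holds, the function $r\mapsto \psi(r)/\log r$ is bounded below away from $0$ for large $r$ — indeed, if $\psi(r_0)\ge A$ then $\psi(r)\ge A\,(r/r_0)^{\alpha}$ fails, but $\psi$ being \emph{not} $o$ of any power is false too.

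I will be candid: the cleanest correct argument I would actually write uses a slightly different, classical device. Note that $\e^{-t\Psi}$ is positive-definite (it is a characteristic function), and \eqref{eq:blow} together with Polya/Bochner considerations is precisely the hypothesis under which one shows $\e^{-t\Psi}\in L^1$. The concrete step: by the L\'evy--Khintchine formula, radiality forces the drift term to vanish, so $\Psi(\xi)=\tfrac12\xi^T Q\xi + \int_{\R^d}\bigl(1-\e^{i\xi\cdot y}+i\xi\cdot y\mathbf{1}_{\{\|y\|\le 1\}}\bigr)\,\nu(\d y)$ with $Q$ a scalar multiple of the identity (by radiality) and $\nu$ a radial L\'evy measure. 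If $Q\ne 0$ then $\Re\Psi(\xi)\gtrsim\|\xi\|^2$ and $\e^{-t\Re\Psi}\in L^1$ is immediate, giving \eqref{cond:ac}. If $Q=0$, then \eqref{eq:blow} says $\int(1-\cos(\xi\cdot y))\,\nu(\d y)\to\infty$; I would then invoke the fact (this is the crux, and the main obstacle of the proof) that for a \emph{radial} L\'evy measure in dimension $d\ge 2$ this divergence is quantitatively strong enough — precisely, one shows $\liminf_{\|\xi\|\to\infty}\Re\Psi(\xi)/\log\|\xi\| = \infty$ is \emph{not} needed; rather one shows directly that $\int_{\|\xi\|\ge 1}\e^{-t\Re\Psi(\xi)}\|\xi\|^{d-1}\d\xi<\infty$ by the substitution-and-doubling argument sketched above, using $\Re\Psi(2\xi)\le 4\Re\Psi(\xi)$ to turn ``$\to\infty$'' into a genuine power-type lower bound $\Re\Psi(\xi)\ge c\|\xi\|^{\epsilon}$ for large $\xi$ and some $\epsilon>0$ depending on the rate of divergence, which more than suffices against the polynomial weight $\|\xi\|^{d-1}$. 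The radiality is what makes the reduction to a one-variable doubling function possible, and $d\ge 2$ is used only to ensure the polar weight $r^{d-1}$ is non-trivial so that the statement is not vacuous (in $d=1$ the analogous statement is genuinely false, which is why the hypothesis $d\ge 2$ appears). So the skeleton is: (1) radiality $\Rightarrow$ drift-free L\'evy--Khintchine with radial $Q,\nu$; (2) if Gaussian part present, done trivially; (3) otherwise, \eqref{eq:blow} plus the doubling inequality $\Re\Psi(2\xi)\le 4\Re\Psi(\xi)$ gives a power lower bound on $\Re\Psi$; (4) integrate in polar coordinates to get $\e^{-t\Re\Psi}\in L^1$; (5) apply Proposition~\ref{pr:hawkes}, \eqref{H3}$\Rightarrow$\eqref{H1}, to conclude \eqref{cond:ac}; (6) the converse is the Riemann--Lebesgue argument of the first paragraph. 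The main obstacle is step (3): extracting a usable quantitative rate from the bare qualitative statement \eqref{eq:blow}, and this is exactly where the doubling property of negative-definite functions must be brought in.
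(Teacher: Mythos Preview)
The paper does not prove this proposition; it is quoted from Zabczyk's 1970 paper without argument, with only the Riemann--Lebesgue direction mentioned in the paragraph that follows. So I assess your proposal on its own.

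The easy direction is fine but over-engineered: radiality gives $\Psi(\xi)=\Psi(-\xi)=\overline{\Psi(\xi)}$, so $\Psi$ is automatically real-valued and equal to $\Re\Psi$. The entire discussion of controlling $\Im\Psi$ via the L\'evy--Khintchine formula is vacuous.

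The hard direction has a genuine, fatal gap at your step~(3). The doubling inequality $\Re\Psi(2\xi)\le 4\,\Re\Psi(\xi)$ is an \emph{upper} bound on growth; it cannot possibly be iterated into a power-type \emph{lower} bound $\psi(r)\gtrsim r^\epsilon$---your own wavering mid-argument reflects exactly this. Worse, the conclusion you are aiming for, $\e^{-t\Psi}\in L^1(\R^d)$, is in general \emph{false} under the hypotheses. Here is a concrete obstruction in any $d\ge 2$: take zero Gaussian part and the radial L\'evy measure whose radial profile is $\mu(\d\rho)=\rho^{-1}(\log(1/\rho))^{-1}\,\d\rho$ on $(0,1/\e)$. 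Then $\int_0^{1/\e}\rho^2\,\mu(\d\rho)<\infty$, so this is a bona fide L\'evy measure, and splitting the defining integral at $\rho=1/r$ gives $\Psi(\xi)\asymp\log\log\|\xi\|$ for large $\|\xi\|$. Hence $\Psi(\xi)\to\infty$, yet
\[
\int_{\|\xi\|>\e^\e}\e^{-t\Psi(\xi)}\,\d\xi
\ \asymp\ \int_{\e^\e}^\infty\frac{r^{d-1}}{(\log r)^t}\,\d r
\ =\ \infty
\qquad\text{for every }t>0.
\]
So the route through Proposition~\ref{pr:hawkes} is blocked: absolute continuity of $m_t$ is strictly weaker than integrability of $\hat m_t$, and Zabczyk's theorem does not reduce to Hawkes's criterion.

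Your diagnosis of where $d\ge 2$ enters is also off: a smaller polar weight $r^{d-1}$ would make the $L^1$ condition \emph{easier}, not harder. The actual mechanism is angular smoothing. Once $\Psi\to\infty$ forces either a nondegenerate Gaussian part (trivial case) or an infinite radial L\'evy measure, each jump factors as a random radius times an independent direction uniform on $S^{d-1}$. Conditionally on the (almost surely infinitely many) radii, the law of $\sum_k\rho_k\Theta_k$ is absolutely continuous because the uniform distribution on $S^{d-1}$ is genuinely ``spread out'' when $d\ge 2$; this is precisely what fails in $d=1$, where $S^0=\{\pm 1\}$ is discrete. Steps (1), (2), (5), (6) of your skeleton survive, but steps (3)--(4) must be replaced by an argument along these lines rather than an integrability estimate on $\e^{-t\Psi}$.
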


We now put this beautiful result in context. By the Riemann-Lebesgue lemma, 
if the process $X$ has transition functions then equality \eqref{eq:blow} holds. 
Proposition \ref{pr:Zab} states that the converse also holds,
provided that $d\ge 2$ and $\Psi$ is radial. 
In other words, the Riemann--Lebesgue lemma is a necessary and sufficient 
condition for the existence of transition functions whenever 
$d\ge 2$ and $\Psi$ is radial.  As was mentioned in Zabczyk \cite{Zab}, the preceding
is not in general true when $d=1$. This can be seen by considering
$\Psi$ to correspond to two independent
one-dimensional Poisson processes.

\section{On a Family of Isotropic L\'evy Processes}
The main result of this section will be needed to construct a counterexample in Chapter 4.  It is possible that it is known but we were not able to find an explicit reference. So we provide a complete proof.
We begin by recalling a few definitions used to study L\'evy processes. 

We say that a L\'evy process $X:=\{X_t\}_{t\ge 0}$ is \emph{isotropic}
if its characteristic exponent $\Psi$ is a radial function [and hence
also real-valued and nonnegative]. Such processes are also known
as \emph{radial processes}; see Millar \cite{Millar}.

A [standard] \emph{subordinator} $\tau:=\{\tau_t\}_{t\ge 0}$
is a one-dimensional L\'evy process that is nondecreasing
and $\tau_0:=0$.
According to the L\'evy--Khintchine formula
 \cite[Theorem 1.2, p.\ 13]{Bertoin:99},
 every subordinator $\tau$ is determined by the formula 
\begin{equation}
	\E \e^{-\lambda \tau_t} =\e^{-t\Phi(\lambda)}, 
\end{equation}
where $t,\lambda\ge 0$,
and
\begin{equation}\label{eq:Phi}
	\Phi(\lambda) = \int_0^\infty \left(
	1-\e^{-\lambda z}\right)\,\Pi(\d z),
\end{equation}
for a Borel measure $\Pi$ on $(0\,,\infty)$ that satisfies
\begin{equation}\label{eq:LM}
	\int_0^\infty (1\wedge x)\,\Pi(\d x)<\infty.
\end{equation}
The function $\Phi$ is the socalled
\emph{Laplace exponent} of the subordinator $\tau$.
	 
We have the following lemma.
\begin{lemma}\label{lem:subord}
	Choose and fix two numbers $p\in(0\,,1)$ and $q\in\R$. Then,
	there exists a subordinator $\tau$ on $\R_+$ whose
	Laplace exponent satisfies
	\begin{equation}
		0<\inf_{\lambda>\e} \frac{\Phi (\lambda)}{\lambda^p(\log\lambda)^{q/2}}
		\le\sup_{\lambda>\e}\frac{\Phi (\lambda)}{\lambda^p(\log\lambda)^{q/2}}<\infty.
	\end{equation}
\end{lemma}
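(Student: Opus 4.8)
The plan is to construct the subordinator $\tau$ directly by exhibiting a L\'evy measure $\Pi$ on $(0\,,\infty)$ for which the integral defining $\Phi$ in \eqref{eq:Phi} has the stated two-sided power-log behavior. The natural ansatz is to take a density of the form $\Pi(\d z) = c\, z^{-1-p}\, (\log(1/z))^{r}\,\mathbf{1}_{\{0<z<1/\e\}}\,\d z$ for a suitable exponent $r$ to be matched against $q$, together with a finite contribution on $[1/\e\,,\infty)$ (for instance an atom or a bounded density there) which is irrelevant to the large-$\lambda$ asymptotics but keeps $\tau$ a genuine subordinator. First I would check the integrability condition \eqref{eq:LM}: since $p\in(0\,,1)$, the singularity $z^{-1-p}$ at the origin is integrated against $z$, giving $\int_0^{1/\e} z^{-p}(\log(1/z))^r\,\d z<\infty$ regardless of the sign of $r$, so $\Pi$ is a legitimate L\'evy measure and defines a subordinator via \eqref{eq:Phi}.

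The heart of the argument is the asymptotic analysis of
\begin{equation}
	\Phi(\lambda) = \int_0^\infty\left(1-\e^{-\lambda z}\right)\Pi(\d z)
	\qquad\text{as }\lambda\to\infty.
\end{equation}
I would substitute $z = u/\lambda$ in the singular part of the integral. With the proposed density this turns the integral into $c\lambda^{p}\int_0^{\lambda/\e}(1-\e^{-u})\,u^{-1-p}\,(\log(\lambda/u))^{r}\,\d u$. The factor $(\log(\lambda/u))^{r} = (\log\lambda)^{r}(1 - \log u/\log\lambda)^{r}$, and for $u$ in the bulk range where $(1-\e^{-u})u^{-1-p}$ carries its mass (roughly $u$ of order one), $\log u/\log\lambda\to 0$, so $(\log(\lambda/u))^{r}\sim(\log\lambda)^{r}$. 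A dominated-convergence / uniform-bounds argument then gives $\Phi(\lambda)\asymp \lambda^{p}(\log\lambda)^{r}$ as $\lambda\to\infty$; choosing $r = q/2$ matches the target. This handles the $\limsup$/$\liminf$ as $\lambda\to\infty$; finally, because $\Phi$ is continuous and strictly positive on $(0\,,\infty)$ with $\Phi(\e)>0$, and the ratio $\Phi(\lambda)/(\lambda^{p}(\log\lambda)^{q/2})$ is continuous on $(\e\,,\infty)$ and has finite positive limits at both ends of that interval (at $\lambda=\e^{+}$ one must be mildly careful because $\log\lambda\to 1$, so the denominator tends to $\e^{p}>0$ and there is no issue), the infimum and supremum over $\lambda>\e$ are finite and positive.

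The main obstacle I anticipate is making the asymptotic estimate uniform enough to extract genuine two-sided bounds (not merely $\limsup$'s) while controlling the boundary contribution from the range $u\in[\text{const}\,,\lambda/\e]$, where $\log(\lambda/u)$ can be small or even force $(\log(\lambda/u))^{r}$ to misbehave when $r<0$. The clean fix is to split the integral at, say, $u=1$ and $u=\sqrt\lambda$: on $[0\,,\sqrt\lambda]$ one has $\log(\lambda/u)\in[\tfrac12\log\lambda\,,\log\lambda]$, so $(\log(\lambda/u))^{r}\asymp(\log\lambda)^{r}$ uniformly and $\int_0^{\sqrt\lambda}(1-\e^{-u})u^{-1-p}\,\d u\to\int_0^\infty(1-\e^{-u})u^{-1-p}\,\d u\in(0\,,\infty)$; while on $[\sqrt\lambda\,,\lambda/\e]$ the integrand is bounded by $u^{-1-p}(\log\lambda)^{|r|}$ (or a trivial bound when $r\ge 0$), whose integral is $O(\lambda^{-p/2}(\log\lambda)^{|r|})$ — negligible relative to the main term $(\log\lambda)^{r}$. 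Assembling these pieces yields $\Phi(\lambda)=c\lambda^{p}(\log\lambda)^{q/2}(1+o(1))$, which is stronger than, and in particular implies, the claimed statement.
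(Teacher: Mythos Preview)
Your construction and analysis are essentially the same as the paper's: the paper also takes $\Pi(\d z)=z^{-1-p}(\log(1/z))^{q/2}\,\d z$ on a small interval near $0$, substitutes to write $\Phi(\lambda)=\lambda^{p}Q(\lambda)$, and splits $Q$ at $x=1$ to show $Q(\lambda)\asymp(\log\lambda)^{q/2}$. One small slip: your claim that $\log(\lambda/u)\in[\tfrac12\log\lambda,\log\lambda]$ on $[0,\sqrt\lambda]$ fails for $u\in(0,1)$ (the upper bound is violated), so the piece $[0,1]$ does need the separate treatment you already announced---for $r\ge 0$ use $(\log\lambda+\log(1/u))^{r}\le C((\log\lambda)^{r}+(\log(1/u))^{r})$ and absorb the resulting finite integral, while for $r<0$ the lower bound $\log(\lambda/u)\ge\tfrac12\log\lambda$ already gives the needed upper control; with that correction your argument goes through and in fact yields the sharper $\Phi(\lambda)=c\,\lambda^{p}(\log\lambda)^{q/2}(1+o(1))$.
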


\begin{proof}
	Define a measure $\Pi$ via
	 \begin{equation}	
		 \frac{\Pi(\d x)}{\d x} :=\begin{cases}
		 	x^{-1-p}\left(\log(1/x)\right)^{q/2}&\text{if $0<x<\frac12$},\\
			0&\text{otherwise}.
		 \end{cases}
	 \end{equation}
	 Since $p\in(0\,,1)$, it follows that \eqref{eq:LM} holds,
	 whence $\Pi$ is a L\'evy measure.
	 
	 We can also apply the definition \eqref{eq:Phi} of the Laplace exponent
	 and write $\Phi(\lambda)=\lambda^pQ(\lambda)$, where
	 \begin{equation}
	 	Q(\lambda) = \int_0^{\lambda/2} \frac{1-\e^{-x}}{x^{1+p}}
		\left(\log(\lambda /x)\right)^{q/2}\,\d x
		\qquad\text{for $\lambda> 0$}.
	 \end{equation}
	 In order to complete the proof, we will verify that 
	$Q(\lambda)\asymp(\log\lambda)^{q/2}$ for
	 $\lambda>\e$.\footnote{%
	As usual, $h(x)\asymp g(x)$ over a certain range of
	$x$'s is short-hand for the statement that, uniformly over that range of $x$'s,
	$h(x)/g(x)$ is bounded above and below by positive and finite constants.}
	 We do so in the special case that $q\ge 0$; similar arguments
	 can be used to estimate $Q(\lambda)$ in the case that $q<0$.
	 
	 Whenever $\lambda>\e$, we can write
	 \begin{equation}
	 	Q(\lambda) := I_1+I_2,
	 \end{equation}
	 where
	 \begin{equation}\begin{split}
	 	I_1 & := \int_1^{\lambda/2}\frac{1-\e^{-x}}{x^{1+p}}
			\left(\log(\lambda /x)\right)^{q/2}\,\d x,\\
		I_2 & := \int_0^1 \frac{1-\e^{-x}}{x^{1+p}}
			\left(\log(\lambda /x)\right)^{q/2}\,\d x.
	 \end{split}\end{equation}
	 Evidently,
	 \begin{equation}\begin{split}
	 	I_1 &\le (\log\lambda)^{q/2} \cdot \int_1^\infty \frac{\d x}{x^{1+p}}\\
		&= \frac1p (\log\lambda)^{q/2}.
	 \end{split}\end{equation}
	 Since $I_1\ge 0$, it remains to prove that
	 \begin{equation}
	 	I_2\asymp(\log\lambda)^{q/2}
		\qquad\text{for  $\lambda>1$}.
	\end{equation}
	We establish this by deriving first an upper, and then a lower,
	 bound for $I_2$. Because $1-\exp(-y)\le y$ for $y\ge 0$,
	 and since $\sup_{z\in(0,1)}z^\epsilon \log (1/z)<\infty$
	 for all $\epsilon\in(0\,,1)$, it follows that
	 \begin{equation}\begin{split}
	 	I_2&\le (\log\lambda)^{q/2}\cdot
			\int_0^1 \left(1+\frac{\log(1/x)}{\log\lambda}
			\right)^{q/2} \frac{\d x}{x^p}\\
		&\le \text{const}\cdot (\log\lambda)^{q/2}.
	 \end{split}\end{equation}
	 And a similar lower bound is obtained via the bounds: 
	 (i) $1-\exp(-x)\ge x/2$; and
	 (ii) $\log(\lambda/x)\ge\log\lambda$; both valid for all $x\in(0\,,1)$.
\end{proof}

The following is the main result of this section. It gives a special construction of
an isotropic L\'evy process $X:=\{X_t\}_{t\ge 0}$
whose characteristic exponent is regularly varying in a special manner.

\begin{theorem}\label{th:Levy:asymp}
	Choose and fix $r\in(0\,,2)$ and $q\in\R$. Then,
	there exists an isotropic L\'evy process $X:=\{X_t\}_{t\ge 0}$
	such that
	\begin{equation}
		0<\inf_{\substack{\xi\in\R^d:\\\|\xi\|>\e}}
		\frac{\Psi(\xi)}{\|\xi\|^r(\log\|\xi\|)^q} \le
		\sup_{\substack{\xi\in\R^d:\\\|\xi\|>\e}}\frac{\Psi(\xi)}{\|\xi\|^r(\log\|\xi\|)^q} 
		<\infty.
	\end{equation}
\end{theorem}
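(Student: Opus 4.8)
The plan is to build the isotropic L\'evy process $X$ by \emph{subordinating} a standard $d$-dimensional Brownian motion (run at a suitable clock speed) by the subordinator $\tau$ produced in Lemma \ref{lem:subord}, with the parameters of that lemma chosen so that the Laplace exponent $\Phi$ of $\tau$ has the right power-and-log growth. Recall that if $W:=\{W_t\}_{t\ge0}$ is a $d$-dimensional Brownian motion with $\E\exp(i\xi\cdot W_t)=\exp(-t\|\xi\|^2)$, independent of a subordinator $\tau$ with Laplace exponent $\Phi$, then $X_t:=W_{\tau_t}$ is a L\'evy process whose characteristic exponent is
\begin{equation}\label{eq:subord:exponent}
	\Psi(\xi)=\Phi\!\left(\|\xi\|^2\right)\qquad\text{for all }\xi\in\R^d.
\end{equation}
This is the classical formula for subordination (see e.g.\ Bertoin \cite{Bertoin:book} or Sato \cite{Sato}); it is immediate from conditioning on $\tau_t$ and using $\E\exp(i\xi\cdot W_{\tau_t}\mid\tau_t)=\exp(-\tau_t\|\xi\|^2)$ together with $\E\exp(-\lambda\tau_t)=\exp(-t\Phi(\lambda))$. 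Since $\Phi$ is a Bernstein function and $\|\xi\|^2$ is radial, $\Psi$ is radial, real-valued, and nonnegative, so $X$ is isotropic in the sense defined in this section.

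First I would fix the parameters: apply Lemma \ref{lem:subord} with $p:=r/2\in(0\,,1)$ (legitimate since $r\in(0\,,2)$) and with the same $q$. This yields a subordinator $\tau$ whose Laplace exponent satisfies $\Phi(\lambda)\asymp\lambda^{r/2}(\log\lambda)^{q/2}$ for all $\lambda>\e$. Then I would define $X_t:=W_{\tau_t}$ as above, so that by \eqref{eq:subord:exponent}, $\Psi(\xi)=\Phi(\|\xi\|^2)$.

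Next, the estimate. For $\xi\in\R^d$ with $\|\xi\|>\e$ we have $\|\xi\|^2>\e^2>\e$, so Lemma \ref{lem:subord} applies to $\lambda=\|\xi\|^2$ and gives constants $0<c_1\le c_2<\infty$ with
\begin{equation}
	c_1\,\|\xi\|^{r}\bigl(\log\|\xi\|^2\bigr)^{q/2}
	\le \Psi(\xi)=\Phi\!\left(\|\xi\|^2\right)
	\le c_2\,\|\xi\|^{r}\bigl(\log\|\xi\|^2\bigr)^{q/2}.
\end{equation}
Since $\log\|\xi\|^2=2\log\|\xi\|$ and $\log\|\xi\|>1$ on the range $\|\xi\|>\e$, the factor $\bigl(\log\|\xi\|^2\bigr)^{q/2}=2^{q/2}(\log\|\xi\|)^{q/2}$ is comparable to $(\log\|\xi\|)^{q}$ up to positive finite constants depending only on $q$ (this is where having $\log\|\xi\|$ bounded away from $0$ matters, so that $(\log\|\xi\|)^{q/2}\asymp(\log\|\xi\|)^{q}$ is false in general — wait, I should be careful here). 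Actually the cleanest route is: choose $p:=r/2$ and $q':=2q$ in Lemma \ref{lem:subord}, so that $\Phi(\lambda)\asymp\lambda^{r/2}(\log\lambda)^{q}$ for $\lambda>\e$; then $\Psi(\xi)=\Phi(\|\xi\|^2)\asymp\|\xi\|^{r}(\log\|\xi\|^2)^{q}\asymp\|\xi\|^{r}(\log\|\xi\|)^{q}$ on $\|\xi\|>\e$, the last step using $\log\|\xi\|^2=2\log\|\xi\|$ and absorbing the constant $2^q$. Taking infimum and supremum over $\{\xi:\|\xi\|>\e\}$ gives exactly the two-sided bound in the statement.

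The only genuine obstacle is the one already resolved by Lemma \ref{lem:subord}: constructing a subordinator with Laplace exponent of the prescribed regularly-varying order $\lambda^p(\log\lambda)^{q/2}$, which is handled there by an explicit choice of L\'evy measure. Given that lemma, the present theorem is essentially a bookkeeping exercise in subordination plus the elementary substitution $\lambda=\|\xi\|^2$ and $\log\|\xi\|^2=2\log\|\xi\|$; the constant $2$ in the logarithm is harmless because it is absorbed into the comparison constants. The transition-function hypothesis standing throughout the paper is also easily checked for this $X$: by Proposition \ref{pr:Zab} (for $d\ge2$) one only needs $\Psi(\xi)\to\infty$ as $\|\xi\|\to\infty$, which is clear since $\Psi(\xi)\asymp\|\xi\|^r(\log\|\xi\|)^q\to\infty$; for $d=1$ one can verify $\exp(-t\Re\Psi)=\exp(-t\Phi(\|\xi\|^2))\in L^1(\R)$ directly and invoke Proposition \ref{pr:hawkes}.
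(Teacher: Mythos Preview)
Your proof is correct and follows exactly the paper's approach: subordinate Brownian motion by the subordinator of Lemma~\ref{lem:subord} with $p=r/2$ (and, as you correctly self-correct, with the lemma's log-exponent parameter set to $2q$), then substitute $\lambda=\|\xi\|^2$. The paper's version is terser---it uses the normalization $\Psi(\xi)=\Phi(\|\xi\|^2/2)$ and simply says ``Lemma~\ref{lem:subord} applied with $p:=r/2$ completes the remainder of the proof''---but the content is identical.
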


\begin{proof}
	Let $B:=\{B(t)\}_{t\ge 0}$ denote a $d$-dimensional
	Brownian motion, independent from the
	subordinator $\tau:=\{\tau_t\}_{t\ge 0}$ of Lemma \ref{lem:subord}.
	Define
	\begin{equation}
		X_t:=B(\tau_t)
		\qquad\text{for all $t\ge 0$}.
	\end{equation}
	It is well known---as well as easy to check---that the process
	$X:=\{X_t\}_{t\ge 0}$ is a L\'evy process with characteristic
	exponent
	\begin{equation}
		\E\left[ \e^{i\xi\cdot X_t}\right] = \exp\left(-t\Phi
		\left(\frac{\|\xi\|^2}{2}\right)\right)
		\qquad\text{for all $t\ge 0$ and $\xi\in\R^d$}.
	\end{equation}
	That is, $\Psi(\xi)=\Phi(\|\xi\|^2/2)$; Lemma \ref{lem:subord}
	[applied with $p:=r/2$]
	completes the remainder of the proof.
\end{proof}
\chapter[Positive-Definite Functions]{%
	Positive-Definite Functions, Fourier Analysis, and Probabilistic
	Potential Theory}
\label{ch:PD}

A large part of this paper relies heavily on our ensuing
analysis of positive-definite functions and 
their many connections to harmonic analysis. In this chapter we  develop 
the requisite theory and prove Theorem \ref{th:Dalang:1}. 
We also give intrinsically-probabilistic interpretations to the 
two central potential-theoretic
hypotheses of this paper; namely Conditions \ref{cond:1} and \ref{cond:0}.  
Even though, most of the results derived in this chapter will be use later in this paper, they might be of independent interest. 

\section{Fourier Analysis}
Let us begin by recalling some basic facts from harmonic analysis. 

First, let us recall 
our definition of Fourier transform.
\begin{equation}
	\hat g(\xi) := \int_{\R^d} \e^{ix\cdot\xi} g(x)\,\d x
	\quad\text{for all $\xi\in\R^d$ and $g\in L^1(\R^d)$}.
\end{equation}
This particular normalization is standard in probability theory
and leads to the following form
of the Parseval identity, which plays a big role in the ensuing
theory:
\begin{equation}\label{eq:Parseval}
	\int_{\R^d} g(x)\, h(x)\,\d x = \frac{1}{(2\pi)^d}
	\int_{\R^d}\hat g(\xi) \, \overline{\hat h(\xi)}\,\d\xi
	\quad\text{for all $g,h\in L^2(\R^d)$}.
\end{equation}
The preceding is another way to say that the Fourier transform is
an ``isometry'' from $L^2(\R^d)$ onto itself.\footnote{Strictly
speaking, this is not true, as is evidenced by the
multiplicative factor of $(2\pi)^{-d/2}$ in the
right-hand side of equation
\eqref{eq:Parseval}. And that is why the
word isometry appears in quotations: 
In order for the Fourier transform to be a proper isometry from $L^2(\R^d)$
onto itself, we need to use a different normalization than the
one used here. We have not done that as 
it would be nonstandard for probability theory.}

Recall that a function $g:\R^d\to\R$ is \emph{tempered}
if it is Borel-measurable and there exists $k\ge 0$ such that
\begin{equation}
	\sup_{x\in\R^d}\frac{|g(x)|}{(1+|x|)^k}<\infty.
\end{equation}

Also, recall that $g$ has \emph{at most polynomial growth} 
[\emph{at least polynomial growth}, resp.] if
the preceding holds for some $k\ge 0$ [$k<0$, resp.].
Finally, we say that $g\in\mathcal{S}$ if 
$g$ and all of its derivatives have at least polynomial growth.
\index{000Scal@$\mathcal{S}$, the space of all rapidly-decreasing 
test functions.}%
The collection $\mathcal{S}$ is the usual space of
\emph{rapidly-decreasing test functions} on $\R^d$.

The following is an
elementary, but important, variant of the Parseval identity.

\begin{lemma}\label{lem:Parseval}
	Parseval's identity
	\eqref{eq:Parseval} is valid when $g\in\mathcal{S}$ and $h:\R^d\to\R$ 
	is continuous and tempered.
\end{lemma}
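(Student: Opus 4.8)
The plan is to prove Lemma~\ref{lem:Parseval} by a standard truncation-and-mollification argument that reduces the claimed identity to the classical Parseval identity \eqref{eq:Parseval} for pairs of $L^2$ functions. The difficulty is that $h$ is only assumed continuous and tempered, hence need not lie in $L^1(\R^d)$ or $L^2(\R^d)$, so $\hat h$ exists only as a tempered distribution a priori; the point is that the rapid decay of $g\in\mathcal{S}$ compensates for the polynomial growth of $h$, making both sides of \eqref{eq:Parseval} well-defined absolutely convergent integrals. So the first step is to observe that the left-hand side $\int_{\R^d} g(x)h(x)\,\d x$ is absolutely convergent, since $g\in\mathcal{S}$ decays faster than any polynomial while $|h(x)|\le C(1+|x|)^k$ for some $k\ge 0$.

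Next I would introduce a Gaussian mollifier: for $\varepsilon>0$ set $\varphi_\varepsilon(x):=\e^{-\varepsilon\|x\|^2/2}$, so that $\varphi_\varepsilon\in\mathcal{S}$, $\varphi_\varepsilon\to 1$ pointwise and boundedly as $\varepsilon\downarrow 0$, and $h\varphi_\varepsilon\in L^1(\R^d)\cap L^2(\R^d)$ because $\varphi_\varepsilon$ kills the polynomial growth of $h$. Then $g,\,h\varphi_\varepsilon\in L^2(\R^d)$, so the classical identity \eqref{eq:Parseval} applies and gives
\begin{equation*}
	\int_{\R^d} g(x)\,h(x)\varphi_\varepsilon(x)\,\d x
	= \frac{1}{(2\pi)^d}\int_{\R^d} \hat g(\xi)\,\overline{\widehat{h\varphi_\varepsilon}(\xi)}\,\d\xi.
\end{equation*}
The left-hand side converges to $\int_{\R^d} g(x)h(x)\,\d x$ as $\varepsilon\downarrow 0$ by dominated convergence, with dominating function $|g(x)|\,|h(x)|\in L^1(\R^d)$ from the first step.

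For the right-hand side, I would use that $\widehat{h\varphi_\varepsilon}=\frac{1}{(2\pi)^d}\,\hat h * \hat\varphi_\varepsilon$ in the distributional sense, where $\hat\varphi_\varepsilon$ is (a constant multiple of) a Gaussian density of variance $1/\varepsilon$, hence an approximate identity as $\varepsilon\downarrow 0$. Pairing against $\hat g\in\mathcal{S}$ and using that $\hat h$ is a tempered distribution, one gets $\int \hat g\,\overline{\widehat{h\varphi_\varepsilon}}\to \langle \hat h, \overline{\hat g}\rangle = \int \hat g\,\overline{\hat h}$, where the last equality is the definition of $\hat h$ as the distributional Fourier transform of $h$ tested against $\hat g$; since $\hat g$ is Schwartz this pairing is exactly $\int_{\R^d}\hat g(\xi)\,\overline{\hat h(\xi)}\,\d\xi$ when $\hat h$ happens to be a function, and in general it is the meaning we assign to the right-hand side of \eqref{eq:Parseval}. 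Taking $\varepsilon\downarrow 0$ in the displayed identity then yields the lemma.

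The main obstacle is the convergence of the right-hand side, because $\hat h$ is merely a tempered distribution and one must argue carefully that convolution with the Gaussian approximate identity $\hat\varphi_\varepsilon$ converges to $\hat h$ in $\mathcal{S}'$ and that testing against the fixed Schwartz function $\hat g$ then passes to the limit; alternatively, one can sidestep distributions entirely by also mollifying $g$ — but $g$ is already Schwartz, so the cleanest route is the one above, transferring all the mollification onto $h$ and invoking standard facts about approximate identities acting on tempered distributions. Everything else is routine dominated convergence once the absolute convergence of $\int |g|\,|h|$ is recorded.
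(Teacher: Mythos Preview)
The paper does not actually prove this lemma; it states ``We leave the proof to the interested reader.'' Your mollification argument is correct in outline and would furnish a valid proof, with one minor slip: the Fourier transform of $\varphi_\varepsilon(x)=\e^{-\varepsilon\|x\|^2/2}$ is $(2\pi/\varepsilon)^{d/2}\e^{-\|\xi\|^2/(2\varepsilon)}$, so $\frac{1}{(2\pi)^d}\hat\varphi_\varepsilon$ is a Gaussian density of variance $\varepsilon$, not $1/\varepsilon$. Your conclusion that it is an approximate identity as $\varepsilon\downarrow 0$ is nonetheless correct, and the rest of the argument goes through.

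That said, there is a shorter route that avoids mollification entirely and is probably what the authors had in mind. Since $h$ is continuous and tempered, it defines an element of $\mathcal{S}'$, and its distributional Fourier transform $\hat h$ satisfies $\langle \hat h,\psi\rangle=\langle h,\hat\psi\rangle$ for all $\psi\in\mathcal{S}$. Taking $\psi:=\overline{\hat g}$ (which lies in $\mathcal{S}$ because $g\in\mathcal{S}$ and the Fourier transform is an automorphism of $\mathcal{S}$) and using that, with the paper's normalization and $g$ real, $\widehat{\overline{\hat g}}=(2\pi)^d g$ by Fourier inversion, one obtains
\[
\langle \hat h,\overline{\hat g}\rangle=\langle h,(2\pi)^d g\rangle=(2\pi)^d\int_{\R^d} g(x)h(x)\,\d x,
\]
which is exactly \eqref{eq:Parseval} once the right-hand side is read as the distributional pairing $\frac{1}{(2\pi)^d}\langle \hat h,\overline{\hat g}\rangle$. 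Your approach has the advantage of making the approximation by honest $L^2$ functions explicit; the one-line distributional argument has the advantage of showing that the identity is essentially a restatement of the definition of $\hat h\in\mathcal{S}'$ together with Fourier inversion on $\mathcal{S}$.
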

We leave the proof to the interested reader.

\section{Positive-Definite Functions}
\index{Positive-definite function}%
Recall that a function $g:\R^d\to\R_+$ is \emph{positive definite}
if $g$ is tempered and $( \phi\,, g*\phi )_{L^2(\R^d)}
\ge 0$ for all rapidly-decreasing test functions $\phi$.
That is,
\begin{equation}
	\int_{\R^d\times\R^d} \phi(x)\phi(y) g(x-y)\,\d x\,\d y\ge 0
	\qquad\text{for all $\phi\in\mathcal{S}$}.
\end{equation}
The following central result of L. Schwartz characterizes
positive-definite functions.

\begin{proposition}[Schwartz \protect{\cite[Theorem 3,
	p.\ 157]{GV}}]\label{BochnerSchwartz}
	If $g:\R^d\to\bar\R$ is positive definite, then there exists a tempered
	measure $\Gamma$ on $\R^d$ such that $g=\hat\Gamma$.
\end{proposition}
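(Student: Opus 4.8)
The plan is to realize $\Gamma$ by a Gaussian regularization carried out in physical space, followed by a limit taken in the space of tempered distributions; this sidesteps the fact that $g$ is only assumed tempered (hence neither continuous nor bounded), which blocks a direct appeal to the classical Bochner theorem. At the outset I would record that we may take $g$ symmetric, $g(-x)=g(x)$: the quadratic form in the definition of positive-definiteness sees only the symmetric part $x\mapsto\frac12(g(x)+g(-x))$ of $g$, and it is this part that enters the applications in the paper; symmetry is in any case implicit in the classical notion of a positive-definite function.

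First I would fix $t>0$ and set $g_t(x):=g(x)\,\e^{-t\|x\|^2}$, so that $g_t\in L^1(\R^d)\cap L^\infty(\R^d)$ because $g$ is tempered, and hence $\hat g_t$ is a bounded continuous function on $\R^d$. The central claim to establish is that $\hat g_t\ge 0$ pointwise. For this, fix $\xi\in\R^d$ and introduce the real Schwartz functions $\phi^R_\xi(y):=\e^{-2t\|y\|^2}\cos(y\cdot\xi)$ and $\phi^I_\xi(y):=\e^{-2t\|y\|^2}\sin(y\cdot\xi)$, as well as $\phi_\xi:=\phi^R_\xi+i\phi^I_\xi$. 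A routine Gaussian computation gives
\[
	\int_{\R^d}\int_{\R^d} g(x-w)\,\phi_\xi(x)\,\overline{\phi_\xi(w)}\,\d x\,\d w
	=\left(\frac{\pi}{4t}\right)^{d/2}\hat g_t(\xi),
\]
and expanding $\phi_\xi(x)\overline{\phi_\xi(w)}$ into real and imaginary parts shows the left side is nonnegative: the two ``diagonal'' terms $\int\!\!\int g(x-w)\phi^R_\xi(x)\phi^R_\xi(w)$ and $\int\!\!\int g(x-w)\phi^I_\xi(x)\phi^I_\xi(w)$ are $\ge 0$ by positive-definiteness of $g$ applied to the real test functions $\phi^R_\xi,\phi^I_\xi$, while the imaginary cross-term cancels after the substitution $x\leftrightarrow w$ together with the symmetry of $g$. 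So $\hat g_t\ge 0$.

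Next I would let $t\downarrow 0$. Since $g$ is tempered, dominated convergence yields $g_t\to g$ in $\mathcal{S}'$, hence $\hat g_t\to\hat g$ in $\mathcal{S}'$; as each $\hat g_t$ is a nonnegative locally-integrable function, the limit obeys $\langle\hat g\,,\varphi\rangle\ge 0$ for every nonnegative $\varphi\in\mathcal{S}$, i.e.\ $\hat g$ is a nonnegative tempered distribution. By L.\ Schwartz's theorem that nonnegative distributions are positive Radon measures, $\hat g$ is a positive measure; and a positive Radon measure lying in $\mathcal{S}'$ is automatically tempered, as one sees by testing the continuity estimate $|\langle\hat g\,,\varphi\rangle|\le C\sum_{|\alpha|\le N}\sup_\xi(1+\|\xi\|)^N|\partial^\alpha\varphi(\xi)|$ against bump functions approximating $(1+\|\xi\|)^{-N-d-1}$ on larger and larger balls, which gives $\int_{\R^d}(1+\|\xi\|)^{-N-d-1}\,\hat g(\d\xi)<\infty$. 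Fourier inversion then produces $g=\hat\Gamma$ with $\Gamma:=(2\pi)^{-d}\hat g$ (symmetry of $g$ makes this reflection-free), a tempered measure; this is the asserted representation.

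The hard part is the first step --- turning the scalar positive-definiteness hypothesis (one quadratic inequality per real $\phi\in\mathcal{S}$) into pointwise nonnegativity of a Fourier transform. The Gaussian family $\{\phi_\xi\}_\xi$ is precisely the ``resolution of the identity'' that makes this work, and the only delicate bookkeeping is moving between complex and real test functions, which the symmetry of $g$ handles. The remaining measure-theoretic steps (nonnegative distribution $\Rightarrow$ positive measure $\Rightarrow$ tempered measure) and the Fourier inversion are routine.
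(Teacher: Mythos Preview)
The paper does not supply its own proof of this proposition: it is stated as a citation to Gel'fand--Vilenkin \cite[Theorem 3, p.\ 157]{GV}, and no argument is given in the text. So there is no ``paper's proof'' to compare against.

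Your argument is a correct, self-contained derivation of the Bochner--Schwartz theorem. The Gaussian regularization $g_t(x)=g(x)\e^{-t\|x\|^2}$ forces $g_t\in L^1\cap L^\infty$ because $g$ is tempered; your quadratic-form identity with the complex test functions $\phi_\xi$ is a clean way to extract pointwise nonnegativity of $\hat g_t$ from the single-real-test-function hypothesis, and the passage to the limit in $\mathcal{S}'$ together with the identification of nonnegative tempered distributions as tempered measures is standard. The only caveat---which you flag yourself---is that the paper's hypothesis constrains only the symmetric part of $g$, so the conclusion $g=\hat\Gamma$ with $\Gamma\ge 0$ really holds for that symmetric part; since the paper applies the result only to the correlation function $f$, which is assumed symmetric throughout, this is harmless in context.
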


The preceding has an elementary converse as well:
Recall that a Borel measure $\Gamma$ on $\R^d$ is of 
\emph{positive type} if $\hat\Gamma\ge 0$ in the sense of
distributions; that is, $\hat\Gamma (\phi)\ge 0$
for all nonnegative $\phi\in\mathcal{S}$. 
Then it is straightforward to check that
if $\Gamma$ is a tempered
measure of positive type on $\R^d$, then $\hat\Gamma$ is
positive definite.

Schwartz's theorem is a generalization of the following theorem
of Herglotz [$d=1$] and Bochner [$d\ge 2$]:

\begin{proposition}[Herglotz \cite{Herglotz}, Bochner \cite{Bochner:33}]\label{HerzogBochner}
	If $g:\R^d\to\R$ is continuous and positive definite,
	then there exists a finite Borel measure $\Gamma$ on $\R^d$
	such that $g=\hat\Gamma$. 
\end{proposition}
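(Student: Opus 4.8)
The plan is to reduce the statement to the Bochner--Schwartz result already recorded as Proposition \ref{BochnerSchwartz}, and then upgrade ``tempered measure'' to ``finite measure'' using continuity of $g$ at the origin. So first I would invoke Proposition \ref{BochnerSchwartz}: since $g$ is continuous, it is in particular tempered, so there is a tempered measure $\Gamma$ on $\R^d$ with $g=\hat\Gamma$ in the sense of distributions. The work is then entirely about showing $\Gamma$ is finite.

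The standard device for this is a Gaussian (or Fej\'er-type) approximate identity. For $\varepsilon>0$ let $\phi_\varepsilon(\xi):=\exp(-\varepsilon\|\xi\|^2/2)$, so that the inverse Fourier transform of $\phi_\varepsilon$ is, up to the normalization constant $(2\pi)^{-d}$ fixed in the excerpt, a centered Gaussian density $G_\varepsilon$ that integrates to $1$ and concentrates at $0$ as $\varepsilon\downarrow 0$. Testing the distributional identity $g=\hat\Gamma$ against $\phi_\varepsilon\in\mathcal{S}$ and using Parseval (Lemma \ref{lem:Parseval}, which applies precisely because $g$ is continuous and tempered while $\phi_\varepsilon\in\mathcal{S}$) gives
\begin{equation}
	\int_{\R^d} G_\varepsilon(x)\, g(x)\,\d x
	= \frac{1}{(2\pi)^d}\int_{\R^d} \e^{-\varepsilon\|\xi\|^2/2}\,\Gamma(\d\xi),
\end{equation}
with everything nonnegative because $g\ge 0$ (recall positive-definite functions here are by definition $\R_+$-valued, and $\Gamma$ is a nonnegative measure). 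The left-hand side converges to $g(0)$ as $\varepsilon\downarrow 0$ by continuity of $g$ at $0$ and the approximate-identity property (split the integral over a small ball, where $g\approx g(0)$, and its complement, where $G_\varepsilon$ has vanishing mass and $g$ grows at most polynomially). The right-hand side increases to $(2\pi)^{-d}\,\Gamma(\R^d)$ by monotone convergence as $\varepsilon\downarrow 0$. Hence $\Gamma(\R^d)=(2\pi)^d\, g(0)<\infty$, so $\Gamma$ is a finite Borel measure.

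Finally, once $\Gamma$ is known to be finite, $\hat\Gamma(\xi)=\int \e^{ix\cdot\xi}\,\Gamma(\d x)$ is a genuine bounded continuous function, and it agrees with $g$ as a distribution; since $g$ is also continuous, the two agree pointwise everywhere. This yields the claimed representation $g=\hat\Gamma$ with $\Gamma$ finite. The only mild subtlety, and the place I would be most careful, is justifying $\int G_\varepsilon g\to g(0)$ rigorously given that $g$ is merely tempered (not bounded): this needs the tail estimate on $G_\varepsilon$ against polynomial growth, but that is routine since Gaussians have all moments finite and the $\varepsilon$-scaling kills any fixed polynomial on the complement of a small ball. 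Everything else is a direct application of results already in the excerpt.
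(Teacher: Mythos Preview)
The paper does not actually prove this proposition: it is stated with citations to Herglotz and Bochner as a classical result, and the paragraph immediately following it is not a proof of the proposition but rather a reminder of why the distributional notion of positive definiteness coincides with the classical matrix-inequality notion. So there is no paper proof to compare against.

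That said, your argument is correct and is a standard way to deduce Herglotz--Bochner from the Bochner--Schwartz theorem. Two minor points of precision: first, your invocation of Lemma~\ref{lem:Parseval} is slightly off, since that lemma is a function--function Parseval identity while here one of the objects is the tempered measure $\Gamma$; what you really want is simply the distributional definition of $g=\hat\Gamma$, namely $\int g\,\psi = \int \hat\psi\,\d\Gamma$ for $\psi\in\mathcal{S}$, applied with $\psi=G_\varepsilon$ so that $\hat\psi=\phi_\varepsilon$. Second, with the paper's Fourier normalization and your choice that $G_\varepsilon$ integrates to $1$, one has $\hat G_\varepsilon=\phi_\varepsilon$ directly, so the identity reads $\int G_\varepsilon\, g=\int \phi_\varepsilon\,\d\Gamma$ without the extra $(2\pi)^{-d}$, and the conclusion is $\Gamma(\R^d)=g(0)$. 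Neither issue affects the validity of the argument.
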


Recall also that a continuous function $g:\R^d\to\R$
is positive definite if and only if is is positive definite in 
the sense of Herglotz, Bochner, P\'olya, etc.; that is if
\begin{equation}
	\sum_{i,j=1}^N a_i\overline{a_j}\,
	g(x_i-x_j)\ge 0\text{ for all $a_1,\ldots,a_N\in\mathbf{C}$
	and $x_1,\ldots,x_N\in\R^d$}.
\end{equation}
The proof uses elementary integration theory, and we merely
recall the [easy] steps: Define the finite complex
measure
\begin{equation}
	\mu := \sum_{i=1}^N a_i \delta_{x_i},
\end{equation}
and choose 
a sequence $\phi_1,\phi_2,\ldots:\R^d\to\R$ of probability density
functions, each in $\mathcal{S}$, such that $\hat\phi_n\to 1$
pointwise as $n\to\infty$. Then, one can 
make precise the following approximation: As $n\to\infty$,
\begin{equation}\begin{split}
	\sum_{i,j=1}^N a_i\overline{a_j}\, g(x_i-x_j)&=
		\int(g*\mu)\,\d\mu\\
	&\approx \int(g*\phi_n*\mu)\,\d(\phi_n*\mu).
\end{split}\end{equation}
And the final quantity is nonnegative because $g$
is positive definite and $\Re(\phi_n*\mu),
\Im(\phi_n*\mu)\in\mathcal{S}$ for all $n\ge 1$.

\section{A Preliminary Maximum Principle}
Now that we have recalled the basic definitions and properties
of positive-definite functions, we can begin our proof of 
our maximum principle [Theorem \ref{th:Dalang:1}]. 
But first let us prove the following technical result.

\begin{lemma}\label{lem:C_0}
	If $\phi\in\mathcal{S}$, then there exists a version of
	$f*\phi$ that is in  $C_0(\R^d)$. Consequently,
	$\bar R_\beta(f*\phi)\in C_0(\R^d)$
	for every $\beta>0$.
\end{lemma}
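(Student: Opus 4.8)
The plan is to control $f*\phi$ via the Fourier side. Since $\phi\in\mathcal{S}$, its transform $\hat\phi$ is again in $\mathcal{S}$; in particular $\hat\phi$ is rapidly decreasing. On the other hand, by the Bochner--Schwartz theorem [Proposition \ref{BochnerSchwartz}], $f$ is positive definite with $f=\hat{\hat f}$ (up to normalization), where $\hat f$ is a nonnegative tempered measure which — under our standing assumption that the noise has a spectral density — is an element of $L^1_{\mathrm{loc}}(\R^d)$. The idea is that $\widehat{f*\phi}=\hat f\cdot\hat\phi$ as tempered distributions, and since $\hat f$ is a locally integrable function of at most polynomial growth while $\hat\phi$ decays faster than any polynomial, the product $\hat f\,\hat\phi$ lies in $L^1(\R^d)$. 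Then the (inverse) Fourier transform of an $L^1$ function is continuous and vanishes at infinity by the Riemann--Lebesgue lemma, giving a version of $f*\phi$ in $C_0(\R^d)$.

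First I would make the distributional identity $\widehat{f*\phi}=\hat f\hat\phi$ precise. Because $\phi\in\mathcal{S}$ and $f$ is tempered, $f*\phi$ is a well-defined tempered function (indeed smooth with at most polynomial growth), and the convolution theorem for a tempered distribution against a Schwartz function applies. Next I would establish the integrability: $\hat f\in L^1_{\mathrm{loc}}$ is automatic from the spectral-density assumption (as noted in the excerpt), and a tempered measure that happens to be a function has at most polynomial growth in an averaged sense; combined with the bound $|\hat\phi(\xi)|\le C_N(1+\|\xi\|)^{-N}$ for every $N$, a routine estimate splitting $\R^d$ into the unit ball and its complement (and summing over dyadic annuli on the complement) shows $\int_{\R^d}|\hat f(\xi)|\,|\hat\phi(\xi)|\,\d\xi<\infty$. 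Then Fourier inversion yields a continuous representative
\begin{equation*}
	(f*\phi)(x) = \frac{1}{(2\pi)^d}\int_{\R^d}\e^{-i\xi\cdot x}\,\hat f(\xi)\,\hat\phi(\xi)\,\d\xi,
\end{equation*}
and Riemann--Lebesgue gives that this representative belongs to $C_0(\R^d)$.

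For the second assertion, I would invoke \eqref{eq:R:BG}: the replica resolvent $\bar R_\beta$ maps $C_0(\R^d)$ into $C_0(\R^d)$ for every $\beta>0$ (the replica process shares the regularity features of $X$, so this is just \eqref{eq:R:BG} applied to $\bar X$). Strictly one should first check that the continuous version of $f*\phi$ is in the domain where $\bar R_\beta$ acts as a bona fide convolution against $\bar r_\beta$ — but $f*\phi$ is a continuous function of at most polynomial growth, hence, after noting $f*\phi\in L^\infty$ is not automatic, one restricts $\bar R_\beta$ via its action on nonnegative Borel functions applied to $(f*\phi)^{\pm}$; since $C_0\subset L^\infty$ and each $\bar P_s$ is an $L^\infty$-contraction, $\bar R_\beta(f*\phi)$ is well defined and the Feller property \eqref{eq:R:BG} gives membership in $C_0(\R^d)$.

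The main obstacle I anticipate is the bookkeeping in the integrability estimate: one must translate "$\hat f$ is a tempered measure which is a measurable function, hence locally integrable" into a quantitative polynomial bound on $\int_{\|\xi\|\le R}\hat f$ as $R\to\infty$, valid because $\hat f$ is tempered, and then pair it against the rapid decay of $\hat\phi$. This is standard but needs care about which notion of "tempered" (growth of averages versus pointwise growth) is being used, since $\hat f$ need not be pointwise polynomially bounded. Everything else — the convolution theorem, Fourier inversion, Riemann--Lebesgue, and the appeal to \eqref{eq:R:BG} — is routine.
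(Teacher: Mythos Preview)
Your proposal is correct and follows essentially the same route as the paper: show $\hat f\,\hat\phi\in L^1(\R^d)$ (using that $\hat f$ is tempered and $\hat\phi$ is rapidly decreasing), apply Fourier inversion and Riemann--Lebesgue to get a $C_0$ version, and then invoke the Feller property \eqref{eq:R:BG} of $\bar R_\beta$. The only place the paper is slightly cleaner is the second assertion: rather than worrying about whether $f*\phi$ itself lies in $L^\infty$, the paper simply observes that if $h$ denotes the $C_0$ version, then $(\bar R_\beta h)(x)=\int \bar r_\beta(y-x)h(y)\,\d y=\int \bar r_\beta(y-x)(f*\phi)(y)\,\d y=(\bar R_\beta(f*\phi))(x)$ pointwise (since $h=f*\phi$ a.e.), and then applies \eqref{eq:R:BG} to $h$.
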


\begin{proof}
	Because $\hat f$ is tempered, the following defines a uniformly continuous
	function on $\R^d:$
	\begin{equation}	
		h(x) = \frac{1}{(2\pi)^d}\int_{\R^d}\e^{-i x\cdot \xi}\hat f(\xi)
		\hat\phi(\xi)\,\d\xi
		\quad\text{for all $x\in\R^d$}.
	\end{equation}
	In fact, $h\in C_0(\R^d)$ because of the Riemann--Lebesgue lemma.
	Furthermore, if $\psi\in\mathcal{S}$, then
	\begin{equation}\begin{split}
		\int_{\R^d}\psi(x)h(x)\,\d x &=\frac{1}{(2\pi)^d}\int_{\R^d}
			\overline{\hat\psi(\xi)}\, \hat f(\xi)
			\hat\phi(\xi)\, \d \xi\\
		&=\int_{\R^d}\psi(x) (f*\phi)(x)\,\d x.
	\end{split}\end{equation}
	The first line is justified by the Fubini theorem, and the second
	by the Parseval identity. It follows from density and 
	the Lebesgue differentiation
	theorem that $h=f*\phi$ almost everywhere. This proves the
	first assertion of the lemma. In addition, 
	\begin{equation}\begin{split}
		(\bar R_\beta h)(x) &= \int_{\R^d} \bar r_\beta (y-x)h(y)\,\d y\\
		&= \int_{\R^d} \bar r_\beta (y-x)(f*\phi)(y)\,\d y\\
		&=\left( \bar R_\beta(f*\phi) \right)(x).
	\end{split}\end{equation}
	Since $h\in C_0(\R^d)$, it follows from \eqref{eq:R:BG}
	that $\bar R_\beta(f*\phi)= \bar R_\beta h \in C_0(\R^d)$.
\end{proof}

The following
contains a portion of the said maximum principle of Theorem \ref{th:Dalang:1}.  It also provides
some of the requisite technical estimates that are needed
for the remainder of the proof of Theorem \ref{th:Dalang:1}.

\begin{proposition}\label{pr:FA}
	For all $\beta\ge 0$,
	\begin{equation}\label{eq:Up:pot}
		\Upsilon(\beta)=\sup_{x\in\R^d}(\bar R_\beta f)(x)
		=\mathop{\text{\rm ess sup}}\limits_{x\in\R^d} (\bar R_\beta f)(x)
		=\limsup_{x\to 0}(\bar R_\beta f)(x),
	\end{equation}
	where $\Upsilon(0):=\lim_{\beta\downarrow 0}\Upsilon(\beta)$.
\end{proposition}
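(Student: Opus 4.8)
The plan is to prove the identity first for $\beta>0$ and then recover the case $\beta=0$ by monotone convergence. Fix $\beta>0$ and introduce the Gaussian approximate identity $\phi_n(x):=(n/\pi)^{d/2}\e^{-n\|x\|^2}$; thus $\phi_n\in\mathcal{S}$ is everywhere positive and even, $\int_{\R^d}\phi_n=1$, and $\hat\phi_n(\xi)=\e^{-\|\xi\|^2/(4n)}\uparrow1$ as $n\to\infty$. Put $g_n:=f*\phi_n$. Since $\hat f$ is a tempered (hence locally integrable) function, $\hat f\hat\phi_n\in L^1(\R^d)$, so by Lemma~\ref{lem:C_0} and its proof, $g_n$ has a version in $C_0(\R^d)$ given by $g_n(x)=(2\pi)^{-d}\int\e^{-ix\cdot\xi}\hat f(\xi)\hat\phi_n(\xi)\,\d\xi$, and $\bar R_\beta g_n\in C_0(\R^d)$. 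Because $\beta>0$ and $\Re\Psi\ge0$, the function $\hat f\hat\phi_n/(\beta+2\Re\Psi)$ lies in $L^1(\R^d)$, and inserting $\widehat{\bar p_s}=\e^{-2s\Re\Psi}$ into $\bar R_\beta g_n=\int_0^\infty\e^{-\beta s}(g_n*\bar p_s)\,\d s$ and invoking Fubini's theorem yields
\[
	(\bar R_\beta g_n)(x)=\frac1{(2\pi)^d}\int_{\R^d}\frac{\e^{-ix\cdot\xi}\hat f(\xi)\hat\phi_n(\xi)}{\beta+2\Re\Psi(\xi)}\,\d\xi .
\]
Since $\bar R_\beta g_n\ge0$, this gives $0\le(\bar R_\beta g_n)(x)\le(\bar R_\beta g_n)(0)$ for all $x$, and monotone convergence gives $(\bar R_\beta g_n)(0)=(2\pi)^{-d}\int\hat f\hat\phi_n/(\beta+2\Re\Psi)\,\d\xi\uparrow\Upsilon(\beta)$. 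Finally, Tonelli's theorem (all integrands nonnegative) identifies $\bar R_\beta g_n=(\bar R_\beta f)*\phi_n$ pointwise; in particular $\int(\bar R_\beta f)\phi_n=(\bar R_\beta g_n)(0)<\infty$, so $\bar R_\beta f$ is finite almost everywhere.

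For the upper bound I would first note that $\bar R_\beta f$ is lower semicontinuous: each $\bar p_t$ is symmetric and lower semicontinuous, so $(f*\bar p_t)(x)=\int f(y)\bar p_t(x-y)\,\d y$ is lower semicontinuous by Fatou's lemma, and hence so is $\bar R_\beta f=\int_0^\infty\e^{-\beta t}(f*\bar p_t)\,\d t$ by a second application of Fatou in the $t$ variable. The standard approximate-identity estimate then gives $\liminf_n((\bar R_\beta f)*\phi_n)(x)\ge(\bar R_\beta f)(x)$ for every $x$, so
\[
	(\bar R_\beta f)(x)\le\liminf_{n\to\infty}(\bar R_\beta g_n)(x)\le\liminf_{n\to\infty}(\bar R_\beta g_n)(0)=\Upsilon(\beta)\qquad\text{for all }x\in\R^d .
\]
Hence $\sup_x(\bar R_\beta f)(x)\le\Upsilon(\beta)$, and a fortiori the essential supremum and $\limsup_{x\to0}(\bar R_\beta f)(x)$ are $\le\Upsilon(\beta)$.

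For the reverse inequalities, the essential-supremum bound is immediate: $(\bar R_\beta g_n)(0)=\int(\bar R_\beta f)(z)\phi_n(z)\,\d z\le\mathop{\mathrm{ess\,sup}}_z(\bar R_\beta f)(z)$ (using $\phi_n\ge0$ even, $\int\phi_n=1$), and letting $n\to\infty$ gives $\mathop{\mathrm{ess\,sup}}_z(\bar R_\beta f)(z)\ge\Upsilon(\beta)$; combined with the upper bounds this already forces $\sup_x(\bar R_\beta f)(x)=\mathop{\mathrm{ess\,sup}}_x(\bar R_\beta f)(x)=\Upsilon(\beta)$. The delicate step — which I expect to be the main obstacle, chiefly because it must also cover $\Upsilon(\beta)=\infty$, where $\bar R_\beta f$ is finite a.e.\ yet may be very large far from the origin — is showing $\limsup_{x\to0}(\bar R_\beta f)(x)\ge\Upsilon(\beta)$. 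I would argue by contradiction: if $\limsup_{x\to0}(\bar R_\beta f)(x)<M<\Upsilon(\beta)$, choose $\delta_0>0$ with $(\bar R_\beta f)(z)<M$ for all $0<|z|<\delta_0$ and split (the point $z=0$ being null)
\[
	(\bar R_\beta g_n)(0)=\int_{0<|z|<\delta_0}(\bar R_\beta f)(z)\phi_n(z)\,\d z+\int_{|z|\ge\delta_0}(\bar R_\beta f)(z)\phi_n(z)\,\d z\le M+\int_{|z|\ge\delta_0}(\bar R_\beta f)(z)\phi_n(z)\,\d z .
\]
The tail is controlled by comparing Gaussians: for fixed $n_0$ and $|z|\ge\delta_0$ one has $\phi_n(z)\le(n/n_0)^{d/2}\e^{-(n-n_0)\delta_0^2}\,\phi_{n_0}(z)$, so the tail is at most $(n/n_0)^{d/2}\e^{-(n-n_0)\delta_0^2}(\bar R_\beta g_{n_0})(0)$, which tends to $0$ as $n\to\infty$ with $n_0$ fixed. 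Letting $n\to\infty$ gives $\Upsilon(\beta)\le M$, a contradiction; hence $\limsup_{x\to0}(\bar R_\beta f)(x)=\Upsilon(\beta)$, and all four quantities agree for $\beta>0$.

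It remains to treat $\beta=0$. Since $\e^{-\beta t}\bar p_t\uparrow\bar p_t$ as $\beta\downarrow0$, monotone convergence gives $(\bar R_0 f)(x)=\sup_{\beta>0}(\bar R_\beta f)(x)=\sup_{\beta\in\mathbf{Q}\cap(0,\infty)}(\bar R_\beta f)(x)$ for every $x$ (by monotonicity of $\beta\mapsto(\bar R_\beta f)(x)$), while $\beta\mapsto\Upsilon(\beta)$ is nonincreasing with $\sup_{\beta>0}\Upsilon(\beta)=\Upsilon(0)$. Interchanging $\sup_x$ with $\sup_\beta$ gives $\sup_x(\bar R_0 f)(x)=\sup_{\beta>0}\Upsilon(\beta)=\Upsilon(0)$; taking the essential supremum of the countable supremum $\sup_{\beta\in\mathbf{Q}\cap(0,\infty)}(\bar R_\beta f)$ term by term gives $\mathop{\mathrm{ess\,sup}}_x(\bar R_0 f)(x)=\Upsilon(0)$; and from $(\bar R_0 f)(x)\ge(\bar R_\beta f)(x)$ we get $\limsup_{x\to0}(\bar R_0 f)(x)\ge\Upsilon(\beta)$ for every $\beta>0$, hence $\ge\Upsilon(0)$, while the trivial bound $\limsup_{x\to0}(\bar R_0 f)(x)\le\sup_x(\bar R_0 f)(x)=\Upsilon(0)$ completes the proof.
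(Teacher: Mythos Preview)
Your proof is correct and follows essentially the same route as the paper: Gaussian mollifiers, the pointwise Plancherel identity for $\bar R_\beta(f*\phi_n)$, and monotone convergence to reach $\Upsilon(\beta)$. The one place you diverge is the step you flagged as delicate, the lower bound $\limsup_{x\to0}(\bar R_\beta f)(x)\ge\Upsilon(\beta)$: your Gaussian tail comparison works, but the paper sidesteps the issue entirely by switching at that point to mollifiers $\phi_n$ supported in the ball of radius $1/n$, so that $((\bar R_\beta f)*\phi_n)(0)\le\sup_{\|x\|<1/n}(\bar R_\beta f)(x)$ immediately, with no tail to control.
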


\begin{proof}
	First, we prove the proposition in the case that $\beta>0$.
	
	In accord with Lemma \ref{lem:C_0},
	if $\phi\in\mathcal{S}$, then 
	$\bar R_\beta(f*\phi)$ is continuous. Therefore,
	the Plancherel theorem applies pointwise: For all $x\in\R^d$,
	\begin{equation}\label{eq:x}
		(\bar R_\beta(f*\phi))(x) = \frac{1}{(2\pi)^d}\int_{\R^d}\frac{\hat{f}(\xi)
		\hat{\phi}(\xi)\e^{-ix\cdot\xi}}{\beta+2\Re\Psi(\xi)}\,\d\xi.
	\end{equation}
	[Without the asserted continuity, we could only deduce this for almost
	every $x\in\R^d$.]
	In particular, for all probability densities $\phi\in\mathcal{S}$,
	\begin{equation}\label{eq:sup}
		\sup_{x\in\R^d}\left( \bar R_\beta(f*\phi)\right)(x) \le 
		\Upsilon(\beta).
	\end{equation}
	[$\Upsilon$ was defined in \eqref{eq:Upsilon}.]
	If $\{\phi_n\}_{n=1}^\infty$ is an approximate identity
	consisting solely of probability densities in
	$\mathcal{S}$, then 
	\begin{equation}
		\bar R_\beta f\le\liminf_{n\to\infty}\bar{R}_\beta(f*\phi_n)
		\qquad\text{[pointwise],}
	\end{equation}
	by Fatou's lemma. Consequently, \eqref{eq:sup} implies that
	\begin{equation}\label{eq:UB1}
		\sup_{x\in\R^d}(\bar R_\beta f)(x)\le \Upsilon(\beta).
	\end{equation}
	
	In order to prove the reverse bound, define the
	Gaussian mollifiers $\{\phi_n\}_{n=1}^\infty$:
	\begin{equation}\label{eq:Gaussian:density}
		\phi_n(z) := \left(\frac{n}{2\pi} \right)^{d/2}\exp\left(
		-\frac{\|z\|^2n}{2}\right).
	\end{equation}
	And observe that
	\begin{equation}\label{eq:mollify:approx}\begin{split}
		\left( \bar R_\beta(f*\phi_n)\right)(0) &= \frac{1}{(2\pi)^d}\int_{\R^d}
			\frac{\hat{f}(\xi)\e^{-\|\xi\|^2/(2n)}}{\beta+2
			\Re\Psi(\xi)}\,\d\xi\\
		&=(1+o(1))\Upsilon(\beta)\qquad\text{as $n\to\infty$},
	\end{split}\end{equation}
	thanks to the monotone convergence theorem. On the other
	hand, 
	\begin{equation}\begin{split}
		\left( \bar R_\beta(f*\phi_n)\right)(0)
			&=\int_{\R^d} \bar r_\beta(y)(f*\phi_n)(y)\,\d y\\
		&=\int_{\R^d} (\bar R_\beta f)(y) \phi_n(y)\,\d y\\
		&\le\mathop{\text{\rm ess sup}}\limits_{x\in\R^d}(\bar R_\beta f)(x),
	\end{split}\end{equation}
	since $\int_{\R^d}\phi_n(y)\,\d y=1$. This and
	\eqref{eq:sup} together prove
	that $\Upsilon(\beta)$ is the maximum of
	the $\beta$-potential of $f$;
	and the maximum $\beta$-potential is finite if and only if $\Upsilon(\beta)$
	is. We can choose the $\phi_n$'s so that in addition to the preceding
	regularity criteria, every $\phi_n$ is supported in the ball 
	of radius $1/n$ about the origin. In that way we obtain
	\begin{equation}\begin{split}
		\left( \bar R_\beta(f*\phi_n)\right)(0) 
			&= ((\bar R_\beta f)*\phi_n)(0) \\
		&\le \sup_{\|x\|<1/n}(\bar R_\beta f)(x).
	\end{split}\end{equation}
	And this proves \eqref{eq:Up:pot} for every $\beta\in(0\,,\infty)$.
	
	The case that $\beta:=0$ has to be handled separately
	because $\bar R_0$ is not a finite measure. 
	Therefore, we next derive \eqref{eq:Up:pot} in the case that $\beta=0$.
	
	First of all, $\beta\mapsto\Upsilon(\beta)$ is nonincreasing. Therefore,
	$\Upsilon(0):=\lim_{\beta\downarrow 0}\Upsilon(\beta)$ exists
	as a nondecreasing limit. Because $\bar R_\beta f\le \bar R_0 f$ [pointwise]
	for all $\beta>0$, we can deduce that
	\begin{equation}
		\Upsilon(0)\le\mathop{\text{\rm ess sup}}_{x\in\R^d}\,(\bar R_0 f)(x),
	\end{equation}
	and 
	\begin{equation}
		\Upsilon(0)\le\limsup_{x\to 0}\,(\bar R_0f)(x).
	\end{equation}
	
	For the reverse bound, recall \eqref{eq:sup},
	and let $\beta\downarrow 0$. Since $\Upsilon(\beta)\to\Upsilon(0)$,
	we find that for every probability density $\phi\in\mathcal{S}$
	and $x\in\R^d$,
	\begin{equation}
		\lim_{\beta\downarrow 0}\left( \bar R_\beta(f*\phi_n)\right)(x) \le\Upsilon(0).
	\end{equation}
	But the left-hand side is 
	\begin{equation}
		\lim_{\beta\downarrow 0}\E\left[\int_0^\infty (f*\phi_n)(\bar X_s+x)
		\e^{-\beta s}\,\d s\right]=
		\left( \bar R_0(f*\phi_n)\right)(x),
	\end{equation}
	thanks to the monotone convergence theorem. Another application 
	of Fatou's lemma shows that $(\bar R_0 f)(x)\le \Upsilon(0)$
	for all $x\in\R^d$.
	This establishes \eqref{eq:Up:pot},
	and hence the proposition, in the case that $\beta=0$.
\end{proof}

\section{Proof of the Maximum Principle}

The main goal of this subsection is to establish Theorem \ref{th:Dalang:1}.
This subsection also contains a harmonic-analytic estimate that might be
of independent interest.
We will use that harmonic-analytic to demonstrate Theorem \ref{th:Dalang:1},
as well as the subsequent Theorems \ref{th:interm} and 
 \ref{th:interm:asymp}.
 
 In order to motivate our estimate, let us first consider
the important special case that
the correlation function $f$ is
of Riesz type. That is, 
\begin{equation}
	f(z) := \| z\|^{-(d-b)}\qquad\text{for $z\in\R^d$},
\end{equation}
where $0^{-1}:=\infty$. Clearly, $f$ is locally integrable
when $b\in(0\,,d)$, a condition which we assume,
and in fact its Fourier transform is
\begin{equation}\label{eq:Riesz:FT}
		\hat f(\xi) = \frac{\pi^{d/2}2^b \Gamma(b/2)}{\Gamma((d-b)/2)}
		\cdot\|\xi\|^{-b}.
\end{equation}
See Mattila \cite[eq.\ (12.10), p.\ 161]{Mattila}, for example.
Then, it is well known that
\begin{equation}\label{eq:Energies}\begin{split}
	\iint f(x-y)\,\mu(\d x)\,\mu(\d y)&=\iint \frac{\mu(\d x)\,\mu(\d y)}{\|x-y\|^{d-b}}\\
		&=\frac{\pi^{d/2}2^b \Gamma(b/2)}{\Gamma((d-b)/2)}\cdot
		\int_{\R^d}\frac{|\hat\mu(\xi)|^2}{\|\xi\|^b}\,\d\xi\\
	&=\frac{1}{(2\pi)^d}
		\int_{\R^d} |\hat\mu(\xi)|^2 \hat f(\xi)\,\d\xi.
\end{split}\end{equation}
It is easy to guess this famous identity from an informal
application of the Fubini theorem. However, a rigorous derivation
of \eqref{eq:Energies} requires a good deal of effort;
see  Mattila \cite[Lemma 12.12, p.\ 162]{Mattila}, for instance.
In the language of potential theory, the preceding asserts that 
the Riesz-type ``energy'' of the form $\iint \|x-y\|^{-d+b}\,\mu(\d x)\,
\mu(\d y)$ is equal to a constant multiple of the P\'olya--Szeg\H{o}-type
energy $\int_{\R^d}|\hat\mu(\xi)|^2 \|\xi\|^{-b}\,\d\xi$.
The following proposition shows that there is a very general lower bound
that is valid for every correlation function $f$.

\begin{proposition}\label{pr:KX}
	For all Borel probability 
	measures $\mu$ on $\R^d$,
	\begin{equation}\label{eq:KX}
		\iint f(x-y)\,\mu(\d x)\,\mu(\d y) \ge
		\frac{1}{(2\pi)^d}\int_{\R^d} |\hat\mu(\xi)|^2
		\hat{f}(\xi)\,\d\xi.
	\end{equation}
\end{proposition}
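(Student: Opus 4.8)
The plan is to prove the inequality first for smooth data---where it becomes an \emph{identity}---and then to pass to a general probability measure by mollification, the only nontrivial ingredient being the positive-definiteness of $f$. For the first reduction, observe that \eqref{eq:KX} is trivial when $\iint f(x-y)\,\mu(\d x)\,\mu(\d y)=\infty$, so assume this quantity is finite. Letting $\mu_R$ denote the normalized restriction of $\mu$ to the ball $B(0\,,R)$, we have $\iint f(x-y)\,\mu_R(\d x)\,\mu_R(\d y)\to\iint f(x-y)\,\mu(\d x)\,\mu(\d y)$ by monotone convergence (since $f\ge 0$), while $\hat\mu_R\to\hat\mu$ pointwise, so Fatou's lemma together with $\hat f\ge 0$ gives $\liminf_R\int|\hat\mu_R(\xi)|^2\hat f(\xi)\,\d\xi\ge\int|\hat\mu(\xi)|^2\hat f(\xi)\,\d\xi$. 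Hence it suffices to prove \eqref{eq:KX} when $\mu$ has compact support.

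The key identity is that, for every real-valued $\varphi\in\mathcal S$,
\begin{equation}\label{eq:KX:id}
	\iint f(x-y)\varphi(x)\varphi(y)\,\d x\,\d y=\frac{1}{(2\pi)^d}\int_{\R^d}|\hat\varphi(\xi)|^2\hat f(\xi)\,\d\xi.
\end{equation}
Indeed, the left-hand side is $(\varphi\,,f*\varphi)_{L^2(\R^d)}$, the function $f*\varphi$ is continuous and tempered with $\widehat{f*\varphi}=\hat f\hat\varphi$, and \eqref{eq:KX:id} then follows from the Parseval identity of Lemma~\ref{lem:Parseval} once one uses that $\hat f$ is real and nonnegative. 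Now apply \eqref{eq:KX:id} to $\varphi=\mu_n:=\mu*\phi_n$, where $\phi_n$ is the Gaussian mollifier of \eqref{eq:Gaussian:density}; since $\mu$ has compact support, $\mu_n\in\mathcal S$. Because $|\hat\mu_n(\xi)|^2=|\hat\mu(\xi)|^2\e^{-\|\xi\|^2/n}$ increases to $|\hat\mu(\xi)|^2$ as $n\to\infty$, the monotone convergence theorem applied to the right-hand side of \eqref{eq:KX:id} shows that the left-hand side increases to $(2\pi)^{-d}\int|\hat\mu(\xi)|^2\hat f(\xi)\,\d\xi$. It therefore remains only to show that for every $n$,
\begin{equation}\label{eq:KX:est}
	\iint f(x-y)\mu_n(x)\mu_n(y)\,\d x\,\d y\le\iint f(x-y)\,\mu(\d x)\,\mu(\d y).
\end{equation}

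Unwinding the convolution $\mu_n=\mu*\phi_n$, the left-hand side of \eqref{eq:KX:est} equals $\int\psi_n(s)\,\mathcal E(s)\,\d s$, where $\psi_n:=\phi_n*\phi_n$ is again a Gaussian approximate identity and $\mathcal E(s):=\iint f(x-y)\,(\tau_s\mu)(\d x)\,\mu(\d y)$ is the mixed energy of $\mu$ and its translate $\tau_s\mu$ by $s$. Since $f$ is positive definite, the bilinear form $(\nu_1\,,\nu_2)\mapsto\iint f(x-y)\,\nu_1(\d x)\,\nu_2(\d y)$ is positive semidefinite; combined with the translation invariance $\iint f(x-y)\,(\tau_s\mu)(\d x)\,(\tau_s\mu)(\d y)=\iint f(x-y)\,\mu(\d x)\,\mu(\d y)$, the Cauchy--Schwarz inequality for this form yields $\mathcal E(s)\le\iint f(x-y)\,\mu(\d x)\,\mu(\d y)$ for every $s$, whence $\int\psi_n(s)\mathcal E(s)\,\d s\le\iint f\,\d\mu\,\d\mu$ and \eqref{eq:KX:est} follows; letting $n\to\infty$ then gives \eqref{eq:KX}. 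I expect the delicate point---and the place where one must use that $f$ is the genuine, lower-semicontinuous correlation function rather than an arbitrary almost-everywhere representative---to be precisely the positive semidefiniteness of that bilinear form for the possibly unbounded, merely lower-semicontinuous kernel $f$: one proves it first for the continuous regularizations $f*\gamma_\varepsilon$, for which it is immediate from \eqref{eq:KX:id} applied to signed Schwartz densities, and then passes to the limit $\varepsilon\downarrow 0$, the only subtlety being to control the diagonal energies $\iint f(x-y)\,\mu(\d x)\,\mu(\d y)$---which may a priori be infinite---by lower semicontinuity and monotone convergence.
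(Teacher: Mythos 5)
Your reduction to compactly supported $\mu$, the Parseval identity for Schwartz densities (which is indeed Lemma~\ref{lem:Parseval} combined with Lemma~\ref{lem:C_0}), and the rewriting of $\iint f(x-y)\mu_n(x)\mu_n(y)\,\d x\,\d y$ as $\int\psi_n(s)\,\mathcal E(s)\,\d s=\iint(f*\phi_{n/2})(x-y)\,\mu(\d x)\,\mu(\d y)$ are all fine. The gap is exactly at the point you flagged, and it is not a removable technicality: your argument is circular there. For the continuous regularizations $f*\gamma_\varepsilon$ you do get positive semidefiniteness of the energy form, hence $\mathcal E_\varepsilon(s)\le\mathcal E_\varepsilon(0)$, from the genuine Bochner theorem. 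To pass to $\varepsilon\downarrow0$, the off-diagonal term is handled by Fatou plus $\liminf_\varepsilon(f*\gamma_\varepsilon)(z)\ge f(z)$ for lower semicontinuous $f\ge0$; but the diagonal term requires $\limsup_{\varepsilon\downarrow0}\iint(f*\gamma_\varepsilon)(x-y)\,\mu(\d x)\,\mu(\d y)\le\iint f(x-y)\,\mu(\d x)\,\mu(\d y)$. Since $f*\gamma_\varepsilon$ is a continuous positive-definite function whose energy against $\mu$ equals $(2\pi)^{-d}\int|\hat\mu(\xi)|^2\hat\gamma_\varepsilon(\xi)\hat f(\xi)\,\d\xi$, which increases to $(2\pi)^{-d}\int|\hat\mu|^2\hat f$ by monotone convergence on the Fourier side, the estimate you are deferring is verbatim the inequality \eqref{eq:KX} you set out to prove. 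Neither tool you invoke closes it: lower semicontinuity gives a Fatou bound in the wrong direction (it bounds the mollified diagonal energy from \emph{below} by the true one), and $(f*\gamma_\varepsilon)(z)$ is not monotone in $\varepsilon$ — for a Riesz kernel, Jensen's inequality applied to the convex profile $r\mapsto r^{-(d-b)}$ gives $(f*\gamma_\varepsilon)(z)>f(z)$ away from the origin. The pointwise claim $\mathcal E(s)\le\mathcal E(0)$ is moreover not a consequence of Schwartz-sense positive definiteness alone: one can add to a continuous positive-definite $f_0$ an indicator of a symmetric Lebesgue-null set without disturbing $\hat f$, and with $\mu=\delta_0$ this reverses the inequality. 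So some regularity of the chosen representative of $f$ must enter, and your scheme provides no mechanism for it.

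The paper avoids this trap by never comparing the mollified kernel with $f$. It writes the energy via reciprocity as $\int(f*\mu)\,\d\mu$, applies Lusin's theorem to the potential $f*\mu\in L^1(\mu)$ to extract a compact $A_\epsilon$ with $\mu(A_\epsilon^c)<\epsilon$ on which $f*\mu$ is continuous, and then routes the one-sided comparison through $\int(f*\mu)\,\d\mu_\epsilon=\lim_n\int(f*\phi_{n/2}*\mu)\,\d\mu_\epsilon\ge\limsup_n\int(f*\phi_{n/2}*\mu_\epsilon)\,\d\mu_\epsilon$, where $\mu_\epsilon$ is the restriction of $\mu$ to $A_\epsilon$: the mollifier is absorbed into the \emph{measure}, the needed inequality comes from $\mu\ge\mu_\epsilon$ and the continuity of the potential on the Lusin set, and only then are Parseval, monotone convergence in $n$, and Fatou in $\epsilon$ applied. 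Some device of this kind — Lusin/Egorov applied to the potential $f*\mu$ rather than to the kernel — is what your translation-plus-Cauchy--Schwarz step would have to be replaced by.
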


\begin{proof}
	We will assume, without incurring any loss in generality,
	that 
	\begin{equation}
		\iint f(x-y)\,\mu(\d x)\,\mu(\d y)<\infty;
	\end{equation}
	for there is nothing to prove otherwise. We also observe that
	\begin{equation}\label{eq:reciprocity}
		\iint f(x-y)\,\mu(\d x)\,\mu(\d y)=\int (f*\mu)\,\d\mu.
	\end{equation}
	This is essentially the famous ``reciprocity theorem'' of classical potential theory.
	
	Because $f*\mu\in L^1(\mu)$, Lusin's theorem implies
	that for all $\epsilon>0$
	there exists a compact set $A_\epsilon$ in $\R^d$ such that:
	\begin{itemize}
		\item[(i)] $\mu(A_\epsilon^c)<\epsilon$; and 
		\item[(ii)] $f*\mu$ is continuous on $A_\epsilon$.
	\end{itemize}
	Let 
	\begin{equation}
		\mu_\epsilon(\bullet) := \mu (\bullet\cap A_\epsilon)
	\end{equation}
	denote the restriction of $\mu$ to $A_\epsilon$,
	and recall the Gaussian densities $\{\phi\}_{n=1}^\infty$
	from \eqref{eq:Gaussian:density}. Because
	\begin{equation}
		\lim_{n\to\infty}\left(f*\phi_{n/2}*\mu\right)=
		f*\mu, \qquad\text{uniformly on $A_\epsilon$,}
	\end{equation}
	it follows from \eqref{eq:reciprocity} that
	\begin{equation}\label{eq:almostthere}\begin{split}
		\iint f(x-y)\,\mu(\d x)\,\mu(\d y) &\ge
			\int_{A_\epsilon} (f*\mu)\,\d\mu\\
		&=\int(f*\mu)\,\d\mu_\epsilon\\
		&=\lim_{n\to\infty}\int (f*\phi_{n/2}*\mu)\,\d\mu_\epsilon\\
		&\ge\limsup_{n\to\infty}\int(f*\phi_{n/2}*\mu_\epsilon)\,\d\mu_\epsilon.
	\end{split}\end{equation}
	Since $\phi_{n/2}=\phi_n*\phi_n$,	
	\begin{equation}\begin{split}
		\iint f(x-y)\,\mu(\d x)\,\mu(\d y) &\ge
			\limsup_{n\to\infty}\int_{\R^d} (f*\mu_{n,\epsilon})(x)
			\mu_{n,\epsilon}(x)\,\d x.
	\end{split}\end{equation}
	where $\mu_{n,\epsilon}(x):=(\phi_n*\mu_\epsilon)(x)$.
	Since $\mu_{n,\epsilon}\in\mathcal{S}$
	and $f*\mu_{n,\epsilon}$ is $C^\infty$ and
	tempered, Lemma \ref{lem:Parseval} implies that 
	\begin{equation}\begin{split}
		\int_{\R^d}(f*\mu_{n,\epsilon})(x)\mu_{n,\epsilon}(x)\,\d x
			&=\frac{1}{(2\pi)^d}\int_{\R^d}\hat f(\xi)
			|\hat\mu_{n,\epsilon}(\xi)|^2\,\d\xi\\
		&=\frac{1}{(2\pi)^d}\int_{\R^d}\e^{-\|\xi\|^2/n}
			|\hat\mu_\epsilon(\xi)|^2\hat f(\xi)\,\d\xi.
	\end{split}\end{equation}
	This, \eqref{eq:almostthere}, and the monotone convergence
	theorem together imply that
	\begin{equation}
		\iint f(x-y)\,\mu(\d x)\,\mu(\d y) \ge \frac{1}{(2\pi)^d}
		\int_{\R^d}|\hat\mu_\epsilon(\xi)|^2\hat f(\xi)\,\d\xi.
	\end{equation}
	Since $\mu_\epsilon$ converges weakly to $\mu$ as $\epsilon\to 0$,
	we know that $\hat\mu_\epsilon$ converges to $\hat\mu$ pointwise.
	The proposition follows from this and Fatou's lemma.
\end{proof}

An elementary computation \cite[Lemma 12.11, p.\ 161]{Mattila} 
and Proposition \ref{pr:KX} above together yield the following corollary. 

\begin{corollary}
	If $f$ is lower semicontinuous, then for all Borel probability
	measures $\mu$ on $\R^d$,
	\begin{equation}
		\iint f(x-y)\,\mu(\d x)\,\mu(\d y) =
		\frac{1}{(2\pi)^d}\int_{\R^d} |\hat\mu(\xi)|^2
		\hat{f}(\xi)\,\d\xi.
	\end{equation}
\end{corollary}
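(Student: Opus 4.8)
Since Proposition~\ref{pr:KX} already supplies the inequality ``$\ge$'' (with no continuity hypothesis on $f$), the plan is to prove only the reverse bound
\[
	\iint f(x-y)\,\mu(\d x)\,\mu(\d y) \le
	\frac{1}{(2\pi)^d}\int_{\R^d} |\hat\mu(\xi)|^2\, \hat{f}(\xi)\,\d\xi
\]
under the additional assumption that $f$ is lower semicontinuous, and then to combine the two. The device is Gaussian mollification, which has the virtue of preserving both nonnegativity and positive-definiteness while turning $f$ into an honest bounded continuous function.

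Concretely, I would set $f_n:=f*\phi_n$, where $\{\phi_n\}_{n\ge1}$ is the family of Gaussian mollifiers from \eqref{eq:Gaussian:density}. Then $f_n\ge0$, and $\hat f_n(\xi)=\hat f(\xi)\,\e^{-\|\xi\|^2/(2n)}$; since $\hat f$ is a tempered measure (Bochner--Schwartz), this product lies in $L^1(\R^d)$, so $f_n$ is bounded and continuous and equals its Fourier inversion, $f_n(z)=(2\pi)^{-d}\int_{\R^d}\e^{-iz\cdot\xi}\hat f_n(\xi)\,\d\xi$. Inserting this representation into $\iint f_n(x-y)\,\mu(\d x)\,\mu(\d y)$ and applying Fubini--Tonelli --- legitimate because $\mu$ is a probability measure and $\hat f_n\in L^1(\R^d)$ --- gives the ``elementary computation''
\[
	\iint f_n(x-y)\,\mu(\d x)\,\mu(\d y)
	=\frac{1}{(2\pi)^d}\int_{\R^d}|\hat\mu(\xi)|^2\,\hat f(\xi)\,\e^{-\|\xi\|^2/(2n)}\,\d\xi ;
\]
equivalently one may read this off from the Herglotz--Bochner representation (Proposition~\ref{HerzogBochner}) of the continuous positive-definite function $f_n$. (A minor point to address: one should know $f\in L^1_{\mathrm{loc}}(\R^d)$, so that $f*\phi_n$ is a genuine function; this is automatic in all the examples we treat and I would simply note it.)

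It then remains to let $n\to\infty$ in the last display. On the right-hand side $\e^{-\|\xi\|^2/(2n)}\uparrow 1$, so by monotone convergence the right-hand side increases to $(2\pi)^{-d}\int_{\R^d}|\hat\mu(\xi)|^2\hat f(\xi)\,\d\xi$, whether or not this integral is finite. On the left-hand side, lower semicontinuity of $f$ gives $\liminf_{n\to\infty}f_n(z)\ge f(z)$ for every $z\in\R^d$ because $\{\phi_n\}$ is an approximate identity, so Fatou's lemma yields $\iint f(x-y)\,\mu(\d x)\,\mu(\d y)\le\liminf_{n\to\infty}\iint f_n(x-y)\,\mu(\d x)\,\mu(\d y)$, which is exactly the bound sought. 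I expect the one genuinely delicate point to be this passage to the limit: lower semicontinuity of $f$ is precisely what makes Fatou run in the direction that produces ``$\le$'' (rather than a useless ``$\ge$''), and it is the Gaussian nature of the mollifier --- not merely some approximation of $f$ from below by continuous functions --- that keeps $\hat f_n\ge0$ and hence keeps the clean Fourier identity in force. The rest is bookkeeping.
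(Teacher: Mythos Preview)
Your proof is correct and follows essentially the same route as the paper: the paper obtains ``$\ge$'' from Proposition~\ref{pr:KX} and attributes the reverse inequality to an ``elementary computation'' cited from Mattila \cite[Lemma~12.11]{Mattila}, which is precisely the mollification-plus-Fourier-inversion argument you have written out in full (mollify $f$ by Gaussians to get a bounded continuous positive-definite $f_n$, use Fubini/Bochner to obtain the exact identity for $f_n$, then pass to the limit via monotone convergence on the Fourier side and lower semicontinuity plus Fatou on the energy side). Your concern about $f\in L^1_{\mathrm{loc}}$ is moot here since the paper's standing hypothesis is that $f$ is tempered in the sense of at-most-polynomial growth, hence locally bounded.
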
 

Even though we do not use the above result in the sequel, we have chosen to include it as it is an interesting fact about energy forms that are based on lower semicontinuous positive-definite functions.   A weaker version of this result has been proved 
by Khoshnevisan and Xiao \cite[Theorem 5.2]{KX:HAAL}.

We are now ready to prove Theorem \ref{th:Dalang:1}.

\begin{proof}[Proof of Theorem \ref{th:Dalang:1}]
	Note that for all $t\ge 0$,
	\begin{equation}\begin{split}
		(\bar P_t f)(0) &= \left( P_t^*P_tf\right)(0)\\
		&=\left( \tilde{p}_t*p_t*f\right)(0)\\
		&=\iint f(x-y) p_t(x)p_t(y)\,\d x\,\d y.
	\end{split}\end{equation}
	This requires only the Tonelli theorem. Now Proposition
	\ref{pr:KX} can be applied to show us that
	\begin{equation}
		(\bar P_t f)(0) \ge \frac{1}{(2\pi)^d}\int_{\R^d}
		\e^{-2t\Re\Psi(\xi)}\hat f(\xi)\,\d\xi.
	\end{equation}
	Multiply both sides by $\exp(-\beta t)$ and integrate
	$[\d t]$ to find that
	\begin{equation}\begin{split}
		(\bar R_\beta f)(0) &\ge \frac{1}{(2\pi)^d}
			\int_{\R^d}\frac{\hat f(\xi)}{\beta+2\Re\Psi(\xi)}\,\d\xi\\
		&=\Upsilon(\beta).
	\end{split}\end{equation}
	This and Proposition \ref{pr:FA} together imply \eqref{eq:0sup}.
	Consequently, \eqref{cond:1} holds if and only if
	$\Upsilon(\beta)<\infty$ for all $\beta>0$. 
	
	Next we prove that $\bar R_\beta f\in C_0(\R^d)$
	whenever $\Upsilon(\beta)<\infty$ and $f$ is lower semicontinuous.
	
	When $f$ is lower semicontinuous we can find compactly-supported continuous
	functions $f_n$ that converge upward to $f$ as $n\to\infty$.
	Recall from \eqref{eq:R:BG} on page \pageref{eq:R:BG}
	that $\bar R_\beta$ maps $C_0(\R^d)$ to $C_0(\R^d)$. 
	Consequently, $\bar R_\beta f_n\in C_0(\R^d)$, and from
	this we may conclude that
	$\bar R_\beta f$ is lower semicontinuous. 
	
	Next, let us define
	\begin{equation}
		\pi_\beta (x) := \frac{1}{(2\pi)^d}\int_{\R^d}\frac{\e^{-i\xi\cdot x}
		\hat f(\xi)}{\beta+2\Re\Psi(\xi)}\,\d\xi.
	\end{equation}
	If $\Upsilon(\beta)<\infty$, then $\pi_\beta\in C_0(\R^d)$.
	Moreover, a few successive applications of Fubini's theorem tell us
	that for all $\phi\in\mathcal{S}$,
	\begin{equation}
		\int_{\R^d} \pi_\beta(x)\phi(x)\,\d x =
		\int_{\R^d} (\bar R_\beta f)(x)\phi(x)\,\d x.
	\end{equation}
	Thus, $\pi_\beta=\bar R_\beta f$ a.e.
	
	It remains to prove that if $\Upsilon(1)$ is finite, then 
	so is $\Upsilon(\beta)$ for every $\beta>0$. 
	
	We have shown that if $\Upsilon(1)$ is finite, 
	then $\bar R_1 f=\pi_1$ almost everywhere,
	$\pi_1\in C_0(\R^d)$; also, $\bar R_1 f$ is bounded [Proposition
	\ref{pr:FA}]. 
	If $h_1=h_2$ almost everwhere
	and $h_1,h_2:\R^d\to\R_+$ are measurable, then
	\begin{equation}\begin{split}
		(\bar R_\beta h_1)(x)  & = \int_{\R^d} \bar r_\beta (y)
			h_1(x-y)\,\d y\\
		&=\int_{\R^d} \bar r_\beta (y) h_2(x-y)\,\d y\\
		&=(\bar R_\beta h_2)(x),
	\end{split}\end{equation}
	for all $x\in\R^d$. Therefore, \eqref{eq:R:BG} on page \pageref{eq:R:BG}
	and the fact that $\pi_1\in C_0(\R^d)$ together imply that
	$\bar R_\beta \pi_1\in C_0(\R^d)$
	for all $\beta>0$. In particular,
	$\bar R_\beta\bar R_1 f\in C_0(\R^d)$---whence $\bar R_\beta\bar R_1 f$
	is bounded---for every
	$\beta>0$. And by the resolvent equation [see
	Blumenthal and Getoor \cite[(8.10), p.\ 41]{BG}],
	\begin{equation}
		(\bar R_\beta f)(x) = (\bar R_1 f)(x)+(1-\beta)(\bar R_\beta 
		\bar R_1 f)(x) \quad\text{for all\ $x\in\R^d$}.
	\end{equation}
	Thus, it follows from the boundedness of
	$\bar R_1 f$ and $\bar R_\beta \bar R_1 f$ [for all $\beta>0$]
	that $\bar R_\beta f$ is bounded for every
	$\beta>0$. Consequently, Proposition \ref{pr:FA} implies the
	finiteness of $\Upsilon(\beta)$ for every $\beta>0$.
\end{proof}

\section{Probabilistic Potential Theory}

Define,
for all measurable functions $\phi:\R^d\to\R_+$, a process
$t\mapsto L_t(\phi)$ as follows:
\begin{equation}
	L_t(\phi):=\int_0^t \phi(\bar X_s)\,\d s
	\qquad\text{for all $t\in[0\,,\infty]$}.
\end{equation}
The random field $(t\,,\phi)\mapsto L_t(\phi)$ defined above is often called the \emph{occupation
field} of the L\'evy process $X$; see, for example, Dynkin
\cite{Dynkin}.
It is well defined, but might well be infinite
even in simple cases. The following example highlights this,
and also paves the way to the ensuing discussion which yields
a probabilistic interpretation of Conditions \ref{cond:1}
and \ref{cond:0}.

\begin{example}[After Girsanov, 1962 \protect{\cite[\S3.10, pp.\ 78--81]{McKean}}]
	Consider $d=1$,
	and let $X$ denote one-dimensional Brownian motion,
	normalized so that
	\begin{equation}
		\E\e^{i\xi\cdot X_t} =\e^{-t\xi^2/4}
		\qquad\text{for all $t>0$ and $\xi\in\R$}.
	\end{equation}
	In this way, $\bar X$ is normalized to
	be standard linear Brownian motion; that is,
	\begin{equation}
		\E\e^{i\xi\cdot \bar{X}_t} =\e^{-t\xi^2/2}
		\qquad\text{for all $t>0$ and $\xi\in\R$}.
	\end{equation}
	Let $\phi(x) := |x|^{-\alpha}$ for all $x\in\R$, where
	$\alpha\in(0\,,1)$ is fixed. By the occupation density
	formula [see Corollary (1.6) of Revuz and Yor \cite[p.\ 209]{RevuzYor}],
	\begin{equation}
		L_t(\phi)=\int_{-\infty}^\infty \frac{\ell^x_t}{|x|^\alpha}\,\d x,
	\end{equation}
	where $\ell$ denotes the process of local times
	associated to $\bar X$. According to Trotter's theorem \cite[Theorem (1.7),
	p.\ 209]{RevuzYor}
	and the occupation density formula,
	\begin{equation}
		\P\left\{ \ell_t^\bullet\in C_0(\R)\text{ for all $t>0$}
		\right\}=1.
	\end{equation}
	It is a well-known consequence of 
	the Blumenthal zero-one law and Brownian scaling that 
	\begin{equation}
		\P\left\{\ell_t^0>0\text{ for all $t>0$}\right\}=1. 
	\end{equation}
	Consequently [if we ignore the null sets in the usual way],
	$L_t(\phi)<\infty$ for some---hence all---$t>0$
	if and only if $\alpha<1$. At the same time, we note that
	\begin{equation}\begin{split}
		\E L_t(\phi) &=\E\int_0^t \frac{\d s}{|\bar X_s|^\alpha}\\
		&=\int_0^t
		\frac{\d s}{s^{\alpha/2}}\cdot\frac{1}{\sqrt{2\pi}}
		\int_{-\infty}^\infty |z|^{-\alpha}\e^{-z^2/2}\,\d z.
	\end{split}\end{equation}
	This requires only Brownian scaling and the Tonelli theorem.
	Thus, $\E L_t(\phi)$ is finite for all $t>0$, if 
	and only if $\alpha<1$. In rough terms, we have shown that
	$L_t(\phi)<\infty$ if and only if $\E L_t(\phi)<\infty$.
	As we shall see, a suitable interpretation of this property
	can be generalized; see Theorem
	\ref{th:PPT} below. 
	\qed
\end{example}

It is convenient to use some notation from Markov-process
theory:
Recall from Markov-process theory that $\P_z$ denotes the law of
the underlying L\'evy process started at $z\in\R^d$
[so that $\P=\P_0$], and $\E_z$ denotes the corresponding expectation operator.
Since the underlying L\'evy process $X$ is L\'evy, $\P_z$ can
be interpretted as the law of $X_\bullet + z$.
Thus, 
\begin{equation}
	(\bar R_\alpha \phi)(z) = \E_z\int_0^\infty \e^{-\alpha s}
	\phi(\bar X_s)\,\d s.
\end{equation}

Before we state and prove the main result of this section,
let us first establish some technical facts which will be needed in the proof
of the main result. 

\begin{lemma}\label{lem:L1}
	For all $t,\alpha>0$ and measurable
	functions $\phi:\R^d\to\R_+$,
	\begin{equation}\label{eq:L1}
		\sup_{x\in\R^d}
		\E_x \left(\sup_{s>0}\left[\e^{-\alpha s}
		L_s(\phi)\right]\right) \le\sup_{z\in\R^d}
		(\bar R_\alpha \phi)(z)
		\le \chi_\alpha(t)\sup_{x\in\R^d}
		\E_x L_t(\phi),
	\end{equation}
	where
	\begin{equation}
		\chi_\alpha(t) := \frac{\e^{\alpha t}}{\e^{\alpha t}-1}.
	\end{equation}
\end{lemma}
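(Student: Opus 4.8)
The plan is to establish the two inequalities in \eqref{eq:L1} separately, working from the definition $L_s(\phi)=\int_0^s\phi(\bar X_r)\,\d r$ and the resolvent formula $(\bar R_\alpha\phi)(z)=\E_z\int_0^\infty\e^{-\alpha r}\phi(\bar X_r)\,\d r$. For the upper bound I would fix $t>0$ and partition the time axis into the intervals $[kt,(k+1)t)$ for $k\ge 0$. On each such interval, $\e^{-\alpha r}\le \e^{-\alpha kt}$, so
\begin{equation*}
	\int_0^\infty\e^{-\alpha r}\phi(\bar X_r)\,\d r
	\le\sum_{k=0}^\infty\e^{-\alpha kt}\int_{kt}^{(k+1)t}\phi(\bar X_r)\,\d r.
\end{equation*}
Taking $\E_x$, applying the Markov property at time $kt$ to the increment $\int_{kt}^{(k+1)t}\phi(\bar X_r)\,\d r$, and using that $\bar X$ has stationary increments (so the conditional expectation is $\E_{\bar X_{kt}}L_t(\phi)\le\sup_z\E_z L_t(\phi)$), each summand is bounded by $\e^{-\alpha kt}\sup_z\E_z L_t(\phi)$. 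Summing the geometric series $\sum_k\e^{-\alpha kt}=(1-\e^{-\alpha t})^{-1}=\chi_\alpha(t)$ and then taking the supremum over $x$ gives the right-hand inequality.

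For the left-hand inequality I would use integration by parts (or a Fubini argument) to rewrite the resolvent in terms of the occupation field: since $\e^{-\alpha s}=\alpha\int_s^\infty\e^{-\alpha u}\,\d u$, Tonelli gives
\begin{equation*}
	(\bar R_\alpha\phi)(z)=\E_z\int_0^\infty\e^{-\alpha r}\phi(\bar X_r)\,\d r
	=\alpha\,\E_z\int_0^\infty\e^{-\alpha u}L_u(\phi)\,\d u.
\end{equation*}
Bounding $L_u(\phi)\le\sup_{s>0}[\e^{-\alpha s}L_s(\phi)]\cdot\e^{\alpha u}$ is the wrong direction, so instead I would simply note that $L_u(\phi)\ge\e^{-\alpha u}L_u(\phi)\cdot\e^{\alpha u}$ is unhelpful too; the clean route is: for every $u>0$, monotonicity of $L$ gives $\int_0^\infty\e^{-\alpha r}\phi(\bar X_r)\,\d r\ge\int_0^u\e^{-\alpha r}\phi(\bar X_r)\,\d r\ge\e^{-\alpha u}L_u(\phi)$, since $\e^{-\alpha r}\ge\e^{-\alpha u}$ on $[0,u]$. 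Hence $\int_0^\infty\e^{-\alpha r}\phi(\bar X_r)\,\d r\ge\sup_{u>0}[\e^{-\alpha u}L_u(\phi)]$ pathwise, and taking $\E_z$ and then $\sup_z$ yields $\sup_z(\bar R_\alpha\phi)(z)\ge\sup_x\E_x(\sup_{s>0}[\e^{-\alpha s}L_s(\phi)])$, which is the left-hand inequality.

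The pathwise lower bound and the geometric-series upper bound are both essentially elementary once the Markov property is invoked correctly, so the only genuine subtlety — and the step I would be most careful about — is the application of the Markov property in the upper bound: one must check that for a nonnegative measurable $\phi$ the increment $\int_{kt}^{(k+1)t}\phi(\bar X_r)\,\d r$ is a measurable functional of the post-$kt$ path and that $\E_x[\,\cdot\mid\mathcal F_{kt}]=\E_{\bar X_{kt}}L_t(\phi)$, using that $\bar X$ is a (time-homogeneous) Markov process with stationary increments started from $\bar X_{kt}$. Everything else — Tonelli's theorem for the interchange of expectation and integration, monotonicity of $u\mapsto L_u(\phi)$, and summation of the geometric series $\sum_{k\ge 0}\e^{-\alpha kt}=\chi_\alpha(t)$ — is routine. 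No measurability pathology arises because $\phi\ge 0$ allows us to work with $[0,\infty]$-valued quantities throughout, exactly as in the occupation-field discussion preceding the lemma.
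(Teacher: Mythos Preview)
Your proof is correct and follows essentially the same route as the paper: the left-hand inequality via the pathwise bound $\e^{-\alpha u}L_u(\phi)\le\int_0^\infty\e^{-\alpha r}\phi(\bar X_r)\,\d r$ (then sup over $u$, then expectations), and the right-hand inequality via partitioning $[0,\infty)$ into blocks of length $t$, bounding $\e^{-\alpha r}\le\e^{-\alpha kt}$ on each block, applying the Markov property, and summing the geometric series. The detour through integration by parts is unnecessary but harmless; your ``clean route'' is exactly what the paper does.
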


\begin{remark}
	Let us point out the following elementary
	bound for the right-most term in the preceding display:
	For all $t,\alpha>0$,
	\begin{equation}
		\e^{-\alpha t}\sup_{x\in\R^d} \E_x L_t(\phi)
		\le \sup_{x\in\R^d}
		\E_x \left(\sup_{s>0}\left[\e^{-\alpha s}
		L_s(\phi)\right]\right).
	\end{equation}
	This shows that the quantities on the two extreme
	ends of \eqref{eq:L1} are one and the same, up to a 
	multiplicative constant that depends on $\alpha$
	and $t$ in an explicit way.
	\qed
\end{remark}

\begin{proof}[Proof of Lemma \ref{lem:L1}]
	Because $\phi$ is nonnegative, we have the sure
	inequality,
	\begin{equation}
		\e^{-\alpha s}L_s(\phi)\le\int_0^\infty \e^{-\alpha r}
		\phi(\bar X_r)\,\d r,
		\qquad\text{valid for all $s,\alpha>0$}.
	\end{equation}
	We take suprema over $s>0$
	and then apply expectations $[\d\P_x]$
	to deduce the first inequality in \eqref{eq:L1}.
	For the second bound, let us note that for all $\alpha,t>0$
	and $z\in\R^d$,
	\begin{equation}\begin{split}
		(\bar R_\alpha \phi)(z) &= \sum_{n=0}^\infty \E_z
			\int_{nt}^{(n+1)t}\e^{-\alpha s} \phi(\bar X_s)\,\d s\\
		&\le \sum_{n=0}^\infty \e^{-\alpha nt}\E_z\int_0^t
			\phi(\bar X_{s+nt})\,\d s.
	\end{split}\end{equation}
	This implies the second inequality, because
	\begin{equation}\begin{split}
		\E_z\int_0^t\phi(\bar X_{s+nt})\,\d s&=
			\E_z\E_{X_{nt}}\int_0^t\phi(\bar X_s)\,\d s\\
		&\le\sup_{x\in\R^d} \E_x\left[ L_t(\phi)\right],
	\end{split}\end{equation}
	in accord with the Markov property.
\end{proof}

\begin{lemma}\label{lem:L2}
	For all $t>0$ and measurable $\phi:\R^d\to\R_+$,
	\begin{equation}
		\sup_{x\in\R^d}
		\E_x\left(\left| L_t(\phi)\right|^2\right) 
		\le 2\left(\sup_{z\in\R^d}\E_z\left[ L_t(\phi)\right]\right)^2.
	\end{equation}
\end{lemma}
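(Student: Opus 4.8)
The natural approach is to expand the square of the occupation functional into a double time integral and then exploit the Markov property of the replica process $\bar X$ at the \emph{earlier} of the two times. Since $\phi\ge 0$ is measurable, Tonelli's theorem applies freely, so
\[
	\E_x\!\left(\left|L_t(\phi)\right|^2\right)
	= \E_x\int_0^t\!\!\int_0^t \phi(\bar X_r)\phi(\bar X_s)\,\d r\,\d s
	= 2\int_0^t\!\d s\int_0^s\!\d r\ \E_x\!\left[\phi(\bar X_r)\phi(\bar X_s)\right],
\]
the last step using the symmetry of the integrand in $(r\,,s)$. If $M:=\sup_{z\in\R^d}\E_z[L_t(\phi)]=\infty$ there is nothing to prove, so I would assume $M<\infty$ from the outset.

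\textbf{Key steps.} First, for fixed $r\le s$, condition on the natural filtration at time $r$ and invoke the (simple) Markov property of the Lévy process $\bar X$, whose transition semigroup is $\bar P$: this gives $\E_x[\phi(\bar X_s)\mid\mathcal F_r]=(\bar P_{s-r}\phi)(\bar X_r)$, hence $\E_x[\phi(\bar X_r)\phi(\bar X_s)]=\E_x[\phi(\bar X_r)(\bar P_{s-r}\phi)(\bar X_r)]$. Substituting this, swapping the order of integration via $\int_0^t\d s\int_0^s\d r=\int_0^t\d r\int_r^t\d s$, and changing variables $u=s-r$ yields
\[
	\E_x\!\left(\left|L_t(\phi)\right|^2\right)
	= 2\int_0^t\!\d r\ \E_x\!\left[\phi(\bar X_r)\int_0^{t-r}(\bar P_u\phi)(\bar X_r)\,\d u\right].
\]
Second, I would bound the inner integral: since $\bar P_u\phi\ge 0$, for every $y\in\R^d$ one has $\int_0^{t-r}(\bar P_u\phi)(y)\,\d u\le\int_0^t(\bar P_u\phi)(y)\,\d u=\E_y[L_t(\phi)]\le M$, the middle equality again being Tonelli applied to $\E_y\int_0^t\phi(\bar X_u)\,\d u$. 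Plugging this bound in gives
\[
	\E_x\!\left(\left|L_t(\phi)\right|^2\right)\le 2M\int_0^t \E_x[\phi(\bar X_r)]\,\d r
	= 2M\,\E_x[L_t(\phi)]\le 2M^2,
\]
and taking the supremum over $x\in\R^d$ finishes the proof.

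\textbf{Main obstacle.} Honestly there is no deep obstacle here; the result is a routine ``second-moment/reproducing'' estimate for additive functionals. The only points deserving a line of care are: justifying each interchange of expectation and integration (all integrands are nonnegative, so Tonelli suffices); applying the Markov property with the \emph{correct} semigroup $\bar P$ of $\bar X$ (legitimate because $\bar X=X+X^*$ is itself a Lévy process and hence Markov); and handling the trivial case $M=\infty$ separately. The slickest presentation is the one above, where the factor $2$ appears naturally from restricting to $\{r\le s\}$ and the gain comes from replacing $\int_0^{t-r}(\bar P_u\phi)(\bar X_r)\,\d u$ by the uniform bound $M$ before integrating $\phi(\bar X_r)$ back up to $\E_x L_t(\phi)\le M$.
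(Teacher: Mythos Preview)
Your proof is correct and follows essentially the same approach as the paper: expand the square into a double time integral, use symmetry to reduce to the simplex $\{r\le s\}$, apply the Markov property of $\bar X$ at the earlier time, and bound the resulting inner occupation integral by $\sup_z\E_z[L_t(\phi)]$. The only cosmetic difference is that the paper writes the conditional expectation directly as $\E_{\bar X_u}[L_{t-u}(\phi)]$ rather than via the semigroup $\bar P_u$ and a change of variables, but the computations are identical.
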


\begin{proof}
	We can write
	\begin{equation}
		\E_x\left(\left| L_t(\phi)\right|^2\right) =2
		\int_0^t\d u\int_u^t\d v\ \E_x\left(
		\phi(\bar X_u)\phi(\bar X_v)
		\right).
	\end{equation}
	Since
	\begin{equation}\begin{split}
		\E_x\left(\left.\int_u^t \phi(\bar X_v)\,\d v\ \right|\,
			\bar X_s;\,s\le u \right)
			&=\E_{X_u}\int_0^{t-u}\phi(\bar X_s)\,\d s\\
		&=\E_{X_u} \left[ L_{t-u}(\phi)\right]
			\qquad\text{$\P_x$-a.s.,}
	\end{split}\end{equation}
	it follows that
	\begin{equation}\begin{split}
		\E_x\left(\left| L_t(\phi)\right|^2\right)
			&=2\E_x\int_0^t\phi(\bar X_u)\E_{X_u}\left[ L_{t-u}(\phi)
			\right]\,\d u\\
		&\le2\E_x\int_0^t\phi(\bar X_u)\cdot
			\sup_{z\in\R^d}\E_z\left[ L_{t-u}(\phi)\right]\,\d u.
	\end{split}\end{equation}
	The lemma follows since $L_{t-u}(\phi)\le L_t(\phi)$.
\end{proof}
The following constitutes the third, and final, technical 
lemma of this section.
\begin{lemma}\label{lem:L3}
	For all $\alpha,t>0$,
	\begin{equation}
		\sup_{x\in\R^d}\P_x\left\{
		L_t(f) \ge \frac{(\bar R_\alpha f)(0)}{2\chi_\alpha(t)}
		\right\} \ge \frac18,
	\end{equation}
	where $\chi_\alpha(t)$ is defined in Lemma \ref{lem:L1}.
\end{lemma}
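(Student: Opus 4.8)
The plan is to prove this by a Paley--Zygmund (small--ball) estimate for the nonnegative random variable $L_t(f)$, drawing its first moment from Lemma \ref{lem:L1} and controlling its second moment by Lemma \ref{lem:L2}. Write $M:=\sup_{z\in\R^d}\E_z L_t(f)$. We may assume $M>0$, since otherwise $(\bar R_\alpha f)(0)=0$, $L_t(f)=0$ almost surely, and the asserted inequality is trivial. The right-most inequality in \eqref{eq:L1}, together with $(\bar R_\alpha f)(0)\le\sup_{z}(\bar R_\alpha f)(z)$ (equivalently, the maximum principle of Theorem \ref{th:Dalang:1}), gives
\[
	M\ \ge\ \frac{(\bar R_\alpha f)(0)}{\chi_\alpha(t)},
\]
so that the threshold $(\bar R_\alpha f)(0)/(2\chi_\alpha(t))$ is at most $\frac12 M$. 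Since $\P_x\{L_t(f)\ge c\}$ decreases as the level $c$ increases, it thus suffices to show $\sup_{x\in\R^d}\P_x\{L_t(f)\ge\frac12 M\}\ge\frac18$.

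For the main case $0<M<\infty$, I would use the elementary form of the Paley--Zygmund inequality: for $Z\ge0$ with $\E Z\in(0,\infty)$ and $0\le\lambda\le\E Z$, Cauchy--Schwarz applied to the bound $\E Z-\lambda\le\E[(Z-\lambda)\mathbf{1}_{\{Z>\lambda\}}]$ yields $\P\{Z>\lambda\}\ge(\E Z-\lambda)^2/\E(Z^2)$. First I would pick points $x_n\in\R^d$ with $M_n:=\E_{x_n}L_t(f)\to M$. For each $n$ large enough that $M_n>\frac12 M$, applying this under $\P_{x_n}$ with $Z=L_t(f)$ and $\lambda=\frac12 M$, and invoking Lemma \ref{lem:L2} in the form $\E_{x_n}(L_t(f)^2)\le 2M^2$, gives
\[
	\P_{x_n}\Bigl\{L_t(f)\ge\tfrac12 M\Bigr\}\ \ge\ \frac{\bigl(M_n-\tfrac12 M\bigr)^2}{2M^2}.
\]
Letting $n\to\infty$, the right-hand side tends to $(M/2)^2/(2M^2)=\frac18$, and hence $\sup_x\P_x\{L_t(f)\ge\frac12 M\}\ge\frac18$; by the reduction above this settles the lemma whenever $M<\infty$.

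There remains the degenerate case $(\bar R_\alpha f)(0)=\infty$, which by \eqref{eq:L1} and the Remark after Lemma \ref{lem:L1} is equivalent to $M=\infty$; here the claim reads $\sup_x\P_x\{L_t(f)=\infty\}\ge\frac18$. I would handle it by monotone approximation: replace $f$ by the Gaussian mollifications $f_n:=f*\phi_n$ of \eqref{eq:Gaussian:density}, which are again nonnegative and positive definite and now also bounded (their Fourier transforms $\hat f\,\hat\phi_n$ are integrable, $\hat f$ being a tempered measure and $\hat\phi_n$ Gaussian), so the case already proved applies to each $f_n$; moreover $L_t(f_n)\uparrow L_t(f)$ pointwise and $(\bar R_\alpha f_n)(0)\uparrow(\bar R_\alpha f)(0)=\infty$ by monotone convergence. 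The one genuinely delicate point is to upgrade ``$\sup_x\P_x\{L_t(f)\ge K\}\ge\frac18$ for every $K$'' into a statement about the single event $\{L_t(f)=\infty\}$; I expect this to be the main obstacle, and the clean route is to first record the semigroup analogue of Proposition \ref{pr:FA} --- that $z\mapsto\E_z L_t(f_n)$ attains its maximum at $z=0$, since $\widehat{\bar P_s f_n}=\e^{-2s\Re\Psi}\hat f\,\hat\phi_n\ge0$ is integrable --- which lets one keep $x=0$ fixed and pass to the limit along the nondecreasing levels $\frac12\E_0 L_t(f_n)\uparrow\infty$. Apart from this bookkeeping there is no real difficulty: the crux is the second-moment bound of Lemma \ref{lem:L2}, which is precisely what makes the Paley--Zygmund argument close with the universal constant $\frac18$.
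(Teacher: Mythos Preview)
Your Paley--Zygmund argument for the case $M:=\sup_z\E_z L_t(f)<\infty$ is correct and is essentially the paper's proof specialized to that situation. The gap is in your treatment of $M=\infty$: the claim that $L_t(f_n)\uparrow L_t(f)$ pointwise, with $f_n=f*\phi_n$, is false in general. Gaussian mollifications of a positive-definite function are neither pointwise monotone in $n$ nor dominated by $f$. For a concrete instance take $d=1$ and $f(x)=\e^{-x^2}$: then $f_n$ is again Gaussian, with $f_n(0)<f(0)$ but $f_n(x)>f(x)$ for $|x|$ large, and $f_n(x)$ is decreasing in $n$ for large $|x|$. Without $f_n\le f$ you cannot pass from $\P_0\{L_t(f_n)\ge K_n\}\ge\frac18$ to any statement about $L_t(f)$, and your ``bookkeeping'' does not address this---it only pins down the point $x=0$ and the monotonicity of the \emph{thresholds} $K_n=\frac12\E_0 L_t(f_n)$, not of the random variables $L_t(f_n)$ themselves.

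The paper avoids this difficulty, and in fact the case split altogether, by truncating rather than mollifying: it first runs Paley--Zygmund together with Lemma~\ref{lem:L2} for \emph{bounded} $\phi$ to obtain
\[
	\sup_{x\in\R^d}\P_x\Bigl\{L_t(\phi)\ge\tfrac12\sup_{z}\E_z L_t(\phi)\Bigr\}\ge\tfrac18,
\]
and then takes $\phi=\phi_N:=\min(f,N)$. Since trivially $\phi_N\le f$ one has $L_t(\phi_N)\le L_t(f)$, so the probability bound transfers immediately to $L_t(f)$; and Lemma~\ref{lem:L1} converts the threshold into $(\bar R_\alpha\phi_N)(0)/(2\chi_\alpha(t))$, which rises to $(\bar R_\alpha f)(0)/(2\chi_\alpha(t))$ as $N\uparrow\infty$ by monotone convergence. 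This works uniformly in whether $(\bar R_\alpha f)(0)$ is finite or not. Note that $\phi_N$ need not be positive definite, but nothing in Lemmas~\ref{lem:L1}--\ref{lem:L2} or in Paley--Zygmund requires it to be.
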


\begin{proof}
	Recall the \emph{Paley--Zygmund inequality} \cite{PaleyZygmund}:
	If $Z$ is a nonnegative random variable in $L^2(\P)$
	with $\E Z>0$, then
	\begin{equation}\label{eq:PZ}
		\P\left\{ Z\ge \frac12\E Z\right\} \ge
		\frac{(\E Z)^2}{4\E(Z^2)}.
	\end{equation}
	We can apply this with $Z:=L_t(\phi)$---where
	$\phi:\R^d\to\R_+$ is bounded away from zero and infinity---to see that
	for all $z\in\R^d$,
	\begin{equation}
		\sup_{x\in\R^d}\P_x\left\{ L_t(\phi) \ge \frac12 \E_z L_t(\phi)
		\right\}\ge\frac{\left( \E_z L_t(\phi) \right)^2}{%
		4\E_z\left( |L_t(\phi)|^2\right)}.
	\end{equation}
	By selecting $z$ appropriately, we can ensure that
	\begin{equation}
		\E_z L_t(\phi)\ge
		(1-\epsilon)\cdot\sup_{x\in\R^d}\E_x L_t(\phi),
	\end{equation}
	where $\epsilon\in(0\,,1)$ is arbitrary but fixed.
	Let $\epsilon\downarrow 0$ and appeal to the continuity
	properties of probability measures to deduce that
	\begin{equation}\begin{split}
		\sup_{x\in\R^d}\P_x\left\{ L_t(\phi) \ge \frac12 
			\sup_{z\in\R^d}\E_z L_t(\phi)
			\right\}
			&\ge\frac{\left( \sup_{z\in\R^d}\E_z L_t(\phi) \right)^2}{%
			4\sup_{x\in\R^d}\E_x\left( |L_t(\phi)|^2\right)}\\
		&\ge\frac18;
	\end{split}\end{equation}
	see Lemma \ref{lem:L2} for the last inequality.
	A monotone-class argument shows that 
	the preceding holds true for all bounded and measurable functions
	$\phi\not\equiv 0$. We apply it with $\phi_N:=\min(f\,,N)$ in place
	of $\phi$, where $N\ge 1$
	is fixed. In this way we obtain
	\begin{equation}
		\sup_{x\in\R^d}\P_x\left\{ L_t(f) \ge \frac12 
		\sup_{z\in\R^d}\E_z L_t(\phi_N)
		\right\}\ge\frac18.
	\end{equation}
	This and Lemma \ref{lem:L1} together tell us that
	\begin{equation}\begin{split}
		&\sup_{x\in\R^d}\P_x\left\{ L_t(f) \ge
			\frac{(\bar R_\alpha \phi_N)(0)}{2\chi_\alpha(t)}
			\right\}\\
		&\hskip1.2in\ge \sup_{x\in\R^d}\P_x\left\{ L_t(f) \ge
			\sup_{z\in\R^d}\frac{(\bar R_\alpha \phi_N)(z)}{2\chi_\alpha(t)}
			\right\}\\
		&\hskip1.2in\ge\frac18.
	\end{split}\end{equation}
	As $N\uparrow\infty$, $(\bar R_\alpha\phi_N)(0)\uparrow
	(\bar R_\alpha f)(0)$, and the lemma follows.
\end{proof}

The next result yields a probabilistic characterization of
Condition \ref{cond:1} which we recall for the reader's convenience.

\begin{equation*}
	(\bar{R}_\alpha f)(0)<\infty
	\quad\text{for all $\alpha>0$}.
\end{equation*}

\begin{theorem}\label{th:PPT}
	Under Condition \ref{cond:1}, 
	\begin{equation}\label{PPT1}
		\P_z\left\{L_t(f)<\infty\text{ for all $t>0$}\right\}=1
		\qquad\text{for all $z\in\R^d$}.
	\end{equation}
	Moreover, in this case, $t\mapsto L_t(f)$ grows subexponentially.
	That is,
	\begin{equation}\label{PPT2}
		\P_z\left\{\limsup_{t\to\infty} \frac{\log L_t(f)}{t}\le0
		\right\}=1
		\qquad\text{for all $z\in\R^d$}.
	\end{equation}
	On the other hand, if Condition \ref{cond:1} fails to hold, then
	\begin{equation}
		\P_z\left\{ L_t(f)<\infty\text{ for some $t>0$}\right\}=0
		\qquad\text{for some $z\in\R^d$}.
	\end{equation}
\end{theorem}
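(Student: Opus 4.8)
The plan is to establish the three assertions in turn, using Lemmas~\ref{lem:L1}--\ref{lem:L3} together with the maximum principle of Theorem~\ref{th:Dalang:1}; the essential content lies in the last assertion.

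\emph{Assertions \eqref{PPT1} and \eqref{PPT2}.} Fix $\alpha>0$. The first inequality of Lemma~\ref{lem:L1}, applied with $\phi:=f$, combined with Theorem~\ref{th:Dalang:1} in the form $\sup_{z\in\R^d}(\bar R_\alpha f)(z)=(\bar R_\alpha f)(0)$, yields
\[
	\sup_{x\in\R^d}\E_x\!\left(\sup_{s>0}\left[\e^{-\alpha s}L_s(f)\right]\right)\ \le\ (\bar R_\alpha f)(0)\ <\ \infty
\]
under Condition~\ref{cond:1}. Hence, for each $z\in\R^d$, the random variable $M_\alpha:=\sup_{s>0}\e^{-\alpha s}L_s(f)$ is $\P_z$-a.s.\ finite. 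Since $s\mapsto L_s(f)$ is nondecreasing and $\e^{-\alpha s}>0$, this forces $L_t(f)<\infty$ for every $t>0$, which is \eqref{PPT1}; and $L_t(f)\le M_\alpha\,\e^{\alpha t}$ for all $t$, so $\limsup_{t\to\infty}t^{-1}\log L_t(f)\le\alpha$ $\P_z$-a.s. Intersecting these full-measure events over a sequence $\alpha_n\downarrow0$ gives \eqref{PPT2}.

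\emph{The converse.} Assume Condition~\ref{cond:1} fails, and fix $\alpha>0$ with $(\bar R_\alpha f)(0)=\infty$. Because $\chi_\alpha(t)<\infty$, the threshold in Lemma~\ref{lem:L3} is then $+\infty$, so that lemma states that for every $t>0$,
\[
	\sup_{x\in\R^d}\P_x\{L_t(f)=\infty\}\ \ge\ \tfrac18 .
\]
On the other hand, $\{L_t(f)=\infty\ \text{for all}\ t>0\}=\{L_{0+}(f)=\infty\}$, where $L_{0+}(f):=\lim_{t\downarrow0}L_t(f)=\inf_{t\in\mathbb{Q}\cap(0,\varepsilon]}\int_0^t f(\bar X_r)\,\d r$ for every $\varepsilon>0$; hence $\{L_{0+}(f)=\infty\}$ lies in the germ $\sigma$-field of $\bar X$, and Blumenthal's $0$--$1$ law gives $q(z):=\P_z\{L_{0+}(f)=\infty\}\in\{0\,,1\}$ for every $z$. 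It therefore suffices to produce a single $z$ with $q(z)=1$. Suppose, to the contrary, that $q\equiv0$; I will argue that this forces $L_t(f)<\infty$ $\P_z$-a.s.\ for all $t>0$ and all $z$, which contradicts the displayed lower bound (take $t=1$) and completes the proof.

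\emph{The crux.} Let $\tau:=\inf\{t\ge0:L_t(f)=\infty\}$. Since $s\mapsto\int_0^s f(\bar X_r)\,\d r$ is nondecreasing and left-continuous, $\{\tau<s\}=\{L_s(f)=\infty\}$, so $\tau$ is a stopping time; similarly the level times $\rho_n:=\inf\{t:L_t(f)\ge n\}$ are stopping times. On $\{\tau<\infty\}$ there are two cases. If $L_\tau(f)<\infty$, then $\int_\tau^{\tau+\varepsilon}f(\bar X_r)\,\d r=L_{\tau+\varepsilon}(f)-L_\tau(f)=\infty$ for every $\varepsilon>0$, i.e.\ the post-$\tau$ process blows up instantaneously from $\bar X_\tau$, so by the strong Markov property $\P_z\{\tau<\infty,\,L_\tau(f)<\infty\}\le\E_z[\,q(\bar X_\tau);\,\tau<\infty\,]=0$. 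The remaining case is ``continuous explosion'': $L_s(f)<\infty$ for all $s<\tau$ but $L_\tau(f)=\infty$; here $\rho_n\uparrow\tau$ with $L_{\rho_n}(f)=n$, and for $n$ large enough that $\tau-\rho_n<\varepsilon$ one has $\int_{\rho_n}^{\rho_n+\varepsilon}f(\bar X_r)\,\d r=\infty$, so by the strong Markov property at $\rho_n$,
\[
	\P_z\{\text{continuous explosion}\}\ \le\ \liminf_{n\to\infty}\E_z\big[\,h_\varepsilon(\bar X_{\rho_n});\,\rho_n<\infty\,\big],\qquad h_\varepsilon(w):=\P_w\{L_\varepsilon(f)=\infty\},
\]
and one must let $n\to\infty$ and then $\varepsilon\downarrow0$, using the strong Feller property of $\bar X$ and $h_\varepsilon\downarrow q\equiv0$, to see that the right-hand side is $0$. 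This continuous-explosion case is the main obstacle --- in essence the classical dichotomy that an occupation functional which does not blow up instantaneously from any starting point is a genuine continuous additive functional, finite at every finite time --- and essentially all of the technical work in the converse direction goes into making it precise.
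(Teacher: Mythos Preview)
Your treatment of \eqref{PPT1} and \eqref{PPT2} matches the paper exactly: Lemma~\ref{lem:L1} plus the maximum principle gives $\E_z[\sup_{s>0}\e^{-\alpha s}L_s(f)]\le(\bar R_\alpha f)(0)<\infty$, and both conclusions follow.

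For the converse, the paper is far terser than you. After Lemma~\ref{lem:L3} yields $\sup_x\P_x\{L_t(f)=\infty\}\ge1/8$ for every $t>0$, the paper simply picks a single $z$ with $\P_z\{L_t(f)=\infty\text{ for some }t>0\}\ge1/9$ and writes: ``Because $t\mapsto L_t(f)$ is nondecreasing, the Blumenthal zero--one law applies and implies that $\P_z\{L_t(f)=\infty\text{ for some }t>0\}=1$; this implies the remaining portion of the theorem.'' There is no explosion time, no strong Markov step, and no case analysis.

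Your route via $\tau$, $\rho_n$, and the instantaneous/continuous-explosion dichotomy is a reasonable attempt to unpack this, but the continuous-explosion case does not close as sketched. You arrive at
\[
\P_z(\text{continuous explosion})\ \le\ \liminf_{n\to\infty}\E_z\!\left[h_\varepsilon(\bar X_{\rho_n});\,\rho_n<\infty\right]
\qquad\text{for each fixed }\varepsilon>0,
\]
and need the right side to vanish. For each \emph{fixed} $n$, dominated convergence gives $\E_z[h_\varepsilon(\bar X_{\rho_n});\rho_n<\infty]\downarrow0$ as $\varepsilon\downarrow0$; but what you have is $\liminf_n$ at \emph{fixed} $\varepsilon$, and the iterated limit $\lim_{\varepsilon\downarrow0}\liminf_n$ need not vanish---the threshold beyond which $\tau-\rho_n<\varepsilon$ depends on $\varepsilon$ in exactly the wrong direction for an interchange. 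The strong Feller property does not rescue this: it makes $w\mapsto\E_w[\phi(\bar X_t)]$ continuous for bounded measurable $\phi$, whereas $h_\varepsilon(w)=\P_w\{L_\varepsilon(f)=\infty\}$ is a path-functional probability, not of that form, and there is no a~priori reason for it to be continuous or upper semicontinuous in $w$. So the argument has a genuine gap exactly where you flagged the difficulty; the paper avoids the issue entirely by invoking the zero--one law directly rather than going through $\tau$.
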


\begin{remark}
	Consider the stochastic heat equation where
	$\dot F$ is space-time white noise. Formally
	speaking, this means that $f:=\delta_0$
	is our correlation ``function.'' In this case, one
	can [again formally] interpret 
	\begin{equation}
		L_t(f) = L_t(\delta_0) = \int_0^t \delta_0(\bar X_s)\,\d s
	\end{equation}
	as the local time of the replica process $\bar X$ at zero.
	And if we interpret Theorem \ref{th:PPT} loosely as well,
	then Theorem \ref{th:existence} suggests that
	\eqref{heat} has a mild solution if and only if $\bar X$ has
	local times. This interpretation is correct, as well
	as easy to check, and leads to deeper connections between SPDEs
	driven by space-time white noise on one hand and local-time
	theory on the other hand \cite{EFK,FKN}. 
	In the case of the parabolic Anderson model [that is, \eqref{heat}
	with $\sigma(u)=\text{const}\cdot u$ and $b\equiv 0$], Bertini and Cancrini
	\cite{BertiniCancrini} and Hu and Nualart \cite{HuNualart} discuss other
	closely-related connections to local times. In the case that $x$ is a discrete
	variable [for example, because $\mathcal{L}$ is the generator of a L\'evy process
	on $\mathbf{Z}^d$; i.e., the generator of a continuous-time random walk],
	similar connections were found earlier; see Carmona and Molchanov
	\cite{CarmonaMolchanov:94}, for instance.
	\qed
\end{remark}


\begin{proof}[Proof of Theorem \ref{th:PPT}]
	If $(\bar R_\alpha f)(0)<\infty$ for some $\alpha>0$,
	then $(\bar R_\beta f)(0)<\infty$ for all $\beta>0$
	by Theorem \ref{th:Dalang:1}. It follows from
	the first inequality of Lemma \ref{lem:L1},
	and Theorem \ref{th:Dalang:1}, that for all $\beta>0$
	and $z\in\R^d$,
	\begin{equation}
		\E_z \left[ \sup_{t>0}\left(\e^{-\beta t}
		L_t(f)\right) \right] \le (\bar R_\beta f)(0)<\infty.
	\end{equation}
	This implies \eqref{PPT1}; it also implies that
	\begin{equation}
		\limsup_{t\to\infty} \left[ \e^{-t\beta}L_t(f)\right]<\infty
		\qquad\text{almost surely [$\P_z$]}.
	\end{equation}
	This implies \eqref{PPT2}
	because $\beta>0$ and $z\in\R^d$ are arbitrary. 
	
	In order to finish the proof, let us consider the remaining case
	that $(\bar R_\alpha f)(0)=\infty$ for all $\alpha>0$. 
	
	According to Lemma \ref{lem:L3},
	\begin{equation}
		\sup_{x\in\R^d}\P_x\left\{ L_t(f) =\infty\right\}\ge \frac18
		\qquad\text{for all $t>0$}.
	\end{equation}
	In particular, there exists $z\in\R^d$ such that
	\begin{equation}
		\P_z\left\{ L_t(f) =\infty\text{ for some $t>0$}\right\}\ge \frac19.
	\end{equation}
	Because $t\mapsto L_t(f)$ is nondecreasing, the Blumenthal zero-one
	law applies and implies that
	\begin{equation}
		\P_z\left\{ L_t(f)=\infty\text{ for some $t>0$}\right\}
		=1;
	\end{equation}
	this implies the remaining portion of the theorem.
\end{proof}

We now have the following  consequence of
Theorem \ref{th:PPT}.  It is particularly useful because its hypothesis are verified
by all the examples that we have mentioned in the Introduction.

\begin{corollary}
	Suppose $f$ is bounded uniformly on the complement
	of every open neighborhood of the origin. Then,
	Condition \ref{cond:1} is equivalent to the following:
	$\P\{L_t(f)<\infty\text{ for some $t>0$}\}=1$.
\end{corollary}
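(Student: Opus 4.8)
The plan is to obtain both implications directly from Theorem~\ref{th:PPT}, using the hypothesis on $f$ only to force the exceptional starting point in the last assertion of that theorem to be the origin. The forward implication is immediate and needs nothing about $f$ beyond the standing hypotheses: if Condition~\ref{cond:1} holds, then Theorem~\ref{th:PPT} (first assertion, with $z=0$) gives $\P\{L_t(f)<\infty\text{ for all }t>0\}=1$, and a fortiori $\P\{L_t(f)<\infty\text{ for some }t>0\}=1$.

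For the converse I would argue by contraposition. Suppose Condition~\ref{cond:1} fails. By the last assertion of Theorem~\ref{th:PPT} there is a point $z_0\in\R^d$ with $\P_{z_0}\{L_t(f)<\infty\text{ for some }t>0\}=0$, and I claim $z_0=0$. Indeed, fix any $z\neq 0$ and put $M:=\sup_{x\notin B(0,\|z\|/2)}f(x)$, which is finite because $f$ is bounded off the open neighborhood $B(0,\|z\|/2)$ of the origin. Under $\P_z$ the replica process $\bar X$ is started at $z$ and has right-continuous paths, so there is $\P_z$-a.s.\ a random $\eta>0$ with $\bar X_s\in B(z\,,\|z\|/2)$ for every $s\in[0\,,\eta)$; since $B(z\,,\|z\|/2)\subseteq\R^d\setminus B(0\,,\|z\|/2)$, this yields $L_t(f)=\int_0^t f(\bar X_s)\,\d s\le Mt<\infty$ whenever $t<\eta$. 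Hence $\P_z\{L_t(f)<\infty\text{ for some }t>0\}=1$ for every $z\neq 0$, so the exceptional point must be $z_0=0$; that is, $\P\{L_t(f)<\infty\text{ for some }t>0\}=0$. Taking the contrapositive, $\P\{L_t(f)<\infty\text{ for some }t>0\}=1$ forces Condition~\ref{cond:1}, which finishes the proof.

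There is no real analytic difficulty to overcome here; the one point worth keeping in mind is conceptual. The event $\{L_t(f)<\infty\text{ for some }t>0\}$ says that $t\mapsto L_t(f)=\int_0^t f(\bar X_s)\,\d s$ does not blow up instantaneously---equivalently that $\inf\{t:L_t(f)=\infty\}>0$---so its probability is dictated entirely by the behaviour of $\bar X$ on an arbitrarily short initial time interval, during which $\bar X$ remains near its starting point. Since $f$ is bounded away from the origin, this initial contribution is harmless unless $\bar X$ starts exactly at $0$, which is precisely why the stated hypothesis on $f$ is the natural one and why it suffices to locate the obstruction at the single point $0$ rather than to propagate it from the (unknown) point $z_0$. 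One may additionally record, via the Blumenthal zero--one law applied to the $\mathcal{F}_{0+}$-measurable event $\bigcup_{n}\{L_{1/n}(f)<\infty\}$, that $\P\{L_t(f)<\infty\text{ for some }t>0\}$ is always $0$ or $1$, but this fact is not actually needed in the argument above.
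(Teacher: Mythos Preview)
Your proof is correct and follows essentially the same approach as the paper's: both directions invoke Theorem~\ref{th:PPT}, and for the converse you argue by contraposition that the exceptional point $z_0$ must be the origin because, for any $z\neq 0$, right-continuity of $\bar X$ keeps the process away from the origin for a positive time, during which $f(\bar X_s)$ is uniformly bounded by hypothesis. The only cosmetic difference is that the paper uses the first hitting time of $B(0,\|z\|/2)$ where you use the first exit time from $B(z,\|z\|/2)$, but these serve the identical purpose.
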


\begin{proof}
	According to Theorem \ref{th:PPT}, if Condition \ref{cond:1}
	holds then
	\begin{equation}
		\P_z\left\{ L_t(f)<\infty\text{ for some $t>0$}
		\right\} =1\qquad\text{for all $z\in\R^d$.}
	\end{equation}
	Set $z:=0$ to obtain half of the corollary.
	
	Conversely, suppose Condition \ref{cond:1} fails. According to 
	Theorem \ref{th:PPT}, there exists a point $z\in\R^d$
	such that
	\begin{equation}\label{eq:bad:z}
		\P_z\left\{ L_t(f)<\infty\text{ for some $t>0$}
		\right\}=0.
	\end{equation}
	We need to prove that $z=0$. This holds because if $z$
	were not equal to the origin, 
	then
	\begin{equation}
		\P_z\left\{ L_t(f)<\infty\text{ for all $t\in[0\,,\tau)$}
		\right\}=1,
	\end{equation}
	where $\tau$ denotes the first hitting time of the open ball
	of radius $\|z\|/2$ around $0$. Indeed, 
	\begin{equation}
		\sup_{0\le t<\tau} L_t(f)\le \tau 
		\cdot\sup_{\|u\|\ge\|z\|/2}
		f(u)<\infty,
	\end{equation}
	$\P_z$-almost surely. Since the paths of $X$ are
	right-continuous, $\P_z\{\tau>0\}=1$, and hence \eqref{eq:bad:z}
	is contradicted.
\end{proof}

Condition \ref{cond:0} [p.\ \pageref{cond:0}] 
also has a probabilistic interpretation
that is given by  following proposition.

\begin{proposition}\label{pr:transient}
	If $(\bar R_\alpha f)(0)<\infty$ for
	some, hence all, $\alpha>0$, then:
	\begin{equation}
		(\bar R_0f)(0)<\infty
		\qquad\Longrightarrow\qquad
		L_\infty(f)<\infty\quad\text{a.s.};
	\end{equation}
	and 
	\begin{equation}
		(\bar R_0 f)(0)=\infty\qquad\Longrightarrow\qquad
		L_\infty(f)=\infty\quad\text{a.s.}
	\end{equation}
\end{proposition}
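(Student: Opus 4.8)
The plan is to reduce the assertion to a zero--one law for the additive functional $L_\infty(f)=\int_0^\infty f(\bar X_s)\,\d s$. The first implication is routine: Tonelli's theorem (applied to the regular transition densities) gives $(\bar R_0 f)(0)=\E_0 L_\infty(f)$, so $(\bar R_0 f)(0)<\infty$ forces $L_\infty(f)<\infty$ $\P_0$-almost surely by Markov's inequality. Thus the substance of the Proposition is the converse: assuming $(\bar R_\alpha f)(0)<\infty$ for all $\alpha>0$ (i.e.\ Condition \ref{cond:1}), one must show that $(\bar R_0 f)(0)=\infty$ implies $L_\infty(f)=\infty$ $\P_0$-almost surely.

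First I would invoke Theorem \ref{th:PPT}: under Condition \ref{cond:1}, $L_t(f)<\infty$ $\P_0$-a.s.\ for every finite $t$, so $Y_k:=\int_{k-1}^{k}f(\bar X_s)\,\d s$ is a.s.\ finite for each $k$ and only the large-$t$ behaviour of $t\mapsto L_t(f)$ is at issue. Write $L_\infty(f)=\sum_{k\ge 1}Y_k$, and let $W_k$ be the trajectory of $\bar X$ on $[k-1,k]$ recentred at its left endpoint, so that $\{W_k\}_{k\ge1}$ are i.i.d.\ by the L\'evy property, $\bar X_{k-1}=W_1(1)+\dots+W_{k-1}(1)$, and $Y_k$ is a function of $W_1,\dots,W_k$ that depends on $W_1,\dots,W_{k-1}$ only through the \emph{sum} $\bar X_{k-1}$. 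Consequently, for each $m$ the tail $\sum_{k>m}Y_k$ is unchanged by any permutation of $(W_1,\dots,W_m)$, while the head $\sum_{k\le m}Y_k=L_m(f)$ is a.s.\ finite; hence $\{L_\infty(f)=\infty\}$ is, modulo $\P_0$-null sets, invariant under every finite permutation of the i.i.d.\ sequence $(W_k)$. The Hewitt--Savage zero--one law then gives $\P_0\{L_\infty(f)=\infty\}\in\{0,1\}$.

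To show the value is $1$ when $(\bar R_0 f)(0)=\infty$, I would run a Paley--Zygmund (second moment) argument, paralleling Lemma \ref{lem:L3}. Let $S_N:=\sum_{k=1}^{N}Y_k\uparrow L_\infty(f)$ and $g:=\int_0^1\bar P_s f\,\d s$; then $\E_0 S_N=\int_0^N(\bar P_s f)(0)\,\d s\uparrow(\bar R_0 f)(0)=\infty$, and $g$ is finite and maximised at the origin with $g(0)\le\e\,(\bar R_1 f)(0)<\infty$. Using the Markov property together with (i) the refinement $\E_x[L_1(f)^2]\le 2g(0)\,\E_x[L_1(f)]$ implicit in the proof of Lemma \ref{lem:L2}, and (ii) the positive-definiteness of $\bar P_{[j,j+1]}f:=\int_j^{j+1}\bar P_s f\,\d s$ (so that $\bar P_j g\le(\bar P_j g)(0)$ pointwise, a special case of the maximum principle of Proposition \ref{pr:FA} and Theorem \ref{th:Dalang:1}), one estimates $\E_0[S_N^2]\le C\,(\E_0 S_N)^2$ for all large $N$, with $C$ depending only on $(\bar R_1 f)(0)$. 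Paley--Zygmund then yields $\P_0\{S_N\ge\tfrac12\E_0 S_N\}\ge 1/(4C)$ for all large $N$; since $S_N\le L_\infty(f)$ and $\E_0 S_N\to\infty$, letting $N\to\infty$ gives $\P_0\{L_\infty(f)=\infty\}\ge 1/(4C)>0$, and combined with the zero--one law this equals $1$, as desired.

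I expect the main obstacle to be the second-moment bound $\E_0[S_N^2]\le C(\E_0 S_N)^2$, whose proof must be run with care because $f$ may be unbounded and even locally singular; this is precisely the point at which Lemma \ref{lem:L2} (controlling $\E_x[L_1(f)^2]$ by $\E_x[L_1(f)]$, uniformly in $x$) and the maximum principle for $\bar P_{[j,j+1]}f$ enter decisively. A secondary, more pedestrian issue is making the Hewitt--Savage set-up rigorous --- checking that $\{L_\infty(f)=\infty\}$ is genuinely invariant mod $\P_0$-null sets under finite permutations of $(W_k)$, which is where the a.s.\ finiteness of the head $L_m(f)$ from Theorem \ref{th:PPT} is used.
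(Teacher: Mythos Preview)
Your approach is correct and structurally the same as the paper's: first-moment implication via Tonelli, then Paley--Zygmund plus a second-moment bound to get $\P_0\{L_\infty(f)=\infty\}>0$, and finally the Hewitt--Savage zero--one law (after peeling off the a.s.\ finite head via Theorem \ref{th:PPT}) to upgrade to probability one.

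The one place you diverge from the paper is the choice of random variable to which Paley--Zygmund is applied. You use $S_N=L_N(f)$ and let $N\to\infty$; the paper instead uses the exponentially damped integral $Z_\alpha:=\int_0^\infty \e^{-\alpha s}f(\bar X_s)\,\d s$ and lets $\alpha\downarrow 0$. The paper's choice is cleaner for exactly the reason you flag as your ``main obstacle'': the required maximum principle, $\sup_x(\bar R_\alpha f)(x)=(\bar R_\alpha f)(0)$, is \emph{already} Theorem \ref{th:Dalang:1}, so the second-moment bound $\E[Z_\alpha^2]\le 2((\bar R_\alpha f)(0))^2$ drops out of (the damped analogue of) Lemma \ref{lem:L2} with no extra work, and Paley--Zygmund gives $\P\{L_\infty(f)\ge\tfrac12(\bar R_\alpha f)(0)\}\ge\P\{Z_\alpha\ge\tfrac12(\bar R_\alpha f)(0)\}\ge\tfrac18$. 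Your route requires the analogue for truncated integrals, namely $\sup_z\int_0^N(\bar P_s f)(z)\,\d s=\int_0^N(\bar P_s f)(0)\,\d s$; this is \emph{not} a special case of Proposition \ref{pr:FA} or Theorem \ref{th:Dalang:1} as you assert, though it can be proved by re-running the same Fourier/mollifier argument. Once you have that, note that Lemma \ref{lem:L2} applied directly to $L_N(f)$ already gives $\E_0[L_N(f)^2]\le 2(\E_0[L_N(f)])^2$ --- the $Y_k$-decomposition with claims (i) and (ii) is a detour.
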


\begin{proof}
	If $(\bar R_0 f)(0)<\infty$, then because
	$(\bar R_0f)(0)=\E L_\infty(f)<\infty$, it follows that $L_\infty(f)$
	is finite a.s.
	
	If, on the other hand,
	$(\bar R_0f)(0)=\infty$, then because
	\begin{equation}
		\E\int_0^\infty\e^{-\alpha s}f(\bar X_s)\,\d s=
		(\bar R_\alpha f)(0), 
	\end{equation}
	the Paley--Zygmund inequality 
	\eqref{eq:PZ} implies that
	\begin{equation}
		\P\left\{\int_0^\infty \e^{-\alpha s}f(\bar X_s)\,\d s
		\ge \frac12(\bar R_\alpha f)(0)\right\}
		\ge \frac{\left| (\bar R_\alpha f)(0)\right|^2}{
		4\E\left(\left|\int_0^\infty \e^{-\alpha s}f(\bar X_s)\,\d s\right|^2\right)}.
	\end{equation}
	It follows from this and Lemma \ref{lem:L2}---see also the
	proof of Lemma \ref{lem:L3}---that
	\begin{equation}\begin{split}
		\P\left\{\int_0^\infty f(\bar X_s)\,\d s
			\ge \frac12(\bar R_\alpha f)(0)\right\}&
			\ge \frac{(\bar R_\alpha f)(0)}{8
			\sup_{x\in\R^d} (\bar R_\alpha f)(x)}\\
		&=\frac18,
	\end{split}\end{equation}
	owing to Theorem \ref{th:Dalang:1}
	[p.\ \pageref{th:Dalang:1}]. Let $\alpha\downarrow 0$ to find
	that 
	\begin{equation}\begin{split}
		\mathcal{P} &:= \P\left\{\int_0^\infty f(\bar X_s)\,\d s
			=\infty\right\}\\
		&\ge \frac18.
	\end{split}\end{equation}
	But according to Theorem \ref{th:PPT}, 
	$\int_0^Tf(\bar X_s)\,\d s<\infty$, for all $T>0$
	a.s., because $(\bar R_\alpha f)(0)<\infty$ for some
	[hence all] $\alpha>0$. This implies that
	\begin{equation}
		\mathcal{P} = \P\left\{\lim_{T\to\infty}\int_T^\infty f(\bar X_s)\,\d s
		=\infty\right\}.
	\end{equation}
	That is: (i) $\mathcal{P}$ is the probability of a tail event;
	and (ii) $\mathcal{P}$ is strictly positive, in fact $\mathcal{P}\ge1/8$.
	By the Hewitt--Savage zero-one law \cite{HewittSavage}, $\mathcal{P}=1$.
\end{proof}

\section{A Final Observation}
Let us conclude this chapter with an observation that
will be used later on in Theorem \ref{theorem:counter}
[p.\ \pageref{theorem:counter}] in order 
to produce a stochastic PDE whose
random-field solution exists but is discontinuous densely.

Let $\bm{X}:=\{X_t\}_{t\ge 0}$ denote a L\'evy process
on $\R^d$ with characteristic exponent $\Psi$. Recall that
$\bm{X}$ has a \emph{one-potential density} $v$
if $v$ is a probability density on $\R^d$ that satisfies the following
for all Borel-measurable functions $\phi:\R^d\to\R_+$:
\begin{equation}\label{eq:pot:density}
	\E\left[\int_0^\infty \e^{-s} \phi(X_s)\,\d s\right] =\int_{\R^d}
	\phi(x) v(x)\,\d x.
\end{equation}
Because $\phi\ge 0$,
the preceding expectation commutes with the $\d s$-integral.
Recall that $m_s$ denotes the law of $X_s$, and restrict attention to
only nonnegative $\phi\in\mathcal{S}$. In that case,
\begin{equation}\begin{split}
	\int_0^\infty \e^{-s} \E\phi(X_s)\, \d s &=\int_0^\infty
		\e^{-s}\left(\int\phi\,\d m_s\right)\,\d s\\
	&= \frac{1}{(2\pi)^d}\int_0^\infty \e^{-s} \d s \int_{\R^d}
		\d\xi \ \overline{\hat\phi(\xi)}\
		\e^{-s\Psi(\xi)}\\
	&= \frac{1}{(2\pi)^d}\int_{\R^d} \frac{\overline{\hat\phi(\xi)}}{
		1+\Psi(\xi)}\,\d\xi.
\end{split}\end{equation}
We compare this to the right-hand side of \eqref{eq:pot:density},
and then apply Plancherel's theorem to the latter, to deduce the 
following well-known formula:
\begin{equation}
	\hat{v}(\xi) = \frac{1}{1+\Psi(\xi)}\qquad
	\text{for all $\xi\in\R^d$}.
\end{equation}

If we consider only the case that $\bm{X}$ is symmetric,
then $\hat v$ is rendered nonnegative,
since $\Psi$ is nonnegative in this case. This observation and the 
Bochner--Schwartz theorem [Theorem \ref{BochnerSchwartz}]
together imply that $v$ is a correlation function. Because 
products---and hence integer powers---of correlation functions are themselves
correlation functions, Theorem \ref{th:Levy:asymp} yields the
following byproduct.

\begin{theorem}\label{th:nice:cor}
	Choose and fix $a>0$ and $b\in\R$. Then,
	there exists a correlation function $v$ on $\R^d$ such that
	\begin{equation}
		\hat{v}(\xi) \asymp \frac{1}{\|\xi\|^a(\log\|\xi\|)^b}
		\qquad\text{for $\xi\in\R^d$ with $\|\xi\|>\e$}.
	\end{equation}
\end{theorem}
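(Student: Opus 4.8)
The plan is to deduce the statement from Theorem~\ref{th:Levy:asymp} together with the one-potential-density computation carried out just before the statement and the Bochner--Schwartz theorem. The one genuine obstruction is that Theorem~\ref{th:Levy:asymp} only produces characteristic exponents whose polynomial growth exponent lies in $(0\,,2)$, whereas $a$ here is an arbitrary positive number. I would get around this by passing to an integer \emph{convolution} power of the relevant potential density: this multiplies the spectral density by itself, and hence multiplies its decay exponent by an integer factor, while preserving nonnegativity, symmetry, and positive definiteness---so it stays inside the class of correlation functions in the sense of the Introduction.

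Concretely: first fix an integer $k\ge 1$ with $k>a/2$, so that $r:=a/k\in(0\,,2)$ and $q:=b/k\in\R$, and apply Theorem~\ref{th:Levy:asymp} with these parameters. This yields an isotropic---and hence symmetric---L\'evy process $X$ on $\R^d$ whose characteristic exponent $\Psi$ is real, nonnegative, and satisfies
\begin{equation*}
	\Psi(\xi)\asymp\|\xi\|^{a/k}(\log\|\xi\|)^{b/k}
	\qquad\text{for $\xi\in\R^d$ with $\|\xi\|>\e$}.
\end{equation*}
Since $\Psi(\xi)$ grows fast enough at infinity that $\e^{-t\Psi}\in L^1(\R^d)$ for all $t>0$, Proposition~\ref{pr:hawkes} shows that $X$ has transition densities, hence a one-potential density $v_0$; by the discussion immediately preceding the statement, $v_0$ is a nonnegative symmetric function and a correlation function, with $\hat v_0(\xi)=1/(1+\Psi(\xi))$.

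Next I would set $v:=v_0*\cdots*v_0$ ($k$-fold convolution). As a convolution of nonnegative symmetric functions, $v$ is nonnegative and symmetric; its spectral density is $\hat v=\hat v_0^{\,k}=(1+\Psi)^{-k}$, which is a bounded, nonnegative, measurable function (in particular a nonnegative tempered measure), so $v$ is positive definite by the Bochner--Schwartz theorem [Proposition~\ref{BochnerSchwartz} and the converse remark following it], and $v\not\equiv0$; thus $v$ is a correlation function. It then remains only to read off its asymptotics. On $\{\|\xi\|>\e\}$ the function $\|\xi\|^{a/k}(\log\|\xi\|)^{b/k}$ is continuous, strictly positive, and tends to $\infty$ as $\|\xi\|\to\infty$ (the polynomial factor dominates any power of $\log\|\xi\|$, including when $b<0$), hence is bounded below by a positive constant there; therefore so is $\Psi$, and consequently $1+\Psi(\xi)\asymp\Psi(\xi)\asymp\|\xi\|^{a/k}(\log\|\xi\|)^{b/k}$ uniformly over $\|\xi\|>\e$. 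Raising to the power $-k$ gives
\begin{equation*}
	\hat v(\xi)=\frac{1}{(1+\Psi(\xi))^{k}}\asymp\frac{1}{\|\xi\|^{a}(\log\|\xi\|)^{b}}
	\qquad\text{for $\|\xi\|>\e$},
\end{equation*}
which is exactly the assertion. The argument is essentially bookkeeping; the only points needing care are (i) the choice of $k$ to absorb the constraint $a/k<2$, together with the fact that the spectral density of a $k$-fold convolution of a correlation function is the $k$-th power of the original one---which is what makes the convolution-power trick legitimate---and (ii) the elementary uniform comparison $1+\Psi\asymp\Psi$ on $\{\|\xi\|>\e\}$, which is the place where one must check that $t\mapsto t^{a/k}(\log t)^{b/k}$ stays bounded away from $0$ on $(\e\,,\infty)$. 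I do not anticipate a genuine difficulty beyond these.
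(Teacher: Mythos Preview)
Your proposal is correct and follows the same strategy as the paper: construct the one-potential density $v_0$ of the isotropic L\'evy process supplied by Theorem~\ref{th:Levy:asymp}, observe via Bochner--Schwartz that it is a correlation function with $\hat v_0=(1+\Psi)^{-1}$, and then pass to an integer power to accommodate arbitrary $a>0$. The paper phrases that last step as ``products---and hence integer powers---of correlation functions''; your explicit use of $k$-fold \emph{convolution} powers (so that the Fourier transforms multiply, giving $\hat v=(1+\Psi)^{-k}$) is precisely the operation that makes this work.
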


\chapter{The Linear Equation}
\label{ch:linear}

Before we study the fully nonlinear equation \eqref{heat}, we
analyse the far simpler linearized form of the same equation
$[\sigma\equiv 1,\ b\equiv 0]$, and show that
it has many interesting features of its own. Because the solutions,
if any, to the said linear equations can only be Gaussian random
fields, we are able to use the theory of Gaussian processes in order
to produce some definitive existence and regularity results. 
Our results should be
compared with our earlier joint effort with Eulalia Nualart
\cite{FKN}, in which $\dot{F}$ was space-time white noise.
Our earlier effort was, in turn, motivated strongly by the earlier works
of Dalang and Frangos \cite{DalangFrangos},
Dalang \cite{Dalang}, and Peszat and Zabcyzk \cite{PeszatZabczyk}.

\section{Existence and uniqueness}

The linearized form of \eqref{heat} is the stochastic PDE
\begin{equation}\label{heat:lin}
	\frac{\partial}{\partial t}
	u_t(x) = (\sL u_t)(x) + \dot{F}_t(x),
\end{equation}
subject to $u_0$ being the initial function, which as mentioned in
the Introduction is assumed to be a nonrandom bounded
and measurable function
$u_0:\R^d\to\R$.
One can follow through the theory of Walsh, and define
the \emph{weak solution} to \eqref{heat:lin} as the Gaussian
random field $\bm{u}:=\{u_t(\phi)\}_{t> 0,\phi\in\mathcal{S}}$,
where [we recall] $\mathcal{S}$ denotes the collection of all rapidly-decreasing
test functions on $\R^d$, and
\begin{equation}
	u_t(\phi) = \int_{\R^d} u_0(x)(P_t^*\phi)(x)\,\d x +
	\int_0^t\int_{\R^d} \left( p_{t-s}*\phi\right)(y)\, F(\d s\,\d y).
\end{equation}
The double integral is a Wiener integral and
$\{P_t^*\}_{t\ge 0}$ is the semigroup associated
to the dual process $X^*_t:=-X_t$ $[t\ge 0]$.

There are two main questions that one needs to answer before one proceeds
further: 
\begin{enumerate}
\item[(a)] Is $\bm{u}$ well defined?
\item[(b)] What is the largest family of $\phi$'s for which $u_t(\phi)$ is well defined?
\end{enumerate}
An affirmative answer to the first question would imply existence of 
solutions in the general sense of Walsh \cite{Walsh}.
Since the analysis of the nonrandom quantity $\int_{\R^d} u_0(x)(P_t^*\phi)(x)\,\d x$
is standard, we can reduce our problem to the special case that $u_0\equiv 0$.
In that case, these question are addressed by the following estimate.
Here and throughout, we define
\begin{equation}\label{eq:mathcalE}
	\mathcal{E}_\lambda(v) := \frac{1}{2(2\pi)^d}\int_{\R^d}
	\frac{|\hat v(\xi) |^2\hat{f}(\xi)}{\lambda^{-1}+\Re\Psi(\xi)}\,\d\xi
\end{equation}
for all Schwartz distributions $v$ whose Fourier transform is a function.

\begin{lemma}\label{lem:linear:L2:est}
	The weak solution $\bm{u}$ to \eqref{heat:lin} with $u_0\equiv 0$
	exists as a well-defined Gaussian random field parametrized by
	$t>0$ and $\phi\in\mathcal{S}$. Moreover, for all $t,\lambda>0$
	and $\phi\in\mathcal{S}$,
	\begin{equation}
		a(t)\mathcal{E}_\lambda(\phi)
		\le\E\left(\left| u_t(\phi) \right|^2\right)
		\le b(t)\mathcal{E}_\lambda(\phi),
	\end{equation}
	where $a(t):=(1-\e^{-2t/\lambda})$ and $b(t):=\e^{2t/\lambda}$.
\end{lemma}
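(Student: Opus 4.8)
The plan is to express $\E(|u_t(\phi)|^2)$ explicitly in the spatial Fourier variable, after which the lemma comes down to two elementary one-dimensional inequalities.

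Since $u_0\equiv 0$, the weak solution reduces to the single Wiener integral $u_t(\phi)=\int_0^t\int_{\R^d}(p_{t-s}*\phi)(y)\,F(\d s\,\d y)$. So $u_t(\phi)$ is a well-defined element of $L^2(\P)$ precisely when the deterministic integrand $(s,y)\mapsto(p_{t-s}*\phi)(y)\mathbf 1_{[0,t]}(s)$ has finite energy with respect to $F$, and in that case $\E(|u_t(\phi)|^2)$ equals that energy. By \eqref{eq:Cov} the energy is $\int_0^t\d s\iint(p_{t-s}*\phi)(x)(p_{t-s}*\phi)(y)f(x-y)\,\d x\,\d y$. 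Using $\widehat{p_{t-s}*\phi}=\widehat{p_{t-s}}\cdot\hat\phi$ and $|\widehat{p_{t-s}}(\xi)|^2=\e^{-2(t-s)\Re\Psi(\xi)}$ [see \eqref{eq:Pt:hat}], together with Parseval's identity in the spatial variable (Lemma \ref{lem:Parseval}) and Tonelli's theorem, I would rewrite this as
\[
\E\!\left(|u_t(\phi)|^2\right)=\frac{1}{(2\pi)^d}\int_{\R^d}|\hat\phi(\xi)|^2\,\hat f(\xi)\left(\int_0^t\e^{-2(t-s)\Re\Psi(\xi)}\,\d s\right)\d\xi .
\]
The inner integral equals $(1-\e^{-2t\Re\Psi(\xi)})/(2\Re\Psi(\xi))$, interpreted as $t$ when $\Re\Psi(\xi)=0$, whence
\[
\E\!\left(|u_t(\phi)|^2\right)=\frac{1}{2(2\pi)^d}\int_{\R^d}
\frac{\bigl(1-\e^{-2t\Re\Psi(\xi)}\bigr)\,|\hat\phi(\xi)|^2\,\hat f(\xi)}{\Re\Psi(\xi)}\,\d\xi .
\]

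To compare this with $\mathcal E_\lambda(\phi)$, I would write both denominators as Laplace integrals: for $a\ge 0$ and $m:=1/\lambda>0$,
\[
\frac{1-\e^{-2ta}}{a}=\int_0^{2t}\e^{-ra}\,\d r,\qquad
\frac{1}{m+a}=\int_0^\infty\e^{-r(m+a)}\,\d r .
\]
It then suffices to prove, for every $a\ge 0$, the pointwise bounds $\left(1-\e^{-2tm}\right)\int_0^\infty\e^{-r(m+a)}\,\d r\le\int_0^{2t}\e^{-ra}\,\d r\le\e^{2tm}\int_0^\infty\e^{-r(m+a)}\,\d r$. For the upper bound, on $0\le r\le 2t$ one has $\e^{-ra}=\e^{rm}\e^{-r(m+a)}\le\e^{2tm}\e^{-r(m+a)}$, and one integrates and then enlarges the range to $(0,\infty)$. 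For the lower bound, split $\int_0^\infty\e^{-r(m+a)}\,\d r$ at $r=2t$: the part over $(0,2t)$ is $\le\int_0^{2t}\e^{-ra}\,\d r$ since $\e^{-rm}\le 1$, while the part over $(2t,\infty)$ equals $\e^{-2t(m+a)}/(m+a)\le\e^{-2tm}\int_0^\infty\e^{-r(m+a)}\,\d r$; rearranging gives it. Taking $a=\Re\Psi(\xi)$ [permissible since $\Re\Psi\ge 0$ by \eqref{eq:RePsi:pos}] and integrating against $|\hat\phi(\xi)|^2\hat f(\xi)\,\d\xi$ yields $a(t)\mathcal E_\lambda(\phi)\le\E(|u_t(\phi)|^2)\le b(t)\mathcal E_\lambda(\phi)$ with $a(t)=1-\e^{-2t/\lambda}$ and $b(t)=\e^{2t/\lambda}$.

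Finally, these bounds already give existence: the energy is finite as soon as $\mathcal E_\lambda(\phi)<\infty$, and $\mathcal E_\lambda(\phi)\le\frac{\lambda}{2(2\pi)^d}\int_{\R^d}|\hat\phi(\xi)|^2\hat f(\xi)\,\d\xi<\infty$ because $|\hat\phi|^2$ is rapidly decreasing ($\phi\in\mathcal S$) whereas $\hat f$ is a tempered measure. Hence $u_t(\phi)$ is a legitimate Wiener integral for each $t>0$ and $\phi\in\mathcal S$; the family $\{u_t(\phi)\}$ lies in the closed Gaussian subspace of $L^2(\P)$ generated by $F$, so any finite subfamily is jointly Gaussian, and $\phi\mapsto u_t(\phi)$ is linear. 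The one point that needs some care is the passage from \eqref{eq:Cov} to the displayed spectral formula, since $p_{t-s}*\phi$ is not itself a Schwartz function: I would handle this either by working from the outset with the spectral description of the Gaussian space of $F$, or by dominating $|p_{t-s}*\phi|\le p_{t-s}*|\phi|$ and approximating through genuinely Schwartz integrands. That is the only mildly delicate step; the real content of the lemma is the two-line pointwise inequality above.
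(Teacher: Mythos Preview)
Your proof is correct and follows essentially the same route as the paper: compute the energy via \eqref{eq:Cov}, pass to the spectral side by Parseval, and then bound $\int_0^t\e^{-2s\Re\Psi(\xi)}\,\d s$ pointwise against $(\lambda^{-1}+\Re\Psi(\xi))^{-1}$. The paper delegates that last pointwise estimate to \cite[Lemma~3.5]{FKN}, whereas you supply a clean self-contained Laplace-integral argument; conversely, the paper disposes of your ``mildly delicate step'' by observing at the outset that $\hat p_t\hat\phi\in\mathcal S$, hence $P_t^*\phi=p_t*\phi\in\mathcal S$, so the integrand really is a Schwartz function and no approximation is needed.
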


In fact, Lemma \ref{lem:linear:L2:est} holds under far greater generality than
the one presented here. For instance, it holds even when transition functions
do not necessarily exist, and when the correlation function is a general
correlation measure. The less general formulation above suffices for our needs.

\begin{proof}
	If $\phi\in\mathcal{S}$ then $\hat{p_t}\hat{\phi}\in \mathcal{S}$
	for all $t\ge 0$.
	Since the Fourier transform is an isometry on $\mathcal{S}$,
	this proves that
	\begin{equation}
		P_t^*\phi=p_t\ast \phi\in \mathcal{S}\quad
		\text{for every $t\ge 0$}. 
	\end{equation}
	Therefore, in accord with \eqref{eq:Cov}, the second moment
	$\E(|u_t(\phi)|^2)$ is equal to
	\begin{equation}\begin{split}
		&\E\left(\left|\int_0^t\int_{\R^d} \left( P_{t-s}^*\phi\right)(y)\, F(\d s\,\d y)
			\right|^2\right)\\
		&\hskip1.3in= \frac{1}{(2\pi)^d}\int_0^t\d s\int_{\R^d}\d\xi\ \left|
			\e^{-(t-s)\Psi(\xi)}\right|^2\cdot  |\hat\phi(\xi) |^2\hat{f}(\xi)\\
		&\hskip1.3in= \frac{1}{(2\pi)^d}\int_0^t\d s\int \d\xi\
			\e^{-2s\Re\Psi(\xi)}\cdot  |\hat\phi(\xi) |^2\hat{f}(\xi).
	\end{split}\end{equation}
	And the lemma follows from the preceding and Lemma 3.5 of \cite{FKN}.
\end{proof}

There are standard ways to extend the domain of Gaussian random fields.
In our case, we proceed as follows:
Consider the pseudo-distances $\{\rho_t\}_{t>0}$ defined by
\begin{equation}
	\rho_t(\phi\,,\psi) :=
	\left\{ \E\left( \left| u_t(\phi)-u_t(\psi)
	\right|^2\right) \right\}^{1/2}
	\qquad\text{for $\phi,\psi\in\mathcal{S}$}.
\end{equation}
Because $L^2(\P)$-limits
of Gaussian random fields are themselves Gaussian random fields,
we deduce the following: Suppose $v$ is a Schwartz distribution
such that $\lim_{n\to\infty}\rho_t(v\,,v*\phi_n)=0$ for all $t>0$,
where $\{\phi_n\}_{n=1}^\infty$ is the sequence of
Gaussian densities, as defined in \eqref{eq:Gaussian:density}
[p.\ \pageref{eq:Gaussian:density}].
Then $u_t(v)$ is well defined in $L^2(\P)$, 
and the totality $\{u_t(v)\}$ of all such random variables 
forms a Gaussian random field. 

We follow \cite{FKN} and say that
\eqref{heat:lin} has a \emph{random-field solution} if we can obtain
$u_t(\delta_x)$ in this way for all $x\in\R^d$. 

Consider the space $\mathcal{H}_0$ of all $\phi\in\mathcal{S}$ such
that $\mathcal{E}_1(\phi)<\infty$ for all $t>0$. Evidently,
$\mathcal{H}_0$ can be metrized, using the distance
\begin{equation}
	\delta(\phi\,,\psi) := \sqrt{\mathcal{E}_1(\phi-\psi)}
	\qquad\text{for $\psi,\phi\in\mathcal{S}$}.
\end{equation}
Define $\mathcal{H}_1$ to be the completion of $\mathcal{H}_0$ in
the distance $\delta$. 

\begin{lemma}
	Condition \ref{cond:1} holds iff $\delta_x\in\mathcal{H}_1$ for
	some, hence all, $x\in\R^d$.
\end{lemma}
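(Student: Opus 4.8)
The plan is to reduce the stated equivalence to the maximum principle of Theorem~\ref{th:Dalang:1} by way of the single Fourier-analytic identity $\mathcal{E}_1(\delta_x)=\Upsilon(2)$, valid for every $x\in\R^d$. This is immediate from \eqref{eq:mathcalE}: in the normalization of our Fourier transform, $\widehat{\delta_x}(\xi)=\e^{ix\cdot\xi}$, so $|\widehat{\delta_x}(\xi)|^2\equiv 1$ and hence $\mathcal{E}_1(\delta_x)=\frac{1}{2(2\pi)^d}\int_{\R^d}\hat f(\xi)/(1+\Re\Psi(\xi))\,\d\xi$, which is exactly $\Upsilon(2)$ in the notation of \eqref{eq:Upsilon}. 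In particular the value of $\mathcal{E}_1(\delta_x)$ is independent of $x$, which is what makes the ``some, hence all'' clause automatic. By Theorem~\ref{th:Dalang:1}, $\Upsilon(2)=(\bar R_2f)(0)$, and Condition~\ref{cond:1} holds if and only if $\Upsilon(\beta)<\infty$ for one --- equivalently, for every --- $\beta>0$; in particular Condition~\ref{cond:1} holds iff $\mathcal{E}_1(\delta_x)=\Upsilon(2)<\infty$.

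It then remains to connect the finiteness of $\mathcal{E}_1(\delta_x)$ with the statement ``$\delta_x\in\mathcal{H}_1$'', the subtlety being that $\delta_x$ itself lies neither in $\mathcal{S}$ nor in $\mathcal{H}_0$. I would first fix the meaning of ``$\delta_x\in\mathcal{H}_1$'' in accordance with the convention set up just before the lemma: $\delta_x\in\mathcal{H}_1$ precisely when the Gaussian mollifications $\psi_n:=\delta_x*\phi_n=\phi_n(\cdot-x)$ --- with $\phi_n$ as in \eqref{eq:Gaussian:density} --- form a $\delta$-Cauchy sequence in $\mathcal{H}_0$. Each $\psi_n$ belongs to $\mathcal{S}\subseteq\mathcal{H}_0$, and $\widehat{\psi_n}(\xi)=\e^{ix\cdot\xi}\e^{-\|\xi\|^2/(2n)}$, so
\[
	\mathcal{E}_1(\psi_n)=\frac{1}{2(2\pi)^d}\int_{\R^d}\e^{-\|\xi\|^2/n}\,
	\frac{\hat f(\xi)}{1+\Re\Psi(\xi)}\,\d\xi ,
\]
and by the monotone convergence theorem this increases, as $n\to\infty$, to $\mathcal{E}_1(\delta_x)=\Upsilon(2)$.

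The two directions are then short. If Condition~\ref{cond:1} holds, so $\Upsilon(2)<\infty$, then for $n\ge m$ the elementary inequality $(a-b)^2\le a^2-b^2$ --- valid whenever $0\le b\le a$, applied with $a=\e^{-\|\xi\|^2/(2n)}\ge b=\e^{-\|\xi\|^2/(2m)}$ --- yields
\[
	\delta(\psi_n\,,\psi_m)^2=\mathcal{E}_1(\psi_n-\psi_m)\le\mathcal{E}_1(\psi_n)-\mathcal{E}_1(\psi_m),
\]
whose right-hand side tends to $0$ as $n,m\to\infty$ because $\{\mathcal{E}_1(\psi_n)\}_n$ is a bounded nondecreasing --- hence convergent, hence Cauchy --- sequence of reals; thus $\{\psi_n\}_n$ is $\delta$-Cauchy and $\delta_x\in\mathcal{H}_1$. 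If, instead, Condition~\ref{cond:1} fails, then $\mathcal{E}_1(\psi_n)\uparrow\infty$, so $\delta(\psi_n\,,0)=\sqrt{\mathcal{E}_1(\psi_n)}\to\infty$; since a $\delta$-Cauchy sequence is $\delta$-bounded, $\{\psi_n\}_n$ is not $\delta$-Cauchy, and therefore $\delta_x\notin\mathcal{H}_1$.

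I expect the only genuinely delicate point to be the bookkeeping of what ``$\delta_x\in\mathcal{H}_1$'' means --- $\mathcal{H}_1$ being an abstract metric completion while $\delta_x\notin\mathcal{H}_0$ --- which is why I would settle that convention at the outset (equivalently, one identifies $\mathcal{H}_1$ with the space of tempered distributions $v$ whose Fourier transform is a function and which satisfy $\mathcal{E}_1(v)<\infty$, under the isometry induced by $\sqrt{\mathcal{E}_1}$). Once that is in place, the whole argument reduces to the one-line computation $\mathcal{E}_1(\delta_x)=\Upsilon(2)$ together with the equivalences already recorded in Theorem~\ref{th:Dalang:1}; the mollification step serves only to make rigorous the passage from $\mathcal{H}_0$ to its completion.
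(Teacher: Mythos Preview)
Your proof is correct and follows the same overall strategy as the paper: compute $\mathcal{E}_1(\delta_x)$ as a value of $\Upsilon$ (you obtain $\Upsilon(2)$, which is the correct value given the definition \eqref{eq:mathcalE}; the paper's displayed $\tfrac12\Upsilon(1/\lambda)$ contains a typographical slip in the factor of $2$ before $\Re\Psi$), invoke Theorem~\ref{th:Dalang:1} for the equivalence with Condition~\ref{cond:1}, and then show that the Gaussian mollifications $\psi_n=\delta_x*\phi_n$ are $\delta$-Cauchy if and only if this quantity is finite.

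Where you differ is in the execution of the two directions, and both of your shortcuts are worth noting. For the forward direction, the paper writes out $\mathcal{E}_\lambda(\psi_n-\psi_m)$ explicitly and appeals to dominated convergence; your pointwise inequality $(a-b)^2\le a^2-b^2$ for $0\le b\le a$, applied to the Gaussian factors, yields $\mathcal{E}_1(\psi_n-\psi_m)\le\mathcal{E}_1(\psi_n)-\mathcal{E}_1(\psi_m)$ and reduces the Cauchy property to convergence of a bounded monotone real sequence. For the converse, the paper extracts a rapidly converging subsequence and telescopes via the triangle inequality to bound $\sqrt{\mathcal{E}_\lambda(\delta_x)}$; your observation that a $\delta$-Cauchy sequence must be $\delta$-bounded, combined with $\delta(\psi_n,0)=\sqrt{\mathcal{E}_1(\psi_n)}\uparrow\infty$ by monotone convergence, is considerably shorter and equally rigorous. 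Your explicit clarification of what ``$\delta_x\in\mathcal{H}_1$'' means is also a useful addition, since the paper leaves that identification implicit.
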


\begin{proof}
	First of all, we recall \eqref{eq:Upsilon} and check that
	\begin{equation}\begin{split}
		\mathcal{E}_\lambda(\delta_x)&=
			\frac{1}{2(2\pi)^d}\int_{\R^d}
			\frac{\d\xi}{\lambda^{-1}+2\Re\Psi(\xi)}\\
		&=\frac12\Upsilon(1/\lambda).
	\end{split}\end{equation}
	In particular, the value of $\mathcal{E}_\lambda(\delta_x)$ does
	not depend on $x\in\R^d$. And
	Theorem \ref{th:Dalang:1} implies that $\mathcal{E}_\lambda(\delta_x)$
	is finite for some $\lambda>0$ if and only if it is finite for all $\lambda>0$.
	
	Let us first suppose that $\mathcal{E}_\lambda(\delta_x)$
	is finite. We can note that $\delta_x*\phi_n=\phi_n(\bullet-x)\in\mathcal{S}$,
	where $\{\phi_n\}_{n=1}^\infty$ was defined in \eqref{eq:Gaussian:density}.
	Therefore, for all $n,m\ge 1$,
	\begin{equation}\label{eq:E:phi}\begin{split}
		&\mathcal{E}_\lambda(\delta_x*\phi_n - \delta_x*\phi_m)\\
		&\hskip.8in=\frac{1}{2(2\pi)^d} \int_{\R^d}
			\frac{1}{\lambda^{-1}+2\Re\Psi(\xi)}
			\left| 1-\e^{-\|\xi\|^2\left|\frac{1}{2n}-\frac{1}{2m}\right|}
			\right|^2\,\d\xi.
	\end{split}\end{equation}
	Since $\mathcal{E}_\lambda(\delta_x)$
	is finite, the dominated convergence theorem
	tells us that the sequence $\{\delta_x*\phi_n\}_{n=1}^\infty$ is Cauchy
	in $\mathcal{H}_0$. A calculation similar to the preceding
	shows that the quantity 
	$\mathcal{E}_\lambda(\delta_x*\phi_n-\delta_x)$
	converges to zero as $n\to\infty$. And therefore, $\delta_x\in\mathcal{H}_1$.
	
	Conversely, if $\delta_x\in\mathcal{H}_1$, then
	$\mathcal{E}_\lambda(\delta_x*\phi_n-\delta_x*\phi_m)\to 0$
	as $n,m\to\infty$. We can extract an unbounded subsequence
	$n_1\le n_2\le \cdots$ of positive integers such that
	\begin{equation}\label{eq:binary}
		\mathcal{E}_\lambda(\delta_x*\phi_{n_j}-\delta_x*\phi_{n_{j+1}})\le
		2^{-j}\quad\text{for all $j\ge 1$}.
	\end{equation}
	It follows from \eqref{eq:E:phi} that
	if $k^{-1}\le |n_j^{-1}-n_{j+1}^{-1}|$, then
	\begin{equation}\begin{split}
		\mathcal{E}_\lambda(\delta_x-\delta_x*\phi_k)
			&=\frac{1}{2(2\pi)^d}\int_{\R^d}\frac{1}{\lambda^{-1}+
			2\Re\Psi(\xi)}\left| 1 - \e^{-\|\xi\|^2/(2k)}\right|^2\\
		&\le\mathcal{E}_\lambda(\delta_x*\phi_{n_j}-\delta_x*\phi_{n_{j+1}})\\
		&\le 2^{-j}.
	\end{split}\end{equation}
	Let $k\to\infty$ and then $j\to\infty$, in this order,
	to deduce from the preceding discussion
	that
	\begin{equation}
		\lim_{k\to\infty}\mathcal{E}_\lambda
		(\delta_x-\delta_x*\phi_k) =0.
	\end{equation}
	Because $v\mapsto
	\sqrt{\mathcal{E}_\lambda(v)}$ satisfies the triangle inequality,  it
	follows from \eqref{eq:binary} that for all $k\ge 2$,
	\begin{equation}\begin{split}
		\sqrt{\mathcal{E}_\lambda(\delta_x)} &\le
			\sqrt{\mathcal{E}_\lambda(\delta_x-\delta_x*\phi_k)} 
			+\sqrt{\mathcal{E}_\lambda(\delta_x*\phi_k)}\\
		&\le\sqrt{\mathcal{E}_\lambda(\delta_x-\delta_x*\phi_k)} 
			+\sqrt{\mathcal{E}_\lambda(\delta_x*\phi_k-\delta_x*\phi_{k-1})}\\
		&\hskip2.6in + \sqrt{\mathcal{E}_\lambda(\delta_x*\phi_{k-1})}\\
		&\le \sqrt{\mathcal{E}_\lambda(\delta_x-\delta_x*\phi_k)} 
			+ 2^{-(k-1)/2}+\sqrt{\mathcal{E}_\lambda(\delta_x*\phi_{k-1})}\\
		&\ \vdots\\
		&\le \sqrt{\mathcal{E}_\lambda(\delta_x-\delta_x*\phi_k)} +
			\sum_{j=1}^{k-1} 2^{-j/2} + \sqrt{\mathcal{E}_\lambda(\delta_x*\phi_1)}.
	\end{split}\end{equation}
	We have shown that the first quantity on the right-hand side converges to zero as $k\to\infty$;
	and the second term remains bounded. Finally, the third quantity on
	the right-hand side of the preceding is finite since $\phi_1\in\mathcal{S}$.
	Therefore, it follows that if $\delta_x\in\mathcal{H}_1$
	then $\mathcal{E}_\lambda(\delta_x)<\infty$. This concludes our proof.
\end{proof}


For more general
initial functions $u_0 \ge 0$, \eqref{heat:lin} has a random-field
solution if and only if Condition
\ref{cond:1} holds and $(P_tu_0)(x)<\infty$
for all $t>0$ and $x\in\R^d$. Let us conclude this section with a
lemma that provides simple conditions that ensure that $(P_tu_0)(x)$
is finite for all $t>0$ and $x\in\R^d$.

\begin{lemma}\label{lem:p_t:lin}
	Suppose $\exp(-\Re\Psi)\in L^t(\R^d)$ for all $t>0$, and
	$u_0\in L^\beta(\R^d)$ for some $\beta\in[1\,,\infty]$. Then,
	$(P_tu_0)(x)<\infty$ for all $t>0$ and $x\in\R^d$.
	Moreover, $P_tu_0$ is uniformly bounded and continuous for every
	fixed $t>0$.
\end{lemma}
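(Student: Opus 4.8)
The plan is to deduce everything from two elementary convolution facts, with Proposition~\ref{pr:hawkes} supplying the needed integrability of the transition densities. First I would note that the hypothesis ``$\exp(-\Re\Psi)\in L^t(\R^d)$ for all $t>0$'' is exactly condition~\eqref{H3} of Proposition~\ref{pr:hawkes}; hence \eqref{cond:ac} holds and $p_t\in L^\infty(\R^d)$ for every $t>0$. Since each $p_t$ is a probability density, it also lies in $L^1(\R^d)$, and the interpolation inequality $\|p_t\|_{L^r(\R^d)}\le\|p_t\|_{L^1(\R^d)}^{1-\theta}\|p_t\|_{L^\infty(\R^d)}^{\theta}$, valid with $1/r=1-\theta$, shows that $p_t\in L^r(\R^d)$ for every $r\in[1\,,\infty]$ and every $t>0$.

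Next, fix $t>0$ and let $\beta'\in[1\,,\infty]$ denote the conjugate exponent of $\beta$. Recalling that $(P_tu_0)(x)=(u_0*\tilde p_t)(x)=\int_{\R^d}u_0(y)\,p_t(y-x)\,\d y$, H\"older's inequality yields
\begin{equation*}
	\sup_{x\in\R^d}\left|(P_tu_0)(x)\right|\le\|u_0\|_{L^\beta(\R^d)}\,\|p_t\|_{L^{\beta'}(\R^d)}<\infty.
\end{equation*}
In particular $(P_tu_0)(x)$ is finite for every $x\in\R^d$ and every $t>0$, and $P_tu_0$ is bounded on $\R^d$, uniformly so for each fixed $t>0$.

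For continuity the idea is to exploit the semigroup identity $p_t=p_{t/2}*p_{t/2}$, which gives $P_tu_0=(P_{t/2}u_0)*\tilde p_{t/2}$: this presents $P_tu_0$ as the convolution of $g:=P_{t/2}u_0\in L^\infty(\R^d)$ --- bounded by the estimate just proved --- against $\tilde p_{t/2}\in L^1(\R^d)$. Then for all $x,h\in\R^d$,
\begin{equation*}
	\left|(P_tu_0)(x+h)-(P_tu_0)(x)\right|=\left|\int_{\R^d}g(x-y)\bigl[\tilde p_{t/2}(y+h)-\tilde p_{t/2}(y)\bigr]\,\d y\right|\le\|g\|_{L^\infty(\R^d)}\,\bigl\|\tilde p_{t/2}(\cdot+h)-\tilde p_{t/2}\bigr\|_{L^1(\R^d)},
\end{equation*}
and the right-hand side tends to $0$ as $h\to0$, uniformly in $x$, by continuity of translation in $L^1(\R^d)$. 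Hence $P_tu_0$ is uniformly continuous for every fixed $t>0$, which completes the proof.

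The only subtle point is the endpoint $\beta=1$: then $\beta'=\infty$, translation fails to be continuous in $L^{\beta'}(\R^d)$, and the naive estimate obtained by translating directly inside $u_0*\tilde p_t$ breaks down; the splitting $P_tu_0=(P_{t/2}u_0)*\tilde p_{t/2}$ is precisely what removes this difficulty, since one is then always convolving an $L^\infty$ function with an $L^1$ function. Everything else is routine once Proposition~\ref{pr:hawkes} has been invoked.
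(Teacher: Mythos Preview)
Your proof is correct. The boundedness step is essentially the same as the paper's (both reduce to $\|p_t\|_{L^{\beta'}(\R^d)}<\infty$ via Proposition~\ref{pr:hawkes}), but your continuity argument is genuinely different and, in one respect, cleaner.

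The paper splits into two cases: for $\beta<\infty$ it translates $u_0$ and uses continuity of translation in $L^\beta(\R^d)$; for $\beta=\infty$ it instead translates $p_t$ and uses continuity of translation in $L^1(\R^d)$. Your semigroup identity $P_tu_0=(P_{t/2}u_0)*\tilde p_{t/2}$ handles all $\beta\in[1,\infty]$ uniformly, since $P_{t/2}u_0\in L^\infty(\R^d)$ by the boundedness already established and translation is always continuous in $L^1(\R^d)$. This avoids the case split entirely. A minor remark: your worry about the endpoint $\beta=1$ is slightly overstated, because Proposition~\ref{pr:hawkes} in fact gives $p_t\in C_0(\R^d)$, so $\|\tilde p_t(\cdot+h)-\tilde p_t\|_{L^\infty(\R^d)}\to 0$ would also work directly; but your trick is still the more robust argument, as it needs nothing beyond $p_t\in L^1(\R^d)$.
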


\begin{proof}
	Since $P_t$ is a contraction on $L^\infty(\R^d)$, it suffices
	to consider only the case that $1\le\beta<\infty$.
	
	Choose and fix some $t>0$. According to Young's inequality,
	\begin{equation}\begin{split}
		\|P_tu_0\|_{L^\infty(\R^d)}
			&=\|\tilde{p}_t*u_0\|_{L^\infty(\R^d)}\\
		&\le \| p_t\|_{L^p(\R^d)}\cdot \|u_0\|_{L^q(\R^d)},
	\end{split}\end{equation}
	where $p^{-1}+q^{-1}=1$. On the other hand, for all $p\in(1\,,\infty)$,
	\begin{equation}
		\|p_t\|_{L^p(\R^d)} \le \|p_t\|_{L^\infty(\R^d)},
	\end{equation}
	and this is finite, thanks to Proposition \ref{pr:hawkes} on page
	\pageref{pr:hawkes}.
	Therefore, it remains to prove continuity.
	
	First consider the case that $\beta<\infty$.
	In that case, we can bound the quantity
	\begin{equation}
		\left| (P_tu_0)(x)-(P_tu_0)(x')\right|
		=\left| (\tilde{p}_t*u_0)(x)-(\tilde{p}_t*u_0)(x')\right|,
	\end{equation}
	from above, by
	\begin{equation}\begin{split}
		&\int_{\R^d} p_t(y) \left|
			u_0(y-x)-u_0(y-x')\right|\,\d y\\
		&\hskip1.3in\le \left(\int_{\R^d} p_t(y) \left|
			u_0(y-x)-u_0(y-x')\right|^\beta\,\d y\right)^{1/\beta}\\
		&\hskip1.3in\le\|p_t\|_{L^\infty(\R^d)}^{1/\beta}\cdot
			\left\| u_0(\bullet-x)-u_0(\bullet-x')\right\|_{L^\beta(\R^d)}.
	\end{split}\end{equation}
	It is a
	classical fact that $u_0\in L^\beta(\R^d)$ implies that
	$u_0$ is continuous in $L^\beta(\R^d)$. Therefore,
	$P_tu_0=\tilde{p}_t*u_0$ is continuous.
	
	Next let us consider the case that $\beta=\infty$.
	In that case, we write
	\begin{equation}\begin{split}
		&\left| (\tilde{p}_t*u_0)(x) - (\tilde{p}_t*u_0)(x')\right|\\
		&\hskip1.2in\le \|u_0\|_{L^\infty(\R^d)}\cdot
			\left\| p_t(\bullet-x)-p_t(\bullet-x')\right\|_{L^1(\R^d)},
	\end{split}\end{equation}
	which goes to zero because, once again, $p_t\in L^1(\R^d)$ implies that
	$p_t$ is continuous in $L^1(\R^d)$.
\end{proof}

\section{Spatial regularity: Examples}\label{Spatial regularity: Examples}
Define for all $x,y\in\R^d$,
\begin{equation}
	d(x\,,y) := \left(\frac{1}{(2\pi)^d}\int_{\R^d}
	\frac{1-\cos(\xi\cdot(x-y))}{1+2\Re\Psi(\xi)}\hat{f}(\xi)\,
	\d\xi\right)^{1/2}.
\end{equation}
Then, $d$ defines a pseudo-distance on $\R^d$.
Let $N_d$ denote the \emph{metric entropy} of $[0\,,1]^d$.
That is, for all $\epsilon>0$, $N_d(\epsilon)$ denotes the
minimum number of radius-$\epsilon$ $d$-balls required to
cover $[0\,,1]^d$. We can combine Lemma \ref{lem:linear:L2:est} with
theorems of Dudley (see, for example, Marcus and 
Rosen \cite[Theorem 6.1.2, p.\ 245]{MR}) and Fernique
\cite[Theorem 6.2.2, p.\ 251]{MR}, together with Belyaev's dichotomy
\cite[Theorem 5.3.10, p.\ 213]{MR}, and deduce the following:

\begin{proposition}\label{pr:cont:x}
	Suppose $\exp(-\Re\Psi)\in L^t(\R^d)$ for all $t>0$,
	and $\Upsilon(1)<\infty$ so that \eqref{heat:lin}
	has a random-field solution $\bm{u}$ with $u_0\equiv 0$.
	Then the following are equivalent:
	\begin{enumerate}
		\item $x\mapsto u_t(x)$ has a continuous modification for some $t>0$;
		\item $x\mapsto u_t(x)$ has a continuous modification for all $t>0$;
		\item The following metric-entropy condition holds:
			\begin{equation}\label{cond:ME}
				\int_{0^+} \left(\log N_d(\epsilon)\right)^{1/2}\,\d\epsilon<\infty.
			\end{equation}
	\end{enumerate}
\end{proposition}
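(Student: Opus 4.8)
The plan is to recognize $\{u_t(x)\}_{x\in\R^d}$, for each fixed $t>0$, as a centered Gaussian random field whose intrinsic (canonical) pseudo-metric is comparable to the pseudo-distance $d(\cdot\,,\cdot)$ defined just above the statement. Once this comparison is in hand, the equivalence of (1), (2), and (3) follows from a standard package of Gaussian-process theorems: Dudley's sufficiency theorem for sample-path continuity, Fernique's necessity (minoration) theorem, and Belyaev's dichotomy, which says that a stationary-increments Gaussian field is either a.s. continuous on compacts or a.s. unbounded on every open set, so there is no intermediate regime and the ``for some $t$''/``for all $t$'' distinction collapses.

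First I would fix $t>0$ and compute the increment variance. By Lemma \ref{lem:linear:L2:est} applied with test functions approximating $\delta_x-\delta_y$ (using the completion procedure from the previous section, which is legitimate precisely because $\Upsilon(1)<\infty$ puts $\delta_x$ into $\mathcal{H}_1$, hence $u_t(x)$ is well defined), one gets
\begin{equation}
	a(t)\,\mathcal{E}_1(\delta_x-\delta_y)\le
	\E\!\left(|u_t(x)-u_t(y)|^2\right)\le b(t)\,\mathcal{E}_1(\delta_x-\delta_y),
\end{equation}
where $a(t)=1-\e^{-2t}$ and $b(t)=\e^{2t}$. Next I would evaluate $\mathcal{E}_1(\delta_x-\delta_y)$ explicitly: since $\widehat{\delta_x-\delta_y}(\xi)=\e^{i\xi\cdot x}-\e^{i\xi\cdot y}$, we have $|\widehat{\delta_x-\delta_y}(\xi)|^2=2(1-\cos(\xi\cdot(x-y)))$, so
\begin{equation}
	\mathcal{E}_1(\delta_x-\delta_y)=\frac{1}{2(2\pi)^d}\int_{\R^d}
	\frac{2(1-\cos(\xi\cdot(x-y)))\,\hat f(\xi)}{1+\Re\Psi(\xi)}\,\d\xi,
\end{equation}
which, up to the harmless factor $2$ inside the denominator $1+2\Re\Psi$ versus $1+\Re\Psi$ (absorbed into the constants in the two-sided bound), is exactly $d(x\,,y)^2$. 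The hypothesis $\exp(-\Re\Psi)\in L^t(\R^d)$ for all $t>0$ is invoked here only to guarantee, via Proposition \ref{pr:hawkes}, that transition densities behave well and the random-field solution genuinely exists; the finiteness of $d(x\,,y)$ itself is forced by $\Upsilon(1)<\infty$ since $d(x\,,y)^2\le 2\,b(t)^{-1}\cdot\tfrac12\Upsilon(1)<\infty$ wait—more directly, $1-\cos\le 2$ gives $d(x\,,y)^2\le \mathrm{const}\cdot\Upsilon(1)$.

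Having established $d(x\,,y)^2\asymp \E(|u_t(x)-u_t(y)|^2)$ with $t$-dependent constants, I would then argue: (3)$\Rightarrow$(2) by Dudley's theorem, applied on the compact $[0\,,1]^d$ (and then on every dyadic translate/dilate, so on all of $\R^d$), because $\int_{0^+}(\log N_d(\epsilon))^{1/2}\d\epsilon<\infty$ is precisely Dudley's entropy integral for the pseudo-metric $d$. The implication (2)$\Rightarrow$(1) is trivial. For (1)$\Rightarrow$(3): since $x\mapsto u_t(x)$ has stationary increments (the noise is spatially homogeneous and $p_{t-s}$ translation-invariant), Belyaev's dichotomy applies; continuity on $[0\,,1]^d$ for one value of $t$ rules out the a.s.-unbounded alternative, and then Fernique's minoration theorem forces the entropy integral \eqref{cond:ME} to converge—otherwise the field would be a.s. unbounded on every ball, contradicting (1). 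The same Belyaev/Fernique argument, read in the other direction, gives that failure of (3) implies a.s. discontinuity for every $t$, closing the loop among all three statements.

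The main obstacle I anticipate is not any single deep estimate but the bookkeeping around the pseudo-metric: one must be careful that the $L^2(\P)$-completion construction of $u_t(\delta_x)$ from the preceding section actually yields a version of the field whose increment variances equal the claimed integral (rather than merely a one-sided bound), and that the metric entropy $N_d$ computed with respect to $d$ is the relevant one for both Dudley and Fernique simultaneously. A secondary subtlety is passing from continuity/boundedness statements on the unit cube to all of $\R^d$; this is routine given translation invariance, but should be stated. Everything else is a direct citation to the Gaussian-process literature (Marcus--Rosen \cite{MR} for Dudley, Fernique, and Belyaev) once the variance identity is pinned down.
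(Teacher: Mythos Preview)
Your proposal is correct and matches the paper's approach exactly: the paper states the proposition as a direct consequence of combining Lemma \ref{lem:linear:L2:est} with Dudley's theorem, Fernique's theorem, and Belyaev's dichotomy (all cited from Marcus--Rosen \cite{MR}), and you have simply spelled out the details of that combination. The factor-of-two discrepancy you flag between $1+\Re\Psi$ in $\mathcal{E}_1$ and $1+2\Re\Psi$ in $d$ is genuine but, as you note, harmless for the two-sided comparison.
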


Next we describe a large family of examples for \eqref{heat:lin} 
that have continuous random-field solutions. Throughout,
we write ``$h\asymp g$'' in place of ``$(h/g)$ is bounded, above and
below uniformly, by finite positive constants.''

\begin{theorem}\label{th:example:riesz}
	Suppose $f(x)=\text{\rm const}/ \|x\|^{d-\beta}$ and $\Re\Psi(\xi)\asymp
	\|\xi\|^\alpha$ for some $\alpha\in[0\,,2]$
	and $\beta\in(0\,,d)$. Then \eqref{heat:lin} has a
	random-field solution if and only if  	$\alpha+\beta>d$. In this case, 
	\eqref{cond:ME} holds. In fact, we have
	\begin{equation}
		d(x\,,y) \asymp g(\|x-y\|) \qquad\text{uniformly when $\|x-y\|<1/\e$},
	\end{equation}
	where for all $r\in(0\,,1/\e)$,
	\begin{equation}
		g(r) := \begin{cases}
				r^{(\alpha+\beta-d)/2}&\text{if $\alpha+\beta\in(d+1\,,d+2)$},\\
				r\sqrt{\log(1/r)}&\text{if $\alpha+\beta=d+2$},\\
			r&\text{if $\alpha+\beta>d+2$}.
		\end{cases}
	\end{equation}
\end{theorem}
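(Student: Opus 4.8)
\emph{Overview of the plan.} I would establish the three assertions in order — the existence dichotomy, then the sharp modulus $d(x,y)\asymp g(\|x-y\|)$, and finally the metric‑entropy bound \eqref{cond:ME} and continuity — the modulus being the only step with real content. Throughout I would use the Riesz transform formula \eqref{eq:Riesz:FT}, namely $\hat f(\xi)=\text{const}\cdot\|\xi\|^{-\beta}$, and the comparability $1+2\Re\Psi(\xi)\asymp 1+\|\xi\|^\alpha$ (which holds since $\Re\Psi\ge 0$ and $\Re\Psi(\xi)\asymp\|\xi\|^\alpha$).

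\emph{Existence.} Since $u_0$ is bounded and measurable, $(P_tu_0)(x)$ is automatically finite (contraction of $P_t$ on $L^\infty(\R^d)$, or Lemma \ref{lem:p_t:lin}), so by the development of this chapter \eqref{heat:lin} has a random‑field solution iff $\delta_0\in\mathcal{H}_1$, which by the characterization lemma of the previous section is Condition \ref{cond:1}, which by Theorem \ref{th:Dalang:1} is $\Upsilon(1)<\infty$ [see \eqref{cond:Dalang}]. Inserting \eqref{eq:Riesz:FT} and $\Re\Psi(\xi)\asymp\|\xi\|^\alpha$ into \eqref{eq:Upsilon} and passing to polar coordinates,
\[
	\Upsilon(1)\asymp\int_{\R^d}\frac{\d\xi}{\|\xi\|^\beta(1+\|\xi\|^\alpha)}
	\asymp\int_0^1\rho^{d-1-\beta}\,\d\rho+\int_1^\infty\rho^{d-1-\alpha-\beta}\,\d\rho .
\]
The first integral converges because $\beta<d$ is assumed; the second converges iff $\alpha+\beta>d$. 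Hence $\Upsilon(1)<\infty\iff\alpha+\beta>d$. Note $\alpha+\beta>d$ together with $\beta<d$ forces $\alpha>0$.

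\emph{The modulus.} Assume $\alpha+\beta>d$, fix $x\neq y$, put $h:=x-y$, $r:=\|h\|\in(0\,,1/\e)$. By the definition of $d$ and the hypotheses, $d(x,y)^2\asymp J(h):=\int_{\R^d}(1+\|\xi\|^\alpha)^{-1}\|\xi\|^{-\beta}(1-\cos(\xi\cdot h))\,\d\xi$. I would split $\R^d$ into the shells $\{\|\xi\|\le 1\}$, $\{1<\|\xi\|\le 1/r\}$, $\{\|\xi\|>1/r\}$. On the first two, $|\xi\cdot h|\le\|\xi\|\,r\le 1$, so $1-\cos(\xi\cdot h)\asymp(\xi\cdot h)^2$, and integrating first over each sphere $\{\|\xi\|=\rho\}$ replaces $(\xi\cdot h)^2$ by a fixed constant times $\rho^2r^2$; with $1+\|\xi\|^\alpha\asymp 1$ on the first shell and $\asymp\|\xi\|^\alpha$ on the second, polar coordinates give contributions $\asymp r^2\int_0^1\rho^{d+1-\beta}\,\d\rho\asymp r^2$ and $\asymp r^2\int_1^{1/r}\rho^{d+1-\alpha-\beta}\,\d\rho$, the latter being $\asymp r^{\alpha+\beta-d}$ if $\alpha+\beta<d+2$, $\asymp r^2\log(1/r)$ if $\alpha+\beta=d+2$, and $\asymp r^2$ if $\alpha+\beta>d+2$. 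On the third shell $1+\|\xi\|^\alpha\asymp\|\xi\|^\alpha$ and $0\le 1-\cos(\xi\cdot h)\le 2$, so the contribution is $\le\text{const}\int_{1/r}^\infty\rho^{d-1-\alpha-\beta}\,\d\rho\asymp r^{\alpha+\beta-d}$ (finite exactly because $\alpha+\beta>d$); for the matching lower bound I would restrict the integral to $\{1/r\le\|\xi\|\le 3/r\}$ intersected with a fixed solid cone about $h/\|h\|$ chosen so that $\xi\cdot h$ stays in a compact subset of $(0\,,2\pi)$, whence $1-\cos(\xi\cdot h)$ is bounded below there, the region has Lebesgue measure $\asymp r^{-d}$, and $\|\xi\|^{-\alpha-\beta}\asymp r^{\alpha+\beta}$ on it, yielding $\gtrsim r^{\alpha+\beta-d}$. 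Summing the three shells and taking square roots gives $d(x,y)\asymp g(\|x-y\|)$ with $g$ as stated: the third‑shell term $\asymp r^{\alpha+\beta-d}$ is dominated by the first‑shell term $r^2$ precisely when $\alpha+\beta>d+2$, and is comparable to the second‑shell term otherwise. (Under $\alpha\le 2$, $\beta<d$ only the case $g(r)=r^{(\alpha+\beta-d)/2}$ actually occurs; the trichotomy is recorded for reference.)

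\emph{Metric entropy and conclusion; main obstacle.} From $d(x,y)\asymp g(\|x-y\|)$ with $g$ strictly increasing near $0$, a $d$‑ball of radius $\epsilon$ is sandwiched between Euclidean balls of radii comparable to $g^{-1}(\epsilon)$, so $N_d(\epsilon)\asymp g^{-1}(\epsilon)^{-d}$; in every case $g^{-1}(\epsilon)$ is a power of $\epsilon$ times at most a power of $\log(1/\epsilon)$, so $\log N_d(\epsilon)\asymp\log(1/\epsilon)$ and $\int_{0^+}(\log N_d(\epsilon))^{1/2}\,\d\epsilon\asymp\int_{0^+}(\log(1/\epsilon))^{1/2}\,\d\epsilon<\infty$, which is \eqref{cond:ME}. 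Finally $\Re\Psi(\xi)\asymp\|\xi\|^\alpha$ with $\alpha>0$ gives $\exp(-t\Re\Psi)\in L^1(\R^d)$ for every $t>0$, so Proposition \ref{pr:cont:x} applies and the solution has a continuous modification. The one non‑routine point is the third‑shell lower bound: the oscillation of $1-\cos(\xi\cdot h)$ precludes any pointwise estimate, and one must construct by hand a set of the correct size on which the phase $\xi\cdot h$ is bounded away from $2\pi\mathbf{Z}$; everything else is polar‑coordinate bookkeeping and the cited results of this chapter.
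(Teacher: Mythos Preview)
Your proof is correct and follows essentially the same strategy as the paper: the identical three-shell decomposition of the $\xi$-integral (your shells coincide with the paper's $J_1,J_3,J_2$), the same polar-coordinate bookkeeping on each, and the same existence argument via $\Upsilon(1)$. You are in fact somewhat more careful than the paper in two places: you make the spherical averaging $(\xi\cdot h)^2\mapsto c_d\rho^2r^2$ explicit (the paper writes the two-sided $J_3$ estimate as if it were pointwise), and you supply a third-shell lower bound via the cone construction, which the paper omits because the lower bound already follows from the $J_3$-shell; your extra argument is correct but not needed. Your explicit verification of \eqref{cond:ME} and of the integrability hypothesis of Proposition~\ref{pr:cont:x} is also a welcome addition---the paper defers continuity to the subsequent corollary without checking \eqref{cond:ME} directly.
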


\begin{remark}
	The condition that $\Re\Psi(\xi)\asymp\|\xi\|^\alpha$ implies
	that the upper and lower Blumenthal--Getoor
	indices of $\Psi$ match and are both equal to $\alpha$; see 
	Blumenthal and Getoor \cite[Theorem 3.2]{BG:61}
	and Khoshnevisan and Xiao \cite{KX:HAAL} for definitions
	and further details, including the various
	connections that exist between those indices and the fractal
	properties of the underlying L\'evy process $X$.
	\qed
\end{remark}

\begin{proof}[Proof of Theorem \ref{th:example:riesz}]
	Recall \eqref{eq:Upsilon}.
	In order to prove the existence of random-field solutions, 
	it suffices to show that $\Upsilon(1)<\infty$. We begin by writing
	\begin{equation}\begin{split}
		\Upsilon(1)&=\frac{1}{(2\pi)^d}\int_{\R^d}
			\frac{\hat{f}(\xi)}{1+2\Re\Psi(\xi)}\,\d\xi\\
		&:=I_1+I_2,
	\end{split}\end{equation}
	where
	\begin{equation}\begin{split}
		I_1 &:=\frac{1}{(2\pi)^d}\int_{\|\xi\|\leq 1}
			\frac{\hat{f}(\xi)}{1+2\Re\Psi(\xi)}\,\d\xi,
			\qquad\text{and}\\
		I_2 &:=\frac{1}{(2\pi)^d}\int_{\|\xi\|>1}
			\frac{\hat{f}(\xi)}{1+2\Re\Psi(\xi)}\,\d\xi. \\
	\end{split}\end{equation}
	It is clear from the hypothesis of the Lemma that $I_1$ is always finite, because
	$\beta<d$. We now turn our attention to $I_2$, and note that
	\begin{equation}
		I_2\asymp \int_{\|\xi\|>1}\frac{\d \xi}{\|\xi\|^{\alpha+\beta}}.
	\end{equation}
	Therefore, $I_2<\infty$ if and only if $\alpha+\beta>d$. 
	This concludes the first part of the result. 
	
	For the second part we assume that
	\begin{equation}
		\|x-y\|\le 1/\e.
	\end{equation}
	
	We write
	\begin{equation}
			\left| d(x\,,y)\right|^2 := \frac{1}{(2\pi)^d}\left( J_1 + J_2 + J_3\right),
	\end{equation}
	where
	\begin{equation}\begin{split}
			J_1 &:= \int_{\|\xi\|\le 1}\frac{1-\cos(\xi\cdot(x-y))}{1+2\Re\Psi(\xi)}\hat{f}(\xi)\,\d\xi,\\
			J_2 &:= \int_{\|\xi\|>1/\|x-y\|}\frac{1-\cos(\xi\cdot(x-y))}{1+2\Re\Psi(\xi)}\hat{f}(\xi)\,\d\xi,\\
			J_3 &:= \int_{1<\|\xi\|\le 1/\|x-y\|}\frac{1-\cos(\xi\cdot(x-y))}{%
				1+2\Re\Psi(\xi)}\hat{f}(\xi)\,\d\xi.
		\end{split}\end{equation}
		We can estimate each $J_j$ separately.
		
		Because $1-\cos\theta\asymp\theta^2$ for $\theta\in(-1\,,1)$,
		\begin{equation}\label{eq:J1}\begin{split}
		J_1 & \asymp \|x-y\|^2 \cdot\int_{\|\xi\|\le 1} 
			\|\xi\|^2\hat{f}(\xi)\,\d\xi\\
		&\asymp \|x-y\|^2\cdot\int_{\|\xi\|\le 1} 
			\|\xi\|^{2-\beta}\,\d\xi\\
		&\asymp \|x-y\|^2\cdot \int_0^1 \frac{\d r}{r^{\beta-d-1}}.
	\end{split}\end{equation}
	Because $\beta<d$, the
	integral term in the above display is finite, 
	and hence
	\begin{equation}
		J_1\asymp \|x-y\|^2.
	\end{equation}
	
	We estimate $J_2$ similarly:
	\begin{equation}\label{eq:J2}\begin{split}
		J_2 &\le \text{const}\cdot
			\int_{\|\xi\|>1/\|x-y\|} \frac{\hat{f}(\xi)\,\d\xi}{\|\xi\|^\alpha}\\
		&\asymp  \int_{1/\|x-y\|}^\infty \frac{\d r}{r^{\alpha+\beta-d+1}}.
	\end{split}\end{equation}
	The final integral is finite if and only if $\alpha+\beta>d$; 
	and in this case,
	we have the estimate 
	$0\le J_2\le\text{const}\cdot \|x-y\|^{\alpha+\beta-d}$.
			
	Finally,
	\begin{equation}\label{eq:J3}\begin{split}
		J_3 &\asymp \int_{1<\|\xi\|\le 1/\|x-y\|}
			\|\xi\|^{2-\alpha-\beta}\cdot\|x-y\|^2\,\d\xi\\
		&\asymp \|x-y\|^2\cdot\int_1^{1/\|x-y\|} \frac{\d r}{r^{\alpha+\beta-d-1}}.
	\end{split}\end{equation}
	We evaluate the integrals \eqref{eq:J2} and \eqref{eq:J3}
	for the different cases of $\alpha+\beta$ to obtain the result.
\end{proof}
The following is an immediate consequence of Proposition \ref{pr:cont:x} and Theorem \ref{th:example:riesz}.

\begin{corollary}
	Every random-field solution $\bm{u}$ given by Theorem \ref{th:example:riesz} has
	a continuous modification for all $t>0$.
\end{corollary}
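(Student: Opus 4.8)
The plan is simply to check that each random-field solution furnished by Theorem~\ref{th:example:riesz} satisfies the three hypotheses of Proposition~\ref{pr:cont:x}, and then to invoke the equivalence (2)$\Leftrightarrow$(3) of that proposition.

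Two of the three hypotheses are handed to us directly. By Theorem~\ref{th:example:riesz}, the existence of a random-field solution in the setting $f(x)=\text{const}/\|x\|^{d-\beta}$, $\Re\Psi(\xi)\asymp\|\xi\|^\alpha$, forces $\alpha+\beta>d$; the proof of that theorem establishes existence precisely by showing $\Upsilon(1)<\infty$ (equivalently, Condition~\ref{cond:1} holds; cf.\ \eqref{eq:Upsilon} and the earlier characterization of random-field solutions), and it also asserts that the metric-entropy condition \eqref{cond:ME} holds. Thus ``$\Upsilon(1)<\infty$'' and item~(3) of Proposition~\ref{pr:cont:x} are already in hand.

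The only remaining hypothesis is $\exp(-\Re\Psi)\in L^t(\R^d)$ for all $t>0$, and here lies the one point that deserves a sentence. Since $\beta\in(0\,,d)$ and $\alpha+\beta>d$, we get $\alpha>d-\beta>0$; that is, $\alpha$ is strictly positive. Using $\Re\Psi(\xi)\asymp\|\xi\|^\alpha$ with $\alpha>0$, the integrand $\exp(-t\Re\Psi(\xi))$ is bounded near the origin and is dominated by $\exp(-ct\|\xi\|^\alpha)$ for $\|\xi\|$ large, which is integrable on $\R^d$; hence $\exp(-\Re\Psi)\in L^t(\R^d)$ for every $t>0$. The positivity of $\alpha$ is essential here: if $\alpha=0$ this would fail, but $\alpha=0$ is incompatible with $\alpha+\beta>d>\beta$, so it never occurs among the solutions in question.

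With all three hypotheses verified, Proposition~\ref{pr:cont:x} applies, and its implication (3)$\Rightarrow$(2) shows that $x\mapsto u_t(x)$ admits a continuous modification for every $t>0$, which is exactly the assertion of the corollary. I expect no genuine obstacle; the ``hard part,'' such as it is, is merely the bookkeeping observation $\alpha>0$ that licenses the integrability of $\exp(-\Re\Psi)$ and thereby unlocks Proposition~\ref{pr:cont:x}.
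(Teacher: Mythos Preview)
Your proposal is correct and is precisely the approach the paper takes: the corollary is stated there as ``an immediate consequence of Proposition~\ref{pr:cont:x} and Theorem~\ref{th:example:riesz},'' with no further argument. Your extra sentence verifying $\exp(-\Re\Psi)\in L^t(\R^d)$ via $\alpha>0$ fills in the one hypothesis the paper leaves implicit, so if anything your write-up is slightly more complete.
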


We devote the remainder of this section to a special case of \eqref{heat:lin} namely
\begin{equation}\label{heat:R3}
	\frac{\partial}{\partial t} u_t(x) = (\Delta u_t)(x) + \dot{F}_t(x),
\end{equation}
where $u_0\equiv 0$,
$x\in\R^3$, $t>0$, and the Laplacian acts on the $x$ variable only.
The noise $F$ is a centered Gaussian noise, as before, that is
white in time and homogeneous in space with a correlation
function $f$ that satisfies the following for a fixed $q\in\R$:
\begin{equation}\label{eq:nice:cor}
	\hat{f}(\xi) \asymp \frac{1}{\|\xi\|(\log\|\xi\|)^q}
	\qquad\text{for $\xi\in\R^3$ with $\|\xi\|>\e$}.
\end{equation}
According to Theorem \ref{th:nice:cor} on page \pageref{th:nice:cor},
such correlation functions exist.

The following lemma will be useful for the proof of the main result of this section.

\begin{lemma}\label{lem:spherical}
	If $g:\R^3\mapsto\R_+$ is a Borel-measurable radial function, then
	\begin{equation}
		\int_{\|x\|>1/\|y\|}(1-\cos(x\cdot y))g(x)\,\d x\geq \text{\rm const}\cdot
		\int_{\|x\|>1/\|y\|}g(x)\,\d x,
	\end{equation}
	uniformly for all $y\in\R^3\setminus\{0\}$.

\end{lemma}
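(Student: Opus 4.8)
The plan is to reduce the asserted inequality to a one-dimensional statement by using the radial symmetry of $g$ to integrate the oscillatory factor $1-\cos(x\cdot y)$ over spheres.

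First I would write $g(x)=G(\|x\|)$ and pass to spherical coordinates. Fix $y\neq 0$, set $e:=y/\|y\|$, and write a point of $\R^3$ as $r\omega$ with $r>0$ and $\omega\in S^2$. By the Tonelli theorem (the integrand is nonnegative),
\[
	\int_{\|x\|>1/\|y\|}\bigl(1-\cos(x\cdot y)\bigr)g(x)\,\d x
	= \int_{1/\|y\|}^\infty G(r)\,r^2\left[\int_{S^2}\bigl(1-\cos(r\|y\|\,\omega\cdot e)\bigr)\,\d\sigma(\omega)\right]\d r,
\]
where $\d\sigma$ denotes the surface measure on $S^2$. Parametrizing $\omega$ by the polar angle $\theta$ measured from $e$ and substituting $u=\cos\theta$, the inner integral equals $4\pi\,\psi(r\|y\|)$, where
\[
	\psi(s):=1-\frac{\sin s}{s}\qquad\text{for }s>0.
\]

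Next I would record the elementary fact that $c_0:=\inf_{s\ge 1}\psi(s)>0$: indeed $\sin s<s$ for all $s>0$ gives $\psi>0$ on $(0\,,\infty)$, while $|\sin s|\le 1$ gives $\psi(s)\ge 1-1/s$, so $\psi$ is continuous and strictly positive on $[1\,,\infty)$ and is bounded below away from $0$ near infinity; hence its infimum over $[1\,,\infty)$ is positive. Since $\|x\|>1/\|y\|$ forces $r\|y\|>1$, inserting $\psi(r\|y\|)\ge c_0$ into the displayed identity yields
\[
	\int_{\|x\|>1/\|y\|}\bigl(1-\cos(x\cdot y)\bigr)g(x)\,\d x
	\ge 4\pi c_0\int_{1/\|y\|}^\infty G(r)\,r^2\,\d r
	= c_0\int_{\|x\|>1/\|y\|}g(x)\,\d x,
\]
which is the claim, with the constant $c_0$ independent of $y$. (If the right-hand integral is $+\infty$ the inequality is trivial, and the pointwise lower bound inside the $\d r$-integral covers that case automatically.)

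There is no serious obstacle here; the method is essentially a Fubini-plus-explicit-computation argument. The only points that require a little care are the evaluation of the spherical integral and the verification that $\psi$ stays bounded away from zero on $[1\,,\infty)$ — in particular for $s$ close to $1$, where the asymptotic estimate $\psi(s)\ge 1-1/s$ is useless and one must instead invoke continuity together with the strict inequality $\sin s<s$.
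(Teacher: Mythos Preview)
Your proof is correct and follows essentially the same approach as the paper: both pass to spherical coordinates, evaluate the spherical integral explicitly as a constant times $1-\sin(r\|y\|)/(r\|y\|)$, and then observe that this quantity is bounded below by a positive constant on the region $r\|y\|>1$. Your version is slightly more detailed in justifying the positive lower bound for $\psi$ on $[1,\infty)$, but the argument is the same.
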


\begin{proof}
	Clearly,
	\begin{equation}\begin{split}
		&\int_{\|x\|>1/\|y\|}
			(1-\cos(x\cdot y))g(x)\,\d x\\
		&\hskip1.4in=\int_{1/\|y\|}^\infty  r^2
			R(r)\,\d r
			\int_{\textbf{S}^2}\d\theta\ (1-\cos(y\cdot r\theta)),
	\end{split}\end{equation}
	where $R$ is the function on $\R_+$ defined
	by $R(\|x\|):=g(x)$ for all $x\in\R^3$.
	But for all $r>0$, the $\d\theta$-integral can be computed as
	\begin{equation}
		\int_{\mathbf{S}^2}(1-\cos(y\cdot r\theta))\, \d \theta
		=\text{const}\cdot\left(1-\frac{\sin(r\|y\|)}{r\|y\|}\right),
	\end{equation}
	 and this is bounded below uniformly, as long as $r>1/\|y\|$. We
	combine the preceding two displays to obtain the result.
\end{proof}
The following is the main result concerning \eqref{heat:R3}. 

\begin{theorem}\label{theorem:counter}
	Consider the stochastic heat equation \eqref{heat:R3} in $\R^3$,
	where the correlation function $f$ of the noise satisfies \eqref{eq:nice:cor}
	for a given fixed value $q\in\R$. Then:
	\begin{enumerate}
	\item[(a)] \eqref{heat:R3} has a random-field solution $\bm{u}$ iff $q>1$;
	\item[(b)] $x\mapsto u_t(x)$ has a continuous modification for all $t>0$ iff $q>2$.
	\end{enumerate}
\end{theorem}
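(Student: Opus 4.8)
The plan is to handle (a) by a direct evaluation of Dalang's integral $\Upsilon(1)$, and (b) by a sharp two‑sided estimate of the pseudo‑distance $d$ followed by a metric‑entropy computation. Throughout I would use that for $\mathcal{L}=\Delta$ on $\R^3$ one has $\Re\Psi(\xi)=\|\xi\|^2$ (up to an irrelevant normalization), so $\exp(-\Re\Psi)\in L^t(\R^3)$ for every $t>0$; hence $X$ has transition densities and both Propositions \ref{pr:hawkes} and \ref{pr:cont:x} apply.

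For (a): since $u_0\equiv 0$, equation \eqref{heat:R3} has a random‑field solution if and only if Condition \ref{cond:1} holds, which by Theorem \ref{th:Dalang:1} is equivalent to $\Upsilon(1)<\infty$. I would split
\[
	\Upsilon(1)=\frac{1}{(2\pi)^3}\int_{\R^3}\frac{\hat f(\xi)}{1+2\|\xi\|^2}\,\d\xi
\]
at $\|\xi\|=\e$: the integral over $\{\|\xi\|\le\e\}$ is finite because $\hat f$ is locally integrable and $(1+2\|\xi\|^2)^{-1}$ is bounded, and, passing to polar coordinates, the integral over $\{\|\xi\|>\e\}$ is comparable to
\[
	\int_{\e}^{\infty}\frac{r^2}{(1+2r^2)\,r(\log r)^q}\,\d r\asymp\int_{\e}^{\infty}\frac{\d r}{r(\log r)^q}=\int_1^{\infty}\frac{\d s}{s^q},
\]
which is finite precisely when $q>1$. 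This proves (a).

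For (b) we may assume $q>1$, so that a solution exists and Proposition \ref{pr:cont:x} applies: $x\mapsto u_t(x)$ has a continuous modification for one (equivalently all) $t>0$ iff the entropy condition \eqref{cond:ME} holds for $d$. The key estimate I would prove is
\[
	d(x\,,y)\asymp\bigl(\log(1/\|x-y\|)\bigr)^{-(q-1)/2}\qquad\text{uniformly for }\|x-y\|<1/\e.
\]
Writing $\rho:=\|x-y\|$ and $h:=x-y$, split $|d(x,y)|^2$ into integrals over $\{\|\xi\|\le\e\}$, $\{\e<\|\xi\|\le 1/\rho\}$, and $\{\|\xi\|>1/\rho\}$, in the style of the proof of Theorem \ref{th:example:riesz}. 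The first piece is $O(\rho^2)$ via $1-\cos\theta\le\theta^2/2$ and local integrability of $\hat f$. For the middle piece one uses the spherical identity from the proof of Lemma \ref{lem:spherical}, namely $\int_{\mathbf{S}^2}(1-\cos(r\theta\cdot h))\,\d\theta=\mathrm{const}\cdot(1-\sin(r\rho)/(r\rho))\asymp(r\rho)^2$ for $r\rho\le1$, which yields a contribution $\asymp\rho^2\int_{\e}^{1/\rho}r(\log r)^{-q}\,\d r\asymp(\log(1/\rho))^{-q}$. For the outer piece, $1-\cos\le2$ gives the upper bound $\lesssim\int_{1/\rho}^{\infty}\frac{\d r}{r(\log r)^q}\asymp(\log(1/\rho))^{1-q}$, and the matching lower bound comes from applying Lemma \ref{lem:spherical} to a radial minorant of $\xi\mapsto\hat f(\xi)(1+2\|\xi\|^2)^{-1}$ on $\{\|\xi\|>1/\rho\}$, which is legitimate because $1/\rho>\e$. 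Since $(\log(1/\rho))^{1-q}$ dominates $\rho^2$ and $(\log(1/\rho))^{-q}$ as $\rho\to0$, the displayed asymptotics follow.

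Finally I would convert this into a metric‑entropy bound. Because $r\mapsto(\log(1/r))^{-(q-1)/2}$ is increasing on $(0\,,1/\e)$ and comparable to $d$, a $d$‑ball of radius $\epsilon$ inside $[0\,,1]^3$ is comparable to a Euclidean ball of radius $\exp(-(c\epsilon)^{-2/(q-1)})$, whence $\log N_3(\epsilon)\asymp\epsilon^{-2/(q-1)}$; therefore the integral in \eqref{cond:ME} is comparable to $\int_{0^+}\epsilon^{-1/(q-1)}\,\d\epsilon$, which converges iff $q-1>1$, i.e. $q>2$. By Proposition \ref{pr:cont:x} this gives (b): for $q>2$ there is a continuous modification, for $1<q\le2$ there is not, and for $q\le1$ there is no random‑field solution by (a). I expect the main obstacle to be the sharp two‑sided bound on $d(x,y)$ — especially the lower bound on the outer piece, where the oscillation of $1-\cos$ and the non‑radiality of $\hat f$ must be controlled via Lemma \ref{lem:spherical} — together with keeping the slowly‑varying logarithmic factors straight when passing from $d$ to $N_3$.
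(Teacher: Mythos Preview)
Your proposal is correct and follows essentially the same route as the paper's proof: the same split of $\Upsilon(1)$ at $\|\xi\|=\e$ for (a), the same three-shell decomposition of $|d(x,y)|^2$ for (b), the lower bound on the outer shell via Lemma~\ref{lem:spherical} applied to a radial minorant, and the same metric-entropy conversion $\log N_d(\epsilon)\asymp\epsilon^{-2/(q-1)}$. The only cosmetic difference is that on the middle shell the paper uses the elementary bound $1-\cos\theta\le\theta^2/2$ (which is enough since that piece is subdominant), whereas you invoke the spherical identity to get a two-sided $(\log(1/\rho))^{-q}$; this is harmless but unnecessary, and your caution about non-radiality of $\hat f$ is handled exactly as you say, by sandwiching with the radial functions supplied by \eqref{eq:nice:cor}.
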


\begin{remark}\label{rem:discontinuous:RF}
	Theorem \ref{theorem:counter}, and general facts about 
	stationary Gaussian processes [see Belyaev's
	dichotomy \cite[Theorem 5.3.10, p.\ 213]{MR}, for instance], 
	together prove that when $q\in(1\,,2]$,
	the stochastic heat equation \eqref{eq:nice:cor} has a random-field solution
	$\bm{u}$ that almost surely has infinite oscillations in every open
	space-time set. This example was mentioned at the end of Introduction.
	\qed
\end{remark}

\begin{proof}
	In order to show existence of a random-field solution, it suffices to show that
	$\Upsilon(1)<\infty$ if and only if $q>1$. Because
	$\Psi(\xi)=\|\xi\|^2$, we may write $\Upsilon(1)$ as follows:
	\begin{equation}\begin{split}
		\Upsilon(1) &=\frac{1}{8\pi^3}\int_{\R^3}\frac{\hat{f}(\xi)}{1+2
			\|\xi\|^2}\,\d \xi\\
		&:=\frac{I_1+I_2}{8\pi^3},
	\end{split}\end{equation}
	where
	\begin{equation}
		I_1 := \int_{\|\xi\|<\e}\frac{\hat{f}(\xi)}{1+2 \|\xi\|^2}\,\d \xi
		\quad\text{and}\quad
		I_2 := \int_{\|\xi\|>\e}\frac{\hat{f}(\xi)}{1+2 \|\xi\|^2}\,\d \xi.
	\end{equation}
	Direct inspection reveals that
	\begin{equation}
		I_1\asymp \int_0^\e \frac{r}{(\log r)^q}\,\d r
		\quad\text{and}\quad
		I_2\asymp \int_\e^\infty \frac{1}{r(\log r)^q}\,\d r.
	\end{equation}
	It follows readily from this that $\Upsilon(1)<1$ if and only if $q>1$.
	
	We now turn our attention to the second part of the proof. 
	Throughout, we assume that $\|x-y\|\leq 1/\e$, and consider the following integral:
	\begin{equation}\begin{split}
		|d(x\,,y)|^2&=\frac{1}{8\pi^3}
			\int_{\R^3}\frac{1-\cos(\xi\cdot(x-y))}{1+2\|\xi\|^2}\hat{f}(\xi)\,\d \xi\\
		&:=\frac{J_1+J_2+J_3}{8\pi^3},
	\end{split}\end{equation}
	where
	\begin{equation}\begin{split}
			J_1 &:= \int_{\|\xi\|\le\e}\frac{1-\cos(\xi\cdot(x-y))}{
				1+2\|\xi\|^2}\hat{f}(\xi)\,\d\xi,\\
			J_2 &:= \int_{\|\xi\|>1/\|x-y\|}\frac{1-\cos(\xi\cdot(x-y))}{
				1+2\|\xi\|^2}\hat{f}(\xi)\,\d\xi,\\
			J_3 &:= \int_{\e<\|\xi\|\le 1/\|x-y\|}\frac{1-\cos(\xi\cdot(x-y))}{
				1+2\|\xi\|^2}\hat{f}(\xi)\,\d\xi.
	\end{split}\end{equation}
	We estimate each of the integral separately. The first term can be dealt with easily, 
	and we obtain the following, using similar computations to those in the
	proof of Theorem \ref{th:example:riesz}:
	\begin{equation}
		J_1\asymp \|x-y\|^2.
	\end{equation}
	The estimation of the second term requires a little bit more work, viz.,
	\begin{equation}\begin{split}
		J_2&\le \frac12 \int_{\|\xi\|>1/\|x-y\|}\frac{1-\cos(\xi\cdot(x-y))}{
			\|\xi\|^2}\hat{f}(\xi)\,\d \xi\\
		&\leq\text{const}\cdot
			\int_{\|\xi\|>1/\|x-y\|}\frac{\hat{f}(\xi)}{\|\xi\|^2}\,\d \xi\\
		&\leq\text{const}\cdot
			\int_{1/\|x-y\|}^\infty\frac{1}{r(\log r)^q}\,\d r\\
		&=\text{const}\cdot
			\left(\log \frac{1}{\|x-y\|}\right)^{-q+1}.
	\end{split}\end{equation}
	Finally, we consider the final term $J_3$:
	\begin{equation}\begin{split}
		J_3&\asymp  \|x-y\|^2\int_{\e<\|\xi\|\leq 1/\|x-y\|} \hat{f}(\xi)\,\d \xi\\
		& \asymp  \|x-y\|^2\int_\e^{1/\|x-y\|}\frac{r}{(\log r)^q}\,\d r\\
		&\leq  \text{const}\cdot \left(\log \frac{1}{\|x-y\|}\right)^{-q}.
	\end{split}\end{equation}
	Upon combining the above estimates we obtain the
	bound
	\begin{equation}
		|d(x\,,y)|^2\leq \text{const}\cdot \left(\log \frac{1}{\|x-y\|} \right)^{-q+1}.
	\end{equation}
	
	Next, we compute a similar lower bound for $|d(x\,,y)|^2$. 
	Since the integrands are nonnegative throughout, we may consider 
	only $J_2$. In that case, Lemma \ref{lem:spherical} yields the following:
	\begin{equation}\begin{split}
		J_2&\geq\text{const}\cdot
			\int_{\|\xi\|\geq 1/\|x-y\|}\frac{\hat{f}(\xi)}{\|\xi\|^2}\,\d \xi\\
		&\ge \text{const}\cdot \left(\log \frac{1}{\|x-y\|}\right)^{-q+1}.
	\end{split}\end{equation}

	Thus far, we have proved that
	\begin{equation}
		d(x\,,y)\asymp |\log (\|x-y\|)|^{(1-q)/2},
	\end{equation}
	uniformly, as long as $\|x-y\|<1/\e$. 
	From this, we obtain 
	\begin{equation}
		\log N_d(\epsilon)\asymp \epsilon^{2/(1-q)},
	\end{equation}
	valid for  $0<\epsilon<1/\e$.
	In particular, the metric-entropy
	condition \eqref{cond:ME} applies if and only if $q>2$. 
	Since the other conditions of
	Proposition \ref{pr:cont:x} hold [for elementary reasons],
	the second part of the theorem follows from Proposition \ref{pr:cont:x}.
\end{proof}				
\chapter{The Nonlinear Equation}
\label{ch:NL}

The primary goal of this chapter is to study the fully-nonlinear
stochastic  heat equation \eqref{heat}  as described in the introduction. 

In the first part,
we derive a series of \emph{a priori} estimates that 
ultimately lead to the proof of Theorem \ref{th:existence}. The latter
theorem shows that the finite-potential Condition \ref{cond:1} is
sufficient for the existence of a mild solution to the stochastic
heat equation. As a byproduct, that theorem
also yields a temporal growth rate for the solution.
This means that under some natural conditions on the multiplicative nonlinearity $\sigma$,
the mild solution will not be intermittent.

The second part is devoted to the proofs of 
Theorems \ref{th:interm} and
\ref{th:interm:asymp}, and thereby establishing
the fact that, in contrast to the preceding discussion, if
``there is enough symmetry and nonlinearity,''
then the mild solution to the
stochastic heat equation is weakly intermittent.

In the third and final part, we give a partial answer to a deep question
of David Nualart who asked about the ``effect of drift'' on the intermittence
of the solution. In particular, we show that if the
drift is exactly linear---which corresponds to a massive and/or dissipative
version of \eqref{heat}---then there is frequently an explicit
phase transition which describes
the amount of drift  needed in order to offset the intermittent 
multiplicative effect of the underlying noise.

\section{Existence and Uniqueness}
The main goal of this section is to prove Theorem \ref{th:existence}.
With that aim in mind, we can formulate \eqref{heat}, in mild form, as follows:
\begin{equation}\label{heat:mild:form}\begin{split}
	u_t(x) &= (P_tu_0)(x) + \int_0^t\d s\int_{\R^d}\d y\
		p_{t-s}(y-x) b(u_s(y))\\
	&\hskip1.4in+\int_0^t\int_{\R^d} p_{t-s}(y-x)\sigma(u_s(y))\, F(\d s\,\d y).
\end{split}\end{equation}
As is customary, we seek to find a mild solution that satisfies
the following integrability condition:
\begin{equation}\label{eq:mild}
	\sup_{t\in[0,T]}\sup_{x\in\R^d}
	\E\left(|u_t(x)|^2\right)<\infty
	\qquad\text{for all $T>0$}.
\end{equation}

In order to prove Theorem \ref{th:existence}
we apply a familiar fixed-point argument,
though the details of this argument are not entirely standard.

Let $\bm{\mathcal{F}}:=\{\mathcal{F}_t\}_{t\ge 0}$ denote the right-continuous
complete filtration generated by the noise $F$. Specifically, for every positive $t$, we define
$\mathcal{F}_t^0$ to be the $\sigma$-algebra generated by
random variables of the form of
the Wiener integral $\int_{[0,t]\times\R^d} \phi_s(x)\, F(\d s\,\d x)$,
as $\phi$ ranges over $L^2([0\,,t]\times\R^d)$. Define $\mathcal{F}^1_t$
to be the $\P$-completion of $\mathcal{F}^0_t$, and finally define
\begin{equation}
	\mathcal{F}_t := \bigcap_{s>t}\mathcal{F}_s^1
\end{equation}
as the right-continuous extension. 

We recall from Walsh \cite{Walsh} that a random field $\{v_t(x)\}_{t\ge 0,x\in\R^d}$ is
\emph{predictable} if it can be realized as an $L^2(\P)$-limit of finite linear combination
of random fields of the type
\begin{equation}
	z_t(x)(\omega) := X(\omega)\1_{(a,b]\times A}(t\,,x)
	\qquad\text{for $t>0$, $x\in\R^d$, and $\omega\in\Omega$,}
\end{equation}
where: $0<a<b<\infty$, $A\subseteq\R^d$ is compact, and
$X$ is an $\mathcal{F}_t$-measurable and bounded random variable.\footnote{%
We are using the standard ``$(\Omega\,,\mathcal{F},\P)$'' notation of
probability for the underlying probability space, of course.}
Define for all predictable random fields $v$,
\index{000Av@$\mathcal{A}v$, a stochastic integral}%
\begin{equation}\label{def:A}
	(\mathcal{A}v)_t(x) := \int_{[0,t]\times\R^d}
	p_{t-s}(y-x)\sigma(v_s(y))\, F(\d s\,\d y),
\end{equation}
and
\begin{equation}
	(\mathcal{B}v)_t(x) := \int_0^t\d s\int_{\R^d}\d y\
	p_{t-s}(y-x)b(v_s(y)),
\end{equation}
provided that the integrals exist: The first integral must exist
in the sense of Walsh \cite{Walsh}; and the second in the sense of Lebesgue.

Define for all $\beta,p>0$, and all predictable random fields $v$,
\begin{equation}
	\|v\|_{\beta,p} := \sup_{t>0} \sup_{x\in\R^d}
	\left[ \e^{-\beta t} \E\left(|v_t(x)|^p\right) \right]^{1/p}.
\end{equation}
It is easy to see that the preceding defines a [pseudo-] norm on
random fields, for every fixed choice of $\beta,p>0$. In
fact, these are one among many possible infinite-dimensional $L^p$-norms.
And the corresponding $L^p$-type space is denoted by $\bm{B}_{\beta,p}$.
We make the following definition which will be in force throughout the rest of the paper.

\index{000Bbetap@$\bm{B}_{\beta,p}$, a family of stochastic
	Banach spaces}%
\begin{definition}\label{def:B_beta:p}
	Let $\bm{B}_{\beta,p}$ denote the
	collection of all [equivalence classes of modifications of] predictable 
	random fields $X:=\{X_t(x)\}_{t\ge0,x\in\R^d}$
	such that $\|X\|_{\beta,p}<\infty$.
\end{definition}
One can easily checks easily that $\|\cdot\|_{\beta,p}$ defines a pseudo-norm on
$\bm{B}_{\beta,p}$. Moreover, if we identify $X\in\bm{B}_{\beta,p}$
with $Y\in\bm{B}_{\beta,p}$ when $\|X-Y\|_{\beta,p}=0$, then
[the resulting collection of equivalence classes in] 
$\bm{B}_{\beta,p}$ becomes a Banach space. Because $\|X-Y\|_{\beta,p}=0$
if and only if $X$ and $Y$ are modifications of one another, it follows
that---after the usual identification of a process with its 
modifications---$\bm{B}_{\beta,p}$ is a Banach space of [equivalence classes of]
functions with finite $\|\cdot\|_{\beta,p}$ norm. 

Our next two lemmas contain \emph{a priori} estimates on Walsh-type
stochastic integrals, as well as certain  Lebesgue integrals.
Among other things, these lemmas show that $\mathcal{B}$ and $\mathcal{A}$
are bounded linear maps from predictable processes to predictable processes.
These lemmas
are motivated strongly by the theory of optimal regularity
for parabolic equations, as is our entire approach to the proof of 
Theorem \ref{th:existence}; see Lunardi \cite{Lunardi}. 
We follow the main idea of optimal regularity,
and aim to find a good function space such that if $u_0$ resides
in that function space, then $u_t$ has to live in the same
function space for all $t$. As we shall soon see,
the previously-defined Banach spaces
$\{\bm{B}_{\beta,p}\}_{\beta,p>0}$ form excellent candidates
for those function spaces. In a rather different context,
this general idea appears also in Dalang and Mueller \cite{DalangMueller}.
Those authors show that $L^2(\R^d)$ is also a good candidate for
such a function space provided that $\sigma(0)=0$.

Here and throughout,
we will use the following notation on Lipschitz functions.

\begin{convention}\label{conv:Lip}
	If $g:\R^d\to\R$ is Lipschitz continuous, then
	we can find finite constants ${\rm C}_g$ and ${\rm D}_g$
	such that
	\begin{equation}
		|g(x)|\le {\rm C}_g+ {\rm D}_g|x|
		\quad\text{for all $x\in\R^d$.}
	\end{equation}
	To be concrete, we choose ${\rm C}_g:=|g(0)|$
	and ${\rm D}_g:=\lip_g$, to be concrete.
\end{convention}

As mentioned above, the next two results describe \emph{a priori} estimates for the Walsh-integral-processes
$\mathcal{B}v$ and $\mathcal{A}v$ when $v$ is a nice predictable random field.
Together,  they imply
that the random linear operators
$\mathcal{A}$ and $\mathcal{B}$ map each and every $\bm{B}_{\beta,p}$
into itself boundedly and continuously. The respective operator norms
are both described in terms of a replica potential of the correlation function $f$.

\begin{lemma}\label{lem:Norm:B}
	For all integers $p\ge 2$, real numbers $\beta>0$,
	and predictable random fields $v$ and $w$,
	\begin{equation}
		\|\mathcal{B}v\|_{\beta,p} \le \frac{p}{\beta}
		\left( \frac{{\rm C}_b}\e + {\rm D}_b\|v\|_{\beta,p}\right),
	\end{equation}
	and
	\begin{equation}
		\| \mathcal{B}v-\mathcal{B}w\|_{\beta,p}\le
		\frac{p\lip_b}{\beta}\|v-w\|_{\beta,p}.
	\end{equation}
\end{lemma}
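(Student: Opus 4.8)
The plan is to bound $\|\mathcal{B}v\|_{\beta,p}$ and $\|\mathcal{B}v-\mathcal{B}w\|_{\beta,p}$ by pulling out the $\sup$ over space-time inside the convolution, exploiting that $p_{t-s}$ is a probability density. First I would write, for fixed $t>0$ and $x\in\R^d$,
\begin{equation*}
	(\mathcal{B}v)_t(x) = \int_0^t\d s\int_{\R^d}\d y\ p_{t-s}(y-x)\,b(v_s(y)),
\end{equation*}
and apply Minkowski's integral inequality in $L^p(\P)$ to move the $L^p(\P)$-norm inside the $\d s\,\d y$ integral. Since $\int_{\R^d}p_{t-s}(y-x)\,\d y=1$, this gives
\begin{equation*}
	\left\|(\mathcal{B}v)_t(x)\right\|_{L^p(\P)}
	\le \int_0^t\d s\int_{\R^d}\d y\ p_{t-s}(y-x)\,\left\|b(v_s(y))\right\|_{L^p(\P)}.
\end{equation*}
Then I would use Convention \ref{conv:Lip} in the form $|b(z)|\le {\rm C}_b+{\rm D}_b|z|$, so that $\|b(v_s(y))\|_{L^p(\P)}\le {\rm C}_b+{\rm D}_b\|v_s(y)\|_{L^p(\P)}\le {\rm C}_b+{\rm D}_b\e^{\beta s/p}\|v\|_{\beta,p}$ by the definition of the norm.

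Substituting this bound and again using $\int_{\R^d}p_{t-s}(y-x)\,\d y=1$ yields
\begin{equation*}
	\left\|(\mathcal{B}v)_t(x)\right\|_{L^p(\P)}
	\le \int_0^t\left({\rm C}_b+{\rm D}_b\e^{\beta s/p}\|v\|_{\beta,p}\right)\d s
	\le {\rm C}_b\,t + {\rm D}_b\|v\|_{\beta,p}\cdot\frac{p}{\beta}\,\e^{\beta t/p}.
\end{equation*}
Multiplying through by $\e^{-\beta t/p}$ and taking $\sup_{t>0}\sup_{x\in\R^d}$, I would use the elementary bound $\sup_{t>0}t\,\e^{-\beta t/p}=p/(\e\beta)$ (the maximum of $t\mapsto t\e^{-\beta t/p}$ occurs at $t=p/\beta$) to handle the first term, and note that $\e^{-\beta t/p}\cdot\e^{\beta t/p}=1$ for the second. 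This gives exactly
\begin{equation*}
	\|\mathcal{B}v\|_{\beta,p}\le \frac{p}{\beta}\left(\frac{{\rm C}_b}{\e}+{\rm D}_b\|v\|_{\beta,p}\right),
\end{equation*}
as claimed; in particular $\mathcal{B}$ maps each $\bm{B}_{\beta,p}$ into itself.

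For the Lipschitz estimate the argument is the same but cleaner: since $\mathcal{B}$ is linear, $(\mathcal{B}v)_t(x)-(\mathcal{B}w)_t(x)=\int_0^t\d s\int_{\R^d}\d y\ p_{t-s}(y-x)\,[b(v_s(y))-b(w_s(y))]$, and Minkowski's inequality together with $|b(v_s(y))-b(w_s(y))|\le\lip_b|v_s(y)-w_s(y)|$ gives
\begin{equation*}
	\left\|(\mathcal{B}v)_t(x)-(\mathcal{B}w)_t(x)\right\|_{L^p(\P)}
	\le \lip_b\int_0^t\d s\int_{\R^d}\d y\ p_{t-s}(y-x)\,\e^{\beta s/p}\|v-w\|_{\beta,p},
\end{equation*}
which integrates to $\lip_b\|v-w\|_{\beta,p}\cdot(p/\beta)\e^{\beta t/p}$; multiplying by $\e^{-\beta t/p}$ and taking the supremum yields $\|\mathcal{B}v-\mathcal{B}w\|_{\beta,p}\le (p\lip_b/\beta)\|v-w\|_{\beta,p}$. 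There is no real obstacle here — the only points requiring a word of care are the justification of Minkowski's integral inequality (valid since the integrand is jointly measurable and nonnegative after taking absolute values) and the measurability/predictability of $(\mathcal{B}v)$ so that the $\|\cdot\|_{\beta,p}$-norm is even defined, which follows from predictability of $v$ and Fubini. The mildly delicate bookkeeping step is extracting the constant $p/(\e\beta)$ from $\sup_{t>0}t\e^{-\beta t/p}$, which is where the $1/\e$ in the statement comes from.
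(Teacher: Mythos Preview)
Your proof is correct and follows essentially the same route as the paper's: both push the $L^p(\P)$-norm inside the $\d s\,\d y$ integral (you via Minkowski's integral inequality, the paper via expanding the $p$-th power and applying the generalized H\"older inequality, which amounts to the same thing), then use $\sup_{t>0}t\e^{-\beta t/p}=p/(\e\beta)$ and reduce the Lipschitz estimate to the first bound with $b_1(x)=x$. One small verbal slip: $\mathcal{B}$ is not linear in $v$ (since $b$ need not be), but the displayed identity for the difference is correct because it is just linearity of the integral.
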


\begin{proof}
	On one hand, the triangle inequality implies that
	$\E(|(\mathcal{B}u)_t(x)|^p)$ is bounded
	above by the following quantity:
	\[
		\int_0^t\d s_1\int_{\R^d}\d y_1\,\cdots
		\int_0^t\d s_p\int_{\R^d}\d y_p\
		\prod_{k=1}^p p_{t-s_k}(t_k-x)\cdot
		\E\left( \prod_{j=1}^p |b(v_{s_j}(y_j))|\right).
	\]
	On the other hand, the generalized H\"older inequality tells us that
	\begin{equation}
		\E\left( \prod_{j=1}^p |b(v_{s_j}(y_j))|\right)
		\le \prod_{j=1}^p \| b(v_{s_j}(y_j))\|_p.
	\end{equation}
	Therefore, we can conclude that
	\begin{equation}
		\E\left(\left| (\mathcal{B}v)_t(x) \right|^p\right)
		\le \left({\rm C}_bt+{\rm D}_b\int_0^t\d s\int_{\R^d}\d y\
		p_{t-s}(y-x)\|v_s(y)\|_p\right)^p.
	\end{equation}
	We multiply the preceding by $\exp(-\beta t)$ and take the the $(1/p)$-th root
	to find that
	\begin{align}\label{eq:Norm:B}\nonumber
		&\left[\e^{-\beta t}\E\left(\left| 
			(\mathcal{B}v)_t(x) \right|^p\right) \right]^{1/p}\\
			\nonumber
		&\le {\rm C}_bt\e^{-\beta t/p}+{\rm D}_b\int_0^\infty\d s\int_{\R^d}\d y\
			\e^{-\beta(t-s)/p}p_{t-s}(y-x)\e^{-\beta s/p}\|v_s(y)\|_p\\
		&\le {\rm C}_bt\e^{-\beta t/p} + \frac{p{\rm D}_b}\beta \|v\|_{\beta,p}.
	\end{align}
	The first display of the
	lemma follows because $t\exp(-\beta t/p)\le p/(\e\beta)$
	for all $t>0$. In order to obtain the second display we note that
	\begin{equation}
		|(\mathcal{B}v)_t(x)-(\mathcal{B}w)_t(x)|\le\lip_b\cdot
		(\mathcal{B}_1(|v-w|))_t(x),
	\end{equation}
	where $\mathcal{B}_1$ is
	defined exactly as $\mathcal{B}$ was, but with $b(x)$ replaced
	by $b_1(x)=x$. Because we may choose
	${\rm C}_{b_1}=0$
	and ${\rm D}_{b_1}=1$, the second assertion of the lemma follows from the first.
\end{proof}

\begin{lemma}\label{lem:Norm:sigma}
	For all even integers $p\ge 2$, real numbers $\beta>0$,
	and predictable random fields $v$ and $w$,
	\begin{equation}
		\|\mathcal{A}v\|_{\beta,p} \le z_p
		\left( {\rm C}_\sigma + {\rm D}_\sigma \|v\|_{\beta,p}\right)\sqrt{%
		( \bar R_{2\beta/p} f)(0)},
	\end{equation}
	and
	\begin{equation}
		\| \mathcal{A}v-\mathcal{A}w\|_{\beta,p}\le z_p
		\lip_\sigma\|v-w\|_{\beta,p}\sqrt{( \bar R_{2\beta/p} f)(0)}.
	\end{equation}
\end{lemma}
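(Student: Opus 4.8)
The plan is to mimic the structure of the proof of Lemma \ref{lem:Norm:B}, but replacing the Minkowski-type bound for the Lebesgue integral with the Burkholder--Davis--Gundy inequality for the Walsh-type stochastic integral $(\mathcal{A}v)_t(x)$. First I would recall that, since $p\ge 2$ is an even integer, the $p$-th moment of the Walsh martingale $(\mathcal{A}v)_t(x)$ is controlled by its quadratic variation via the optimal BDG constant for even moments, which is precisely $z_p$ (the largest zero of $\text{\sl He}_p$); this is the identification recorded in the second ``borrowed'' Remark after Theorem \ref{th:existence}, and it is the reason $z_p$ rather than a generic constant appears. Thus one obtains
\begin{equation*}
	\E\left( \left| (\mathcal{A}v)_t(x)\right|^p\right)
	\le z_p^p\, \E\left( \left| \langle (\mathcal{A}v)_\bullet(x)\rangle_t \right|^{p/2}\right),
\end{equation*}
where the bracket process is
\begin{equation*}
	\langle (\mathcal{A}v)_\bullet(x)\rangle_t
	= \int_0^t\d s\int_{\R^d}\d y\int_{\R^d}\d y'\ p_{t-s}(y-x)p_{t-s}(y'-x)\,
	f(y-y')\,\sigma(v_s(y))\sigma(v_s(y')).
\end{equation*}

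Next I would apply the generalized Minkowski inequality in $L^{p/2}(\P)$ to pull the $L^{p/2}(\P)$-norm inside the triple integral over $(s,y,y')$, giving a bound of the form
\begin{equation*}
	\left\| \langle (\mathcal{A}v)_\bullet(x)\rangle_t\right\|_{p/2}
	\le \int_0^t\d s\int_{\R^d}\d y\int_{\R^d}\d y'\ p_{t-s}(y-x)p_{t-s}(y'-x)\,
	f(y-y')\,\|\sigma(v_s(y))\|_p\|\sigma(v_s(y'))\|_p,
\end{equation*}
where the splitting $\|\sigma(v_s(y))\sigma(v_s(y'))\|_{p/2}\le\|\sigma(v_s(y))\|_p\|\sigma(v_s(y'))\|_p$ uses Cauchy--Schwarz. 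Then I would apply Convention \ref{conv:Lip} to write $\|\sigma(v_s(y))\|_p\le {\rm C}_\sigma + {\rm D}_\sigma\|v_s(y)\|_p$, multiply through by $\e^{-\beta t}$, split $\e^{-\beta t}=\e^{-\beta(t-s)}\e^{-\beta s}\cdot 1$ appropriately (more precisely, distribute $\e^{-\beta(t-s)}$ over the two heat kernels as $\e^{-\beta(t-s)/2}$ apiece and $\e^{-\beta s}$ over the two $\|v_s(\cdot)\|_p$ factors as $\e^{-\beta s/2}$ apiece), and use the definition of $\|\cdot\|_{\beta,p}$ to bound each $\e^{-\beta s/2}\|v_s(y)\|_p$-type quantity. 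The key computation that closes the estimate is the observation that
\begin{equation*}
	\int_0^t\d s\int_{\R^d}\d y\int_{\R^d}\d y'\ \e^{-(2\beta/p)(t-s)}p_{t-s}(y-x)p_{t-s}(y'-x)\,f(y-y')
	\le (\bar R_{2\beta/p}f)(0),
\end{equation*}
which follows from $(\bar P_r f)(0)=\iint f(y-y')p_r(y-x)p_r(y'-x)\,\d y\,\d y'$ (the identity established at the start of the proof of Theorem \ref{th:Dalang:1}, translation-invariance reducing the $x$-dependence) and then integrating $\e^{-(2\beta/p)r}$ against $\d r$ and extending the $r$-range to $(0,\infty)$. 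Taking the $(1/p)$-th root, using the elementary inequality $(a+b)^{1/2}\le a^{1/2}+b^{1/2}$ to split the ${\rm C}_\sigma$ and ${\rm D}_\sigma$ contributions, and taking suprema over $t>0$ and $x\in\R^d$ yields the first displayed bound of the lemma.

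For the second (Lipschitz) bound, I would repeat the argument with $\sigma(v_s(y))-\sigma(v_s(y'))$ replaced by $\sigma(v_s(y))-\sigma(w_s(y))$ inside the quadratic variation of $(\mathcal{A}v-\mathcal{A}w)_t(x)$, using $|\sigma(v_s(y))-\sigma(w_s(y))|\le\lip_\sigma|v_s(y)-w_s(y)|$ pointwise; this is exactly the device used at the end of the proof of Lemma \ref{lem:Norm:B}, with $\mathcal{A}$ playing the role of $\mathcal{B}$ and the linear function $b_1(x)=x$ replaced by $\sigma_1(x)=x$ (so ${\rm C}_{\sigma_1}=0$, ${\rm D}_{\sigma_1}=1$), and the first assertion then gives the second immediately. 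The main obstacle is not conceptual but a matter of bookkeeping: one must be careful that the stochastic integral $(\mathcal{A}v)_t(x)$ genuinely defines an $L^p(\P)$-bounded Walsh martingale in $s$ for predictable $v$ with finite $\|v\|_{\beta,p}$ (so that BDG applies and the It\^o-type isometry/inequality for the quadratic variation is legitimate), and that the exponential weights are distributed so that every surviving $s$-integral and $y$-integral is exactly of replica-potential type; a secondary subtlety is justifying the use of the \emph{optimal} constant $z_p$ for even $p$ rather than a crude BDG constant, which is what makes the bound sharp and is the feature that later makes Theorem \ref{th:existence} match the lower bounds when $d=1$.
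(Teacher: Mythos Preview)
Your proposal is correct and follows essentially the same route as the paper: Burkholder--Davis--Gundy with the sharp constant $z_p$ (the paper cites Davis's formulation and Carlen--Kree), then Minkowski/H\"older to replace $\E(\prod\sigma(v_{s_j}(y_j))\sigma(v_{s_j}(z_j)))$ by the product of $L^p$-norms, then Convention~\ref{conv:Lip} and the replica identity $(\bar P_rf)(0)=\iint p_r(y)p_r(z)f(y-z)\,\d y\,\d z$, followed by the $\sigma_1(x)=x$ reduction for the Lipschitz bound. One bookkeeping slip: when you ``multiply through by $\e^{-\beta t}$'' you must push $\e^{-2\beta t/p}$ inside the bracket (since the bracket is raised to $p/2$), so the split is $\e^{-2\beta(t-s)/p}\cdot\e^{-2\beta s/p}$ and each $\|v_s(\cdot)\|_p$ picks up $\e^{-\beta s/p}$, not $\e^{-\beta s/2}$---you evidently know this, since your key display has the correct exponent $2\beta/p$, but the parenthetical description is off. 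Also, the paper avoids your $(a+b)^{1/2}\le a^{1/2}+b^{1/2}$ step by the cleaner bound $\|\sigma(v_s(y))\|_p\le {\rm C}_\sigma+{\rm D}_\sigma\e^{\beta s/p}\|v\|_{\beta,p}\le \e^{\beta s/p}({\rm C}_\sigma+{\rm D}_\sigma\|v\|_{\beta,p})$, which lets the factor $({\rm C}_\sigma+{\rm D}_\sigma\|v\|_{\beta,p})$ come out whole.
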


\begin{proof}
	According to Davis's formulation \cite{Davis}
	of the Burkholder--Davis--Gundy inequality,
	$\E (  | (\mathcal{A}v)_t(x) |^p )$
	is bounded above by $z_p^p$ times the expectation of
	\begin{equation}
		\left| \int_0^t \d s\int_{\R^d}\d y
		\int_{\R^d}\d z\ V_s(y\,,z) p_{t-s}(y-x)p_{t-s}(z-x)f(z-y)
		\right|^{p/2},
	\end{equation}
	where
	\begin{equation}
		V_s(y\,,z):=\sigma(v_s(y))\sigma(v_s(z)).
	\end{equation}
	By the generalized H\"older inequality,
	\begin{equation}
		\E\left(\prod_{j=1}^{p/2} V_{s_j}(y_j\,,z_j)\right)
		\le \prod_{j=1}^{p/2}\left\| \sigma(v_{s_j}(y_j))\right\|_p
		\|\sigma(v_{s_j}(z_j)\|_p.
	\end{equation}
	Consequently, $\E\left( \left| (\mathcal{A}v)_t(x)\right|^p\right)$
	is bounded above by $z_p^p$ times
	\begin{equation}\label{eq:Norm:sigma}
		\left(\int_0^t \d s\int_{\R^d}\d y
		\int_{\R^d}\d z\ V'_s(y\,,z) p_{t-s}(y-x)p_{t-s}(z-x)f(z-y)
		\right)^{p/2},
	\end{equation}
	where
	\begin{equation}
		V_s'(y\,,z) := \left\| \sigma(v_s(y))\right\|_p
		\left\| \sigma(v_s(z))\right\|_p.
	\end{equation}
	We can note that for all $s>0$ and $y\in\R^d$,
	\begin{equation}\label{eq:Norm:sigma:1}\begin{split}
		\left\| \sigma(v_s(y))\right\|_p &\le {\rm C}_\sigma
			+{\rm D}_\sigma\|v_s(y)\|_p\\
		&\le {\rm C}_\sigma + {\rm D}_\sigma\e^{\beta s/p}\|v\|_{\beta,p}\\
		&\le \e^{\beta s/p}\left( {\rm C}_\sigma + {\rm D}_\sigma \|v\|_{\beta,p}\right).
	\end{split}\end{equation}
	Therefore, a line or two of computation yield
	\begin{equation}\label{eq:Sf:1}
		\E\left( \left| (\mathcal{A}v)_t(x)\right|^p\right)
		\le  \e^{\beta t}z_p^p\left( {\rm C}_\sigma +
		{\rm D}_\sigma \|v\|_{\beta,p}\right)^p
		\left(\int_0^\infty \e^{-2\beta s/p} \mathcal{H}_s\,\d s
		\right)^{p/2},
	\end{equation}
	where
	\begin{equation}\begin{split}
		\mathcal{H}_s &:= \int_{\R^d}\d a\int_{\R^d}\d b\
			f(a-b) p_s(a)p_s(b)\\
		&=\left( P^*_sP_s f\right)(0)\\
		&= (\bar P_sf)(0).
	\end{split}\end{equation}
	And hence,
	\begin{equation}\label{eq:Sf:1:1}
		\E\left( \left| (\mathcal{A}v)_t(x)\right|^p\right)\\
		\le  \e^{\beta t}z_p^p\left( {\rm C}_\sigma +
		{\rm D}_\sigma \|v\|_{\beta,p}\right)^p
		\left( ( \bar R_{2\beta/p} f)(0)
		\right)^{p/2},
	\end{equation}
	The first assertion of the lemma follows immediately from this.
	
	In order to deduce the second assertion we note that
	\begin{equation}
		\|\mathcal{A}v-\mathcal{A}w\|_{\beta,p}
		\le \lip_\sigma\cdot\mathcal{A}_1(|v-w|), 
	\end{equation}
	where $\mathcal{A}_1$
	is the same as $\mathcal{A}$, but with $\sigma(x)$
	replaced by $\sigma_1(x)=x$. Therefore, the first assertion
	of the lemma implies the second. This completes the proof.
\end{proof}

We are ready to begin a more-or-less standard
iterative construction that is used to
prove Theorem \ref{th:existence}.

Let
\begin{equation}
	u^0_t(x):=u_0(x),
\end{equation}
and define iteratively: For all $n\ge 0$,
\begin{equation}
	u^{n+1}_t(x) = (P_tu_0)(x) + (\mathcal{B}u^n)_t(x)
	+(\mathcal{A}u^n)_t(x).
\end{equation}

\begin{lemma}\label{lem:Lp:bdd}
	Choose and fix $\beta>0$ and an even integer $p\ge 2$.
	If
	\begin{equation}
		\frac{p{\rm D}_b}{\beta}+z_p{\rm D}_\sigma
		\sqrt{( \bar R_{2\beta/p} f)(0)}<1,
	\end{equation}
	then $\sup_{n\ge 0} \|u^n\|_{\beta,p}<\infty$.
\end{lemma}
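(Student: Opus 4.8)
The plan is to prove the bound by induction on $n$, using the \emph{a priori} estimates of Lemmas~\ref{lem:Norm:B} and~\ref{lem:Norm:sigma} together with the fact that $P_tu_0$ contributes a bounded term because $u_0$ is bounded and measurable. First I would record that, since $u_0$ is bounded and $P_t$ is a contraction on $L^\infty(\R^d)$, one has
\begin{equation}
	\|P_\bullet u_0\|_{\beta,p} \le \|u_0\|_{L^\infty(\R^d)}<\infty
	\qquad\text{for every $\beta,p>0$;}
\end{equation}
call this finite quantity $c_0$. Next I would apply the triangle inequality in the norm $\|\cdot\|_{\beta,p}$ to the defining recursion $u^{n+1}_t(x)=(P_tu_0)(x)+(\mathcal{B}u^n)_t(x)+(\mathcal{A}u^n)_t(x)$, giving
\begin{equation}
	\|u^{n+1}\|_{\beta,p}\le c_0+\|\mathcal{B}u^n\|_{\beta,p}+\|\mathcal{A}u^n\|_{\beta,p}.
\end{equation}

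\textbf{Key steps.} Inserting the first displays of Lemmas~\ref{lem:Norm:B} and~\ref{lem:Norm:sigma} into this bound yields
\begin{equation}
	\|u^{n+1}\|_{\beta,p}\le c_0 + \frac{p}{\beta}\left(\frac{{\rm C}_b}{\e}+{\rm D}_b\|u^n\|_{\beta,p}\right)
	+ z_p\left({\rm C}_\sigma+{\rm D}_\sigma\|u^n\|_{\beta,p}\right)\sqrt{(\bar R_{2\beta/p}f)(0)}.
\end{equation}
Grouping the constant terms into a single finite constant $K:=c_0+\frac{p{\rm C}_b}{\e\beta}+z_p{\rm C}_\sigma\sqrt{(\bar R_{2\beta/p}f)(0)}$ and setting
\begin{equation}
	\theta := \frac{p{\rm D}_b}{\beta}+z_p{\rm D}_\sigma\sqrt{(\bar R_{2\beta/p}f)(0)},
\end{equation}
the recursion becomes $\|u^{n+1}\|_{\beta,p}\le K+\theta\|u^n\|_{\beta,p}$, and the hypothesis of the lemma is precisely $\theta<1$. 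Since $\|u^0\|_{\beta,p}=\|u_0\|_{L^\infty(\R^d)}\e^{-\beta\cdot 0}$-type quantity is finite, an elementary induction gives
\begin{equation}
	\|u^n\|_{\beta,p}\le \theta^n\|u^0\|_{\beta,p}+K\sum_{j=0}^{n-1}\theta^j\le \|u^0\|_{\beta,p}+\frac{K}{1-\theta}
	\qquad\text{for all $n\ge 0$,}
\end{equation}
which bounds $\sup_{n\ge 0}\|u^n\|_{\beta,p}$ and completes the proof. One should also check at the outset that each $u^n$ is genuinely predictable with finite $\|\cdot\|_{\beta,p}$ norm, so that the lemmas are applicable at every stage; this follows inductively, since $P_\bullet u_0$ is deterministic and bounded, and $\mathcal{A}$, $\mathcal{B}$ map $\bm{B}_{\beta,p}$ into itself by the same two lemmas.

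\textbf{Main obstacle.} There is essentially no deep obstacle here: the content is entirely carried by the two preceding \emph{a priori} lemmas, and what remains is the bookkeeping of the linear recursion $a_{n+1}\le K+\theta a_n$ with $\theta<1$. The only point requiring a little care is making sure the ``constant'' term $K$ is actually finite for the chosen $\beta$ and $p$ — in particular that $(\bar R_{2\beta/p}f)(0)<\infty$, which is guaranteed by Condition~\ref{cond:1}, and that $\|P_\bullet u_0\|_{\beta,p}<\infty$, which uses boundedness of $u_0$. Once those finiteness facts are in place, the geometric-series estimate closes the argument.
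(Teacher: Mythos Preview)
Your proposal is correct and follows essentially the same approach as the paper: apply the triangle inequality to the recursion, insert the \emph{a priori} bounds of Lemmas~\ref{lem:Norm:B} and~\ref{lem:Norm:sigma}, obtain the linear recursion $\|u^{n+1}\|_{\beta,p}\le K+\theta\,\|u^n\|_{\beta,p}$ with your $K,\theta$ equal to the paper's constants $A,B$, and conclude by summing the geometric series. The only cosmetic difference is that the paper iterates the recursion explicitly rather than phrasing it as an induction, but the argument is the same.
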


\begin{proof}
	Because $P_tu_0$ is bounded, uniformly in modulus, by
	$\sup_{x\in\R^d}|u_0(x)|$, the triangle inequality implies that
	\begin{equation}
		\| u^{n+1}\|_{\beta,p}  \le \sup_{x\in\R^d}|u_0(x)|
		+ \| \mathcal{B}u^n\|_{\beta,p} + \|\mathcal{A}u^n\|_{\beta,p}.
	\end{equation}
	Lemmas \ref{lem:Norm:B} and \ref{lem:Norm:sigma},
	and a few lines of direct computation, together imply that
	\begin{equation}
		\| u^{n+1}\|_{\beta,p}
		\le A + B\|u^n\|_{\beta,p},
	\end{equation}
	where 
	\begin{equation}
		A := \sup_{x\in\R^d}|u_0(x)| + \frac{p{\rm C}_b}{\beta\e} + z_p
		{\rm C}_\sigma\sqrt{( \bar R_{2\beta/p} f)(0)},
	\end{equation}
	and
	\begin{equation}
		B:=\frac{p{\rm D}_b}{\beta}+z_p{\rm D}_\sigma
		\sqrt{( \bar R_{2\beta/p} f)(0)}.
	\end{equation}
	Iteration yields the bound
	\begin{equation}
		\| u^{n+1}\|_{\beta,p} \le A\left( 1 + B+ \cdots + B^{n-1} +
		B^n\sup_{x\in\R^d}|u_0(x)|\right).
	\end{equation}
	Consequently, if $B<1$ then
	\begin{equation}
		\sup_{k\ge 1} \| u^k\|_{\beta,p} \le \frac{A}{1-B}.
	\end{equation}
	Since $\|u^0\|_{\beta,p}\le\sup_{x\in\R^d}|u_0(x)|<\infty$,
	the lemma follows.
\end{proof}
We now have all the technical estimates for the proof of Theorem \ref{th:existence}.
\subsection{Proof of Theorem \ref{th:existence}}
Without loss of generality, we can find $\beta>0$
such that $Q(p\,,\beta)<1$, otherwise there is nothing to prove.
Choose and fix such a $\beta$.
	
Thanks to Lemma \ref{lem:Lp:bdd}, every $u^n$ is well defined
and $\|u^n\|_{\beta,p}$ is finite, uniformly in $n$. In particular,
$u^n\in\bm{B}_{\beta,p}$.
Next we apply Lemmas \ref{lem:Norm:B} and \ref{lem:Norm:sigma}---with
${\rm C}_g:=|g(0)|$ and ${\rm D}_g:=\lip_g$---to find that
\begin{equation}\begin{split}
	\| u^{n+1}-u^n\|_{\beta,p} &\le
		\|\mathcal{B}u^n-\mathcal{B}u^{n-1}\|_{\beta,p}
		+ \|\mathcal{A}u^n-\mathcal{A}u^{n-1} \|_{\beta,p}\\
	&\le \|u^n-u^{n-1}\|_{\beta,p}\cdot Q(p\,,\beta).
\end{split}\end{equation}
Because $Q(p\,,\beta)<1$, the preceding implies that
\begin{equation}
	\sum_{n=1}^\infty \| u^{n+1}-u^n\|_{\beta,p}\le\text{const}\cdot
	\sum_{n=1}^\infty \left\{
	Q(p\,,\beta)\right\}^n<\infty.
\end{equation}
Therefore,
we can find a predictable random field $u^\infty\in\bm{B}_{\beta,p}$ such that
\begin{equation}
	\lim_{n\to\infty} \| u^n-u^\infty\|_{\beta,p}=0.
\end{equation}
Our arguments
can be adjusted to show also that
\begin{equation}
	\lim_{n\to\infty}\| \mathcal{B}u^n-\mathcal{B}u^\infty\|_{\beta,p}=0,
\end{equation}
as well as
\begin{equation}
	\lim_{n\to\infty} \| \mathcal{A}u^n-\mathcal{A}u^\infty\|_{\beta,p}=0.
\end{equation}
This proves that $u^\infty$ is another solution to \eqref{heat:mild:form}.
As we have mentioned, $u$ is the almost-surely unique solution to
\eqref{heat:mild:form}. Therefore, $u^\infty$ is equal to $u$,  up to
evanescence.  It follows that
$u\in\bm{B}_{\beta,p}$ for all $\beta>0$ such that
$Q(p\,,\beta)<1$. This proves the theorem.
\qed\\

\section{Intermittency}

In this section we prove Theorems \ref{th:interm} and
\ref{th:interm:asymp}, 
which state that the solution to the stochastic heat equation
can be weakly intermittent in the presence of enough symmetry
and nonlinearity. The basic idea is to follow our earlier work on
space-time white noise \cite{FK} and apply the renewal-theory
methods of Choquet and Deny \cite{ChoquetDeny}. However,
this turns out to be a difficult adaptation which appears to require
a fair bit of harmonic analysis.

It might help to recall the definition \eqref{eq:Upsilon}
[page \pageref{eq:Upsilon}] of the function $\Upsilon$,
and the relation [Theorem \ref{th:Dalang:1}, page
\pageref{th:Dalang:1}] between the positive
number $\Upsilon(\beta)$
and the maximum value of the replica $\beta$-potential of the correlation
function $f$, as well as the positive number $(\bar R_\beta f)(0)$
of that replica potential at the origin.

\begin{proof}[Proof of Theorem \ref{th:interm}]
	We can assume, without loss of generality, that there exists
	$\beta>0$ such that
	\begin{equation}\label{eq:Upsilon:LB}
		\Upsilon(\beta)\ge \frac{2}{{\rm L}_\sigma^2},
	\end{equation}
	for there is nothing left to prove otherwise.
	Now consider any such $\beta$.

	Since $b(x)= 0$, $u_0\ge \eta$, and $\sigma(u)\ge{\rm L}_\sigma|u|$,
	we can apply \eqref{heat:mild:form} to deduce that for all $x,y\in\R^d$
	and $t>0$,
	\begin{align}\nonumber
		&\E\left( \left| u_t(x) u_t(y)\right|\right)\\\label{eq:uu:LB}
		&\ \ge \E\left( u_t(x) u_t(y)\right)\\\nonumber
		&\ \ge \eta^2+ \int_0^t\d s\int_{\R^d}
			\d z \int_{\R^d} \d z'\ W_s(z\,,z') p_{t-s}(z-x) p_{t-s}(z'-y) f(z-z'),
	\end{align}
	where
	\begin{equation}\label{eq:W}\begin{split}
		W_s(z\,,z') &:= \E\left( \sigma(u_s(z))\sigma(u_s(z')\right)\\
		&\ge {\rm L}_\sigma^2\E\left( \left| u_s(z) u_s(z') \right| \right).
	\end{split}\end{equation}

	Consider the following $\R_+$-valued functions on $(\R^d)^2$:
	\begin{equation}\begin{split}
		H_\beta(a\,, b) &:= \int_0^\infty \e^{-\beta t}
			\E\left(\left| u_t(a)u_t(b)\right|\right)\d t,\\
		G_\beta(a\,, b) &:=\int_0^\infty \e^{-\beta t}
		p_t(a)p_t(b)\,\d t,\\
		F(a\,, b) &:= f(a-b).
	\end{split}\end{equation}
	Also consider the linear operator $\mathcal{A}_\beta$ defined as follows:
	For all nonnegative Borel-measurable functions $h:(\R^d)^2\to\R_+$,
	\begin{equation}
		(\mathcal{A}_\beta h)(x\,, y) := \left( Fh*\tilde{G}_\beta
		\right)(x\,, y).
	\end{equation}
	A line or two of computation shows that the preceding is
	simply a quick way to write the following:
	\begin{equation}\begin{split}
		&(\mathcal{A}_\beta h)(x\,, y)\\
		&= \int_{\R^d}\,\d a\int_{\R^d}\d b\ F(a\,,b)h(a\,,b)G_\beta(x-a\,,y-b)\\
		&=\int_0^\infty \e^{-\beta t}\,\d t\int_{\R^d}\d a\int_{\R^d}\d b\
			f(a-b) h(a\,,b) p_t(x-a)p_t(y-a).
	\end{split}\end{equation}

	With the preceding definitions under way, we
	can write equation \eqref{eq:uu:LB} in short hand
	as follows:
	\begin{equation}
		H_\beta(x\,, y) \ge \frac{\eta^2}{\beta} +
		{\rm L}_\sigma^2 \left( \mathcal{A}_\beta H_\beta\right)
		(x\,, y).
	\end{equation}
	Since $F\ge 0$ on $(\R^d)^2$, we can apply the preceding to find
	the following pointwise bounds:
	\begin{equation}\begin{split}
		H_\beta &\ge\frac{\eta^2}{\beta}\1+ {\rm L}_\sigma^2 \left(\mathcal{A}_\beta
			\left\{ \frac{\eta^2}{\beta} + {\rm L}_\sigma^2(
			\mathcal{A}_\beta H_\beta)\right\}\right)\\
		&= \frac{\eta^2}{\beta}\1 + {\rm L}_\sigma^2 \frac{\eta^2}{\beta}
			\mathcal{A}_\beta\mathbf{1} +
			{\rm L}_\sigma^4 \mathcal{A}^2_\beta H_\beta,\\
		&\ge \frac{\eta^2}{\beta}\1 + {\rm L}_\sigma^2\frac{\eta^2}{\beta}
			\mathcal{A}_\beta\mathbf{1}
			+{\rm L}_\sigma^4\mathcal{A}^2_\beta\left( \frac{\eta^2}{\beta}
			+ {\rm L}_\sigma^2
			\mathcal{A}_\beta H_\beta\right)\\
		&=\frac{\eta^2}{\beta}\1 + {\rm L}_\sigma^2\frac{\eta^2}{\beta}
			\mathcal{A}_\beta\mathbf{1}
			+{\rm L}_\sigma^4\frac{\eta^2}{\beta}\mathcal{A}^2_\beta\mathbf{1} +
			{\rm L}_\sigma^6\mathcal{A}^3_\beta H_\beta,
	\end{split}\end{equation}
	where $\mathbf{1}(x\,, y):=1$ for all $x,y\in\R^d$. 
	By applying induction we may arrive at the following simplified bound:
	\begin{equation}\begin{split}
		H_\beta 
			&\ge\frac{\eta^2}{\beta}\cdot\sum_{\ell=0}^k{\rm L}_\sigma^{2\ell}
			\mathcal{A}^\ell_\beta\mathbf{1}
			+{\rm L}_\sigma^{2(k+1)}\mathcal{A}_\beta^{k+1}H_\beta\\
		&\ge \frac{\eta^2}{\beta}\cdot\sum_{\ell=0}^k{\rm L}_\sigma^{2\ell}
			\mathcal{A}^\ell_\beta\mathbf{1}.
	\end{split}\end{equation}
	Because the parameter $k\ge 0$
	is arbitrary, it follows that
	\begin{equation}\label{eq:H:LB}
		H_\beta \ge \frac{\eta^2}{\beta}\cdot
		\sum_{\ell=0}^\infty {\rm L}_\sigma^{2\ell} \mathcal{A}^\ell_\beta
		\mathbf{1}.
	\end{equation}
	In order to better understand the behavior of this infinite sum, we
	begin by inspecting only the first few terms.

	The first term in the sum is identically $1$. And the
	more interesting second term can be written as
	${\rm L}_\sigma^2$ multipled by
	\begin{equation}\begin{split}
		(\mathcal{A}_\beta\mathbf{1})(x\,, y) &=
			\left( F*\tilde{G}_\beta\right)(x\,, y)\\
		&=\int_0^\infty\e^{-\beta t}\d t
			\int_{\R^d}\d a\int_{\R^d}\d b\ f(a-b)p_t(a-x)p_t(b-y)\\
		&\ge \frac{1}{(2\pi)^d}\int_0^\infty\e^{-\beta t}\d t
			\int \d\xi\ \hat{f}(\xi)\e^{-2t\Re \Psi(\xi)}\e^{i\xi\cdot (x-y)},
	\end{split}\end{equation}
	thanks to Proposition \ref{pr:KX} [p.\ \pageref{pr:KX}]. 
	We change the order of the double integral $[\d t\,\d\xi]$ to find that
	\begin{equation}
		(\mathcal{A}_\beta\mathbf{1})(x\,, y)
		\ge \frac{1}{(2\pi)^d}\int \frac{\e^{i\xi\cdot(x-y)}
		\hat{f}(\xi)}{\beta+2\Re\Psi(\xi)}\, \d\xi.
	\end{equation}
	[Theorem \ref{th:Dalang:1} and condition \eqref{cond:1} together
	justify the use of Fubini's theorem.]
	
	In order to bound the third term in the infinite sum in \eqref{eq:H:LB},
	we need to estimate the following quantity:
	\begin{equation}\begin{split}
		(\mathcal{A}^2_\beta\mathbf{1})(x\,, y)
			&= \left( F(\mathcal{A}_\beta\mathbf{1})*\tilde{G}_\beta
			\right)(x\,, y)\\
		&\ge\frac{1}{(2\pi)^d}\int\frac{\hat{f}(\xi)\,\d\xi}{%
			\beta+2\Re\Psi(\xi)}Z_\xi(x\,,y),
	\end{split}\end{equation}
	where
	\begin{equation}
		Z_\xi(x\,,y):=\int_0^\infty\e^{-\beta t}\d t
		\int\d a\int\d b\ f(a-b)\e^{i\xi\cdot(a-b)}
		p_t(a-x)p_t(b-y).
	\end{equation}
	[The same ideas that were applied to the second term can be applied
	here, in exactly the same manner, to produce this bound.]
	The Fourier transform of the function
	\begin{equation}
		a\mapsto\exp(i\xi\cdot a)p_t(a-x)
	\end{equation}
	is
	\begin{equation}
		\zeta\mapsto\exp(i(\xi+\zeta)\cdot x-t\Psi(\xi+\zeta)).
	\end{equation}
	Therefore, Proposition \ref{pr:KX} implies the following:
	\begin{equation}\begin{split}
		Z_\xi(x\,,y) &\ge \frac{1}{(2\pi)^d}\int_0^\infty\e^{-\beta t}\d t
			\int \d\zeta\ \hat{f}(\zeta)
			\e^{i(\xi+\zeta)\cdot(x-y)-2t\Re\Psi(\xi+\zeta)}\\
		&= \frac{1}{(2\pi)^d}\int\frac{%
			\e^{i(\xi+\zeta)\cdot(x-y)}\hat{f}(\zeta)}{%
			\beta+2\Re\Psi(\xi+\zeta)}\, \d\zeta.
	\end{split}\end{equation}
	And therefore,
	\begin{equation}\begin{split}
		&(\mathcal{A}^2_\beta\mathbf{1})(x\,, y)\\
		&\ge\frac{1}{(2\pi)^{2d}}\int \frac{\hat{f}(\xi_1)\,
			\d\xi_1}{\beta+2\Re\Psi(\xi_1)}
			\int\frac{\hat{f}(\xi_2)\,\d\xi_2}{\beta+2
			\Re\Psi(\xi_1+\xi_2)}\e^{i(\xi_1+\xi_2)\cdot
			(x-y)}.
	\end{split}\end{equation}

	Now, we can apply induction to deduce that
	we have the following estimate [used to analyse the $\ell$th term
	in the infinite sum in \eqref{eq:H:LB}]
	in general: For all integers $\ell\ge 1$
	and $x,y\in\R^d$,
	\begin{equation}\label{eq:F1}\begin{split}
		&(\mathcal{A}^\ell_\beta\mathbf{1})(x\,, y)\\
		&\ge\frac{1}{(2\pi)^{\ell d}}\int \frac{\hat{f}(\xi_1)\d\xi_1}{%
			\beta+2\Re\Psi(\xi_1)}
			\int\frac{\hat{f}(\xi_2)\,\d\xi_2}{\beta+2\Re\Psi(\xi_1+\xi_2)}
			\cdots\\
		&\hskip1.4in\cdots \int\frac{\hat{f}(\xi_\ell)\,\d\xi_\ell}{
			\beta+2\Re\Psi(\xi_1+\cdots+\xi_\ell)}
			\e^{i\sum_{j=1}^\ell\xi_j\cdot (x-y)}.
	\end{split}\end{equation}

	Although the preceding multiple integral is manifestly nonnegative,
	the integrand itself is complex valued. Fortunately,
	we wish to only understand the behavior of $\mathcal{F}^\ell_\beta\mathbf{1}$
	on the diagonal of $(\R^d)^2$. In that case, the integrand is real and nonnegative.
	As such, we can estimate the integrand directly.
	In order to do so, let us set $y:=x$ in \eqref{eq:F1},
	and then plug the result in \eqref{eq:H:LB}, to find that
	\begin{align} \nonumber
		&\inf_{x\in\R^d}\int_0^\infty \e^{-\beta t}
			\E\left( |u_t(x)|^2\right)\d t\\
		&\hskip.1in\ge  \frac{\eta^2}{\beta}\cdot\sum_{\ell=0}^\infty
			\frac{{\rm L}_\sigma^{2\ell}}{(2\pi)^{\ell d}}
			\int_{\R^d}\d\xi_1\cdots\int_{\R^d}\d\xi_\ell\
			\prod_{j=1}^\ell\frac{\hat f(\xi_j)}{
			\left( \beta+2\Re\Psi(\xi_1+\cdots+\xi_j) \right)}.
	\end{align}
	A change of variables yields the following:
	\begin{equation}\label{eq:INT}\begin{split}
		&\inf_{x\in\R^d}\int_0^\infty \e^{-\beta t}
			\E\left( |u_t(x)|^2\right)\d t\\
		&\hskip.6in\ge \frac{\eta^2}{\beta}\cdot\sum_{\ell=0}^\infty
			\frac{{\rm L}_\sigma^{2\ell}}{(2\pi)^{\ell d}}
			\int_{\R^d}\d z_1\cdots\int_{\R^d}\d z_d\
			\prod_{j=1}^\ell\frac{\hat f(z_j-z_{j-1})}{
			\left( \beta+2\Re\Psi(z_j) \right)},
	\end{split}\end{equation}
	where $z_0:=0$.

	The preceding holds even without Condition \ref{conds:f} [p.\ \pageref{conds:f}].
	But now we recall that Condition \ref{conds:f} is in place, and use it
	to produce the announced lower bound on $\inf_{x\in\R^d}\overline\gamma_x(2)$.

	Define
	\begin{equation}
		\Sigma:=\left\{ x:=(x_1\,,\ldots,x_d)\in\R^d:
		\text{sign}(x_1)=\cdots=\text{sign}(x_d)\right\}.
	\end{equation}
	Equivalently, $\Sigma:=\R_+^d\cup\R_-^d$.
	Because the terms under the product sign in \eqref{eq:INT}
	are all individually nonnegative, Condition \ref{conds:f}
	assures us that the following holds:
	\begin{equation}\begin{split}
		&\inf_{x\in\R^d}\int_0^\infty \e^{-\beta t}\E\left( |u_t(x)|^2\right)\d t\\
		&\hskip.7in\ge \frac{\eta^2}{\beta}\cdot\sum_{\ell=0}^\infty
			\frac{{\rm L}_\sigma^{2\ell}}{(2\pi)^{\ell d}}
			\int_\Sigma\d z_1\cdots\int_\Sigma\d z_d\
			\prod_{j=1}^\ell\frac{\hat f(z_j-z_{j-1})}{
			\left( \beta+2\Re\Psi(z_j) \right)}.
	\end{split}\end{equation}
	If $z_1,\ldots,z_d\in\Sigma$, then the absolute value of the
	$k$th coordinate of
	$z_j-z_{j-1}$ is less than or equal to the absolute value of
	the $k$th coordinate of
	$z_j$ for all $k=1,\ldots d$; therefore
	\begin{equation}
		\hat f(z_j-z_{j-1})\ge \hat f(z_j),
	\end{equation}
	thanks to Condition \ref{conds:f}. Consequently,
	\begin{equation}\label{eq:Sigma}\begin{split}
		&\inf_{x\in\R^d}\int_0^\infty \e^{-\beta t}\E\left( |u_t(x)|^2\right)\d t\\
		&\hskip1.4in\ge\frac{\eta^2}{\beta}\cdot\sum_{\ell=0}^\infty
			\frac{{\rm L}_\sigma^{2\ell}}{(2\pi)^{\ell d}}
			\int_{\Sigma^\ell}\d \bm{z}\
			\prod_{j=1}^\ell\frac{\hat f(z_j)}{
			\left( \beta+2\Re\Psi(z_j) \right)}\\
		&\hskip1.4in=\frac{\eta^2}{\beta}\sum_{\ell=0}^\infty
			\left( \frac{{\rm L}_\sigma^2}{(2\pi)^d}
			\int_{\Sigma}\frac{\hat f(z)}{
			\left( \beta+2\Re\Psi(z) \right)}\,\d z\right)^\ell.
	\end{split}\end{equation}
	In particular, if there exists $\beta>0$ such that
	\begin{equation}\label{eq:Quadrant}
		\frac{1}{(2\pi)^d}\int_\Sigma \frac{\hat{f}(\xi)\,\d\xi}{\beta+2\Re\Psi(\xi)}\ge
		{\rm L}_\sigma^{-2},
	\end{equation}
	then
	\begin{equation}\label{eq:Q}
		\int_0^\infty \e^{-\beta t}\E\left( \left| u_t(x)\right|^2\right)\,\d t
		=\infty\quad\text{for all $x\in\R^d$}.
	\end{equation}
	Thanks to symmetry considerations,
	Condition \ref{conds:f} has the following consequence:
	\begin{equation}\begin{split}
		\int_\Sigma \frac{\hat{f}(\xi)\,\d\xi}{\beta+2\Re\Psi(\xi)}
			&= 2^{-d+1} \int_{\R^d}\frac{\hat{f}(\xi)\,\d\xi}{\beta+2\Re\Psi(\xi)}\\
		&=2^{-d+1} \Upsilon(\beta).
	\end{split}\end{equation}
	This and \eqref{eq:Quadrant} together imply that
	\eqref{eq:Q} holds whenever 
	\begin{equation}
		\Upsilon(\beta)\ge \frac{2^{d-1}}{{\rm L}_\sigma^2}.
	\end{equation}
	
	Now we can apply a real-variable argument to prove that if
	\eqref{eq:Q} holds for some $\beta>0$, then
	\begin{equation}
		\overline\gamma_x(2)\ge\beta\qquad
		\text{for all $x\in\R^d$}.
	\end{equation}
	This and \eqref{eq:Upsilon:LB} together imply the theorem. 
	
	Indeed, let us suppose to the contrary
	that \eqref{eq:Q} holds for our $\beta$,
	and yet $\inf_{x\in\R^d}\overline\gamma_x(2)<\beta$ for the
	very same $\beta$. It follows immediately that there exists $x\in\R^d$,
	 $\delta\in(0\,,\beta)$, and $C\in(0\,,\infty)$ such that 
	\begin{equation}\label{eq:realvar}
		\E\left(|u_t(x)|^2\right) \le
		C \e^{(\beta-\delta)t}
		\qquad\text{for all $t>0$}.
	\end{equation}
	And hence, \eqref{eq:Q} cannot hold in this case. This produces
	a contradiction, and shows that \eqref{eq:Q} implies the theorem.
\end{proof}

\begin{proof}[Proof of Theorem \ref{th:interm:asymp}]
	We begin as we did with \eqref{eq:uu:LB}, but can no longer
	apply the inequality in \eqref{eq:W}. To circumvent that,
	note that for all $q_0\in(0\,,q)$ there exists $A:=A(q_0)\in(0\,,\infty)$
	such that $\sigma(y)\ge q_0|y|$ as soon as $|y|>A$. We have assumed that
	$\P\{u_s(y)>0\}=1$ for all $s>0$ and $y\in\R^d$. Therefore,
	\begin{align} \nonumber
		W_s(z\,,z') &\ge q_0^2 \E\left(  
			u_s(z)u_s(z') ~;~ u_s(z)\wedge u_s(z')>A\right)\\ \nonumber
		&\ge q_0^2\E\left(u_s(z)u_s(z')\right) - q_0^2 A^2
			-q_0A\E\left( u_s(z)~;~u_s(z)>A\right)\\
		&\hskip1.7in-q_0A\E\left( u_s(z')~;~u_s(z')>A\right)\\\nonumber
		&\ge q_0^2\E\left(u_s(z)u_s(z')\right) - q_0^2 A^2
			-q_0A\left\{ \E\left( u_s(z)\right)+\E\left( u_s(z')\right)
			\right\}.\nonumber
	\end{align}
	On the other hand, \eqref{heat:mild:form} guarantees that
	\begin{equation}\begin{split}
		0 &\le \E \left( u_t(x) \right)\\
		&= \left( P_tu_0\right)(x)\\
		&\le \|u_0\|_{L^\infty(\R^d)}.
	\end{split}\end{equation}
	Consequently,
	\begin{equation}
		W_s(z\,,z') \ge q_0^2\left\{ \E\left( u_s(z)u_s(z')\right)
		- A_*\right\},
	\end{equation}
	where
	\begin{equation}
		A_* := \max\left( A^2, 2\|u_0\|_{L^\infty(\R^d)}\right).
	\end{equation}
	Now we apply the recursion argument in the proof of Theorem
	\ref{th:interm}, and find the following pointwise bounds:
	\begin{align}\nonumber
		H_\beta &\ge \frac{\eta^2}{\beta} + q_0^2\left\{
			\mathcal{A}_\beta H_\beta - A_* \mathcal{A}_\beta\mathbf{1}
			\right\}\\\nonumber
		&\ge \frac{\eta^2}{\beta} + q_0^2\left(
			\frac{\eta^2}{\beta}- A_*\right)\mathcal{A}_\beta\mathbf{1}
			+ q_0^4\mathcal{A}_\beta^2H_\beta -q_0^4A_*\mathcal{A}_\beta^2\mathbf{1}\\
		&\ \vdots\\\nonumber
		&\ge\frac{\eta^2}{\beta} + \left( \frac{\eta^2}{\beta}-A_*\right)
			\sum_{\ell=1}^N q_0^{2\ell}\mathcal{A}_\beta^\ell \mathbf{1}
			+q_0^{2(N+1)}\left( \mathcal{A}_\beta^{N+1}H_\beta - A_*
			\mathcal{A}_\beta^{N+1}\mathbf{1}\right);
	\end{align}
	valid for every integer $N\ge 1$. We apply the following
	obvious inequalities: $\eta^2/\beta\ge \eta^2/\beta - A_*$ to the first
	term on the right; and
	$H_\beta\ge \eta^2/\beta$ to the last bracketed term,
	to find that for all integers $N\ge 1$,
	\begin{equation}
		H_\beta \ge \left(\frac{\eta^2}{\beta}-A_*\right)
		\sum_{\ell=0}^{N+1} q_0^{2\ell}\mathcal{A}_\beta^\ell\mathbf{1}.
	\end{equation}
	We can now let $N\uparrow\infty$ and apply the same estimate
	that we used to derive \eqref{eq:Sigma}, and deduce the following bound:
	\begin{align}\nonumber
		&\inf_{x\in\R^d}\int_0^\infty \e^{-\beta t}\E\left( |u_t(x)|^2\right)\d t\\\nonumber
		&\hskip1.2in\ge\left(\frac{\eta^2}{\beta}-A_*\right)
			\cdot \sum_{\ell=0}^\infty
			\left( \frac{q_0^2}{(2\pi)^d}
			\int_{\Sigma}\frac{\hat f(z)}{
			\left( \beta+2\Re\Psi(z) \right)}\,\d z\right)^\ell\\
		&\hskip1.2in=\left(\frac{\eta^2}{\beta}-A_*\right)
			\cdot\sum_{\ell=0}^\infty
			\left( \frac{q_0^2}{2^{d-1}}\Upsilon(\beta)\right)^\ell.
	\end{align}
	Thanks to Theorem \ref{th:Dalang:1}, the preceding implies the following
	bound:
	\begin{equation}
		\inf_{x\in\R^d}\int_0^\infty \e^{-\beta t}\E\left( |u_t(x)|^2\right)\d t
		\ge \left(\frac{\eta^2}{\beta}-A_*\right)
		\cdot\sum_{\ell=0}^\infty
		\left( \frac{q_0^2}{2^{d-1}}(\bar R_\beta f)(0)\right)^\ell.
	\end{equation}
	
	Because $(\bar R_0 f)(0)=\infty$, we can find a $\beta_0>0$ such that
	\begin{equation}
		(\bar R_{\beta_0}f)(0)\ge \frac{2^{d-1}}{q_0^2}. 
	\end{equation}
	For that choice of $\beta_0$,
	the preceding sum diverges. Therefore,
	\begin{equation}
		\int_0^\infty\e^{-\beta_0 t}\E(|u_t(x)|^2)\,\d t=\infty
		\quad\text{provided that}\quad
		\eta>\sqrt{\beta_0 A_*}.
	\end{equation}
	This and an elementary real-variable argument; \eqref{eq:realvar} and together prove the theorem.
\end{proof}

\section{The Massive and Dissipative Operators}

David Nualart asked us about the effect of the drift
coefficient $b$ in \eqref{heat} on the intermittent behavior
of the solution to the stochastic heat equation \eqref{heat}.
At present, we have only an answer to this in a special but
physically-interesting family of cases.

Indeed, let us consider the stochastic heat equation
\begin{equation}\label{heat:massive}
	\frac{\partial}{\partial t}u_t(x) = (\sL u_t)(x) +
	\frac{\lambda}{2} u_t(x) + \sigma(u_t(x))
	\dot F_t(x),
\end{equation}
where $x\in\R^d$, $t>0$, $\lambda\in\R$, and $\dot{F}$ is
as before. Moreover, $\sigma:\R\to\R$ is Lipschitz continuous,
also as before. That is, \eqref{heat:massive} corresponds
to the drift-free stochastic heat equation for the \emph{massive
operator} $\sL^{(\lambda)}:=\sL +(\lambda/2)I$ when $\lambda>0$,
the \emph{dissipative operator} $\sL^{(\lambda)}=
\sL -|\lambda/2|I$ when
$\lambda<0$, and the \emph{free operator} $\sL^{(0)}=\sL$ 
when $\lambda=0$.\footnote{%
The operator
$\mathcal{L}^{(\lambda)}$ is also known as the ``relativistic'' form
of $\mathcal{L}$.}
Of course, Theorem \ref{th:existence}---applied
with $b(u):=\lambda u/2$---guarantees us of the existence
and uniqueness of a mild solution to \eqref{heat:massive}.

The operator $\sL^{(\lambda)}$ is the generator of the semigroup
$\{P^{(\lambda)}_t\}_{t\ge 0}$ defined by
\begin{equation}
	(P^{(\lambda)}_t \phi)(x) := \e^{\lambda t/2} (P_t\phi)(x).
\end{equation}
This can be seen immediately by a semi-formal differential
of $t\mapsto P^{(\lambda)}_t$ at $t=0$; and it is easy to
make the argument rigorous as well. The corresponding
``transition functions'' are given by
\begin{equation}
	p^{(\lambda)}_t(y-x) := \e^{\lambda t/2} p_t(y-x).
\end{equation}

Then the domain $\text{Dom}[\sL^{(\lambda)}]$
of the definition of $\sL^{(\lambda)}$ is the same as $\text{Dom}[\sL]$, and
\begin{equation}
	\sL^{(\lambda)} \phi = \sL \phi + \frac{\lambda}{2}\phi
	\qquad\text{for all $\phi\in \text{Dom}[\sL^{(\lambda)}]$}.
\end{equation}

Let $\{P^{*(\lambda)}_t\}_{t\ge 0}$ denote the adjoint 
[or dual, in probabilistic terms] semigroup. That is,
\begin{equation}
	(P^{*(\lambda)}_t \phi)(x) := \e^{\lambda t/2} (P_t^*\phi)(x).
\end{equation}
with corresponding transition functions,
\begin{equation}
	p^{*(\lambda)}_t(y-x) := \e^{\lambda t/2} p_t(x-y).
\end{equation}
And finally there is also a corresponding replica semigroup,
\begin{equation}
	(\bar{P}^{(\lambda)}_t \phi)(x) := \e^{\lambda t} (\bar{P}_t\phi)(x),
\end{equation}
whose resolvent is described by the following:
\begin{equation}
	(\bar{R}^{(\lambda)}_\alpha \phi)(x) := \int_0^\infty \e^{-(\alpha-\lambda)s}
	(\bar P_s\phi)(x)\,\d s\qquad\text{for all $\alpha\ge \lambda$}.
\end{equation}
We might note that $\bar{R}^{(\lambda)}_\alpha f=\bar R_{\alpha-\lambda}f$
is merely
a shift of the the free replica resolvent of $f$. Therefore, the proof
of Theorem \ref{th:existence} goes through unhindered, and
after accounting for the mentioned shift, produces the following:

\begin{theorem}\label{th:existence:massive}
	Suppose $u_0:\R^d\to\R$ is bounded and measurable.
	Then, under Condition \ref{cond:1}, the mild solution to
	\eqref{heat:massive} satisfies
	the following: For all integers $p\ge 2$ and $\lambda\in\R$,
	\begin{equation}\label{eq:exist:nonlinear:massive}
		\overline\gamma_*(p)\le \lambda+\frac p2
			\inf\left\{\alpha>0:\ ( \bar R_\alpha f)(0)
			<\frac{1}{z_p^2\lip_\sigma^2}\right\},
	\end{equation}
	where $ z_p$ denotes the largest 
	positive zero of the Hermite polynomial
	$\text{\sl He}_p$.
\end{theorem}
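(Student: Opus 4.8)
The plan is to rerun the Picard scheme of Chapter~\ref{ch:NL}---the one behind Theorem~\ref{th:existence}---after absorbing the zeroth-order term $(\lambda/2)u$ into the generator, so that \eqref{heat:massive} becomes the \emph{drift-free} stochastic heat equation for $\sL^{(\lambda)}$. In mild form [compare \eqref{heat:mild:form}], $u_t(x)=(P^{(\lambda)}_tu_0)(x)+(\mathcal{A}^{(\lambda)}u)_t(x)$, where $(\mathcal{A}^{(\lambda)}v)_t(x):=\int_{[0,t]\times\R^d}p^{(\lambda)}_{t-s}(y-x)\sigma(v_s(y))\,F(\d s\,\d y)$ and $p^{(\lambda)}_{t-s}(y-x)=\e^{\lambda(t-s)/2}p_{t-s}(y-x)$. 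Existence and a.s.-uniqueness of $u$ are \emph{already} supplied by Theorem~\ref{th:existence} applied with $b(u):=\lambda u/2$, so the only new content is the Liapounov bound \eqref{eq:exist:nonlinear:massive}; for that it suffices, for each integer $p\ge2$, to exhibit the smallest $\beta$ with $u\in\bm{B}_{\beta,p}$ and then invoke $\overline\gamma_*(p)\le\beta$.

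The heart of the matter is the analogue of Lemma~\ref{lem:Norm:sigma} for $\mathcal{A}^{(\lambda)}$. Running the proof of that lemma verbatim---Davis's form of the Burkholder--Davis--Gundy inequality together with the generalized H\"older inequality---one reaches \eqref{eq:Norm:sigma} with every kernel $p_{t-s}$ replaced by $p^{(\lambda)}_{t-s}$. Because $p^{(\lambda)}_{t-s}(y-x)\,p^{(\lambda)}_{t-s}(z-x)=\e^{\lambda(t-s)}p_{t-s}(y-x)p_{t-s}(z-x)$, the passage \eqref{eq:Sf:1}--\eqref{eq:Sf:1:1} picks up a factor $\e^{\lambda(t-s)}$ inside the time integral; after the change of variable $r:=t-s$ this turns $\int_0^\infty\e^{-2\beta s/p}(\bar P_sf)(0)\,\d s=(\bar R_{2\beta/p}f)(0)$ into $\int_0^\infty\e^{-(2\beta/p-\lambda)s}(\bar P_sf)(0)\,\d s=(\bar R_{2\beta/p-\lambda}f)(0)=(\bar R^{(\lambda)}_{2\beta/p}f)(0)$, which is finite for every $\beta>p\lambda/2$ by Condition~\ref{cond:1} and Theorem~\ref{th:Dalang:1}. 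Hence $\|\mathcal{A}^{(\lambda)}v\|_{\beta,p}\le z_p({\rm C}_\sigma+{\rm D}_\sigma\|v\|_{\beta,p})\sqrt{(\bar R_{2\beta/p-\lambda}f)(0)}$, and likewise $\|\mathcal{A}^{(\lambda)}v-\mathcal{A}^{(\lambda)}w\|_{\beta,p}\le z_p\lip_\sigma\|v-w\|_{\beta,p}\sqrt{(\bar R_{2\beta/p-\lambda}f)(0)}$. One also records that $\|P^{(\lambda)}_\bullet u_0\|_{\beta,p}\le\|u_0\|_{L^\infty(\R^d)}\sup_{t>0}\e^{(\lambda/2-\beta/p)t}<\infty$ for every $\beta\ge p\lambda/2$, so the free term causes no trouble.

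With these estimates the iteration $u^0_t(x):=u_0(x)$, $u^{n+1}:=P^{(\lambda)}_\bullet u_0+\mathcal{A}^{(\lambda)}u^n$ is a contraction on $\bm{B}_{\beta,p}$ as soon as $\beta>p\lambda/2$ and $z_p\lip_\sigma\sqrt{(\bar R_{2\beta/p-\lambda}f)(0)}<1$; arguing exactly as in the proof of Theorem~\ref{th:existence}, its limit agrees with the a.s.-unique mild solution $u$, so $u\in\bm{B}_{\beta,p}$ for every such $\beta$. Consequently $\sup_{x\in\R^d}\E(|u_t(x)|^p)\le\|u\|_{\beta,p}^p\,\e^{\beta t}$ and $\overline\gamma_*(p)\le\beta$; optimizing, $\overline\gamma_*(p)\le\inf\{\beta>p\lambda/2:\ (\bar R_{2\beta/p-\lambda}f)(0)<1/(z_p^2\lip_\sigma^2)\}$. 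Undoing the shift via $\alpha:=2\beta/p-\lambda$ then recasts the right-hand side in the form \eqref{eq:exist:nonlinear:massive}: this is precisely the drift-free case of Theorem~\ref{th:existence} with the replica resolvent of $f$ shifted by $\lambda$, as anticipated by the identity $\bar R^{(\lambda)}_\alpha f=\bar R_{\alpha-\lambda}f$.

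I do not expect a new analytic obstacle here: Davis's inequality, the Banach spaces $\bm{B}_{\beta,p}$, the fixed-point iteration, and the maximum principle of Theorem~\ref{th:Dalang:1} are all already in place, and the argument is the word-for-word transcription of Chapter~\ref{ch:NL} with $p_{t-s}$ replaced by $\e^{\lambda(t-s)/2}p_{t-s}$. The only step that genuinely needs care is the bookkeeping of the mass/dissipation parameter---the new constraint $\beta>p\lambda/2$, which is exactly what is needed to keep the shifted resolvent $\bar R_{2\beta/p-\lambda}f$, equivalently the free evolution $\e^{\lambda t/2}P_tu_0$, under control: automatic when $\lambda<0$, and for $\lambda>0$ the precise amount of dissipation required to offset the intermittent multiplicative growth.
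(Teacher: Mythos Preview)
Your route is exactly the paper's: absorb the drift into $\sL^{(\lambda)}$ and rerun the fixed-point argument of Theorem~\ref{th:existence} with $p_{t-s}$ replaced by $\e^{\lambda(t-s)/2}p_{t-s}$, so that the replica resolvent shifts from $(\bar R_{2\beta/p}f)(0)$ to $(\bar R_{2\beta/p-\lambda}f)(0)$. Your estimates on $\mathcal{A}^{(\lambda)}$ and the contraction step are correct and match the paper's intended argument.

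The only slip is in your last sentence. With $\alpha:=2\beta/p-\lambda$ one has $\beta=\tfrac p2(\alpha+\lambda)$, so your optimization actually yields
\[
\overline\gamma_*(p)\ \le\ \frac{p\lambda}{2}+\frac{p}{2}\,\inf\Bigl\{\alpha>0:\ (\bar R_\alpha f)(0)<\frac{1}{z_p^2\lip_\sigma^2}\Bigr\},
\]
which for $p>2$ does \emph{not} coincide with the right-hand side of \eqref{eq:exist:nonlinear:massive}. In fact the printed formula with a bare $\lambda$ cannot hold for $p>2$: take $\sigma\equiv 0$, so the infimum vanishes and \eqref{eq:exist:nonlinear:massive} would assert $\overline\gamma_*(p)\le\lambda$, whereas the explicit solution $u_t=\e^{\lambda t/2}P_tu_0$ gives $\E(|u_t(x)|^p)=\e^{p\lambda t/2}|(P_tu_0)(x)|^p$ and hence $\overline\gamma_*(p)=p\lambda/2$, which exceeds $\lambda$ whenever $\lambda>0$. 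So your shifting argument is sound and proves the bound with additive constant $p\lambda/2$; the two versions agree at $p=2$, which is the only value actually used downstream in Theorem~\ref{th:interm:massive} and Example~\ref{ex:massdiss}.
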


Recall the function $Q$ of Theorem \ref{th:existence}.
Since
\begin{equation}
	Q(p\,,\beta)\ge\max\left(\frac{p\lambda}{2\beta} ~,\, z_p\lip_\sigma
	\sqrt{(\bar R_{2\beta/p}f)(0)}\right)
\end{equation}
a few lines of arithmetic show that Theorem \ref{th:existence:massive}
provides us with a better upper bound than Theorem \ref{th:existence}
for the top Liapounov $L^p$-exponent of the mild solution to \eqref{heat}.  Next, we produce instances where the solution is intermittent.

First of all, note that according to \eqref{eq:exist:nonlinear:massive},
\begin{equation}
	\overline\gamma_*(2)\le \lambda+ 
	\inf\left\{\alpha>0:\ ( \bar R_\alpha f)(0)
	<\frac{1}{\lip_\sigma^2}\right\},
\end{equation}
because $z_2=2$.
We apply similar ``shifting arguments'' together with Theorem \ref{th:interm}
to deduce that the following offers a converse, under some symmetry and
regularity conditions. We note, in advance, that when $d=1$,
the preceding estimate and the following essentially match up.

\begin{theorem}\label{th:interm:massive}
	Suppose that both Conditions \ref{cond:1} and 
	\ref{conds:f} hold, $\eta:=\inf_{x\in\R^d}u_0(x)>0$, 
	and there exists ${\rm L}_\sigma\in(0\,,\infty)$ such that
	$\sigma(z)\ge{\rm L}_\sigma|z|$ for all $z\in\R$. Then,
	\begin{equation}\label{eq:cond:interm:massive}
		\inf_{x\in\R^d}\overline\gamma_x(2)\ge \lambda+ \sup\left\{\alpha>0:\
		(\bar R_\alpha f)(0)\ge \frac{2^{d-1}}{{\rm L}_\sigma^2}
		\right\},
	\end{equation}
	where $\sup\varnothing:= 0$.
\end{theorem}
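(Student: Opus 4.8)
The plan is to rerun the proof of Theorem \ref{th:interm} with the free operator $\sL$ everywhere replaced by the massive/dissipative operator $\sL^{(\lambda)}=\sL+(\lambda/2)I$, keeping careful track of how $\lambda$ propagates through the estimates. Recall from the discussion preceding Theorem \ref{th:existence:massive} that $\sL^{(\lambda)}$ has transition functions $p_t^{(\lambda)}(y-x)=\e^{\lambda t/2}p_t(y-x)$, so that the mild form of \eqref{heat:massive} is
\[
	u_t(x)=\left(P_t^{(\lambda)}u_0\right)(x)+\int_{[0,t]\times\R^d}
	p_{t-s}^{(\lambda)}(y-x)\,\sigma(u_s(y))\,F(\d s\,\d y),
\]
and that the associated replica resolvent satisfies $\bar R_\alpha^{(\lambda)}f=\bar R_{\alpha-\lambda}f$. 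Since $b\equiv 0$ in \eqref{heat:massive}, the hypotheses of Theorem \ref{th:interm} are in force for $\sL^{(\lambda)}$.

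First I would reproduce the second-moment lower bound \eqref{eq:uu:LB}: using $\sigma(u)\ge{\rm L}_\sigma|u|$ and $\left(P_t^{(\lambda)}u_0\right)(x)=\e^{\lambda t/2}(P_tu_0)(x)\ge\e^{\lambda t/2}\eta$, one gets for all $x,y\in\R^d$ and $t>0$
\[
	\E\left(|u_t(x)u_t(y)|\right)\ge \e^{\lambda t}\eta^2+\int_0^t\d s\iint
	W_s(z,z')\,p_{t-s}^{(\lambda)}(z-x)\,p_{t-s}^{(\lambda)}(z'-y)\,f(z-z')\,\d z\,\d z',
\]
with $W_s(z,z')\ge{\rm L}_\sigma^2\,\E(|u_s(z)u_s(z')|)$ as before. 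I would then define $H_\beta$ exactly as in the proof of Theorem \ref{th:interm} and let $\mathcal{A}_\beta^{(\lambda)}$ be the operator built from the kernel $G_\beta^{(\lambda)}(a,b):=\int_0^\infty\e^{-\beta t}p_t^{(\lambda)}(a)p_t^{(\lambda)}(b)\,\d t$. Since $\e^{-\beta t}p_t^{(\lambda)}(a)p_t^{(\lambda)}(b)=\e^{-(\beta-\lambda)t}p_t(a)p_t(b)$, one has $\mathcal{A}_\beta^{(\lambda)}=\mathcal{A}_{\beta-\lambda}$ in the notation of that proof, and $\int_0^\infty\e^{-\beta t}\e^{\lambda t}\eta^2\,\d t=\eta^2/(\beta-\lambda)$ when $\beta>\lambda$. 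So the integrated inequality becomes $H_\beta\ge\bigl(\eta^2/(\beta-\lambda)\bigr)\mathbf 1+{\rm L}_\sigma^2\,\mathcal{A}_{\beta-\lambda}H_\beta$, and iterating the recursion as in \eqref{eq:H:LB} gives $H_\beta\ge\bigl(\eta^2/(\beta-\lambda)\bigr)\sum_{\ell\ge 0}{\rm L}_\sigma^{2\ell}\,\mathcal{A}_{\beta-\lambda}^\ell\mathbf 1$.

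From here the argument is word for word that of Theorem \ref{th:interm}, but with the spectral parameter $\beta$ replaced by $\beta-\lambda$ throughout: Proposition \ref{pr:KX} lower-bounds each application of $\mathcal{A}_{\beta-\lambda}$ by a Fourier integral, Condition \ref{conds:f} permits restricting the resulting $\ell$-fold integral to the set $\Sigma=\R_+^d\cup\R_-^d$ and factorizing it, and $\frac{1}{(2\pi)^d}\int_\Sigma\hat f(\xi)/(\beta-\lambda+2\Re\Psi(\xi))\,\d\xi=2^{-d+1}\Upsilon(\beta-\lambda)$. This leaves
\[
	\inf_{x\in\R^d}\int_0^\infty\e^{-\beta t}\,\E\left(|u_t(x)|^2\right)\d t
	\ \ge\ \frac{\eta^2}{\beta-\lambda}\sum_{\ell=0}^\infty
	\left(\frac{{\rm L}_\sigma^2}{2^{d-1}}\,\Upsilon(\beta-\lambda)\right)^{\ell}.
\]
By Theorem \ref{th:Dalang:1}, $\Upsilon(\beta-\lambda)=(\bar R_{\beta-\lambda}f)(0)$, so if $\beta>\lambda$ and $(\bar R_{\beta-\lambda}f)(0)\ge 2^{d-1}/{\rm L}_\sigma^2$ the series diverges and $\int_0^\infty\e^{-\beta t}\E(|u_t(x)|^2)\,\d t=\infty$ for every $x$. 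The real-variable argument at the end of the proof of Theorem \ref{th:interm} (via \eqref{eq:realvar}) then forces $\overline\gamma_x(2)\ge\beta$ for all $x$. Writing $\alpha:=\beta-\lambda>0$, this says $\overline\gamma_x(2)\ge\lambda+\alpha$ for every $\alpha>0$ with $(\bar R_\alpha f)(0)\ge 2^{d-1}/{\rm L}_\sigma^2$, and taking the supremum over such $\alpha$ gives \eqref{eq:cond:interm:massive}. When no such $\alpha$ exists, so that the right side equals $\lambda$, the bound is automatic, since $\E(|u_t(x)|^2)\ge\left(\E u_t(x)\right)^2=\e^{\lambda t}|(P_tu_0)(x)|^2\ge\e^{\lambda t}\eta^2$, hence $\overline\gamma_x(2)\ge\lambda$.

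Passing to the massive operator introduces no new analytic difficulty: the genuinely delicate ingredients --- the Fubini interchanges, which require Condition \ref{cond:1}, now invoked for $\bar R_{\beta-\lambda}f$ and hence under the constraint $\beta>\lambda$, together with the harmonic-analytic reduction to $\Sigma$ --- were already settled inside the proof of Theorem \ref{th:interm}. The only thing left to check is that every step of that proof survives the substitution $\beta\mapsto\beta-\lambda$ subject to $\beta>\lambda$, which it does, so I do not anticipate a serious obstacle.
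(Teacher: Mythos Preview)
Your proposal is correct and is exactly the ``shifting argument'' the paper has in mind: replace $\sL$ by $\sL^{(\lambda)}$, observe that $p_t^{(\lambda)}(a)p_t^{(\lambda)}(b)=\e^{\lambda t}p_t(a)p_t(b)$ so that every occurrence of the spectral parameter $\beta$ in the proof of Theorem~\ref{th:interm} becomes $\beta-\lambda$, and then rename $\alpha:=\beta-\lambda$. The paper states this without details, and you have supplied them correctly, including the handling of the empty-set case via $\E(|u_t(x)|^2)\ge\e^{\lambda t}\eta^2$.
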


We end this chapter with the example mentioned on page \pageref{ouch} of the Introduction.

\begin{example}\label{ex:massdiss}
	Consider the case that $\sL=-(-\Delta)^{q/2}$ is a power of
	the Laplacian. In that case, $\sL$ is the generator of an isotropic
	stable process of index $q$, and $q\in(0\,,2]$ necessarily.
	Consider also the case that $f(x)=\|x\|^{-d+b}$
	is a Riesz kernel, where $b\in(0\,,d)$. Then [see \eqref{eq:Riesz:FT}
	on page \pageref{eq:Riesz:FT}],
	\begin{equation}\label{rieszhat}
		\hat f(\xi) = \frac{C_{d,b}}{\|\xi\|^b},
		\quad\text{where}\quad C_{d,b}:=
		\frac{\pi^{d/2}2^b \Gamma(b/2)}{\Gamma((d-b)/2)}.
	\end{equation}
	And therefore,
	\begin{equation}\begin{split}
		(\bar R_\alpha f)(0)&=\Upsilon(\alpha)
			\hskip2.1in\text{[by Theorem \ref{th:Dalang:1}]}\\
		&=\frac{C_{d,b}}{(2\pi)^d}\cdot\int_{\R^d}
			\frac{\|\xi\|^{-b}}{\alpha + 2\|\xi\|^q}\,\d\xi\\
		&=\frac{C_{d,b}\alpha^{-1+(d-b)/q}}{(2\pi)^d\cdot 2^{(d-b)/q}}\cdot\int_{\R^d}
			\frac{\d z}{\|z\|^b+ \|z\|^{q+b}}
			\quad[z:=(2/\alpha)^{1/q}\xi].
	\end{split}\end{equation}
	In other words,
	\begin{equation}
		(\bar R_\alpha f)(0)= \frac{\mathfrak{A}_{d,q,b}}{\alpha^{1-\nu}},
	\end{equation}
	where
	\begin{equation}
		\nu:= \frac{d-b}{q}
		\quad\text{and}\quad
		\mathfrak{A}_{d,q,b}:=\frac{C_{d,b}}{(2\pi)^d \cdot 2^\nu}\cdot\int_{\R^d}
		\frac{\d z}{\| z\|^b+\|z\|^{q+b}}.
	\end{equation}
	
	Since $b\in(0\,,d)$, $\mathfrak{A}_{d,q,b}$---and hence
	$(\bar R_\alpha f)(0)$---is finite if and only if $q+b>d$; this is
	the sufficient condition for the existence of a unique mild solution
	to the resulting stochastic PDE. And
	not surprisingly,
	this ``$q+b>d$'' condition
	is the necessary and sufficient condition for the existence of a solution
	to the linear equation 
	[Theorem \ref{th:example:riesz}, p.\ \pageref{th:example:riesz}]. Moreover,
	when $q+b>d$, we can apply
	Theorems \ref{th:existence:massive} and \ref{th:interm:massive}
	to find that for all $x\in\R^d$,
	\begin{equation}
		\lambda+\left(\frac{\mathfrak{A}_{d,q,b}
		{\rm L}_\sigma^2}{2^{d-1}}\right)^{1/(1-\nu)} \le
		\overline\gamma_x(2) \le \lambda+\left(\mathfrak{A}_{d,q,b}
		\lip_\sigma^2\right)^{1/(1-\nu)}.
	\end{equation}
	In particular, consider the massive/dissipative
	``parabolic Anderson model,''
	\begin{equation}
		\frac{\partial}{\partial t}u_t(x) = -\left( (\Delta)^{q/2} u_t \right)(x)
		+\frac{\lambda}{2}\, u_t(x)+\kappa u_t(x)\dot{F}_t(x),
	\end{equation}
	where $u_0:\R\to\R$ is measurable and bounded uniformly away from zero
	and infinity, and $\kappa\neq 0$. The preceding discussion shows that
	the parabolic Anderson model has a solution if 	$q+b>d$. And when $q+b>d$, we obtain the 
	following bounds for the upper $L^2$-Liapounov exponent of the solution:
	For all $x\in\R^d$,
	\begin{equation}
		\lambda+\left(\frac{\mathfrak{A}_{d,q,b}\kappa^2}{2^{d-1}}\right)^{1/(1-\nu)} \le
		\overline\gamma_x(2) \le \lambda+\left(\mathfrak{A}_{d,q,b}
		\kappa^2\right)^{1/(1-\nu)}.
	\end{equation}
	Theorem \ref{th:existence} shows also that $\overline\gamma_*(p)<\infty$
	for all $p\ge 2$. Therefore, we have no weak intermittency if
	$\lambda\le -(\mathfrak{A}_{d,q,b}\kappa^2)^{1/(1-\nu)}$, whereas
	there is weak intermittency if 
	$\lambda > -(\mathfrak{A}_{d,q,b}\kappa^2/(2^{d-1}))^{1/(1-\nu)}.$
	Our condition is sharp when, and only when, $d=1$. In that one-dimensional
	case, we have a solution if and only if $q+b>1$, and if this inequality
	holds then
	\begin{equation}\begin{split}
		\mathfrak{A}_{1,q,b} &=\frac{C_{1,b}}{2^\nu \pi}\cdot\int_0^\infty
			\frac{{\rm d}z}{z^b+z^{b+q}}\\
		&=\frac{C_{1,b}}{2^\nu\pi q}
		\cdot {\rm B}\left(\tfrac{1-b}{q}\,,1-\tfrac{1-b}{q}\right),
	\end{split}\end{equation}
	where ${\rm B}(x\,,y):=\Gamma(x)\Gamma(y)/\Gamma(x+y)$ is the 
	beta  function. Thanks to \eqref{rieszhat}, this implies that
	\begin{equation}	
		\mathfrak{A}_{1,q,b} =\frac{2^{b-\nu}\Gamma(b/2)}{
		\pi^{1/2} q\Gamma((1-b)/2)}\,{\rm B}\left(\tfrac{1-b}{q}\,,1-\tfrac{1-b}{q}\right),
	\end{equation}
	and we find that weak intermittency holds if and only if
	\begin{equation}\label{eq:relativistic}
		\lambda > - \left( \mathfrak{A}_{1,q,b}\,\kappa^2 \right)^{1/(1-\nu)}.
	\end{equation}
	Another simple though tedious computation
	shows that this example [applied with $q:=2$] includes 
	the material that led to \eqref{ouch} of page
	\pageref{ouch}.
	
\end{example}
\chapter{Temperate Solutions to Parabolic SPDEs}
\label{ch:temperate}

In this chapter we continue the study of the stochastic heat equation 
of the following form:
\begin{equation}\label{heat:temperate}
	\frac{\partial}{\partial t}u_t(x)=
	(\mathcal{L}u_t(x))+ \sigma(u_t(x))\dot{F}_t(x),
\end{equation}
where $\sigma:\R^d\to\R$ is Lipschitz continuous, as before.
But here $u_0$ can be a finite Borel measure.  

Such a problem arises naturally in various specific
cases; see, for example, Bertini and Cancrini \cite{BertiniCancrini},
Carmona and Molchanov \cite{CarmonaMolchanov:94}, and
Molchanov \cite{Molchanov}. It is also motivated by the physical literature
on statistical mechanics, where $u_0$ typically represents the initial
[probability] distribution of a particle system in a random environment
that interacts with it random environment. In fact, the role of
$u_0$ as a measure is typically taken for granted and 
little or no mention of $u_0$ is made explicitly in the physics
literature \cite{KrugSpohn,Kardar,KPZ}.

Thus, our present goal is to continue and produce solutions to \eqref{heat:temperate}, even though the initial
data is a measure. 

The fact that $u_0$ is no longer a function poses some
fundamental problems for the existing theory of SPDEs. In fact,
we argue next that in order to make sense of \eqref{heat:temperate}
in the case that $u_0$ is a finite Borel measure, we need to impose
regularity conditions on both the semigroup $\{P_t\}_{t>0}$
and the initial measure. More importantly, we need to develop a slightly less
restrictive concept of a solution than the well known, as well
as standard, notion of a mild solution.

Let us begin with a few observations.  

It follows from the properties of the Walsh stochastic integral
\cite[Chapter 2]{Walsh} that if $\bm{u}$ is a predictable random field
which is the mild solution of \eqref{heat:mild:form} [with $b\equiv 0$] and 
satisfies \eqref{eq:mild}, then
\begin{equation}\begin{split}
	&\E\left(\left| u_t(x)\right|^2\right)\\
	& \hskip.4in= \left| (P_tu_0)(x)\right|^2
		+ \E\left(\left|\int_0^t\int_{\R^d} p_{t-s}(y-x)
		\sigma(u_s(y))\, F(\d s\,\d y)\right|^2\right)\\
	& \hskip.4in\ge \left| (P_tu_0)(x)\right|^2,
\end{split}\end{equation}
where as usual,
\begin{equation}
	(P_tu_0)(x) :=\int_{\R^d} p_t(y-x)\,u_0(\d y).
\end{equation}
Thus, in particular, \eqref{eq:mild} implies that the following
is a necessary condition for the existence of a mild solution:
\begin{equation}
	\sup_{t\in (0,T)}\sup_{x\in\R^d}\left|
	(P_tu_0)(x)\right|<\infty\qquad\text{for all $T>0$}.
\end{equation}
It is clear that the latter condition rules out many 
interesting choices of $u_0$. 
For instance, consider the case that
$\sL=\Delta$ and one of the most natural
choices $u_0:=\delta_z$ for initial data. In that case,
\begin{equation}
	(P_t\delta_z)(a)=\frac{\e^{-\|a-z\|^2/(4t)}}{(4\pi t)^{d/2}}, 
\end{equation}
and hence
\begin{equation}
	\sup_{t\in(0,T)}\sup_{x\in\R^d} |(P_t\delta_z)(x)|=\infty
	\qquad \text{for all $T>0$ and $z\in\R^d$}.
\end{equation}
Thus, we can never have
a mild solution to \eqref{heat:temperate} when the initial
data is a point mass. But when $\sigma(u)=\text{const}\cdot u$
and $\sL=\Delta$, \eqref{heat:temperate} is expected to have a solution 
defined by the Feynman--Kac formula in many cases
where $f$ is ``nice''; see Bertini and Cancrini \cite{BertiniCancrini}
and Carmona and Molchanov \cite{CarmonaMolchanov:94}, for example. 
It is therefore natural to try to use a different definition of a solution to handle such problems. 

It turns out that one can turn the suggested strategy into a fruitful 
rigorous approach.
In fact, in this chapter we consider a different notion of solution which satisfies,
among other things, the following condition in place of the more restrictive
condition \eqref{eq:mild}:
\begin{equation}\label{eq:mild:1}
	\sup_{x\in\R^d}\E\left(|u_t(x)|^2\right)<\infty
	\qquad\text{for almost every $t>0$}.
\end{equation}

Now suppose we want to know when \eqref{heat:temperate}, 
with $u_0=\delta_z$, has a random-field solution which 
satisfies \eqref{eq:mild:1}. Similar 
considerations as those of the previous paragraph tell 
us that a necessary condition is that
$p_t$ is a bounded function for almost all $t>0$. Thanks to
Hawkes's theorem [Proposition \ref{pr:hawkes},
p.\ \pageref{pr:hawkes}], \eqref{eq:mild:1} implies
that
\begin{equation}\label{cond:hawkes}
	\exp(-\Re\Psi)\in L^t(\R^d)
	\qquad\text{for all $t>0$.}
\end{equation}
Thus, we will will need to assume, at the very least,
that \eqref{cond:hawkes} holds.

Recall that a mild solution $\bm{u}$ of \eqref{heat:temperate}
is a predictable random field that satisfies \eqref{heat:mild:form}
with $b\equiv 0$.
To be specific, one requires that for all nonrandom
pairs $(t\,,x)\in(0\,,\infty)\times\R^d$,
the random equation \eqref{heat:mild:form} holds almost surely.
The following is a slightly less stringent notion of a solution.

\begin{definition}
	\index{Temperate solution}%
	Let $\bm{u}:=\{u_t(x)\}_{t>0,x\in\R^d}$ be a predictable
	random field. We say that $\bm{u}$ is a \emph{temperate
	solution} to \eqref{heat:temperate} if there exists a null set
	$N_0\subset\R_+$ such that for all $t\not\in N_0$,
	the following holds for every $x\in\R^d$:
	\begin{equation}\label{def:temperate}
		u_t(x)=(P_tu_0)(x)+\int_0^t\int_{\R^d} p_{t-s}(y-x)
		\sigma(u_s(y))\, F(\d s\,\d y)
		\quad\text{a.s.}
	\end{equation}
\end{definition}

As we shall see in the next section, the stochastic integral in \eqref{def:temperate}
can be defined properly though, 
technically speaking, it is not a Walsh integral.
Regardless, we have the following, which is the main result of this chapter.

\begin{theorem}\label{th:temperate}
	Suppose $\sigma:\R^d\to\R$ is Lipschitz continuous and
	$u_0$ is a finite Borel measure on $\R^d$ such that
	\begin{equation}\label{eq:temperate}
		\int_{\R^d} \frac{|\hat u_0(\xi)|+\hat f(\xi)}{1+2\Re\Psi(\xi)}\,\d\xi<\infty.
	\end{equation}
	If, in addition, \eqref{cond:hawkes} holds, 
	then there exists a temperate solution $\bm{u}:=
	\{u_t(x)\}_{t>0,x\in\R^d}$ 
	to \eqref{heat:temperate}.
\end{theorem}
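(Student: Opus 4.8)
The plan is to run the same Picard iteration that was used for Theorem \ref{th:existence}, but relative to a family of norms that only control the $L^2(\P)$-size of the random field at almost every time, rather than pointwise in $t$. First I would record the two ingredients that replace the hypotheses of Theorem \ref{th:existence}. On the deterministic side, condition \eqref{eq:temperate} together with \eqref{cond:hawkes} guarantees that $(P_tu_0)(x)=\int p_t(y-x)\,u_0(\d y)$ is, for almost every $t>0$, a bounded function of $x$ with an $L^2$-in-time-weighted bound: since $\widehat{P_tu_0}(\xi)=\hat u_0(\xi)\e^{-t\Psi(\xi)}$ one gets, via Plancherel and Tonelli,
\begin{equation}
	\int_0^\infty \e^{-\beta t}\,\d t\,\sup_{x\in\R^d}|(P_tu_0)(x)|^2
	\le \text{const}\cdot\int_{\R^d}\frac{|\hat u_0(\xi)|^2}{\beta+2\Re\Psi(\xi)}\,\d\xi,
\end{equation}
and one controls the right-hand side using \eqref{eq:temperate} and the fact that $\hat u_0$ is bounded (it is the Fourier transform of a finite measure), so $|\hat u_0(\xi)|^2\le\|u_0\|_{\mathrm{TV}}|\hat u_0(\xi)|$. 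On the stochastic side, \eqref{eq:temperate} gives $\Upsilon(\beta)<\infty$ for all $\beta>0$, hence Condition \ref{cond:1} holds by Theorem \ref{th:Dalang:1}, so $(\bar R_\alpha f)(0)<\infty$ for every $\alpha>0$ and $(\bar R_\alpha f)(0)\to0$ as $\alpha\to\infty$.

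The key technical step is to reinterpret the stochastic convolution. For a predictable random field $v$ with $\sup_{x}\E(|v_t(x)|^2)$ locally integrable in $t$, the process $(\mathcal{A}v)_t(x)=\int_0^t\int_{\R^d}p_{t-s}(y-x)\sigma(v_s(y))\,F(\d s\,\d y)$ need not make sense as a Walsh integral for every fixed $t$ (the integrand need not be $L^2$ in $y$ near $s=t$ when $u_0$ is a measure), but I would show that for almost every $t$ it is a well-defined Wiener-type integral, exactly as in \cite{FKN}, and that it satisfies the analogue of Lemma \ref{lem:Norm:sigma}: introduce, for $\beta>0$, the pseudo-norm
\begin{equation}
	\tnorm{v}_{\beta} := \left(\int_0^\infty \e^{-\beta t}\sup_{x\in\R^d}\E\!\left(|v_t(x)|^2\right)\d t\right)^{1/2},
\end{equation}
and prove
\begin{equation}
	\tnorm{\mathcal{A}v}_{\beta}^2 \le \text{const}\cdot\left({\rm C}_\sigma^2+{\rm D}_\sigma^2\tnorm{v}_\beta^2\right)\cdot\sup_{\alpha\ge\beta}(\bar R_\alpha f)(0),
	\qquad
	\tnorm{\mathcal{A}v-\mathcal{A}w}_\beta^2\le\text{const}\cdot\lip_\sigma^2(\bar R_\beta f)(0)\,\tnorm{v-w}_\beta^2 .
\end{equation}
This is the genuinely new estimate; its proof mirrors the computation in Lemma \ref{lem:Norm:sigma} (Burkholder--Davis--Gundy, then the identity $\int\!\int f(a-b)p_s(a)p_s(b)\,\d a\,\d b=(\bar P_sf)(0)$, then integrate against $\e^{-\beta t}\,\d t$), but one must be careful that the time-integral weighting is now on the \emph{outside} and that the bound on $(\bar R_\alpha f)(0)$ is used only for $\alpha\ge\beta$, where it is finite. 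I would then choose $\beta$ large enough that $\text{const}\cdot\lip_\sigma^2(\bar R_\beta f)(0)<1$ — possible because $(\bar R_\beta f)(0)\to0$ — so that $\mathcal{A}$ is a strict contraction on the Banach space of (equivalence classes of) predictable fields with $\tnorm{\cdot}_\beta<\infty$; Banach's fixed-point theorem then produces a predictable $\bm{u}$ with $\tnorm{\bm{u}}_\beta<\infty$ solving $u_t(x)=(P_tu_0)(x)+(\mathcal{A}u)_t(x)$ for almost every $t$, which is precisely the defining property \eqref{def:temperate} of a temperate solution. Finally, $\tnorm{\bm{u}}_\beta<\infty$ forces $\sup_x\E(|u_t(x)|^2)<\infty$ for a.e.\ $t$, i.e.\ \eqref{eq:mild:1}.

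The main obstacle I anticipate is not the contraction argument itself but making rigorous the claim that $(\mathcal{A}v)_t(x)$ is well defined as a stochastic integral for almost every $t$ when $v$ merely satisfies $\tnorm{v}_\beta<\infty$: one has to check that for a.e.\ $t$ the deterministic kernel $(s,y)\mapsto p_{t-s}(y-x)$, multiplied by $\sigma(v_s(y))$, lies in the domain of the Wiener integral associated with $F$, i.e.\ that $\int_0^t\d s\int\!\int p_{t-s}(y-x)p_{t-s}(z-x)f(y-z)\,\E(\sigma(v_s(y))\sigma(v_s(z)))\,\d y\,\d z<\infty$ for a.e.\ $t$; this follows by Tonelli from a Fubini computation on $\int_0^\infty\e^{-\beta t}(\cdots)\d t$, which is exactly the quantity already bounded above, so the a.e.-finiteness in $t$ is automatic once the $\tnorm{\cdot}_\beta$ estimate is in hand — but it needs to be spelled out carefully, and one must also verify predictability is preserved under the $\tnorm{\cdot}_\beta$-limit (routine, since $L^2(\P)$-limits of predictable fields are predictable). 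Uniqueness of the temperate solution within the class $\tnorm{\cdot}_\beta<\infty$ comes for free from the contraction. I would also remark that \eqref{cond:hawkes} is used precisely to guarantee $p_t\in L^\infty(\R^d)$ for all $t>0$ (Proposition \ref{pr:hawkes}), which is what lets the convolution $P_tu_0$ be a bounded \emph{function} rather than just a measure.

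\newcommand{\tnorm}[1]{\vert\!\vert\!\vert #1 \vert\!\vert\!\vert}
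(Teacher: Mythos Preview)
Your approach is correct and uses the same key ingredients as the paper---the norm $\mathcal{N}_\beta(v)=(\int_0^\infty \e^{-\beta t}\sup_x\E(|v_t(x)|^2)\,\d t)^{1/2}$, the contraction estimate $\mathcal{N}_\beta(\mathcal{A}v-\mathcal{A}w)\le\lip_\sigma\sqrt{2(\bar R_\beta f)(0)}\,\mathcal{N}_\beta(v-w)$, and the need to extend the stochastic convolution to all of $\bm{L}^2_\beta$---but the overall organization is genuinely different. The paper does \emph{not} run Picard iteration directly in $\bm{L}^2_\beta$. Instead it mollifies the initial measure, $u_0^{(n)}:=\psi_n*u_0$, invokes the already-proved Theorem \ref{th:existence} to obtain a mild solution $\bm{u}^{(n)}$ for each $n$, and then proves (Lemma \ref{lem:effect0}) a stability estimate $\mathcal{N}_\beta(\bm{u}^{(n)}-\bm{u}^{(m)})\le\text{const}\cdot\int|\hat u_0(\xi)|\,|\hat\psi_n(\xi)-\hat\psi_m(\xi)|/(\beta+2\Re\Psi(\xi))\,\d\xi$, so that $\{\bm{u}^{(n)}\}$ is Cauchy in $\bm{L}^2_\beta$ and its limit is the temperate solution. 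Your direct fixed-point route is arguably cleaner and more self-contained; the paper's mollification route has the advantage of reusing the mild-solution machinery as a black box and of making the uniqueness statement (Proposition \ref{pr:uniqueness}: independence of the choice of mollifier) fall out naturally.

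Two minor technical remarks. First, your bound on $\int_0^\infty\e^{-\beta t}\sup_x|(P_tu_0)(x)|^2\,\d t$ is not really ``via Plancherel'' (Plancherel controls $\int_x$, not $\sup_x$); the paper gets it by Minkowski in the time variable followed by the pointwise inversion bound $\sup_x|(P_tu_0)(x)|\le(2\pi)^{-d}\int|\hat u_0(\xi)|\e^{-t\Re\Psi(\xi)}\,\d\xi$, which uses \eqref{cond:hawkes} to justify inversion. Your workaround $|\hat u_0|^2\le\|u_0\|_{\mathrm{TV}}|\hat u_0|$ salvages the conclusion, but the cleaner route is Minkowski. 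Second, your $\sup_{\alpha\ge\beta}(\bar R_\alpha f)(0)$ is just $(\bar R_\beta f)(0)$ by monotonicity, so no supremum is needed. The paper handles the extension of the stochastic convolution exactly as you anticipate: it proves your a.e.-in-$t$ well-definedness by first establishing the $\mathcal{N}_\beta$ bound for bounded (hence Walsh-integrable) approximants (Lemma \ref{lem:P*ZF}) and then passing to the limit via density (Lemma \ref{lem:density:Lbeta}).
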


There is also a corresponding [limited] uniqueness result
for these temperate solutions; see Proposition \ref{pr:uniqueness}.

Let us remark that \eqref{eq:temperate} holds if and only if:
\begin{enumerate}
	\item[(a)] Condition \eqref{cond:1} holds; and 
	\item[(b)] 
	\begin{equation}
		\int_{\R^d}\frac{|\hat u_0(\xi)|}{1+2\Re\Psi(\xi)}\,\d\xi<\infty.
	\end{equation}
\end{enumerate}
In addition, in the case that $u_0$ is a positive-definite function,
the preceding display holds if and only if $u_0$ has bounded potentials;
more precisely, that $(\bar R_\alpha u_0)(0)=
\sup_{x\in\R^d}(\bar R_\alpha u_0)(x)<\infty$ for all $\alpha>0$;
see Theorem \ref{th:Dalang:1} for a more precise statement.

\begin{corollary}\label{cor:temperate}
	Suppose $d=1$, $\sigma:\R\to\R$ is Lipschitz continuous,
	and Condition \ref{cond:1} and the following are both valid:
	\begin{equation}\label{cond:dalang}
		\int_{-\infty}^\infty \frac{\d\xi}{1+2\Re\Psi(\xi)}<\infty.
	\end{equation}
	Then, \eqref{heat:temperate} has a temperate solution for every
	finite initial measure $u_0$.
\end{corollary}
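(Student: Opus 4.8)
The plan is to obtain Corollary \ref{cor:temperate} as an immediate specialization of Theorem \ref{th:temperate} to dimension $d=1$: one need only verify that, under the stated hypotheses, both assumptions of Theorem \ref{th:temperate} --- the integrability condition \eqref{eq:temperate} and Hawkes's condition \eqref{cond:hawkes} --- hold automatically.

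First I would dispatch \eqref{eq:temperate}. By the remark immediately following Theorem \ref{th:temperate}, condition \eqref{eq:temperate} is equivalent to the conjunction of Condition \ref{cond:1} --- which is assumed --- and the requirement that $\int_{\R}|\hat u_0(\xi)|/(1+2\Re\Psi(\xi))\,\d\xi<\infty$. Since $u_0$ is a finite Borel measure, its Fourier transform is bounded, with $\|\hat u_0\|_{L^\infty(\R)}\le u_0(\R)<\infty$, so that
\[
	\int_{\R}\frac{|\hat u_0(\xi)|}{1+2\Re\Psi(\xi)}\,\d\xi
	\le u_0(\R)\int_{\R}\frac{\d\xi}{1+2\Re\Psi(\xi)}<\infty
\]
by the assumed condition \eqref{cond:dalang}. (The $\hat f$-part of \eqref{eq:temperate} is, of course, covered by Condition \ref{cond:1} via Theorem \ref{th:Dalang:1}, since $\int_{\R}\hat f(\xi)/(1+2\Re\Psi(\xi))\,\d\xi=2\pi\,\Upsilon(1)<\infty$.)

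The second --- and only mildly substantive --- step is to deduce Hawkes's condition \eqref{cond:hawkes}, namely $\exp(-\Re\Psi)\in L^t(\R)$ for every $t>0$, from \eqref{cond:dalang}. The key elementary point is that, for each fixed $t>0$, the map $x\mapsto\e^{-tx}(1+2x)$ is continuous on $[0\,,\infty)$ and tends to $0$ at infinity, hence is bounded there by some constant $M_t<\infty$. Since $\Re\Psi\ge0$ [see \eqref{eq:RePsi:pos}], this yields the pointwise bound $\e^{-t\Re\Psi(\xi)}\le M_t/(1+2\Re\Psi(\xi))$ for all $\xi\in\R$, and therefore
\[
	\int_{\R}\e^{-t\Re\Psi(\xi)}\,\d\xi\le M_t\int_{\R}\frac{\d\xi}{1+2\Re\Psi(\xi)}<\infty ,
\]
which is precisely the statement $\exp(-\Re\Psi)\in L^t(\R)$.

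With the two hypotheses of Theorem \ref{th:temperate} verified --- and $\sigma:\R\to\R$ Lipschitz continuous, as both the corollary and Theorem \ref{th:temperate} (with $d=1$) require --- Theorem \ref{th:temperate} produces a temperate solution $\bm{u}:=\{u_t(x)\}_{t>0,x\in\R}$ to \eqref{heat:temperate} for the given finite measure $u_0$, which is exactly the assertion. I expect essentially no obstacle here: the only place calling for a moment's thought is the elementary estimate $\e^{-tx}(1+2x)\le M_t$ in the second step, and the corollary is otherwise just a clean repackaging of Theorem \ref{th:temperate} in the one-dimensional case.
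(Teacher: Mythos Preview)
Your argument is correct and is precisely the intended derivation: the paper states the corollary without an explicit proof, treating it as an immediate consequence of Theorem~\ref{th:temperate}, and your verification of \eqref{eq:temperate} via $|\hat u_0|\le u_0(\R)$ together with \eqref{cond:hawkes} via the elementary bound $\e^{-tx}(1+2x)\le M_t$ is exactly what is needed.
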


\begin{remark}
	Condition \eqref{cond:dalang} is equivalent to the property that
	the replica process $\bar X$ has local times; see
	Hawkes \cite{Hawkes:86}. In that case,
	it can be shown, as in our earlier work with
	E. Nualart \cite{FKN}, that \eqref{heat:temperate} has
	a temperate solution even when $\dot F$ is space-time white
	noise. We have stated Corollary \ref{cor:temperate} for $d=1$
	only because \eqref{Bochner} [see p.\ \pageref{Bochner}]
	implies fairly readily that 
	\eqref{cond:dalang} can never hold when $d\ge 2$.
	\qed
\end{remark}

\begin{remark}
	Corollary \ref{cor:temperate} has the following remarkable consequence:
	When \eqref{cond:dalang} holds, the stochastic heat equation
	\eqref{heat:temperate} has a temperate solution for 
	every finite measure $u_0$; this includes every $u_0\in L^1(\R)$. This property fails
	to hold for a mild solution that satisfies the usual integrability
	condition \eqref{eq:mild}.\qed 
\end{remark}

We conclude this section with our final main result concerning temperate 
solutions to \eqref{heat:temperate}.  This result yields an upper 
bound for the growth of the solution.  
Since our solution is not in general a mild one, we will need to
also adapt our previously-used notion of \emph{Liapounov exponents}. 
Thus, we introduce the following, which seems to have a 
number of desirable mathematical properties.

\begin{definition}
	We define the \emph{integrated $L^p(\P)$-Liapounov exponent}
	$\lambda_*(p)$ of the temperate solution to \eqref{heat:temperate}---when
	it exists---as
	\begin{equation}\label{def:Gamma*:p}
			\lambda_*(p) := \inf\left\{
			\beta>0:\ \int_0^\infty \e^{-\beta t}
			\left\{ \sup_{x\in\R^d}
			\E\left(\left| u_t(x) \right|^p\right) \right\}^{2/p}
			\,\d t<\infty\right\},
	\end{equation}
	where $\inf\varnothing:=\infty$.
\end{definition}

\begin{theorem}\label{th:interm:temperate}
	Suppose \eqref{cond:hawkes} holds, 
	$\sigma:\R\to\R$ is Lipschitz continuous and nonrandom,
	and $u_0$ is a nonrandom finite Borel measure on $\R^d$
	that satisfies \eqref{eq:temperate}. Then the temperate
	solution $\bm{u}$ to \eqref{heat:temperate} satisfies
	the following:
	\begin{equation}\label{eq:Gamma*:p}
		\lambda_*(p) \le \inf\left\{ \beta>0:\
		(\bar R_\beta f)(0) <\frac{1}{2z_p^2\lip_\sigma^2}\right\}.
	\end{equation}
\end{theorem}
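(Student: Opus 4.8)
The plan is to run the same Picard-iteration machinery that established Theorem \ref{th:existence}, but measured in the new integrated norm adapted to \eqref{def:Gamma*:p} rather than in the exponentially-weighted sup-norm $\|\cdot\|_{\beta,p}$. Concretely, for an even integer $p\ge 2$ and $\beta>0$ define
\begin{equation}
	\vvvert v\vvvert_{\beta,p}^2 := \int_0^\infty \e^{-\beta t}
	\left\{ \sup_{x\in\R^d}\E\left(\left| v_t(x)\right|^p\right)\right\}^{2/p}\,\d t,
\end{equation}
so that $\lambda_*(p)=\inf\{\beta>0:\ \vvvert u\vvvert_{\beta,p}<\infty\}$. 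First I would recall (from Chapter \ref{ch:temperate}, whose construction of the stochastic convolution at a.e.\ time we may assume) that the temperate solution $\bm u$ is obtained as the a.e.-$t$ limit of the Picard iterates $u^0_t(x):=(P_tu_0)(x)$, $u^{n+1}_t(x):=(P_tu_0)(x)+(\mathcal{A}u^n)_t(x)$. Since $b\equiv 0$ here, only the operator $\mathcal{A}$ enters. The first iterate is deterministic and, under \eqref{eq:temperate}, satisfies $\int_0^\infty\e^{-\beta t}\|P_tu_0\|_{L^\infty}^2\,\d t<\infty$ for every $\beta>0$ by Plancherel and the hypothesis $\int|\hat u_0(\xi)|/(1+2\Re\Psi(\xi))\,\d\xi<\infty$; in particular $\vvvert u^0\vvvert_{\beta,p}<\infty$ for all $\beta>0$.

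The heart of the argument is an $\mathcal{A}$-contraction estimate in the new norm. Following the proof of Lemma \ref{lem:Norm:sigma}, the Burkholder--Davis--Gundy inequality (Davis's form, with constant $z_p^p$) and the generalized Hölder inequality give
\begin{equation}
	\E\left(\left|(\mathcal{A}v)_t(x)\right|^p\right)\le z_p^p
	\left(\int_0^t\d s\ \left\| \sigma(v_s(\cdot))\right\|_{L^p}^2\cdot
	\left(\bar P_{t-s}f\right)(0)\right)^{p/2},
\end{equation}
where I have used $\iint p_{t-s}(y-x)p_{t-s}(z-x)f(z-y)\,\d y\,\d z=(\bar P_{t-s}f)(0)$ and the uniform bound $\|\sigma(v_s(\cdot))\|_{L^p}\le \mathrm{C}_\sigma+\mathrm{D}_\sigma\sup_x\E(|v_s(x)|^p)^{1/p}$. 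Taking $\sup_x$, raising to the $2/p$, and noting the right side is a convolution in $s$ of $g(s):=\{\sup_x\E(|\sigma(v_s(x))|^p)\}^{2/p}$ against $h(s):=(\bar P_sf)(0)$, I would multiply by $\e^{-\beta t}$, integrate $\d t$, and apply Young's convolution inequality in the form $\int_0^\infty\e^{-\beta t}(g*h)(t)\,\d t\le\left(\int_0^\infty\e^{-\beta s}g(s)\,\d s\right)\left(\int_0^\infty\e^{-\beta s}h(s)\,\d s\right)=\left(\int_0^\infty\e^{-\beta s}g(s)\,\d s\right)(\bar R_\beta f)(0)$. This yields
\begin{equation}
	\vvvert \mathcal{A}v\vvvert_{\beta,p}^2\le z_p^2\,(\bar R_\beta f)(0)
	\left(\mathrm{C}_\sigma+\mathrm{D}_\sigma\vvvert v\vvvert_{\beta,p}\right)^2,
	\qquad
	\vvvert \mathcal{A}v-\mathcal{A}w\vvvert_{\beta,p}^2\le z_p^2\lip_\sigma^2\,(\bar R_\beta f)(0)\,\vvvert v-w\vvvert_{\beta,p}^2,
\end{equation}
the second bound coming, as in Lemma \ref{lem:Norm:sigma}, by replacing $\sigma$ by the identity. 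Hence whenever $z_p^2\lip_\sigma^2(\bar R_\beta f)(0)<1$, i.e.\ $(\bar R_\beta f)(0)<1/(2z_p^2\lip_\sigma^2)$ is more than enough (actually any $<1/(z_p^2\lip_\sigma^2)$ works), the map $v\mapsto P_\bullet u_0+\mathcal{A}v$ is a strict contraction on the Banach space of predictable random fields with $\vvvert\cdot\vvvert_{\beta,p}<\infty$; the iterates converge there, the limit is a version of the temperate solution $\bm u$ (by the uniqueness discussion surrounding Proposition \ref{pr:uniqueness}), and therefore $\vvvert u\vvvert_{\beta,p}<\infty$. Taking the infimum over admissible $\beta$ gives $\lambda_*(p)\le\inf\{\beta>0:(\bar R_\beta f)(0)<1/(2z_p^2\lip_\sigma^2)\}$, which is \eqref{eq:Gamma*:p}. (The slightly conservative constant $1/(2z_p^2\lip_\sigma^2)$ in the statement presumably absorbs a factor $2$ from the precise form of the $\mathcal{E}_\lambda$-type estimate or from an $L^2(\P)\to L^p(\P)$ comparison; I would simply verify the arithmetic and keep whatever constant the clean computation delivers, noting it is no worse than the stated one.)

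The main obstacle is not the contraction estimate itself but justifying that the limiting object in this new norm genuinely coincides with the temperate solution constructed in Chapter \ref{ch:temperate} — in particular that the stochastic convolution $\int_0^t\int p_{t-s}(y-x)\sigma(u_s(y))F(\d s\,\d y)$, which is defined only for a.e.\ $t$ and is technically not a Walsh integral, still obeys the BDG inequality in the pointwise-in-$t$ form used above. I would handle this by working first with the genuine Picard iterates (for which each $u^n_t(x)$ is an honest Walsh integral, hence BDG applies verbatim), deriving the $\vvvert\cdot\vvvert_{\beta,p}$ bounds at the level of iterates, and only then passing to the limit; the limit's membership in $\bm B$ of finite integrated norm is then automatic from Fatou applied to $\int_0^\infty\e^{-\beta t}\{\sup_x\E(|u^n_t(x)|^p)\}^{2/p}\,\d t$. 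A secondary technical point is that Young's inequality requires $h(s)=(\bar P_sf)(0)$ to be locally integrable against $\e^{-\beta s}$, which is exactly $(\bar R_\beta f)(0)<\infty$ — guaranteed by \eqref{eq:temperate} via Condition \ref{cond:1} and Theorem \ref{th:Dalang:1} — and that $g$ is finite for a.e.\ $s$, which is built into the iteration once $\vvvert u^0\vvvert_{\beta,p}<\infty$ and $(\bar R_\beta f)(0)<\infty$.
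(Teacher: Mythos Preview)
Your proposal is correct and follows essentially the same route as the paper: both introduce the integrated norm $\mathcal{N}_{\beta,p}$ (your $\vvvert\cdot\vvvert_{\beta,p}$), prove the same $\mathcal{A}$-contraction estimate via BDG plus the convolution identity $\int_0^\infty\e^{-\beta t}(g*h)(t)\,\d t=\bigl(\int_0^\infty\e^{-\beta s}g(s)\,\d s\bigr)(\bar R_\beta f)(0)$, run Picard iteration, and pass to the limit by Fatou. The only cosmetic difference is that the paper adds an outer mollification layer---working with $u_0^{(n)}=\psi_n*u_0$ so that each $\bm u^{(n)}$ is a genuine mild solution and identification with the temperate solution is automatic---whereas you iterate directly with the measure $u_0$ and then appeal to uniqueness; your observation that the factor $2$ in the stated bound is slack (it comes from the crude step $(a+b)^2\le 2(a^2+b^2)$ in the paper's Lemma~\ref{lem:Norm:sigma:N:p}, which Minkowski avoids) is also correct.
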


According to Theorem \ref{th:Dalang:1}, the integral condition \eqref{eq:temperate}
implies that $(\bar R_\alpha f)(0)<\infty$ for all $\alpha>0$, whence
$\lim_{\beta\to\infty}(\bar R_\beta f)(0)=0$, thanks to 
the dominated convergence theorem. Consequently, 
Theorem \ref{th:interm:temperate}
implies, among other things, that $\lambda_*(p)<\infty$ 
for all $p\ge 2$.

It would be interesting to find nontrivial conditions
on $f$ and $\mathcal{L}$ that ensure that the preceding
temperate solution satisfies $\lambda_*(2)>0$, and in this way
derive a version of weak intermittency for temperate solutions.
Such an undertaking would  also
require nondegeneracy conditions on $u_0$. But it seems hard to find nontrivial conditions that can be placed on
an initial measure $u_0$ that guarantee the strict positivity of
$\lambda_*(2)$ without assuming that $u_0$ is a bounded
measurable function [and not just a measure] which is also bounded
away from zero. 

We end this section by making a final observation about weak intermittency.

Suppose $(\bar R_0 f)(0)<\infty$. Then, $\lambda_*(p)=0$
provided that
\begin{equation}\label{eq:temperate:noninterm}
	\lip_\sigma<\frac{1}{\sqrt{2z_p^2(\bar R_0 f)(0)}}.
\end{equation}
We can apply \eqref{eq:temperate:noninterm} with $p=2$ to deduce that
 if $\sigma$ is sufficiently smooth in the sense that 
\begin{equation}
	\lip_\sigma<\frac{1}{\sqrt{2(\bar R_0 f)(0)}},
\end{equation}
then $\lambda_*(2)=0$, and ``weak intermittency'' [with respect to
the new Liapounov exponents $\{\lambda_*(p)\}_{p>2}$]
does not hold.

\section{Stochastic Convolutions}

Define for all predictable random fields $Z:=\{Z_t(x)\}_{t>0,x\in\R^d}$,
\begin{equation}
	(\tilde{p}*Z\dot F)_t(x) := \int_0^t\int_{\R^d}
	p_{t-s}(y-x)Z_s(y)\, F(\d s\,\d y),
\end{equation}
provided that the preceding Walsh-type stochastic integral exists.
Note, in particular, that whenever $v:=\{v_t(x)\}_{t>0,x\in\R^d}$
is a predictable random field, 
\begin{equation}
	\mathcal{A}v=\tilde{p}*(\sigma\circ v)\dot F,
\end{equation}
provided that the Walsh stochastic integrals are well defined.

According to the theory of Walsh \cite{Walsh},
the stochastic-convolution process $\tilde{p}*Z\dot F$ is well defined
provided that the quantity
\begin{equation}
	\label{eq:E(p*ZF2)}
	\int_0^t\d s\int_{\R^d}\d y\int_{\R^d}\d z\
	\E\left(|Z_s(y)Z_s(z)|\right)
	p_{t-s}(y-x)p_{t-s}(z-x) f(y-z)
\end{equation}
is finite for all choices of $(t\,,x)\in(0\,,\infty)\times\R^d$.
In that case,
\begin{align}\nonumber
	&\E\left(\left| (\tilde{p}*Z\dot F)_t(x)\right|^2\right)\\
	&=\int_0^t\d s\int_{\R^d}\d y\int_{\R^d}\d z\
		\E\left(Z_s(y)Z_s(z)\right)
		p_{t-s}(y-x)p_{t-s}(z-x) f(y-z).
	\label{eq:E(p*ZF2):1}
\end{align}

Define for all $\beta>0$ and all predictable random
fields $\bm{v}:=\{v_t(x)\}_{t>0,x\in\R^d}$,
\index{000Nbeta@$\mathcal{N}_\beta$, a family of Hilbertian norms}%
\begin{equation}\label{eq:N:beta}
	\mathcal{N}_\beta(\bm{v}) := \left(
	\int_0^\infty \e^{-\beta t}\sup_{x\in\R^d}
	\E\left(| v_t(x)|^2\right)\,\d t\right)^{1/2}.
\end{equation}
Each $\mathcal{N}_\beta$ is a norm on equivalence classes
of square-integrable predictable processes that are
modifications of one another.

\begin{definition}
	Let $Z:=\{Z_t(x)\}_{t>0,x\in\R^d}$ be a random field.
	We say that $Z$ is \emph{$(p, F)$-Walsh integrable} if 
	$Z$ is predictable and the quantity in
	\eqref{eq:E(p*ZF2)} is finite.
	We frequently abuse notation and write ``Walsh integrable'' in place of 
	``$(p,F)$-Walsh integrable.''
\end{definition}
In other words, Walsh-integrable random fields are random fields
for which the stochastic-convolution process $\tilde{p}*Z\dot F$
is defined by the stochastic integration theory of Walsh \cite{Walsh}. 

The following is a key ``embedding'' theorem for our
extension of stochastic convolutions.

\begin{lemma}\label{lem:P*ZF}
	If $Z:=\{Z_t(x)\}_{t>0,x\in\R^d}$ is Walsh integrable,
	then
	\begin{equation}
		\mathcal{N}_\beta(\tilde{p}*Z\dot F) \le
		\mathcal{N}_\beta(Z)\cdot\sqrt{(\bar R_\beta f)(0)}
		\quad\text{for all $\beta>0$}.
	\end{equation}
\end{lemma}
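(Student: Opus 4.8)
The plan is to start from the explicit second-moment formula \eqref{eq:E(p*ZF2):1} for the stochastic convolution, bound the correlation factor $\E(Z_s(y)Z_s(z))$ from above by $\sup_{w\in\R^d}\E(|Z_s(w)|^2)$ pulled outside the spatial integrals, and then recognize the remaining $\d y\,\d z$ integral as $(\bar P_{t-s}f)(0)$. Concretely, for fixed $(t\,,x)$,
\begin{equation*}
	\E\left(\left| (\tilde p*Z\dot F)_t(x)\right|^2\right)
	\le \int_0^t\left(\sup_{w\in\R^d}\E(|Z_s(w)|^2)\right)
	\left(\int_{\R^d}\d y\int_{\R^d}\d z\ p_{t-s}(y-x)p_{t-s}(z-x)f(y-z)\right)\d s,
\end{equation*}
and the inner double integral equals $(p_{t-s}*\tilde p_{t-s}*f)(0)=(\bar P_{t-s}f)(0)$ by exactly the computation used in the proof of Theorem~\ref{th:Dalang:1} (Tonelli plus the identity $\bar P_t=P_tP_t^*$); in particular it does not depend on $x$, so one may take $\sup_{x\in\R^d}$ freely. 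Note the bound $\E(Z_s(y)Z_s(z))\le\sup_w\E(|Z_s(w)|^2)$ is justified by Cauchy--Schwarz, $\E(Z_s(y)Z_s(z))\le\{\E(|Z_s(y)|^2)\}^{1/2}\{\E(|Z_s(z)|^2)\}^{1/2}$.

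Next I would multiply through by $\e^{-\beta t}$, take $\sup_{x\in\R^d}$, and integrate $\d t$ over $(0\,,\infty)$. Writing $g(s):=\sup_{w\in\R^d}\E(|Z_s(w)|^2)$ and $h(r):=(\bar P_r f)(0)$, Tonelli gives
\begin{equation*}
	\mathcal{N}_\beta(\tilde p*Z\dot F)^2
	\le \int_0^\infty\e^{-\beta t}\d t\int_0^t g(s)\,h(t-s)\,\d s
	= \int_0^\infty \e^{-\beta s}g(s)\,\d s\int_0^\infty \e^{-\beta r}h(r)\,\d r,
\end{equation*}
the last equality being the substitution $r=t-s$ and factorization of the resulting product (Laplace-transform-of-a-convolution). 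The first factor is $\mathcal{N}_\beta(Z)^2$ by definition \eqref{eq:N:beta}, and the second is $\int_0^\infty\e^{-\beta r}(\bar P_r f)(0)\,\d r=(\bar R_\beta f)(0)$ by the definition of the replica resolvent. Taking square roots yields the claim.

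The one point requiring a little care---and the only genuine obstacle---is the passage to $\sup_{x\in\R^d}$ and the measurability/finiteness bookkeeping: \eqref{eq:E(p*ZF2):1} is a pointwise-in-$x$ identity, so before integrating one should check that $x\mapsto\E(|(\tilde p*Z\dot F)_t(x)|^2)$ is measurable (it is, being a limit of Walsh-integral second moments) and that the right-hand side majorant is a measurable function of $(s\,,t)$ so that Tonelli applies; since $h=\bar P_\bullet f\ge 0$ and $g\ge 0$ are measurable this is routine. If the product $\mathcal{N}_\beta(Z)^2(\bar R_\beta f)(0)$ is infinite the inequality is trivial, so one may assume both factors finite, in which case every interchange above is legitimate. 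I would present this as three short displayed lines—Cauchy--Schwarz bound, identification of the spatial integral with $(\bar P_{t-s}f)(0)$, and the Laplace-convolution factorization—without belaboring the Tonelli justifications.
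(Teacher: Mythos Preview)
Your proposal is correct and follows essentially the same approach as the paper's own proof: bound $\E(Z_s(y)Z_s(z))$ by $\sup_a\E(|Z_s(a)|^2)$, identify the remaining spatial integral as $(\bar P_{t-s}f)(0)$, observe this is independent of $x$, and then multiply by $\e^{-\beta t}$ and integrate to obtain the Laplace-convolution factorization. The paper presents this in one displayed computation without spelling out the Cauchy--Schwarz step or the measurability bookkeeping you mention, but the argument is otherwise identical.
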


\begin{proof}
	Owing to \eqref{eq:E(p*ZF2):1}, the following is valid for
	all $t>0$ and $x\in\R^d$:
	\begin{align} \nonumber
		&\E\left(\left| (\tilde{p}*Z\dot F)_t(x)\right|^2\right)\\\nonumber
		&\le\int_0^t\sup_{a\in\R^d}\E\left(\left| Z_s(a)\right|^2\right)\,
			\d s\int_{\R^d}\d y\int_{\R^d}\d z\
			p_{t-s}(y-x)p_{t-s}(z-x)f(y-z)\\
		&=\int_0^t\sup_{a\in\R^d}\E\left(\left| Z_s(a)\right|^2\right)\,
			(\bar P_{t-s}f)(0)\,\d s.
	\end{align}
	Since the right-most quantity is independent of $x\in\R^d$,
	we can take the supremum over all $x\in\R^d$, multiply the preceding
	by $\exp(-\beta t)$ and then integrate $[\d t]$ to deduce the lemma.
\end{proof}

We now use Lemma \ref{lem:P*ZF} to extend the Walsh definition
of a stochastic convolution as follows.  Suppose $(\bar R_\alpha f)(0)<\infty$
for some $\alpha>0$; because of Theorem \ref{th:Dalang:1}
we know that
$(\bar R_\beta f)(0)<\infty$ for all $\beta>0$.
\index{000L2beta@$\bm{L}^2_\beta$, an $L^2$ space of predictable random fields}%
Define $\bm{L}^2_\beta$ to be the collection of all [equivalence classes of
modifications of] predictable random fields
$Z$ such that $\mathcal{N}_\beta(Z)<\infty$.\footnote{As is customary
in the study of $L^p$ spaces, we treat the elements of $\bm{L}^2_\beta$  
slightly carelessly
as if they are random fields, rather than equivalence classes of random fields. This is
done, as in the case of $L^p$ spaces, merely to simplify the otherwise-cumbersome
notation. There should be no loss in precision, as ought to be clear from
the context.} Clearly, $\bm{L}^2_\beta$
is a Banach space for every $\beta>0$.

\begin{lemma}\label{lem:density:Lbeta}
	If $Z\in\bm{L}^2_\beta$ for some $\beta>0$, then there exist $Z^1,Z^2,\dots\in
	\bm{L}^2_\beta$ such that $\lim_{n\to\infty}
	Z^n= Z$ in $\bm{L}^2_\beta$, and $Z^n$ is Walsh integrable for all $n\ge 1$.
\end{lemma}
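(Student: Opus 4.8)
The plan is to prove Lemma~\ref{lem:density:Lbeta} by a standard truncation-and-mollification argument. The obstacle is that a generic $Z\in\bm{L}^2_\beta$ need not satisfy the finiteness of the quantity in \eqref{eq:E(p*ZF2)} \emph{pointwise in $(t,x)$}; membership in $\bm{L}^2_\beta$ only controls an exponentially-weighted \emph{time-integral} of $\sup_x\E(|Z_t(x)|^2)$, not the running integral against the kernel $p_{t-s}(y-x)p_{t-s}(z-x)f(y-z)$ for each fixed $t$. So the task is to produce approximants $Z^n$ whose second moments are bounded \emph{uniformly in $(t,x)$ on every time interval} — which is exactly the sufficient condition for Walsh integrability recorded after \eqref{eq:E(p*ZF2)} — while still converging to $Z$ in $\mathcal{N}_\beta$.

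First I would define $Z^n_t(x) := Z_t(x)\cdot\1_{\{1/n\le t\le n\}}\cdot\1_{\{\sup_{a\in\R^d}\E(|Z_t(a)|^2)\le n\}}$; more precisely, since the indicator must be predictable, I would use the deterministic set $G_n:=\{t>0:\ \sup_{a}\E(|Z_t(a)|^2)\le n\}$, which is Borel-measurable as a function of $t$, and set $Z^n_t(x):=Z_t(x)\1_{[1/n,n]}(t)\1_{G_n}(t)$. This is still predictable because multiplication by a deterministic Borel function of $t$ preserves predictability. The point of this truncation is that for every $n$,
\begin{equation}
	\sup_{t>0}\sup_{x\in\R^d}\E\left(\left|Z^n_t(x)\right|^2\right)\le n<\infty,
\end{equation}
and $Z^n_t\equiv 0$ for $t<1/n$. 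Combined with \eqref{cond:hawkes}, which via Proposition~\ref{pr:hawkes} gives $p_t\in L^\infty(\R^d)\cap L^1(\R^d)$ for all $t>0$ and hence $(\bar P_sf)(0)<\infty$ for $s>0$, the quantity in \eqref{eq:E(p*ZF2)} for $Z^n$ is bounded by $n\int_{1/n}^{t}(\bar P_{t-s}f)(0)\,\d s\le n(\bar R_0 f)(0)<\infty$ when $(\bar R_0 f)(0)<\infty$, or, when that fails, by $n\e^{\beta t}(\bar R_\beta f)(0)<\infty$ for any $\beta>0$ — in either case finite for every $(t,x)$, so each $Z^n$ is Walsh integrable.

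Next I would verify $\mathcal{N}_\beta(Z^n-Z)\to 0$. Writing $h(t):=\sup_{a}\E(|Z_t(a)|^2)$, we have $h\in L^1(\e^{-\beta t}\,\d t)$ by hypothesis, and
\begin{equation}
	\mathcal{N}_\beta(Z^n-Z)^2 = \int_0^\infty \e^{-\beta t}\,\sup_{x\in\R^d}\E\left(\left|Z_t(x)\right|^2\left(1-\1_{[1/n,n]\cap G_n}(t)\right)\right)\d t \le \int_0^\infty \e^{-\beta t} h(t)\1_{E_n^c}(t)\,\d t,
\end{equation}
where $E_n:=[1/n,n]\cap G_n$. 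Since $h(t)<\infty$ for $\d t$-a.e.\ $t$ (as $h\in L^1(\e^{-\beta t}\d t)$), we have $\1_{G_n}(t)\uparrow 1$ a.e.; also $\1_{[1/n,n]}(t)\uparrow 1$ for every $t>0$; hence $\1_{E_n^c}(t)\to 0$ a.e. The integrand $\e^{-\beta t}h(t)\1_{E_n^c}(t)$ is dominated by $\e^{-\beta t}h(t)\in L^1$, so the dominated convergence theorem gives $\mathcal{N}_\beta(Z^n-Z)\to 0$. This completes the proof; the one subtlety worth a sentence in the writeup is the measurability of $t\mapsto h(t)$ and of $G_n$, which follows from $(t,x)\mapsto\E(|Z_t(x)|^2)$ being jointly measurable (a consequence of predictability of $Z$ and Fubini) together with the fact that the supremum over a separable index set of a jointly-measurable nonnegative function is measurable in $t$.
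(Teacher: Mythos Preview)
Your proof is correct and takes a genuinely different route from the paper's. The paper truncates at the \emph{value} level: after reducing to $Z\ge 0$ it sets $Z^n_t(x):=\min(Z_t(x),n)$, so that $|Z^n|\le n$ pointwise and the quantity in \eqref{eq:E(p*ZF2)} is bounded by $n^2\int_0^t(\bar P_sf)(0)\,\d s\le n^2\e^{\beta t}(\bar R_\beta f)(0)$. You instead truncate in the \emph{time} variable according to the second-moment profile $h(t)=\sup_a\E(|Z_t(a)|^2)$, setting $Z^n_t=Z_t\cdot\1_{E_n}(t)$ with $E_n=[1/n,n]\cap\{h\le n\}$. The paper's scheme is more elementary---no need to discuss measurability of $h$ or predictability under multiplication by deterministic indicators---but your scheme has the virtue that the convergence $\mathcal N_\beta(Z^n-Z)\to 0$ reduces \emph{exactly} to $\int \e^{-\beta t}h(t)\1_{E_n^c}(t)\,\d t\to 0$, which is transparent dominated convergence. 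By contrast, the paper's ``by the dominated convergence theorem'' implicitly requires $\sup_x\E\bigl((Z_t(x)-n)^2;\,Z_t(x)>n\bigr)\to 0$ for a.e.\ $t$, i.e.\ a uniform-integrability-in-$x$ statement that is not spelled out; your approach neatly sidesteps this.

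Two minor clean-ups. First, the appeal to \eqref{cond:hawkes} and Proposition~\ref{pr:hawkes} is a detour: $p_t\in L^\infty\cap L^1$ does not by itself give $(\bar P_sf)(0)<\infty$ (since $f$ need not be integrable), and in any case you do not need it---your own fallback bound $n\,\e^{\beta t}(\bar R_\beta f)(0)$, which uses only the ambient finiteness of $(\bar R_\beta f)(0)$, already establishes Walsh integrability. Second, the window $[1/n,n]$ is redundant: the indicator $\1_{G_n}(t)$ alone already gives $\sup_{t,x}\E(|Z^n_t(x)|^2)\le n$ and $\1_{G_n^c}\to 0$ a.e., so both Walsh integrability and $\mathcal N_\beta$-convergence go through without it.
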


\begin{proof}
	We may assume without loss of generality that $Z_t(x)\ge 0$
	for all $t>0$ and $x\in\R^d$; for otherwise we can consider
	the predictable random fields $(Z_t(x))^+$ and $(Z_t(x))^-$
	separately [they too are in $\bm{L}^2_\beta$]. 
	
	For all $t>0$,
	$x\in\R^d$, and $n\ge 1$, let
	$Z^n_t(x)$ denote the minimum of $Z_t(x)$ and $n$. Then
	$Z^n$ converges in $\bm{L}^2_\beta$ to $Z$ as $n\to\infty$,
	thanks to the dominated convergence theorem. And 
	\begin{align}\nonumber
		&\int_0^t\d s\int_{\R^d}\d y\int_{\R^d}\d z\
			\E\left(|Z_s^n(y)Z_s^n(z)|\right)
			p_{t-s}(y-x)p_{t-s}(z-x) f(y-z)\\\nonumber
		&\hskip1.1in\le n^2\int_0^t\d s\int_{\R^d}\d y\int_{\R^d}\d z\
			p_s(y-x)p_s(z-x) f(y-z)\\
		&\hskip1.1in= n^2\int_0^t (\bar P_sf)(0)\,\d s\\\nonumber
		&\hskip1.1in\le n^2 \e^{\beta t}(\bar R_\beta f)(0),
	\end{align}
	which is finite.
\end{proof}

Now let us choose and fix some $Z\in\bm{L}^2_\beta$.
According to Lemma \ref{lem:density:Lbeta} we can find
$Z^n\to Z$ [in $\bm{L}^2_\beta$] such that every $Z^n$
is $(p, F)$-Walsh integrable. Lemma \ref{lem:P*ZF} tells us that
$\lim_{n\to\infty} (\tilde{p}*Z^n\dot F)$ exists in $\bm{L}^2_\beta$.
Thus, we can define the stochastic-convolution process
$\tilde{p}*Z\dot F$ for every $Z\in\bm{L}^2_\beta$ as follows:
\begin{equation}
	(\tilde{p}*Z\dot F)_t(x) := \lim_{n\to\infty} 
	(\tilde{p}*Z^n\dot F)_t(x),
\end{equation}
where the limit takes place in $\bm{L}^2_\beta$. Of course,
the preceding limit does not depend on the choice of $\{Z^n\}_{n=1}^\infty$.
But it also does not depend on $\beta$ because 
\begin{equation}
	\bm{L}^2_\beta\subseteq\bm{L}^2_\alpha
	\qquad\text{if $\alpha\le\beta$.}
\end{equation}

We abuse notation slightly and write for all $t>0$ and $x\in\R^d$,
\begin{equation}
	\int_0^t\int_{\R^d} p_{t-s}(y-x)Z_s(y)\, F(\d s\,\d y):=
	(\tilde{p}*Z\dot F)_t(x)
	\quad\text{for $Z\in\bigcup_{\beta>0}\bm{L}^2_\beta$}.
\end{equation}
It is easy to see that many of the standard properties of the
Walsh stochastic convolution transfer, by limiting arguments,
to those of the present extension. Chief among them is the fact
that $Z\mapsto (\tilde{p}*Z\dot F)$ is a random linear map from
$\bm{L}^2_\beta$ to itself. In other words, if $Z,W\in\bm{L}^2_\beta$
and $a,b\in\R$, then 
\begin{equation}\label{eq:id:temp}
	\tilde{p}*(aZ+bW)\dot F = a(\tilde{p}*Z\dot F) + b(\tilde{p}*Z\dot F).
\end{equation}
We emphasize that the preceding means that, as elements of $\bm{L}^2_\beta$,
the two sides agree. In particular, because of the Fubini theorem,
we obtain the following equivalent formulation of \eqref{eq:id:temp}:
There exists a null set $N_0\subset \R_+$ such that for all $t\not\in N_0$
and all $x\in\R^d$,
\begin{equation}
	\P\left\{
	\left(\tilde{p}*\left(aZ+bW\right)\dot F\right)_t(x)= a
	\left(\tilde{p}*Z\dot F\right)_t(x) + b\left(\tilde{p}* W\dot F\right)_t(x)
	\right\}
\end{equation}
is equal to one.

We conclude this section by emphasizing that 
the stochastic convolution process $t\mapsto (\tilde{p}*Z\dot F)_t$
is hereby not defined for all $t>0$. However, it \emph{is}
defined for almost all $t>0$; and if $(\tilde{p}*Z\dot F)_t$
is defined for a given $t>0$, then $(\tilde{p}*Z\dot F)_t(x)$
is well-defined for every $x\in\R^d$, and 
$x\mapsto (\tilde{p}*Z\dot F)_t(x)$ is a random field in the usual
sense.

Our stochastic convolution is thus quite different
from the existing ones in the literature.
For example, for the most commonly-used infinite-dimensional
stochastic convolution [see, for example, Chapter 5 of Da Prato
and Zabczyk \cite[\S5.1.2]{DaPratoZabczyk}], each random process
$x\mapsto (\tilde{p}*Z\dot F)_t(x)$
is defined for every $t>0$, typically as an element of a certain
Hilbert space. But, in that case, the random variables
$(\tilde{p}*Z\dot F)_t(x)$ cannot in general be
defined for every $x$.

\section{A Priori Estimates}
We now proceed to establish \emph{a priori} estimates for
Walsh-type stochastic integrals of the form $\mathcal{A}v$,
thereby showing that the random linear operator
$\mathcal{A}$ maps every $\bm{L}^2_\beta$ to itself
continuously and boundedly. As such, the following 
result should be compared to Lemma \ref{lem:Norm:sigma} on
page \pageref{lem:Norm:sigma}.

\begin{lemma}\label{lem:Norm:sigma:N}
	For all $\beta>0$
	and predictable random fields $v$ and $w$,
	\begin{equation}
		\mathcal{N}_\beta(\mathcal{A}v) \le 
		\left( \frac{
		{\rm C}_\sigma}{\beta^{1/2}} + 
		{\rm D}_\sigma \mathcal{N}_\beta(v)\right)\sqrt{%
		2( \bar R_\beta f)(0)},
	\end{equation}
	and
	\begin{equation}
		\mathcal{N}_\beta\left( \mathcal{A}v-\mathcal{A}w\right)\le 
		\lip_\sigma\mathcal{N}_\beta(v-w)
		\sqrt{2( \bar R_\beta f)(0)}.
	\end{equation}
\end{lemma}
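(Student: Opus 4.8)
The plan is to mimic the proof of Lemma \ref{lem:Norm:sigma} on page \pageref{lem:Norm:sigma}, but to carry the estimates through under the time-integrated norm $\mathcal{N}_\beta$ rather than the pointwise norm $\|\cdot\|_{\beta,2}$; the exponent $p=2$ simplifies matters substantially, so that only the $p=2$ case of the Burkholder--Davis--Gundy inequality (i.e. the It\^o isometry for Walsh integrals) is needed, with constant $z_2=1$ --- wait, actually $z_2=2$ by the earlier remark, but for $p=2$ the relevant constant in $\mathcal{N}_\beta$ is controlled directly by the isometry, without invoking $z_p$ at all, which is why the bound here has no $z_p$ in it. First I would recall from \eqref{eq:E(p*ZF2):1} that for any Walsh-integrable $Z$,
\begin{equation*}
	\E\left( \left| (\tilde p*Z\dot F)_t(x)\right|^2\right)
	=\int_0^t\d s\int_{\R^d}\d y\int_{\R^d}\d z\ \E\left(Z_s(y)Z_s(z)\right)
	p_{t-s}(y-x)p_{t-s}(z-x)f(y-z),
\end{equation*}
apply this with $Z=\sigma\circ v$ (so that $\tilde p*Z\dot F=\mathcal{A}v$), bound $\E(\sigma(v_s(y))\sigma(v_s(z)))\le \|\sigma(v_s(y))\|_2\|\sigma(v_s(z))\|_2$ by Cauchy--Schwarz, and then use $\|\sigma(v_s(a))\|_2\le {\rm C}_\sigma+{\rm D}_\sigma\|v_s(a)\|_2$ from Convention \ref{conv:Lip}.

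The key step is the following elementary inequality: for nonnegative $\alpha_s,\gamma_s$, the quantity $\int_0^t(\alpha_s+\gamma_s)^2 g_{t-s}\,\d s$ (with $g_r:=(\bar P_r f)(0)\ge 0$) is bounded by $2\int_0^t\alpha_s^2 g_{t-s}\,\d s + 2\int_0^t\gamma_s^2 g_{t-s}\,\d s$. Applying this with $\alpha_s\equiv {\rm C}_\sigma$ and $\gamma_s:=\sup_{a}\|v_s(a)\|_2$, taking $\sup_{x\in\R^d}$ (the right-hand side is already $x$-free after the $y,z$ integrations are expressed via $(\bar P_{t-s}f)(0)$, exactly as in the proof of Lemma \ref{lem:P*ZF}), multiplying by $\e^{-\beta t}$, integrating $\d t$, and using the convolution/Laplace-transform identity $\int_0^\infty\e^{-\beta t}\int_0^t h_s g_{t-s}\,\d s\,\d t = (\int_0^\infty\e^{-\beta s}h_s\,\d s)(\int_0^\infty\e^{-\beta r}g_r\,\d r)$ with $\int_0^\infty\e^{-\beta r}(\bar P_r f)(0)\,\d r=(\bar R_\beta f)(0)$, I get
\begin{equation*}
	\left[\mathcal{N}_\beta(\mathcal{A}v)\right]^2 \le
	2(\bar R_\beta f)(0)\left( {\rm C}_\sigma^2\int_0^\infty \e^{-\beta t}\,\d t
	+ {\rm D}_\sigma^2 \left[\mathcal{N}_\beta(v)\right]^2\right)
	= 2(\bar R_\beta f)(0)\left( \frac{{\rm C}_\sigma^2}{\beta}
	+ {\rm D}_\sigma^2\left[\mathcal{N}_\beta(v)\right]^2\right).
\end{equation*}
Taking square roots and using $\sqrt{a^2+b^2}\le a+b$ for $a,b\ge0$ yields the first displayed bound of the lemma.

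For the second bound I would follow the device used at the end of the proof of Lemma \ref{lem:Norm:sigma}: write $\mathcal{A}v-\mathcal{A}w=\tilde p*\left((\sigma\circ v)-(\sigma\circ w)\right)\dot F$, note that $|\sigma(v_s(y))-\sigma(w_s(y))|\le \lip_\sigma|v_s(y)-w_s(y)|$ pointwise, hence by Cauchy--Schwarz $\E\left(|(\sigma\circ v)_s(y)-(\sigma\circ w)_s(y)|\,|(\sigma\circ v)_s(z)-(\sigma\circ w)_s(z)|\right)\le \lip_\sigma^2\|v_s(y)-w_s(y)\|_2\|v_s(z)-w_s(z)\|_2$, and rerun the same computation (now with no ${\rm C}_\sigma$ term) to get $\left[\mathcal{N}_\beta(\mathcal{A}v-\mathcal{A}w)\right]^2\le 2(\bar R_\beta f)(0)\lip_\sigma^2\left[\mathcal{N}_\beta(v-w)\right]^2$, which is the claim. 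A small technical point worth spelling out: strictly speaking this computation is first carried out for Walsh-integrable $v,w$, and then extended to arbitrary predictable $v,w$ with $\mathcal{N}_\beta(v),\mathcal{N}_\beta(w)<\infty$ by the approximation/limiting procedure of Lemma \ref{lem:density:Lbeta} and the definition of the extended stochastic convolution (if $\mathcal{N}_\beta(v)=\infty$ there is nothing to prove). The only genuine obstacle I anticipate is bookkeeping the $x$-independence cleanly --- i.e. justifying that after integrating out $y$ and $z$ against $p_{t-s}(\cdot-x)$ the result is exactly $(\bar P_{t-s}f)(0)$ times an $x$-free factor, which is precisely the content of the Tonelli-type manipulation already performed in Lemma \ref{lem:P*ZF} --- and making sure Tonelli applies, which it does since all integrands are nonnegative.
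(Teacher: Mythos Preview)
Your proposal is correct and follows essentially the same route as the paper: bound $\E(|(\mathcal{A}v)_t(x)|^2)$ by $\int_0^t Q_s^2\,(\bar P_{t-s}f)(0)\,\d s$ with $Q_s={\rm C}_\sigma+{\rm D}_\sigma\sup_a\|v_s(a)\|_2$, then use $Q_s^2\le 2{\rm C}_\sigma^2+2{\rm D}_\sigma^2\sup_a\|v_s(a)\|_2^2$, multiply by $\e^{-\beta t}$, integrate, and take square roots via $\sqrt{a+b}\le\sqrt a+\sqrt b$; the second inequality then follows by the same reduction as in Lemma~\ref{lem:Norm:sigma}. One small slip: the paper records $z_2=1$, not $2$---but as you note, the $p=2$ isometry makes this moot here.
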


\begin{proof}
	According to \eqref{eq:Norm:sigma} and \eqref{eq:Norm:sigma:1}, 
	$\E( | (\mathcal{A}v)_t(x)|^2)$ 	is bounded above by
	\begin{align}\nonumber
		&\int_0^t Q_s^2\,\d s\int_{\R^d}\d y
			\int_{\R^d}\d z\  p_{t-s}(y-x)p_{t-s}(z-x)f(z-y)\\
		&\hskip1in=\int_0^t Q_s^2\,\d s\int_{\R^d}\d y
			\int_{\R^d}\d z\  p_{t-s}(y)p_{t-s}(z)f(z-y)\\
		&\hskip1in=\int_0^t Q_s^2 \cdot(\bar P_{t-s}f)(0)\,\d s,
	\end{align}
	where
	\begin{equation}
		Q_s := {\rm C}_\sigma
		+{\rm D}_\sigma\sup_{a\in\R^d}\|v_s(a)\|_2.
	\end{equation}
	Therefore,
	\begin{equation}\begin{split}
		\int_0^\infty  \e^{-\beta t}
			\E\left( | (\mathcal{A}v)_t(x)|^2\right)\,\d t
			&\le \int_0^\infty \e^{-\beta s} Q_s^2\,\d s
			\cdot\int_0^\infty  \e^{-\beta t}(\bar P_t f)(0)\,\d t\\
		& =\int_0^\infty \e^{-\beta s}
			Q_s^2\,\d s\cdot(\bar R_\beta f)(0).
	\end{split}\end{equation}
	Since
	\begin{equation}
		Q_s^2 \le 2{\rm C}_\sigma^2 + 2{\rm D}_\sigma^2\sup_{a\in\R^d}
		\E\left(|v_s(a)|^2\right),
	\end{equation}
	the first part of the lemma follows from the Minkowski-type bound
	\begin{equation}
		(|a|+|b|)^{1/2}\le|a|^{1/2}+|b|^{1/2},
	\end{equation}
	valid for all $a,b\in\R$.
	
	The first follows from the second
	as in the proof of Lemma \ref{lem:Norm:sigma}.
\end{proof}

According to Theorem \ref{th:existence}, if $u_0$ and
$v_0$ are bounded and measurable functions, then there
exist a.s.-unique mild solutions $u$ and $v$ to the stochastic
heat equation \eqref{heat:temperate}. Our next lemma shows
that if $u_0$ and $v_0$ are suitably close, then $u$ and
$v$ are as well.

\begin{lemma}\label{lem:effect0}
	Assume Condition \ref{cond:1} holds.
	Let $u$ and $v$ be two mild solutions to \eqref{heat:temperate}
	whose initial functions $u_0,v_0:\R^d\to\R$ 
	are both bounded and in $L^1(\R^d)$. Then there exists $\beta_0$ which depends 
	only on $f$ and $\lip_\sigma$, such that for all $\beta>\beta_0$,
	\begin{equation}
		\mathcal{N}_\beta(u-v) \le \frac{4}{(2\pi)^d}
		\int_{\R^d}\frac{\left|\hat u_0(\xi) -\hat v_0(\xi)\right|}{
		\beta+2\Re\Psi(\xi)}\,\d\xi.
	\end{equation}
\end{lemma}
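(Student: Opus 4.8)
The plan is to exploit the mild-solution representation \eqref{heat:mild:form} (with $b\equiv 0$) together with the $\mathcal{N}_\beta$-contraction estimate for the operator $\mathcal{A}$ furnished by Lemma \ref{lem:Norm:sigma:N}. Writing $w_t(x):=u_t(x)-v_t(x)$, subtracting the two mild equations gives
\begin{equation*}
	w_t(x) = \left(P_t(u_0-v_0)\right)(x) + \left(\mathcal{A}u-\mathcal{A}v\right)_t(x),
\end{equation*}
so that by the triangle inequality for $\mathcal{N}_\beta$ and the second bound of Lemma \ref{lem:Norm:sigma:N},
\begin{equation*}
	\mathcal{N}_\beta(w) \le \mathcal{N}_\beta\bigl(P_\bullet(u_0-v_0)\bigr)
	+ \lip_\sigma\sqrt{2(\bar R_\beta f)(0)}\cdot\mathcal{N}_\beta(w).
\end{equation*}
Since $\mathcal{N}_\beta(u-v)$ is finite for $\beta$ large (this follows because $u,v\in\bm B_{\gamma,2}$ for suitable $\gamma$ by Theorem \ref{th:existence}, and the $\|\cdot\|_{\gamma,2}$-finiteness converts to $\mathcal{N}_\beta$-finiteness once $\beta>\gamma$), and since $(\bar R_\beta f)(0)\to 0$ as $\beta\to\infty$ by the dominated convergence theorem under Condition \ref{cond:1}, we may pick $\beta_0$ depending only on $f$ and $\lip_\sigma$ such that $\lip_\sigma\sqrt{2(\bar R_\beta f)(0)}\le \tfrac12$ for all $\beta>\beta_0$. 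Absorbing the last term then yields $\mathcal{N}_\beta(w)\le 2\,\mathcal{N}_\beta\bigl(P_\bullet(u_0-v_0)\bigr)$.

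The remaining task is to bound the deterministic quantity $\mathcal{N}_\beta\bigl(P_\bullet(u_0-v_0)\bigr)$ by the stated Fourier integral. Set $h:=u_0-v_0\in L^1(\R^d)\cap L^\infty(\R^d)$. Then $(P_th)(x)=(h*\tilde p_t)(x)$ has spatial Fourier transform $\hat h(\xi)\e^{-t\Psi(\xi)}$, but since $P_th$ need not be in $L^2(\R^d)$ one argues more carefully via Young's inequality: $\|P_th\|_{L^\infty(\R^d)}\le \|h\|_{L^1(\R^d)}\|p_t\|_{L^\infty(\R^d)}$. Actually the clean route is to note $|(P_th)(x)|\le (|h|*\tilde p_t)(x)$ and hence, because $P_\bullet h$ is a deterministic field,
\begin{equation*}
	\mathcal{N}_\beta\bigl(P_\bullet h\bigr)^2 = \int_0^\infty \e^{-\beta t}\sup_{x\in\R^d}\bigl|(P_th)(x)\bigr|^2\,\d t.
\end{equation*}
Here one wants instead to track the $L^2$-in-$x$ behaviour: by Plancherel, $\|P_th\|_{L^2(\R^d)}^2=(2\pi)^{-d}\int|\hat h(\xi)|^2\e^{-2t\Re\Psi(\xi)}\,\d\xi$, and integrating against $\e^{-\beta t}$ gives $(2\pi)^{-d}\int\frac{|\hat h(\xi)|^2}{\beta+2\Re\Psi(\xi)}\,\d\xi$. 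To convert this into the sup-norm estimate appearing in $\mathcal{N}_\beta$, one uses that $\widehat{P_th}$ is integrable — indeed $|\widehat{P_th}(\xi)|=|\hat h(\xi)|\e^{-t\Re\Psi(\xi)}$, and $\hat h$ is bounded (as $h\in L^1$) while $\e^{-t\Re\Psi}\in L^1(\R^d)$ for all $t>0$ by \eqref{cond:hawkes}-type integrability, so Fourier inversion applies pointwise and $\sup_x|(P_th)(x)|\le (2\pi)^{-d}\int|\hat h(\xi)|\e^{-t\Re\Psi(\xi)}\,\d\xi$. Then
\begin{equation*}
	\sup_x|(P_th)(x)|^2 \le \left(\frac{1}{(2\pi)^d}\int|\hat h(\xi)|\e^{-t\Re\Psi(\xi)}\,\d\xi\right)^{\!2},
\end{equation*}
and integrating $\e^{-\beta t}$ and using Tonelli together with $\int_0^\infty\e^{-\beta t}\e^{-t(\Re\Psi(\xi_1)+\Re\Psi(\xi_2))}\,\d t = (\beta+\Re\Psi(\xi_1)+\Re\Psi(\xi_2))^{-1}\le$ a product bound gives, after Cauchy--Schwarz or a direct product estimate, $\mathcal{N}_\beta(P_\bullet h)\le (2\pi)^{-d}\int\frac{|\hat h(\xi)|}{\beta+2\Re\Psi(\xi)}\,\d\xi$ — possibly with a harmless constant that gets folded into the factor $4$.

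The main obstacle I anticipate is precisely this last deterministic estimate: unlike the random part, $P_\bullet h$ lives most naturally in $L^2(\R^d)$ in the space variable, whereas $\mathcal{N}_\beta$ measures a spatial supremum of the $L^2(\P)$-norm, which here degenerates to a pointwise supremum. Handling that gap rigorously requires invoking \eqref{cond:hawkes} (so that $\e^{-t\Re\Psi}\in L^1$, hence $p_t$ bounded and Fourier inversion legitimate pointwise), and then carefully producing the bilinear estimate $\int_0^\infty\e^{-\beta t}|\widehat{P_th}(\xi_1)||\widehat{P_th}(\xi_2)|\,\d t \le \tfrac12(\beta+2\Re\Psi(\xi_1))^{-1}|\hat h(\xi_1)|^2 + (\text{sym})$ via $2ab\le a^2+b^2$ and monotonicity, which after integrating in $\xi_1,\xi_2$ reproduces the square of the claimed bound. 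The constant $4$ in the statement is exactly the slack this AM--GM splitting plus the factor-$2$ absorption leaves behind, so no sharpness is lost and the bookkeeping is routine once the inversion step is justified.
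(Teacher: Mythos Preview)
Your contraction-and-absorb argument for the stochastic part is sound, and in fact tidier than the paper's: the paper runs the estimate through the Picard iterates $u^n,v^n$ and then passes to the limit via Fatou, whereas you invoke Lemma~\ref{lem:Norm:sigma:N} directly and justify the finiteness of $\mathcal{N}_\beta(u-v)$ through the $\bm{B}_{\gamma,2}$ bound of Theorem~\ref{th:existence}. Both routes land at $\mathcal{N}_\beta(u-v)\le 2\,\mathcal{N}_\beta\bigl(P_\bullet(u_0-v_0)\bigr)$ once $\beta$ is large enough.

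The gap is in your treatment of the deterministic term. Your pointwise bilinear estimate
\[
\int_0^\infty \e^{-\beta t}\,|\hat h(\xi_1)|\,|\hat h(\xi_2)|\,
\e^{-t(\Re\Psi(\xi_1)+\Re\Psi(\xi_2))}\,\d t
\ \le\ \frac12\cdot\frac{|\hat h(\xi_1)|^2}{\beta+2\Re\Psi(\xi_1)}
+\frac12\cdot\frac{|\hat h(\xi_2)|^2}{\beta+2\Re\Psi(\xi_2)}
\]
is correct, but integrating it over $(\xi_1,\xi_2)\in\R^{2d}$ blows up: the first summand carries no decay in $\xi_2$, so $\int_{\R^d}\d\xi_2=\infty$. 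What Cauchy--Schwarz (or Minkowski's integral inequality applied to $\bigl\|\int(\cdot)\,\d\xi\bigr\|_{L^2(\d t)}$) genuinely yields is
\[
\mathcal{N}_\beta(P_\bullet h)\ \le\ \frac{1}{(2\pi)^d}\int_{\R^d}
\frac{|\hat h(\xi)|}{\sqrt{\beta+2\Re\Psi(\xi)}}\,\d\xi,
\]
with a \emph{square root} in the denominator; no universal constant converts this into the stated bound with $\beta+2\Re\Psi(\xi)$ to the first power.

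The paper sidesteps the decoupling problem by not squaring at all: it first replaces the $L^2(\d t)$ quantity by the $L^1(\d t)$ quantity $\int_0^\infty\e^{-\beta t/2}\sup_x|(P_th)(x)|\,\d t$, and only then inserts the pointwise Fourier-inversion bound $\sup_x|(P_th)(x)|\le(2\pi)^{-d}\int|\hat h(\xi)|\e^{-t\Re\Psi(\xi)}\,\d\xi$ (justified via Proposition~\ref{pr:hawkes}, which places $p_t$ in the Fourier algebra). A single application of Fubini on $\int_0^\infty\e^{-\beta t/2}\e^{-t\Re\Psi(\xi)}\,\d t=2/(\beta+2\Re\Psi(\xi))$ then produces the factor~$4$ with the first-power denominator. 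The crucial difference is that the $L^1$-in-$t$ route keeps the $\xi$-integral single, so there is nothing to decouple.
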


\begin{proof}
	Throughout we suppose that $c>1$. Now let us consider
	a fixed $x\in\R^d$ and $t>0$, and define $u^n$ and $v^n$ to
	be the respective Picard approximation to $u$ and $v$ at the $n$th stage. Then,
	\begin{equation}\label{eq:temp:1}
		\left\| u_t^n(x) - v_t^n(x) \right\|_2^2
		\le 2\left| (P_tu_0)(x)-(P_tv_0)(x)\right|^2+2\mathcal{Q}_t^{n-1},
	\end{equation}
	where $\mathcal{Q}_t^{n-1}$ is defined as
	\begin{equation}
		\left\| \int_0^t\int_{\R^d} p_{t-s}(y-x)
		\left( \sigma(u_s^{n-1}(y))-\sigma(v_s^{n-1}(y))\right)
		\, F(\d s\,\d y)\right\|_2^2.
	\end{equation}
	Define for all $t>0$ and $n\ge 1$,
	\begin{equation}
		H_t^n := \sup_{a\in\R^d} \E\left(\left| u_t^n(a)-v_t^n(a)
		\right|^2\right).
	\end{equation}
	Then it follows from our construction of stochastic convolutions
	[and \eqref{eq:E(p*ZF2)}] that $\mathcal{Q}_t^{n-1}$ is bounded
	above by
	\begin{equation}\begin{split}
		&\lip_\sigma^2\int_0^t H_s^{n-1}\,\d s
			\int_{\R^d}\d y\int_{\R^d}\d z\ p_{t-s}(y-x)
			p_{t-s}(z-x) f(z-y)\\
		&\hskip2.2in=\lip_\sigma^2\cdot\int_0^t H_s^{n-1}(\bar P_{t-s}f)(0)\,\d s.
	\end{split}\end{equation}
	And hence,
	\begin{equation}
		\int_0^\infty \e^{-\beta t}\mathcal{Q}_t^{n-1}\,\d t
		\le\lip_\sigma^2\cdot\int_0^\infty \e^{-\beta t}H_t^{n-1}\,\d t\cdot
		(\bar R_\beta f)(0).
	\end{equation}
	This and \eqref{eq:temp:1} together prove that
	\begin{equation}\begin{split}
		&\int_0^\infty \e^{-\beta t} H_t^n
			\,\d t \le 2\int_0^\infty \e^{-\beta t}\sup_{x\in\R^d}\left|
			(P_t u_0)(x)-(P_tv_0)(x)\right|^2\,\d t\\
		&\hskip1.7in+2\lip_\sigma^2\cdot\int_0^\infty \e^{-\beta t}H_t^{n-1}\,\d t\cdot
			(\bar R_\beta f)(0).
	\end{split}\end{equation}
	Condition \ref{cond:1} and the monotone convergence theorem
	together imply that $\lim_{\beta\to\infty}(\bar R_\beta f)(0)=0$.
	We can choose $\beta_0$ so large that
	\begin{equation}
		4\lip_\sigma^2(\bar R_{\beta_0} f)(0)<1.
	\end{equation}
	
	Since
	\begin{equation}
		\int_0^\infty \e^{-\beta t} H_t^n\,\d t=
		\{ \mathcal{N}_\beta(u^n-v^n)\}^2,
	\end{equation}
	it follows that for all $\beta>\beta_0$,
	\begin{equation}\begin{split}
		\mathcal{N}_\beta\left( u^n-v^n\right) &\le 2\left(
			\int_0^\infty \e^{-\beta t}\sup_{x\in\R^d}
			\left| (P_tu_0)(x)-(P_tv_0)(x)\right|^2\,\d t\right)^{1/2}\\
		&\le 2\int_0^\infty \e^{-\beta t/2}\sup_{x\in\R^d}
			\left| (P_tu_0)(x)-(P_tv_0)(x)\right|\,\d t,
	\end{split}\end{equation}
	thanks to Minkowski's inequality. Recall from the proof of Theorem of
	\ref{th:existence} that, among other things,
	\begin{equation}
		\|u^n-u\|_2+\|v^n-v\|_2\to 0
		\qquad\text{as $n\to\infty$}.
	\end{equation}
	Therefore,
	Fatou's lemma implies that for all $\beta>\beta_0$,
	\begin{equation}
		\mathcal{N}_\beta\left( u-v\right) 
		\le 2\int_0^\infty \e^{-\beta t/2}\sup_{x\in\R^d}
		\left| (P_tu_0)(x)-(P_tv_0)(x)\right|\,\d t.
	\end{equation}
	
	In order to complete the lemma, we choose $\beta$ as large as needed
	in order to ensure that the preceding inequality holds, and define
	\begin{equation}
		q := u_0-v_0.
	\end{equation}
	Hawkes's theorem [Proposition \ref{pr:hawkes}, p.\ \pageref{pr:hawkes}]
	assures us that $p_t$ and $\hat p_t$ are both in $L^1(\R^d)$;
	in Fourier-analytic language, $p_t$ is in the Fourier [or Wiener] algebra
	\begin{equation}
		A(\R^d):= \left\{ h\in L^1(\R^d):\, \hat h\in L^1(\R^d)\right\},
	\end{equation}
	for all $t>0$. It is easy to see that the Fourier transform is one-to-one
	and onto on $A(\R^d)$. And continuous elements of $A(\R^d)$ 
	are in fact in $C_0(\R^d)$, by 
	the Riemann--Lebesgue lemma.
	Since $u_0$ and $v_0$ are integrable, it follows also that
	$P_tu_0, P_tv_0\in A(\R^d)$ for all $t>0$, and that $P_tu_0$
	and $P_tv_0$ are both continuous. The inversion theorem of Fourier
	analysis can be applied pointwise to the elements of $A(\R^d)$. Therefore,
	\begin{equation}\begin{split}
		\sup_{x\in\R^d}\left| (P_t q)(x)\right|
			&\le  \| \widehat{P_t q} \|_{L^1(\R^d)}\\
		&=\frac{1}{(2\pi)^d}\int_{\R^d}\left| \e^{-t\Psi(-\xi)}\hat q(\xi)
			\right|\,\d\xi\\
		&\le\frac{1}{(2\pi)^d}\int_{\R^d} \e^{-t\Re\Psi(\xi)}
			\left| \hat u_0(\xi)-\hat v_0(\xi)\right|\,\d\xi.
	\end{split}\end{equation}
	And hence,
	\begin{equation}\label{eq:N:beta:p:Ptu0}
		\frac12\int_0^\infty\e^{-\beta t/2}\sup_{x\in\R^d}\left| 
		(P_t q)(x)\right|\,\d t
		\le \frac{1}{(2\pi)^d}\int_{\R^d}\frac{\left| 
		\hat u_0(\xi)-\hat v_0(\xi)\right|}{
		\beta+2\Re\Psi(\xi)}\,\d\xi.
	\end{equation}
	This proves the lemma.
\end{proof}

\section{Proof of Existence}

Having prepared the background material, we are ready
to demonstrate Theorem \ref{th:temperate}.
Before we begin the proof, let us recall the following:
\begin{definition}
	Let $\{\psi_n\}_{n=1}^\infty$ denote a collection
	of measurable functions from $\R^d$ to $\R_+$ such that:
	(i) $n\mapsto \|\psi_n\|_{L^1(\R^d)}$ is uniformly bounded; and
	(ii) $\lim_{n\to\infty} \hat\psi_n=1$ pointwise.
	Then we say that $\{\psi_n\}_{n=1}^\infty$ is a \emph{weak
	mollifier}.
\end{definition}
Clearly, weak mollifiers are mollifiers in the usual sense. But the
converse is not in general true, because weak mollifiers need
not have very good smoothness properties.

We need this definition because our strategy of the proof is the following:
Let $\{\psi_n\}_{n=1}^\infty$ denote a weak  mollifier. Then,
according to  Theorem \ref{th:existence},
we can solve \eqref{heat:temperate} subject to the initial condition
$u_0*\psi_n$, since $u_0*\psi_n$ is a bounded and measurable function.
If $\bm{u}^{(n)}$ denotes the solution, we then proceed to show that
$\bm{u}:=\lim_{n\to\infty}\bm{u}^{(n)}$ exists in a suitable sense
and is a temperate solution to \eqref{heat:temperate}. Now we write
down the details of this argument.

\begin{proof}[Proof of Theorem \ref{th:temperate}]
	Recall that $u_0$ is a finite  Borel measure and
	define $u_0^{(n)}:=\psi_n*u_0$, where $\{\psi_n\}_{n=1}^\infty$
	is a weak mollifier. 
	According to Theorem \ref{th:existence}, there is a solution $\bm{u}^{(n)}$
	to the following SPDE:
	\begin{equation}
		\frac{\partial}{\partial t} u^{(n)}_t(x)=
		(\sL u^{(n)}_t)(x) + \sigma(u^{(n)}_t(x))\dot{F}_t(x),
	\end{equation}
	and the solution is unique up to evanescence. Theorem \ref{th:existence}
	also assures us that
	\begin{equation}
		C_\alpha:=\sup_{t>0}\sup_{x\in\R^d} \e^{-\alpha t}
		\E\left(\left| u_t^{(n)}(x) \right|^2\right)<\infty,
	\end{equation}
	provided that $\alpha>0$ is sufficiently large. Note that if
	$\theta>\alpha$, then
	\begin{equation}\begin{split}
		\mathcal{N}_\theta(u^{(n)}) &\le C_\alpha\int_0^\infty \e^{-(\theta-\alpha)t}
			\,\d t\\
		&=\frac{C_\alpha}{\theta-\alpha}<\infty.
	\end{split}\end{equation}
	That is, $\bm{u}^{(n)}\in\bm{L}^2_\theta$ for all $\theta$ sufficiently large.
	According to Lemma \ref{lem:effect0}, for all $n,m\ge 1$,
	\begin{equation}
		\mathcal{N}_\beta \left( \bm{u}^{(n)}-\bm{u}^{(m)}\right) \le
		\frac{4}{(2\pi)^d}\int_{\R^d}\frac{|\hat u_0(\xi)|}{\beta+2\Re\Psi(\xi)}
		\cdot\left| \hat \psi_n(\xi)-\hat\psi_m(\xi)\right|\,\d\xi,
	\end{equation}
	provided that $\beta>\beta_0$ for a $\beta_0$ that depends only on $f$
	and $\lip_\sigma$. Therefore, it follows that if $\beta>\beta_0$,
	then $\{\bm{u}^{(n)}\}_{n=1}^\infty$ is a Cauchy sequence in
	$\bm{L}^2_\beta$. Let $\bm{u}:=\{u_t(x)\}_{t>0,x\in\R^d}$ denote the limit.
	By definition,
	\begin{equation}
		\lim_{n\to\infty} \bm{u}^{(n)} =\bm{u}
		\qquad\text{in $\bigcap_{\beta>\beta_0}\bm{L}^2_\beta$}.
	\end{equation}
	And therefore, Lemma \ref{lem:Norm:sigma:N} and our extension
	of stochastic convolutions together imply that
	\begin{equation}\label{eq:GG}
		\lim_{n\to\infty}\left(\bm{u}^{(n)} - \mathcal{A}\bm{u}^{(n)}
		\right)=\bm{u} - \mathcal{A}\bm{u}
		\qquad\text{in $\bigcap_{\beta>\beta_0}\bm{L}^2_\beta$},
	\end{equation}
	where $\mathcal{A}\bm{u}^{(n)}$ is defined in \eqref{def:A}
	on page \pageref{def:A}; and $\mathcal{A}\bm{u}$ is defined
	in the same way, but now interpreted as a stochastic convolution in the
	sense of the present chapter.

	In particular,  for all $T>0$,
	\begin{equation}
		\int_0^T \sup_{x\in\R^d}\E\left(
		\left| \left[ u^n_t(x) - \left( \mathcal{A}u^n \right)_t(x)\right]
		-\left[ u_t(x) - \left(
		\mathcal{A}u \right)_t(x)\right]\right|^2\right)\,\d t\to 0,
	\end{equation}
	as $n\to\infty$. This follows simply because
	\begin{equation}
		\int_0^T \kappa(s)\,\d s
		\le \e^{\beta T}\int_0^\infty\e^{-\beta s}\kappa(s)\,\d s,
	\end{equation}
	for all nonnegative measurable $\kappa:\R_+\to\R_+$.

	Since $\bm{u}^{(n)}$ is a mild solution to \eqref{heat:temperate}
	with initial data $u^{(n)}_0=\psi_n*u_0$, Tonelli's theorem implies that
	\begin{equation}
		u^{(n)}_t(x)-(\mathcal{A}u^n)_t(x)= \left( (P_tu_0) * \psi_n \right)(x).
	\end{equation}
	Hawkes's theorem [Proposition \ref{pr:hawkes}, p.\
	\pageref{pr:hawkes}] implies that $P_tu_0$ is uniformly continuous
	for every $t>0$. Therefore, for every $t>0$ fixed,
	\begin{equation}
		\lim_{n\to\infty} (P_tu_0)*\psi_n= P_tu_0
		\qquad\text{uniformly}.
	\end{equation}
	It follows from the preceding and Fatou's lemma that
	for all $T>0$,
	\begin{equation}
		\int_0^T \sup_{x\in\R^d}\E\left(
		\left|  u_t(x) - (P_tu_0)(x) -(\mathcal{A}u)_t(x) \right|^2\right)\,\d t= 0.
	\end{equation}
	This proves the theorem.
\end{proof}

The following is the final result of this section.
It shows that the temperate solution to \eqref{heat:temperate}
is unique among a natural family of possible solutions. It is entirely
possible that one can establish the uniqueness of the temperate solution
among a larger family than that offered below. But we are not
aware of such further improvements.

\begin{proposition}\label{pr:uniqueness}
	The temperate solution $\bm{u}$, provided by the preceding proof,
	does not depend on the choice of the weak mollifier, up to evanescence.
\end{proposition}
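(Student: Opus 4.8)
The plan is to compare the two approximating sequences directly, leaning on the a priori contraction estimate of Lemma~\ref{lem:effect0}. Let $\{\psi_n\}_{n=1}^\infty$ and $\{\phi_n\}_{n=1}^\infty$ be two weak mollifiers and let $\bm u$ and $\bm v$ be the temperate solutions that the proof of Theorem~\ref{th:temperate} produces from them; thus $\bm u=\lim_{n}\bm u^{(n)}$ and $\bm v=\lim_{n}\bm v^{(n)}$ in $\bigcap_{\beta>\beta_0}\bm L^2_\beta$, where $\bm u^{(n)}$ and $\bm v^{(n)}$ are the mild solutions furnished by Theorem~\ref{th:existence} with the bounded, $L^1(\R^d)$ initial data $\psi_n*u_0$ and $\phi_n*u_0$ respectively (bounded and integrable exactly as in the proof of Theorem~\ref{th:temperate}), and $\beta_0$ depends only on $f$ and $\lip_\sigma$. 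Since $\beta_0$ is the \emph{same} for both mollifiers, Lemma~\ref{lem:effect0} applies to the pair $(\bm u^{(n)},\bm v^{(n)})$ and gives, for every $\beta>\beta_0$,
\begin{equation}
	\mathcal N_\beta\bigl(\bm u^{(n)}-\bm v^{(n)}\bigr)\le
	\frac{4}{(2\pi)^d}\int_{\R^d}\frac{|\hat u_0(\xi)|}{\beta+2\Re\Psi(\xi)}
	\,\bigl|\hat\psi_n(\xi)-\hat\phi_n(\xi)\bigr|\,\d\xi .
\end{equation}

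Next I would let $n\to\infty$ in this bound. Because $\psi_n,\phi_n\ge 0$, one has $|\hat\psi_n(\xi)-\hat\phi_n(\xi)|\le \|\psi_n\|_{L^1(\R^d)}+\|\phi_n\|_{L^1(\R^d)}\le 2C$, where $C:=\sup_n\max(\|\psi_n\|_{L^1(\R^d)},\|\phi_n\|_{L^1(\R^d)})<\infty$ by the definition of a weak mollifier; hence the integrand is dominated by $2C\,|\hat u_0(\xi)|/(\beta+2\Re\Psi(\xi))$, and for $\beta\ge 1$ (which we may assume) this dominating function is integrable, since $\beta+2\Re\Psi(\xi)\ge 1+2\Re\Psi(\xi)$ and $\int_{\R^d}|\hat u_0(\xi)|/(1+2\Re\Psi(\xi))\,\d\xi<\infty$ — this being precisely the second of the two conditions equivalent to \eqref{eq:temperate} recorded immediately after the statement of Theorem~\ref{th:temperate}. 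As moreover $\hat\psi_n-\hat\phi_n\to 0$ pointwise, dominated convergence forces the right-hand side to $0$. Combining this with the triangle inequality
\begin{equation}
	\mathcal N_\beta(\bm u-\bm v)\le
	\mathcal N_\beta\bigl(\bm u-\bm u^{(n)}\bigr)
	+\mathcal N_\beta\bigl(\bm u^{(n)}-\bm v^{(n)}\bigr)
	+\mathcal N_\beta\bigl(\bm v^{(n)}-\bm v\bigr)
\end{equation}
and letting $n\to\infty$ (the outer two terms vanish by construction) yields $\mathcal N_\beta(\bm u-\bm v)=0$ for every $\beta>\beta_0$; equivalently $\bm u=\bm v$ as elements of $\bm L^2_\beta$, hence in $\bigcap_{\beta>\beta_0}\bm L^2_\beta$. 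Unwinding the definition \eqref{eq:N:beta} of $\mathcal N_\beta$, this says $u_t(x)=v_t(x)$ almost surely for Lebesgue-almost every $t>0$ and every $x\in\R^d$, which is the asserted equality of the two temperate solutions up to evanescence.

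An equivalent, perhaps more transparent, route is to interleave: the sequence $\psi_1,\phi_1,\psi_2,\phi_2,\dots$ is again a weak mollifier — the uniform $L^1$ bound and the pointwise convergence of the Fourier transforms to $1$ are both inherited by interleavings — and feeding it into the construction of Theorem~\ref{th:temperate} produces a \emph{single} sequence in $\bm L^2_\beta$ which is Cauchy (the same Lemma~\ref{lem:effect0}/dominated-convergence computation) and whose two interleaved subsequences converge to $\bm u$ and to $\bm v$; uniqueness of limits in the Banach space $\bm L^2_\beta$ then gives $\bm u=\bm v$. Either way the argument is short: there is no real obstacle here, only a couple of points needing a moment's care, namely that hypothesis \eqref{eq:temperate} does supply the integrable majorant required for the passage to the limit, and the routine bookkeeping that the exceptional time-null sets attached to $\bm u$ and $\bm v$ (and to the continuous linear operator $\mathcal A$ on $\bm L^2_\beta$, cf.\ Lemma~\ref{lem:Norm:sigma:N}) may be merged into one, so that the conclusion is genuinely a statement about the difference $\bm u-\bm v$ and not merely about a single realization.
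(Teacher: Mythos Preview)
Your proof is correct and follows essentially the same route as the paper: apply Lemma~\ref{lem:effect0} to the pair of approximants, then use dominated convergence (with the integrable majorant coming from \eqref{eq:temperate} and the uniform $L^1$ bound on the mollifiers) to kill the right-hand side. The only cosmetic difference is that the paper passes from $\mathcal N_\beta(\bm u^{(n)}-\bm v^{(n)})$ to $\mathcal N_\beta(\bm u-\bm v)$ via Fatou's lemma rather than your triangle inequality, but the substance is identical.
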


\begin{proof}
	Let $\{\kappa_n\}_{n=1}^\infty$ be another weak mollifier,
	and define $\bm{v}$ to be the temperate solution that the preceding
	proof yields based on $\{\kappa_n\}_{n=1}^\infty$; 
	that is, $v^n$ is obtained from $\{\kappa_l\}_{l=1}^\infty$
	in the same way that $u^n$ was obtained from $\{\psi_l\}_{l=1}^\infty$.
	
	Let $\beta_0$ be as in the proof of Theorem \ref{th:temperate}
	[see also Lemma \ref{lem:effect0}],
	and recall that $\beta_0$ does not depend on $\{\psi_n\}_{n=1}^\infty$,
	$\{\kappa_n\}_{n=1}^\infty$, etc.
	In accord with Fatou's lemma,
	\begin{equation}\begin{split}
		\mathcal{N}_\beta(\bm{u}-\bm{v}) &\le\liminf_{n\to\infty}
			\mathcal{N}_\beta\left(u^n-v^n\right)\\
		&\le\frac{4}{(2\pi)^d}\liminf_{n\to\infty}
			\int_{\R^d}\frac{|\hat u_0(\xi)|}{\beta+2\Re\Psi(\xi)}
			\cdot\left|\hat\psi_n(\xi)-\hat\kappa_n(\xi)\right|\,\d\xi,
	\end{split}\end{equation}
	for every $\beta>\beta_0$,
	owing to Lemma \ref{lem:effect0}. And the preceding is zero by 
	the dominated convergence theorem. This proves the proposition.
\end{proof}

\section{More A Priori Estimates}

Let $u_0$ be a finite Borel measure on $\R^d$ that satisfies
\eqref{eq:temperate}.
In order to investigate the large-time behavior of
the temperate solution to \eqref{heat:temperate}, we need
to introduce a suitable family of Banach spaces.
The task of the present section is precisely to do that.
Our analysis of the temperate solution to \eqref{heat:temperate}
will resume after this section.

Define for all $\beta>0$, finite real numbers $p\ge 1$, and predictable random
fields $\bm{v}:=\{v_t(x)\}_{t>0,x\in\R^d}$,
\index{000Npbeta@$\mathcal{N}_{\beta,p}$, a family of norms}%
\begin{equation}\label{eq:N:beta,p}
	\mathcal{N}_{\beta,p}(\bm{v}) := \left(
	\int_0^\infty \e^{-\beta t}\sup_{x\in\R^d}
	\left\{\E\left(| v_t(x)|^p\right)\right\}^{2/p}
	\,\d t\right)^{1/2},
\end{equation}
so that $\mathcal{N}_{\beta,2}(v)$ is the same quantity as
$\mathcal{N}_\beta(v)$; the latter was defined earlier in
\eqref{eq:N:beta}.
 And, just as was the case with $\mathcal{N}_\beta$ when $p=2$,
every $\mathcal{N}_{\beta,p}$ is a norm on equivalence classes
of $p$-times integrable predictable processes that are
modifications of one another.

\begin{definition}
\index{000L2pbeta@$\bm{L}^p_\beta$, an $L^p$ space of predictable random fields}%
	Let $\bm{L}^p_\beta$ denote the collection of all predictable
	random fields $v:=\{v_t(x)\}_{t>0,x\in\R^d}$ such that
	$\mathcal{N}_{\beta,p}(v)<\infty$.
\end{definition}
Thus, our present definition of $\bm{L}^2_\beta$ agrees with
our older one.

The following generalizes Lemma \ref{lem:Norm:sigma:N} to the 
case that $p$ is an integer $\ge 2$.

\begin{lemma}\label{lem:Norm:sigma:N:p}
	For all $\beta>0$, even integers $p\ge 2$,
	and Walsh-integrable random fields $v$ and $w$,
	\begin{equation}
		\mathcal{N}_{\beta,p}(\mathcal{A}v) \le 
		z_p\left( \frac{
		{\rm C}_\sigma}{\beta^{1/2}} + 
		{\rm D}_\sigma \mathcal{N}_{\beta,p}(v)\right)\sqrt{%
		2( \bar R_\beta f)(0)},
	\end{equation}
	and
	\begin{equation}
		\mathcal{N}_{\beta,p}\left( \mathcal{A}v-\mathcal{A}w\right)\le 
		z_p\lip_\sigma\mathcal{N}_{\beta,p}(v-w)
		\sqrt{2( \bar R_\beta f)(0)}.
	\end{equation}
\end{lemma}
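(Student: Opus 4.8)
The plan is to mimic the proof of Lemma \ref{lem:Norm:sigma} (the $\bm{B}_{\beta,p}$-version) but measure everything in the integrated norm $\mathcal{N}_{\beta,p}$ instead of the pointwise-in-$t$ norm $\|\cdot\|_{\beta,p}$. First I would recall the Burkholder--Davis--Gundy estimate in Davis's form, exactly as in the proof of Lemma \ref{lem:Norm:sigma}: for an even integer $p\ge 2$, $\E(|(\mathcal{A}v)_t(x)|^p)$ is bounded above by $z_p^p$ times the $(p/2)$-th moment of the quadratic variation, and then the generalized H\"older inequality reduces matters to the deterministic kernel estimate
\begin{equation*}
	\E\left(\left| (\mathcal{A}v)_t(x)\right|^p\right)
	\le z_p^p\left(\int_0^t \d s\int_{\R^d}\d y\int_{\R^d}\d z\
	Q_s(y)Q_s(z)\, p_{t-s}(y-x)p_{t-s}(z-x)f(z-y)\right)^{p/2},
\end{equation*}
where $Q_s(a):=\|\sigma(v_s(a))\|_p$. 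This is already displayed as \eqref{eq:Norm:sigma}--\eqref{eq:Norm:sigma:1} in the text, so I would simply cite it.

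Next, where the $\bm{B}_{\beta,p}$-proof writes $Q_s(a)\le\e^{\beta s/p}({\rm C}_\sigma+{\rm D}_\sigma\|v\|_{\beta,p})$, here I would instead keep $Q_s(a)\le {\rm C}_\sigma+{\rm D}_\sigma\sup_{a\in\R^d}\|v_s(a)\|_p$ and take the supremum over $a\in\R^d$ inside the triple integral, which factorizes the spatial integral into $(\bar P_{t-s}f)(0)$ exactly as in the definition of $\mathcal{H}_s$ in the proof of Lemma \ref{lem:Norm:sigma}. That gives
\begin{equation*}
	\E\left(\left| (\mathcal{A}v)_t(x)\right|^p\right)
	\le z_p^p\left(\int_0^t \tilde Q_s^{\,2}\,(\bar P_{t-s}f)(0)\,\d s\right)^{p/2},
	\qquad \tilde Q_s:={\rm C}_\sigma+{\rm D}_\sigma\sup_{a}\|v_s(a)\|_p .
\end{equation*}
Raising to the power $2/p$, multiplying by $\e^{-\beta t}$, integrating $\d t$, and using the multiplicative (Young-type) property of the Laplace transform of a convolution — precisely the step already carried out in the proof of Lemma \ref{lem:Norm:sigma:N} with $p=2$ — yields
\begin{equation*}
	\mathcal{N}_{\beta,p}(\mathcal{A}v)^2
	\le z_p^2\left(\int_0^\infty \e^{-\beta s}\tilde Q_s^{\,2}\,\d s\right)(\bar R_\beta f)(0).
\end{equation*}
Then $\tilde Q_s^{\,2}\le 2{\rm C}_\sigma^2+2{\rm D}_\sigma^2\sup_a\E(|v_s(a)|^2)\le 2{\rm C}_\sigma^2+2{\rm D}_\sigma^2(\sup_a\|v_s(a)\|_p)^2$ by Jensen (since $p\ge 2$), so $\int_0^\infty \e^{-\beta s}\tilde Q_s^{\,2}\,\d s\le 2{\rm C}_\sigma^2/\beta+2{\rm D}_\sigma^2\mathcal{N}_{\beta,p}(v)^2$; the elementary bound $\sqrt{a+b}\le\sqrt a+\sqrt b$ then delivers the first inequality. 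The second (Lipschitz/contraction) inequality follows from the first applied to $\mathcal{A}_1$ — the operator with $\sigma$ replaced by $\sigma_1(x)=x$, so ${\rm C}_{\sigma_1}=0$, ${\rm D}_{\sigma_1}=\lip_\sigma$ — together with the pointwise bound $|(\mathcal{A}v)_t(x)-(\mathcal{A}w)_t(x)|\le\lip_\sigma\cdot(\mathcal{A}_1(|v-w|))_t(x)$, exactly as in the proofs of Lemmas \ref{lem:Norm:sigma} and \ref{lem:Norm:sigma:N}.

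I do not expect any serious obstacle: this is a routine reworking of two already-proven lemmas, the only genuinely new ingredient being the appearance of the Hermite constant $z_p$ from Davis's inequality (absent in the $p=2$ case because $z_2$ got absorbed), and the use of Jensen's inequality to pass from the $p$-th moment control back to the second moment inside $\tilde Q_s^{\,2}$. The one point to be slightly careful about is to justify interchanging the supremum over $x$ with the $\d s$-integral and the use of the convolution/Laplace-transform inequality $\int_0^\infty\e^{-\beta t}(g*h)(t)\,\d t=\bigl(\int_0^\infty\e^{-\beta s}g\bigr)\bigl(\int_0^\infty\e^{-\beta s}h\bigr)$ for nonnegative $g,h$ — but this is precisely the computation done in the proof of Lemma \ref{lem:Norm:sigma:N}, and Condition \ref{cond:1} (via Theorem \ref{th:Dalang:1}) guarantees $(\bar R_\beta f)(0)<\infty$ so everything is finite. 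Hence the proof should be short, essentially a citation of the relevant displays with the bookkeeping adjusted for general even $p$.
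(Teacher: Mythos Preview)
Your approach is essentially identical to the paper's: both cite the BDG/H\"older estimate \eqref{eq:Norm:sigma}, bound $V_s'(y,z)\le 2\Theta_s$ with $\Theta_s={\rm C}_\sigma^2+{\rm D}_\sigma^2\sup_q\|v_s(q)\|_p^2$, reduce the spatial integral to $(\bar P_{t-s}f)(0)$, and then pass to the Laplace transform exactly as in Lemma~\ref{lem:Norm:sigma:N}. One cosmetic slip: your intermediate inequality $\tilde Q_s^{\,2}\le 2{\rm C}_\sigma^2+2{\rm D}_\sigma^2\sup_a\E(|v_s(a)|^2)$ goes the wrong way (Jensen gives $\|\cdot\|_2\le\|\cdot\|_p$, not the reverse), but it is also unnecessary---the final bound $\tilde Q_s^{\,2}\le 2{\rm C}_\sigma^2+2{\rm D}_\sigma^2(\sup_a\|v_s(a)\|_p)^2$ follows directly from $(a+b)^2\le 2a^2+2b^2$, which is exactly what the paper does.
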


\begin{proof}
	The ensuing argument is a generalization of the proof
	of Lemma \ref{lem:Norm:sigma}. Indeed, by
	\eqref{eq:Norm:sigma}, $\E(| (\mathcal{A}v)_t(x) |^p)$ is at
	most $z_p^p$ times
	\begin{equation}
		\left(\int_0^t\d s\int_{\R^d}\d y\int_{\R^d}\d z\
		V_s'(y\,,z)p_{t-s}(y-x)p_{t-s}(z-x)f(y-z)\right)^{p/2},
	\end{equation}
	where
	\begin{equation}\begin{split}
		V_s'(y\,,z) &:= \left\| \sigma\left( v_s(y)\right) \right\|_p
			\cdot\left\| \sigma\left( v_s(z) \right) \right\|_p\\
		&\le \left( {\rm C}_\sigma + {\rm D}_\sigma \sup_{q\in\R^d}
			\|v_s(q)\|_p\right)^2\\
		&\le 2\Theta_s,
	\end{split}\end{equation}
	where
	\begin{equation}
		\Theta_s:= {\rm C}_\sigma^2+ {\rm D}_\sigma^2 \sup_{q\in\R^d}
		\|v_s(q)\|_p^2.
	\end{equation}
	It follows that $\E(| (\mathcal{A}v)_t(x) |^p)$ is bounded above by
	$2^{p/2}z_p^p$ times
	\begin{equation}\begin{split}
		&\E(| (\mathcal{A}v)_t(x) |^p)\\
		&\hskip.2in\le 2^{p/2}z_p^p
			\left(\int_0^t\Theta_s\,\d s\int_{\R^d}\d y\int_{\R^d}\d z\
			p_{t-s}(y)p_{t-s}(z) f(y-z)\right)^{p/2}\\
		&\hskip.2in=2^{p/2}z_p^p
			\left( \int_0^t\Theta_s (\bar P_{t-s}f)(0)\,\d s
			\right)^{p/2},
	\end{split}\end{equation}
	thanks a direct computation that was made already during
	the course of the proof of Lemma \ref{lem:Norm:sigma}.
	
	Let us examine the preceding display next:
	The right-most quantity is independent of the variable
	$x$. Therefore, we can maximize the extreme terms in 
	that display over all $x\in\R^d$, raise the resulting inequality
	to the power $2/p$ on all sides, and then finally
	multiply by $\exp(-\beta t)$ and integrate $[\d t]$ to
	obtain the following:
	\begin{equation}
		\mathcal{N}_{\beta,p}(v) \le z_p
		\sqrt{2\int_0^\infty \e^{-\beta s}\Theta_s\,\d s\cdot
		(\bar R_\beta f)(0)}
	\end{equation}
	The first bound of the lemma follows after a few direct
	computations. The second bound follows from the first
	in a manner that has been pointed out several times already;
	see, for example, the end of the proof of Lemma \ref{lem:Norm:sigma}
	for an outline.
\end{proof}

Note that if $v$ is a Walsh-integrable random field, then so
is $\sigma\circ v$, and
then $\mathcal{A}v$ is none other than the stochastic
convolution $\tilde{p}*(\sigma\circ v)\dot F$. Therefore,
the following is a refinement of Lemma \ref{lem:Norm:sigma:N:p}
that works in the setting of stochastic convolutions.

\begin{lemma}\label{lem:SConvolution:N:p}
	Recall that $\sigma:\R\to\R$ is a predetermined
	nonrandom and Lipschitz-continuous function.
	Choose and fix a real $\beta>0$ and an even integer
	$p\ge 2$. Then, for all predictable random fields $v$
	and $w$,
	\begin{equation}
		\mathcal{N}_{\beta,p}\left( \tilde{p}*(\sigma\circ v)\dot F\right) \le 
		z_p\left( \frac{
		{\rm C}_\sigma}{\beta^{1/2}} + 
		{\rm D}_\sigma \mathcal{N}_{\beta,p}(v)\right)\sqrt{%
		2( \bar R_\beta f)(0)},
	\end{equation}
	and
	\begin{equation}\begin{split}
		&\mathcal{N}_{\beta,p}\left( \tilde{p}*(\sigma\circ v)\dot F
			-\tilde{p}*(\sigma\circ w)\dot F\right) \\
		&\hskip2in\le  z_p\lip_\sigma\mathcal{N}_{\beta,p}(v-w)
			\sqrt{2( \bar R_\beta f)(0)}.
	\end{split}\end{equation}
\end{lemma}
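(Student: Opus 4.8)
The plan is to bootstrap from Lemma~\ref{lem:Norm:sigma:N:p}, which already contains both bounds for \emph{Walsh-integrable} random fields, to arbitrary predictable $v,w$, by truncation together with a limiting argument carried out in the Banach spaces $\bm{L}^p_\beta$. Throughout we may assume $\mathcal{N}_{\beta,p}(v)<\infty$ and $\mathcal{N}_{\beta,p}(w)<\infty$, for otherwise the asserted inequalities are vacuous. First I would record two elementary facts: since $\sigma$ is Lipschitz, $|\sigma(v_t(x))|\le {\rm C}_\sigma+{\rm D}_\sigma|v_t(x)|$; and since Jensen's inequality gives $\mathcal{N}_{\beta,2}(\,\cdot\,)\le\mathcal{N}_{\beta,p}(\,\cdot\,)$ for every even $p\ge2$, one checks directly that $\sigma\circ v\in\bm{L}^2_\beta$. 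In view of Theorem~\ref{th:Dalang:1} [which ensures $(\bar R_\alpha f)(0)<\infty$ for all $\alpha>0$], the stochastic convolution $\tilde{p}*(\sigma\circ v)\dot F$ is thus a well-defined element of $\bm{L}^2_\beta$ in the sense of the construction given earlier in this chapter.

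The next step is the case where $v$ is bounded and predictable. Then $v$ is Walsh-integrable, because $\int_0^t(\bar P_sf)(0)\,\d s\le\e^{\beta t}(\bar R_\beta f)(0)<\infty$ makes the double integral \eqref{eq:E(p*ZF2)} finite, so $\tilde{p}*(\sigma\circ v)\dot F=\mathcal{A}v$ and Lemma~\ref{lem:Norm:sigma:N:p} applies verbatim. For general $v\in\bm{L}^p_\beta$, set $v^n_t(x):=(-n)\vee v_t(x)\wedge n$. Each $v^n$ is bounded and predictable, $|v^n|\le|v|$ pointwise, and $v^n\to v$ in $\bm{L}^p_\beta$ by the dominated convergence theorem. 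The previous sentence gives
\begin{equation*}
	\mathcal{N}_{\beta,p}(\mathcal{A}v^n)\le z_p\left(\frac{{\rm C}_\sigma}{\beta^{1/2}}+{\rm D}_\sigma\mathcal{N}_{\beta,p}(v^n)\right)\sqrt{2(\bar R_\beta f)(0)}\le z_p\left(\frac{{\rm C}_\sigma}{\beta^{1/2}}+{\rm D}_\sigma\mathcal{N}_{\beta,p}(v)\right)\sqrt{2(\bar R_\beta f)(0)},
\end{equation*}
while the Lipschitz bound of Lemma~\ref{lem:Norm:sigma:N:p}, applied to $\mathcal{A}v^n-\mathcal{A}v^m$, shows that $\{\mathcal{A}v^n\}_{n\ge1}$ is Cauchy, hence convergent, in $\bm{L}^p_\beta$; since $\mathcal{N}_{\beta,2}\le\mathcal{N}_{\beta,p}$, its limit there is also its $\bm{L}^2_\beta$-limit, which by the linearity of the extended convolution and (the extension of) Lemma~\ref{lem:P*ZF} applied to $\sigma\circ v^n-\sigma\circ v$ equals $\tilde{p}*(\sigma\circ v)\dot F$. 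Passing to the limit in the displayed inequality—using lower semicontinuity of $\mathcal{N}_{\beta,p}$ under $\bm{L}^p_\beta$-convergence, i.e.\ Fatou—yields the first asserted bound.

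The second bound is obtained in exactly the same fashion: truncate both $v$ and $w$, apply the Lipschitz estimate of Lemma~\ref{lem:Norm:sigma:N:p} to $\mathcal{A}v^n-\mathcal{A}w^n$, and let $n\to\infty$, noting that $\mathcal{N}_{\beta,p}(v^n-w^n)\le\mathcal{N}_{\beta,p}(v^n-v)+\mathcal{N}_{\beta,p}(v-w)+\mathcal{N}_{\beta,p}(w-w^n)\to\mathcal{N}_{\beta,p}(v-w)$, and that the left-hand side converges to $\mathcal{N}_{\beta,p}(\tilde{p}*(\sigma\circ v)\dot F-\tilde{p}*(\sigma\circ w)\dot F)$ by the identification from the previous paragraph. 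The only genuinely nonroutine point—the one I expect to be the main obstacle—is precisely this identification of the $\bm{L}^p_\beta$-limit of $\mathcal{A}v^n$ with the object $\tilde{p}*(\sigma\circ v)\dot F$, which a priori is defined only in $\bm{L}^2_\beta$, i.e.\ only for almost every $t$; this rests on the inclusion $\bm{L}^p_\beta\subseteq\bm{L}^2_\beta$ with the accompanying norm inequality and on the fact that the extended stochastic convolution does not depend on the chosen approximating sequence. Everything else is bookkeeping with the constants ${\rm C}_\sigma,{\rm D}_\sigma,\lip_\sigma$ and the inequality $(|a|+|b|)^{1/2}\le|a|^{1/2}+|b|^{1/2}$.
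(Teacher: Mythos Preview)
Your proposal is correct and follows the same strategy as the paper: approximate by bounded (hence Walsh-integrable) truncations, invoke Lemma~\ref{lem:Norm:sigma:N:p}, and pass to the limit. The only differences are in execution---the paper first treats the linear case $\sigma(u)=u$ and recovers general $\sigma$ afterward via Minkowski applied to $|\sigma(v)|\le{\rm C}_\sigma+{\rm D}_\sigma|v|$, and it passes to the limit via Fatou's lemma on $\mathcal{N}_{\beta,p}$ rather than by establishing a Cauchy sequence in $\bm L^p_\beta$ and identifying its limit through the inclusion $\bm L^p_\beta\subseteq\bm L^2_\beta$.
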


\begin{proof}
	We can, and will, assume without loss of generality that
	the three quantities 
	$(\bar R_\beta f)(0)$,
	$\mathcal{N}_{\beta,p}(v)$,
	and $\mathcal{N}_{\beta,p}(v-w)$ are all finite.
	
	Since $p\ge 2$, it follows that the stochastic convolution
	$\tilde{p}*v\dot F$ is well defined. Moreover, we can find
	a sequence $v^1,v^2,\ldots$ of Walsh-integrable random fields
	such that $\tilde{p}*v^n\dot F$ converges to $\tilde{p}*v\dot F$ in
	$\bm{L}^2_\beta$. In particular, there exists a Lebesgue-null
	set $N_0\subset\R_+$ such that for all $t\not\in N_0$,
	\begin{equation}\label{eq:mild:temperate:approx}
		\lim_{n\to\infty}
		\sup_{x\in\R^d} \E\left( |v^n_t(x)-v_t(x)|^2\right)=0.
	\end{equation}
	Therefore, we may apply Fatou's lemma to the [pseudo-]
	norm $\mathcal{N}_{\beta,p}$ and deduce the following:
	\begin{equation}\begin{split}
		\mathcal{N}_{\beta,p}\left( \tilde{p}*v\dot F\right)
			&\le \liminf_{n\to\infty}\mathcal{N}_{\beta,p}
			\left( \tilde{p}*v^n\dot F\right)\\
		&\le z_p\liminf_{n\to\infty} \mathcal{N}_{\beta,p}\left(
			v^n\right)\sqrt{(\bar R_\beta f)(0)},
	\end{split}\end{equation}
	because we can apply Lemma \ref{lem:Norm:sigma:N:p} to each
	Walsh-integrable process $v^n$ [with $\sigma(u):=u$]. Among other
	things, this proves that $\tilde{p}*v\dot F\in\bm{L}^p_\beta$,
	and
	\begin{equation}
		\mathcal{N}_{\beta,p}\left( \tilde{p}*v\dot F\right)
		\le z_p \mathcal{N}_{\beta,p}(v) \sqrt{2(\bar R_\beta f)(0)}.
	\end{equation}
	The first inequality of the lemma follows from the above
	and Minkowski's inequality, since $\sigma(v_t(x))\le{\rm C}_\sigma
	+{\rm D}_\sigma |v_t(x)|$ pointwise.
	
	Because we also have that $\mathcal{N}_{\beta,p}(w)<\infty$,
	we can deduce the second inequality from
	Lemma \ref{lem:Norm:sigma:N:p} as well.
\end{proof}

We will not need the entire strength of the preceding lemma itself.
We have stated it for two reasons: First of all,
it suggests that our extension of the stochastic
convolution has good continuity properties,
viewed as elements of the Banach spaces $\bm{L}^p_\beta$;
and also, this is a simple setting in which an
approximation method introduced via
the proof of Lemma \ref{lem:SConvolution:N:p}. We will
use that method later on.

\section{An Upper Bound for Growth}

The goal of this section is to establish
Theorem \ref{th:interm:temperate}.

\begin{proof}[Proof of Theorem \ref{th:interm:temperate}]
	Let $\{\psi^n\}_{n=1}^\infty$ be a weak mollifier, and
	let $\bm{u}^{(n)}:=\{u^{(n)}_t(x)\}_{t>0,x\in\R^d}$ be the
	solution to \eqref{heat:temperate} with initial data $\psi^n*u_0$.
	By the Minkowski inequality, the following is valid for every
	integer $n\ge 1$:
	\begin{equation}
		\mathcal{N}_{\beta,p}\left(\bm{u}^{(n)}\right)
		\le \mathcal{N}_{\beta,p}\left( P_\bullet u_0\right) +
		\mathcal{N}_{\beta,p}\left( \tilde{p}*\left(
		\sigma\circ u^{(n)}\right)\dot F \right).
	\end{equation}
	The first term on the right is estimated easily, thanks to
	Minkowski's inequality, as follows:
	\begin{equation}\begin{split}
		\mathcal{N}_{\beta,p}\left( P_\bullet u_0\right)
			&=\left(\int_0^\infty \e^{-\beta t}\sup_{x\in\R^d}
			\left| (P_tu_0)(x) \right|^2\,\d t \right)^{1/2}\\
		&\le \int_0^\infty \e^{-\beta t/2}\sup_{x\in\R^d}
			\left| (P_tu_0)(x) \right|\,\d t\\
		&\le \frac{2}{(2\pi)^d}\int_{\R^d}\frac{\left| 
			\hat u_0(\xi)\right|}{
			\beta+2\Re\Psi(\xi)}\,\d\xi;
	\end{split}\end{equation}
	see \eqref{eq:N:beta:p:Ptu0}. Therefore, Lemma
	\ref{lem:SConvolution:N:p} implies that
	\begin{equation}\label{eq:prepre:prepre}\begin{split}
		\mathcal{N}_{\beta,p}\left(\bm{u}^{(n)}\right)
		&\le\frac{2}{(2\pi)^d}\int_{\R^d}\frac{\left| 
			\hat u_0(\xi)\right|}{
			\beta+2\Re\Psi(\xi)}\,\d\xi\\
		&\qquad+ z_p\left(
			\frac{{\rm C}_\sigma}{\beta^{1/2}}+
			{\rm D}_\sigma\mathcal{N}_{\beta,p}\left( \bm{u}^{(n)}\right)
			\right)\sqrt{2(\bar R_\beta f)(0)}.
	\end{split}\end{equation}
	Let us assume, for the time being,
	that we could prove that $\mathcal{N}_{\beta,p}(\bm{u}^{(n)})$ is finite---that
	is, $\bm{u}^{(n)}$ is in the Banach space $\bm{L}^p_\beta$---provided
	that\footnote{\eqref{cond:beta} holds for all $\beta$ sufficiently large
	because thanks to \eqref{cond:1}, $\lim_{\beta\to\infty}
	(\bar R_\beta f)(0)=0$.}
	\begin{equation}\label{cond:beta}
		z_p{\rm D}_\sigma\sqrt{2(\bar R_\beta f)(0)}<1.
	\end{equation}
	Then we could rearrange \eqref{eq:prepre:prepre} and find that
	\begin{equation}
		\mathcal{N}_{\beta,p}\left(\bm{u}^{(n)}\right)\le
		c \left(\frac{2}{(2\pi)^d}\int_{\R^d}\frac{|\hat u_0(\xi)|}{
		\beta+2\Re\Psi(\xi)}\,\d\xi + \frac{z_p{\rm C}_\sigma}{\beta^{1/2}}\right),
	\end{equation}
	where
	\begin{equation}
		c := \frac{1}{1-z_p{\rm D}_\sigma\sqrt{2(\bar R_\beta f)(0)}}.
	\end{equation}
	And therefore, the proof of Lemma \ref{lem:SConvolution:N:p} implies that
	\begin{equation}\begin{split}
		\mathcal{N}_{\beta,p}\left(\bm{u}\right) &\le
			\liminf_{n\to\infty}
			\mathcal{N}_{\beta,p}(\bm{u}^{(n)})\\
		&\le c \left(\frac{2}{(2\pi)^d}\int_{\R^d}\frac{|\hat u_0(\xi)|}{
			\beta+2\Re\Psi(\xi)}\,\d\xi + \frac{z_p{\rm C}_\sigma}{\beta^{1/2}}\right).
	\end{split}\end{equation}
	In other words, we have shown the following: If we could prove
	that $\bm{u}^{(n)}\in \bm{L}^p_\beta$ for every $n$, then
	in fact $\{\bm{u}^{(n)}\}_{n=1}^\infty$ is bounded in $\bm{L}^p_\beta$,
	whence $\bm{u}\in\bm{L}^p_\beta$ and Theorem \ref{th:interm:temperate}
	follows. 
	
	Now we proceed as follows:
	Let
	\begin{equation}
		u^{(n,0)}_t(x):=(\psi^n*u_0)(x)
		\qquad\text{for all $t\ge 0$ and $x\in\R^d$,}
	\end{equation}
	and iteratively define
	\begin{equation}
		u^{(n,k)}_t(x) := (P_t u_0)(x) + \int_0^t\int_{\R^d}
		p_{t-s}(y-x)\sigma\left( u^{(n,k-1)}_s(y)\right)
		\,F(\d y\,\d s).
	\end{equation}
	By the Minkowski inequality and \eqref{eq:N:beta:p:Ptu0},
	\begin{equation}\label{eq:prepre:n:zero}\begin{split}
		\mathcal{N}_{\beta,p}\left( \bm{u}^{(n,0)}\right)
			&=\left( \int_0^\infty \e^{-\beta t}\sup_{x\in\R^d}
			\left| \left(P_t (\psi^n*u_0)\right)(x)\right|^2\,\d t\right)^{1/2}\\
		&\le \frac{2}{(2\pi)^d}
			\int_{\R^d}\frac{|\hat\psi_n(\xi)|\cdot |\hat u_0(\xi)|}{
			\beta+2\Re\Psi(\xi)}\,\d\xi,
	\end{split}\end{equation}
	which is finite; it is in fact bounded uniformly in $n$ since 
	$\sup_{n\ge 1}\|\psi_n\|_{L^1(\R^d)}$ is finite by the very
	definition of weak mollifiers. Now the argument that led
	to \eqref{eq:prepre:prepre} leads also to the following:
	\begin{equation}\begin{split}
		\mathcal{N}_{\beta,p}\left(\bm{u}^{(n,k+1)}\right)
		&\le\frac{2}{(2\pi)^d}\int_{\R^d}\frac{\left| 
			\hat u_0(\xi)\right|}{
			\beta+2\Re\Psi(\xi)}\,\d\xi\\
		&\qquad+ z_p\left(
			\frac{{\rm C}_\sigma}{\beta^{1/2}}+
			{\rm D}_\sigma\mathcal{N}_{\beta,p}\left( \bm{u}^{(n,k)}\right)
			\right)\sqrt{2(\bar R_\beta f)(0)}.
	\end{split}\end{equation}
	This, \eqref{eq:prepre:n:zero}, and induction together prove that
	if \eqref{cond:beta} is in effect, then
	\begin{equation}
		\sup_{k\ge 1}\mathcal{N}_{\beta,p}\left(\bm{u}^{(n,k)}\right)<\infty.
	\end{equation}
	By the Borel--Cantelli lemma, and owing to Theorem \ref{th:existence}
	and its proof,
	\begin{equation}
		\lim_{k\to\infty} \sup_{x\in\R^d}\E\left(
		\left| u_t^{(n,k)}(x)-u^{(n)}_t(x) \right|^2\right)=0
		\qquad\text{for all $t> 0$}.
	\end{equation}
	[In fact, this is essentially
	\eqref{eq:mild:temperate:approx}, but we have
	noted further that ``almost all $t$'' can be replaced by ``all $t$''
	in the present setting, since $\bm{u}^{(n)}$ is a mild solution
	to \eqref{heat:temperate}.]
	Therefore, Fatou's lemma proves that 
	\begin{equation}
		\mathcal{N}_{\beta,p}\left(\bm{u}^{(n)}\right)
		\le \liminf_{k\to\infty}\mathcal{N}_{\beta,p}\left(
		\bm{u}^{(n,k)}\right)<\infty.
	\end{equation}
	This establishes Theorem \ref{th:interm:temperate}.
\end{proof}				
\include{FoonKhosh-Discrete}			
\appendix

\backmatter
\bibliographystyle{amsalpha}%
\bibliography{FoonKhosh}
\printindex

\end{document}